\newcites{app}{References}
\theoremstyle{plain}
\newtheorem{dfn}{Definition} 
\newtheorem{lem}[dfn]{Lemma}
\newtheorem{rem}[dfn]{Remark}
\newtheorem{prop}[dfn]{Proposition}
\newtheorem*{rem*}{Remark}
\newtheorem{thm}[dfn]{Theorem}
\newtheorem*{thm*}{Theorem}
\newtheorem{ass}{Assumption}
\numberwithin{dfn}{section}
\numberwithin{ass}{section}
\definecolor{darkgreen}{rgb}{0.0,0.5,0.0}
\DeclareMathOperator*{\argmax}{arg\,max}
\newcommand{\convd}{\overset{d}{\longrightarrow}}
\newcommand{\convp}{\overset{\p}{\longrightarrow}}
\newcommand{\p}{\mathbb{P}}
\newcommand{\e}{\mathbb{E}}
\newcommand{\var}{\mathrm{Var}}
\newcommand{\cov}{\mathrm{Cov}}
\def\gbr#1{\lfloor #1 \rfloor}
\newcommand{\simp}{\overset{\p}{\sim}}
\newcommand{\gcell}[1]{%
  \begingroup
    \pgfmathparse{#1*40}%
    \pgfmathsetmacro{\pct}{min(max(\pgfmathresult,0),40)}%
    \edef\GreyColor{black!\pct}%
    \expandafter\cellcolor\expandafter{\GreyColor}#1%
  \endgroup
}
\begin{document}

\begin{frontmatter}
\title{Practically significant change points in high dimension - measuring signal strength pro active component}
\runtitle{Practically significant change points in high dimension}

\begin{aug}
\author[A]{\fnms{Pascal}~\snm{Quanz} 
\ead[label=e1]{pascal.quanz@rub.de}} \and 
\author[A]{\fnms{Holger}~\snm{Dette}\ead[label=e2]{holger.dette@rub.de}\orcid{0000-0001-7048-474X}
}
\address[A]{Fakultät für Mathematik, Ruhr-Universität Bochum, Germany
\printead[presep={\ \\ }]{e1,e2}}

\end{aug}

\begin{abstract}
    In this paper, we consider the change point testing problem for high-dimensional time series. Unlike conventional approaches, where one tests whether the difference $\delta$ of the mean vectors before and after the change point 
    is equal to zero, we argue that the consideration of the null hypothesis $H_0 : \| \delta \| \le \Delta$, for some norm $\| \cdot \|$ and  a threshold $\Delta > 0$, is better suited. The reason is that in the high-dimensional regime, it is rare, and perhaps impossible, to have a null hypothesis that can be exactly modeled by assuming that all components of the vector $\delta$ are precisely equal to zero. By the formulation of the null hypothesis as a composite hypothesis, the change point testing problem becomes significantly more challenging.      
    We develop pivotal inference for testing hypotheses of this type in the setting of high-dimensional time series, first, measuring deviations from the null vector by the $\ell_2$-norm $\| \cdot \|_2$ normalized by the dimension. Second, by measuring deviations using a {\it sparsity adjusted $\ell_2$-``norm''} $\|\cdot \|_2 /\sqrt{\|\cdot \|_0 } $, where $\|\cdot \|_0$ denotes the $\ell_0$-``norm,'' we propose a pivotal test procedure which intrinsically adapts to sparse alternatives in a data-driven way by pivotally estimating the set of nonzero entries of the vector $\delta$. To establish the statistical validity of our approach, we derive tail bounds of certain classes of distributions that frequently appear as limiting distributions of self-normalized statistics.
    As a theoretical foundation for all results, we develop a general weak invariance principle for the partial sum process $X_1^{\smash{\top}} \xi + \cdots + X_{\smash{\gbr{\lambda n}}}^{\smash{\top}} \xi$ for a time series $(X_j)_{j \in \mathbb{Z}}$ and a contrast vector $\xi \in \mathbb{R}^p$ under increasing dimension $p$, which is of independent interest.
    Finally, we investigate the finite sample properties of the tests by means of a simulation study and illustrate its application in a data example.
\end{abstract}

\begin{keyword}[class=MSC]
\kwd[Primary ]{62M10}
\kwd{62M10}
\kwd[; secondary ]{62G10}
\kwd{62G20}
\end{keyword}

\begin{keyword}
\kwd{high-dimensional time series} 
\kwd{structural breaks}
\kwd{self-normalization}
\kwd{$U$-statistics}
\end{keyword}

\end{frontmatter}


\section{Introduction}
\label{sec1}
\def\theequation{1.\arabic{equation}}	
\setcounter{equation}{0}

In various applications such as finance, genomics or meteorology, one encounters time series data that exhibit structural changes over time, which are commonly referred to as change points. Such a change point can manifest in different ways, e.g. in the sequence of mean values, (auto-)covariances or other higher order parameters. Since the seminal paper of \cite{page1}, who systematically analyzed data for a structural change in the mean value, a considerable amount of literature on statistical inference for change point detection has been published. The review articles of \cite{auehor2013,jandhyala:2013,Niuetal2016,truongetal2020} and more recently \cite{cho:kirch:2024} give a good overview, and the literature on change point analysis continues to grow, driven particularly by its wide range of applications.

Several authors have worked on the problem of detecting change points in multivariate data \citep[see][among others]{ombaoetal2015, aue4moments, horvathhuskova, puchpreudet2015, kirchetal2015, haerancho, SunPour2018, chuchen2019, ZifengJianShao2022}. Meanwhile, in the era of big data, change point detection has been identified as one of the major challenges for the analysis of massive data \citep{NationalResearchCouncil.2013}, and there are many applications in which the dimension, denoted by $p$, is large relative to the sample size, denoted by $n$ \citep[see, for example,][]{fanmackey}. 
 As a result, high-dimensional change point problems have received considerable attention in recent years. 
 Here, one typically assumes that the dimension increases with the sample size to investigate the statistical properties of the proposed inference tools. For example, \cite{jirak2015}, \cite{dettegös} and \cite{chengwangwu} propose change point tests for high-dimensional means, aggregating the maximum of CUSUM statistics,
\cite{wangzouyin2018} consider the change point problem high-dimensional categorical data, and \cite{wangvolgushev} develop pivotal change points test using an $\ell_2$-distance combined with the concept of self-normalization.
Several authors use sparsity assumptions for change point estimation in high-dimensional sequences \citep[see, for example][]{chofryz2015, wangtengyao, faridazaid, liuetal2021, CWS22}, while others propose a framework that targets both sparse and dense alternatives simultaneously \citep[see, for example,][]{liuetal2020, yuchen2020, zhangetal2022}. There also exists a considerable amount of work on change point detection in high-dimensional panel data and on the use of factor models in high-dimensional change point problems; see, for example, \cite{Horvathetal2021, Barigozzietal2018, choetal2023}.

This list is by no means complete and a recent review on change point analysis for high-dimensional data can be found in \cite{liuzhangliu}. However, a common feature of most of the cited literature is the fact that change point methodology is developed for the detection of arbitrary small deviations in the parameters. In the context of testing for a change in a high-dimensional mean vector, one usually considers a null hypothesis of the form $H_0: \| \delta \| = 0$, where $\delta$ is the difference of the mean vectors before and after a potential change point and $\| \cdot \|$ denotes some norm. In the present paper, we take a different stance on this problem. We argue that in many applications, especially in the high-dimensional regime, it is often unlikely that all $p$ components of the vector $\delta$ are exactly zero. This point of view is in line with \cite{berger1987}, who state that it \enquote{\it is rare, and perhaps impossible, to have a null hypothesis that can be exactly modeled by a parameter being exactly $0$} (in our context the parameter is the norm $\|\delta\|$ of the difference $\delta$ of the mean vectors before and after the change point). Similarly, \cite{tukey1991} argues in the context of multiple comparisons of means that
\enquote{\it all we know about the world teaches us that the effects of A and B are always different -- in some decimal place -- for every A and B. Thus, asking \enquote{Are the effects different?} is foolish.}
If one adopts this point of view, testing the hypothesis $H_0: \| \delta \| = 0$ for the difference $\delta$ of the mean vectors before and after a potential change point may not be reasonable, as the null hypothesis of exact equality is not believed to be true.
Such issues are especially pertinent in the era of big data, where sample sizes (and dimensions) are typically large. In these settings, the null hypothesis will often be rejected, even if the norm of the difference vector $\delta$ is small and of no practical or scientific significance.

As an alternative, we propose in this paper change point analysis for {\it practically relevant changes}, which are defined by changes where the norm of the difference $\delta$ between the (high-dimensional) mean vectors before and after the change point is larger than a given threshold, say $\Delta >0$. In the context of hypothesis testing, this means that we are interested in the hypotheses (squaring for algebraic convenience, because we will work with $\ell_2$-type norms)
\begin{align}\label{det11}
    H_0^\Delta: \| \delta \|^2 \leq \Delta ~~~ \text{versus} ~~~ H_1^\Delta: \| \delta \|^2 >\Delta.
\end{align}
Throughout this paper, we will call hypotheses of the form \eqref{det11} \textit{relevant hypotheses}.
Note that for the choice $\Delta = 0$, equation \eqref{det11} yields the hypotheses $H_0: \delta = 0$ versus $H_1: \delta \neq 0$, which we refer to as the \textit{classical hypotheses} throughout this paper. These have been studied by numerous authors \citep[see][among others]{jirak2015,wangvolgushev}. However, following Tukey's paradigm, we argue that in the high-dimensional regime, it is very unlikely that none of the components are changing and therefore strictly focus on the case where $\delta \neq 0$ and $\Delta > 0$.

The consideration of relevant hypotheses raises the question of the choice of the threshold $\Delta$, which is problem-specific and has to be discussed carefully in each application. For applications where such a discussion is too difficult, we propose two solutions: first, we present a data-adaptive rule on how to choose this threshold by testing the hypotheses \eqref{det11} for various values of $\Delta$ and choosing the smallest threshold for which the null hypothesis is rejected, and second, we construct a pivotal asymptotic confidence interval for the squared norm of $\delta$, from which reasonable values for $\Delta$ can be obtained (see Remark \ref{remark01} (c) and (d)).

\smallskip

{\bf Our main contributions. } Testing relevant hypotheses of the form \eqref{det11} in the high-dimensional regime poses many significant challenges from both a practical and a methodological/theoretical perspective. In contrast to classical hypotheses, where all norms define the same null hypothesis, the choice of the norm plays a crucial role in our approach, as different norms can define (sometimes substantially) different null hypotheses. We consider two different measures based on the $\ell_2$-norm $\| \cdot \|_2$, following \cite{chenqin, wangshao, wangvolgushev} with the key adjustment of incorporating a normalization factor.

\begin{itemize}
    \item[1)] To make the threshold in \eqref{det11} meaningful -- especially when $p$ is large -- we first consider the \textit{normalized $\ell_2$-norm}, that is,
    \begin{align}\label{det11a}
        \| \delta \| = \frac{\| \delta \|_2}{\sqrt{p} } .
    \end{align}
    Similar to the aforementioned authors, we use a $U$-statistic for the estimation of the (squared) normalized $\ell_2$-norm, with a trimming parameter to reduce the bias induced by the temporal dependency.
    
    A significant methodological and theoretical challenge in testing the composite hypotheses \eqref{det11} lies in the fact that the nominal level has to be controlled for all vectors $\delta \in \mathbb{R}^p$ satisfying $H_0^{\Delta} : \|\delta \|^2 \leq \Delta $. 
    This requires the study of the (asymptotic) stochastic properties of an estimator for $\| \delta \|^2$ simultaneously for all $\delta \in \mathbb{R}^p$. In particular, we find that the (unknown) variance of the limiting distribution depends on $\delta$ itself, which, as a consequence, leads to an even more complicated variance structure in the case of high-dimensional dependent data.
    We address these issues by developing a pivotal test by a self-normalization approach specifically tailored to the problem of testing the relevant hypotheses in \eqref{det11}. We prove that the resulting test is consistent, has asymptotic level $\alpha$ and is also uniformly consistent against a set of local alternatives. Moreover, we also develop a data-driven way of choosing the trimming parameter of the $U$-statistic. 
  
    Self-normalization has turned out to be a powerful tool for testing classical hypotheses in the high-dimensional regime \citep[see][among others]{wangshao,wangvolgushev}, but none of these techniques can be used for testing relevant hypotheses in the high-dimensional regime as considered here. It is worth mentioning that we do not require the assumption of independence or linearity in our time series, which has been made the latter. All our procedures are developed for general high-dimensional time series. 

    \item[2)] The procedure based on the normalized $\ell_2$-norm \eqref{det11a} is targeted at detecting dense changes, which can be quite common in certain fields, such as medicine or finance \citep[see][]{wangvolgushev}. On the other hand, there exist many other applications, where a change occurs in only a few data streams. 
    In such cases, it is likely that the null hypothesis in \eqref{det11} is not  rejected in high-dimensional settings when one is measuring deviations with the norm \eqref{det11a}. 
    We address this problem by proposing the use of the \textit{sparsity adjusted $\ell_2$-norm}
    \begin{align}\label{det11b}
        \| \delta \|_{2,0} = \frac{ \| \delta \|_2 }{\sqrt{\| \delta \|_0}}
    \end{align}
    in the hypotheses \eqref{det11}. Note that $\| \cdot \|_{2,0}$ does not define a norm on $\mathbb{R}^p$, but is used as a measure of the signal $\| \delta \|_2$ \textit{pro active component}. In Section \ref{secAdaptive}, we argue that this choice achieves a good balance between maximum-type statistics, which are commonly used to detect sparse signals, and sum-type statistics, which are  used to detect dense signals. We develop a pivotal, consistent and asymptotic level $\alpha$ test for the relevant hypotheses using the norm $\| \delta \|_{2,0}$. Note that by the nature of these hypotheses, this test adapts to sparsity of the signal.
    By extending the principle of self-normalization, we are able to construct a pivotal and consistent estimator for the set
    \begin{align}\label{defSetS}
        S = \{ \ell = 1,  \ldots , p \mid \delta_\ell \neq 0 \},
    \end{align}
    which avoids the estimation of long-run variances. Consequently, this eliminates the requirement for the selection of tuning parameters, thus making the estimation procedure relatively simple. We use this result to develop a pivotal test for the relevant hypotheses using the measure $\| \delta \|_{2,0}$ in \eqref{det11b}.

    \item[3)]  By incorporating the concept of self-normalization in the context of detecting relevant change points in high-dimensional time series we  derive several theoretical results, which are of independent interest. First, we provide a weak invariance principle for the partial sum process obtained by an inner product of the time series with a contrast vector of increasing dimension. Second, a further weak invariance principle for a similar object is proved, which is obtained by using the unit vectors as contrasts, and this holds uniformly with respect to the coordinates.
    Third, we derive estimates for the tail probabilities of certain functionals of the Brownian motion $\mathbb{B}$,  which typically appear in the application of self-normalization including (the inverses of)
    \begin{align}\label{det111}
        \begin{aligned}
            \mathbb{V}_\alpha &:= \int_0^1 \lambda^\alpha ( \mathbb{B} (\lambda) - \lambda\mathbb{B} (1) )^2 \mathrm{d}  \lambda, & \alpha &\ge 0, \\
            \mathbb{W}_\alpha &:=  \int_0^1  \lambda^\alpha \big( (\mathbb{B}^2  (\lambda)- \lambda) - \lambda^2 (\mathbb{B} ^2(1)  -1 ) \big)^2   \mathrm{d}  \lambda, & \alpha & \ge 1.
        \end{aligned}
    \end{align}
   Such estimates are required for proving consistency of the pivotal estimator of the set \eqref{defSetS}.
\end{itemize}

\section{Testing for relevant changes in high dimension}
\label{sec2}
\renewcommand{\theequation}{\thesection.\arabic{equation}}
\setcounter{equation}{0}

\subsection{Notations}
We denote by $p$ the dimension of a vector that depends on the sample size $n$ (but we do not reflect this in the notation). For a vector $a \in \mathbb{R}^{p}$, let $\| a \|_2 = (a^\top a)^{1/2}$ be the common $\ell_2$-norm. For a $k \times k$ matrix $A = (A_{ij})_{i,j = 1, \ldots, k}$, we define the Frobenius norm as $\| A \|_F$, the spectral norm by $\| A \|$ and its trace by $\mathrm{tr} (A)$. 
Denote by $\ell^\infty ([0,1]) = \{ f: [0,1] \to \mathbb{R} \mid \sup_{x \in [0,1]} |f (x)| < \infty \}$ the space of all bounded functions defined on the interval $[0,1]$ and when talking about convergence, we denote convergence in probability by \enquote{$\convp$}, convergence in distribution by \enquote{$\convd$} and by \enquote{$\rightsquigarrow$} weak convergence in $\ell^\infty ([0,1])$.  We say $X \simp a_n$ iff $X_n / a_n \convp c \in (0, \infty)$ and denote equality in distribution by \enquote{$=^d$.}

\subsection{Statistical model} 
Let $X_1, \ldots , X_n \in \mathbb{R}^{p}$ denote a sample from a $p$-dimensional time series $(X_j)_{j \in \mathbb{Z}}$ and let $\mu, \delta \in \mathbb{R}^{p}$ denote two deterministic vectors. For some $k_0 \in \{ 1, \ldots , ~{n-1}\}$, we consider the following model
\begin{align}\label{model}
    X_j = \mu + \begin{cases}
        \eta_j, & j \leq k_0,\\
        \rho_j + \delta, &  j > k_0,
    \end{cases} ~~~~~ j = 1,  \ldots , n,
\end{align}
where $(\eta_j)_{j \in \mathbb{Z}}$ and $(\rho_j)_{j \in \mathbb{Z}}$ are centered and jointly second-order stationary error processes defined by $\eta_j = f_1 (\varepsilon_j, \varepsilon_{j-1},  \ldots )$ and $\rho_j = f_2 ( \varepsilon_{j}, \varepsilon_{j-1}, \ldots )$ for two measurable functions $f_1, f_2: \mathcal{S}^\mathbb{N} \longrightarrow \mathbb{R}^{p}$, a measurable space $\mathcal{S}$ and $(\varepsilon_j)_{j \in \mathbb{Z}}$ a sequence of independent identically distributed $\mathcal{S}$-valued random variables.
Throughout this paper, we assume that there exists a constant $\vartheta_0 \in (0,1)$, such that $k_0 = \lfloor n \vartheta_0 \rfloor$ and that $\delta \neq 0$.

The vector $\delta = (\delta _1 , \ldots , \delta_p)^\top $ describes the change in the mean vector (at time $k_0$). Picking up the discussion from Section \ref{sec1}, we assume that at least one coordinate of $\delta$ is nonzero (though it may be small). The change is considered scientifically (or practically) relevant if its norm exceeds a given threshold, say $\Delta > 0$. In this section, we will consider the $\ell_2$-norm normalized by the dimension in \eqref{det11a} to measure whether a change in the mean vector is relevant. As mentioned before, it is used for a better interpretation of the threshold $\Delta$ in the hypotheses \eqref{det11}.

\subsection{Test procedure}
In this section, we develop a test for the hypotheses \eqref{det11} with the norm \eqref{det11a}. A theoretical justification of our approach will be given in Section \ref{sec41}. It is based on the sequential estimation of the size of a potential change between the two samples $X_1, \ldots , X_k$ and $X_{k+1}, \ldots, X_n$ for various values of $k$. For this purpose, we define for $k,m \in \mathbb{N}$ with $k > m$ the normalizing factor
\begin{align}\label{nm}
    N_m (k) := \sum_{\substack{i_1,i_2=1\\|i_1-i_2| > m}}^{k}1 = (k - m) \cdot (k-m-1),
\end{align}
where we put $N_m (k) = 0$ for $k \leq m$, and consider the statistic
\begin{align}\label{statistic1}
   T_n (k,m) := \frac{1}{N_m (k)N_m (n-k) p } \sum_{\substack{i_1,i_2=1\\|i_1-i_2| > m}}^{k} \sum_{\substack{j_1,j_2=k+1\\|j_1-j_2| > m}}^{ n} (X_{i_1} - X_{j_1})^\top (X_{i_2} - X_{j_2}),
\end{align}
where we set $ T_n (k,m)=0$ whenever $k \leq m+1$ or $k + m \geq n-1$. Here, $m$ denotes a trimming parameter to address potential temporal dependencies in the data. To motivate the definition of the statistic $T_n (k, m)$, consider the case of independent data and put $m=0$. In this case, we obtain by a straightforward calculation that (throughout this paper $\mathbbm{1}$ denotes the indicator function)
$$
    \e [T_n (k,0) ] = \| \delta \|^2 \cdot  \Big \{ ~ \frac{N_0(n-k_0)}{N_0(n-k)} \mathbbm{1} \{ k <  k_0 \} + \mathbbm{1} \{ k = k_0 \} + \frac{N_0(k_0)}{N_0(k)} \mathbbm{1} \{ k > k_0 \} \Big \},
$$
which also explains the normalizing factor $N_m (k) N_m (n-k)$ in the definition of the statistic \eqref{statistic1}. In particular, in the independent case, this choice makes $T_n (k_0, 0)$ an unbiased estimator for $\| \delta \|^2$. Moreover, it holds that $ \e [T_n (k,0)] < \| \delta \|^2$ (and $\e [T_n (k,0)] > \| \delta \|^2$) if and only if $k < k_0 $ (and $k > k_0 $). 
This motivates us to choose the maximizer of a function that is  closely related to $T_n (k,0) $  as the estimator of the location $k_0$ of the change point.
While the choice $m = 0$ is sufficient to eliminate the bias for independent data, this is not true in the case of temporal dependence. Here, $T_n (k, 0)$ will exhibit a bias. Consider, for example, the case $k = k_0$ for a general $m$, where a tedious but straightforward calculation yields the approximation
\begin{align}
    \begin{aligned}
        \label{tnbias}
        \e [T_n (k_0, m)] -  \| \delta  \| ^2 & \approx  \frac{2}{N_m (k_0) p} \sum_{h=m+1}^{k_0 + 1} (k_0 - h)  \cdot  \mathrm{tr} (\cov (\eta_0, \eta_h))\\
        & ~~~~~ + \frac{2}{N_m (n-k_0) p} \sum_{h=m+1}^{n-k_0 - 1} (n- k_0 - h)  \cdot  \mathrm{tr} (\cov (\rho_0, \rho_h)).
    \end{aligned}
\end{align}

It turns out that for general dependence structures and dimensions, this bias is not negligible compared to the variance of the statistic $T_n (k_0, m) $. However, if $m$ increases with the sample size (the appropriate order will be specified below), it can be shown that $T_n (k_0, m)$ is an asymptotically unbiased estimator of $\| \delta \|^2 $. Note that the idea of trimming in high-dimensional testing problems has also been used in \cite{ChenWu2019}, \cite{wangshao} and \cite{wangvolgushev}, who consider a different statistic. In the latter work, it is assumed that $m/n$ converges to some small positive constant. 

It will be shown that for a suitable choice of $m$ and an appropriate normalization, the difference $T_n (k_0, m) - \| \delta \|^2$ converges weakly to a centered normal distribution, that is,
\begin{align}\label{det13}
    \frac{\sqrt{n}}{\sigma_n} (T_n (k_0, m)  - \| \delta \|^2 ) \convd \mathcal{N} (0, 1),
\end{align}
as $n \to \infty$, where the normalizing factor $\sigma_n$ is given by
\begin{align}\label{variance}
    \sigma_n^2  = \frac{4 }{p^2 \vartheta_0 (1-\vartheta_0)} \cdot \delta^\top \Gamma \delta \approx \frac{4 n^2}{p^2 k_0 (1-k_0)} \cdot \delta^\top \Gamma \delta ,
\end{align}
and $\Gamma = \sum_{h\in\mathbb{Z}} \cov (X_0, X_h)$ denotes the long-run covariance matrix of the time series $(X_j)_{j \in \mathbb{Z}}$. To make this result useful for testing the hypotheses \eqref{det11}, one would require a precise estimate of the normalizing factor $\sigma_n$ and of the unknown location of the change point $k_0$. For the latter, we use the common estimator $\hat k_n :=  \lfloor n \hat \vartheta_n \rfloor$, where 
\begin{align}\label{cpestimator}
    \hat \vartheta_n := \frac{1}{n} \argmax_{1 \leq k \leq n} \bigg\|  \frac{k(n-k)}{n^2} \bigg( \frac{1}{k} \sum_{j=1}^{k} X_j - \frac{1}{n-k} \sum_{j= k +1}^n X_j \bigg) \bigg\|_2^2
\end{align}
\citep[see, for example,][]{harizcp,jandhyala:2013}.
In Theorem \ref{cpthm} below, it will be shown that $\hat \vartheta_n$ estimates $\vartheta_0$ with an error of order $O_\p ( \log^2 (n) / n) $, which will be sufficient to replace $k_0$ in \eqref{det13} by the estimator $\hat k_n$ without changing the limiting distribution. 
Note that we do not use trimmed estimators in this definition, because in Section \ref{sec3} we will develop a data-adaptive rule for choosing the trimming parameter $m$, which requires an initial estimate of the change point.

The estimation of the normalizing factor $\sigma_n$ in \eqref{variance} is more challenging. First, it requires an estimator of the difference between the mean vectors before and after the change point. A natural estimate is the difference between the mean vectors of the first $\hat k_n$ and the remaining $n - \hat k_n$ observations.
Second, one needs a good estimate of the long-run variance $\Gamma$. This is a particular challenging problem in the high-dimensional regime, even in the one-sample problem \citep[see, for example,][]{chen2013covariance,basu2016regularized}. 
In order to avoid the intrinsic difficulties  that arise in the estimation of high-dimensional autocovariance matrices, we will develop a self-normalization procedure that cancels out the variance in the limit. We emphasize that the common self-normalizing concepts proposed by \cite{shaozhang2010} and \cite{wangvolgushev} cannot be used in this context, as they are constructed for a different weak convergence result, which arises from testing classical hypotheses (in our context $H_0: \delta =0$). In fact, a special self-normalizing technique has to be implemented in order to address the specific structure of the composite hypotheses in \eqref{det11} for $\Delta >0$ in the high-dimensional regime.

To be precise, we consider a sequential version of the statistic $T_n( k, m)$, which is defined by
\begin{align}\label{statistic}
   T_n (k,m;\lambda) := \frac{1}{N_m (k)N_m (n-k) p } \sum_{\substack{i_1,i_2=1\\|i_1-i_2| > m}}^{\gbr{\lambda k}} \sum_{\substack{j_1,j_2=k+1\\|j_1-j_2| > m}}^{k + \gbr{\lambda (n-k)}} (X_{i_1} - X_{j_1})^\top (X_{i_2} - X_{j_2}),
\end{align}
where we set $ T_n (k,m ; \lambda ) \equiv 0$ whenever $k \leq m+1$ or $k + m \geq n-1$. It follows from Theorem \ref{thm2.1} (with $A=\{1, \ldots, p \}$) in Section \ref{sec4} that, if appropriately normalized, this sequential process converges weakly in $\ell^\infty ([0,1])$, that is,
\begin{align} \label{det01}
    \bigg\{  \frac{\sqrt{n}}{\sigma_n} \big(T_n (\hat k_n, m; \lambda) - \Lambda_n (\lambda) \| \delta \|^2 \big)\bigg\}_ { \lambda \in [0,1]} \rightsquigarrow \big\{ \lambda^3 \mathbb{B} (\lambda) \big\}_{ \lambda \in [0,1]} 
\end{align}
as $n,m \to\infty$, $m^{3/2}=o(n)$, where
\begin{align}\label{Lambda_n}
    \Lambda_n (\lambda) := \frac{N_m (\gbr{\lambda \hat k_n})}{N_m (\hat k_n)} \frac{N_m (\gbr{\lambda (n- \hat k_n)})}{N_m (n-\hat k_n)}
\end{align}
and $\{ \mathbb{B} (\lambda) \}_{\lambda \in [0,1]}$ is a standard Brownian motion on the interval $[0,1]$. Theorem \ref{thm2.1} actually provides a weak Gaussian approximation result, which is of independent interest. The weak convergence in \eqref{det01} and the continuous mapping theorem imply
\begin{align} \label{det100}
    \frac{\sqrt{n}}{\sigma_n} 
    \big (  T_n (\hat k_n, m ) - \| \delta \|^2,  V_n \big ) \convd \Big ( \mathbb{B} (1) , \Big ( \int_0^1 \lambda^6 (\mathbb{B} (\lambda) - \lambda \mathbb{B} (1) )^2 \mathrm{d} \nu (\lambda) \Big )^{1/2} \Big) , 
\end{align}
where the statistic $V_n$ is defined by 
\begin{align}\label{vn}
    V_n := \bigg( \int_0^1 (T_n (\hat k_n, m; \lambda) - \Lambda_n (\lambda) T_n (\hat k_n, m  ) )^2 \mathrm{d} \nu (\lambda) \bigg)^{1/2}, 
\end{align}
$\nu$ is a probability measure on the interval $[0,1]$. A further application of the continuous mapping theorem then yields
\begin{align}\label{defG}
    \frac{T_n (\hat k_n, m ) - \| \delta \|^2}{ V_n} \convd 
     \mathbb{G} := \frac{\mathbb{B} (1)}{ \big( \int_0^1 \lambda^6 (\mathbb{B} (\lambda) - \lambda \mathbb{B} (1) )^2 \mathrm{d} \nu (\lambda) \big)^{1/2} } .
\end{align}
We will show in Lemma~\ref{lemmasubexp} in the appendix that the denominator of $\mathbb{G} $ is nonzero with probability one, and therefore the random variable $\mathbb{G} $ is well-defined. Moreover, in the same lemma it is shown that the distribution of $\mathbb{G}$ has subexponential tails when $\nu$ is the Lebesgue measure. Some quantiles of this distribution are listed in Table~\ref{quantiletable}, where the measure $\nu$ is taken as the discrete uniform distribution with different numbers of support points.

\begin{table}[!ht]
    \centering
    \begin{tabular}{c c cccccc }
        \hline
        && \multicolumn{6}{c}{$\alpha$}\\
        \cline{3-8}
        $K$ && 80\% & 90\% & 95\% & 97.5\% & 99\% & 99.5\% \\
        \hline
        $10$ && 8.579 & 14.813 & 21.448 & 28.683 & 40.587 & 51.834 \\
        $15$ && 8.308 & 14.333 & 20.465 & 26.295 & 35.344 & 42.782 \\
        $20$ && 8.660 & 14.063 & 19.839 & 26.764 & 34.151 & 38.287 \\
        $25$ && 8.533 & 13.896 & 19.772 & 25.542 & 33.141 & 37.764 \\
        \hline
    \end{tabular}
    \caption{\it Quantiles  of the random variable $\mathbb{G}$ defined in \eqref{defG}, where $\nu$ is the discrete uniform distribution supported on  $\{ 1/K, \ldots, (K-1)/K \}$ for different values of $K$. The quantiles have been calculated by 10000 replications.}
    \label{quantiletable}
\end{table}

Based on the weak convergence in \eqref{defG}, we propose to reject the null hypothesis of no relevant change in the high-dimensional sequence of mean vectors whenever 
\begin{align}\label{rule}
    T_n (\hat k_n, m ) > \Delta + q_{1-\alpha} V_n,
\end{align}
where $q_{1-\alpha}$ denotes the $(1-\alpha)$-quantile of the distribution of $\mathbb{G}$. In Section \ref{sec4}, we show that the decision rule \eqref{rule} defines a (uniformly) consistent and asymptotic level $\alpha$ test for the hypotheses in \eqref{det11} with the norm \eqref{det11a}. In particular, we prove, under suitable assumptions (see Theorem \ref{testconsistent} for details), that 
\begin{align}\label{teststuff}
    \lim_{n \to \infty  } \p \big( T_n (\hat k_n, m) > \Delta + q_{1-\alpha} V_n  \big) = 
    \begin{cases}   
        0,~~  \text{ if } ~ \frac{\sqrt{n}}{\sigma_n}  ( \Delta - \| \delta \|^2 ) \to \infty,\\
        \alpha, ~~\text{ if } ~ \frac{\sqrt{n}}{\sigma_n}  ( \Delta - \| \delta \|^2 ) \to  0,\\
        1,  ~~\text{ if } ~ \frac{\sqrt{n}}{\sigma_n}  ( \Delta - \| \delta \|^2 ) \to  - \infty.
    \end{cases}
\end{align}

\begin{rem}\label{remark01} ~~
{\rm 
    \begin{itemize}
        \item[(a)] Self-normalization is a well known principle to obtain pivotal statistics, and has been used by several authors \citep[we refer to][among others]{lobato,shaozhang2010} for testing classical hypotheses, which corresponds to the hypothesis $H_0: \delta = 0$ in the present context. 
        For these hypotheses, the concept has recently been applied by \cite{wangshao} and \cite{wangvolgushev} to obtain asymptotically distribution free tests for comparing means and for change point detection in high-dimensional time series. 
        In the present context of testing for relevant change points, we obtain a different limiting distribution as derived in these papers, due to the consideration of relevant hypotheses and the use of a $U$-statistic in combination with the quadruple sum. The result is more closely related to the work of \cite{dettekokot}, who consider the problem of detecting a relevant change in the sequence of functional data. 
        Note that the factor $\Lambda_n (\lambda) \approx \lambda^4$ appears in the normalizing factor \eqref{vn} (instead of its limit $\lambda^4$), to avoid unnecessary restrictions on the growth rate of the dimension $p$ relative to the sample size $n$ (this will be clear from the proofs of our main results in the online supplement).

        \item[(b)] The test \eqref{rule} depends on the measure $\nu$ used in the normalizing statistic $V_n$ defined in \eqref{vn}. Some quantiles of the limiting distribution of the random variable $\mathbb{G}$ in \eqref{defG} are listed in Table~\ref{quantiletable} for various discrete uniform distributions. We observe that these quantiles vary for different measures $\nu$. However, the final test is not very sensitive with respect to the choice of $\nu$ as the test statistic depends on the measure $\nu$ as well. To understand this property heuristically, we introduce the notation
        $$
            \mathbb{V} (\nu) = \Big( \int_0^1 \lambda^6 (\mathbb{B} (\lambda) - \lambda \mathbb{B} (1) )^2 \mathrm{d} \nu (\lambda) \Big)^{1/2} 
        $$
        and $q_{1- \alpha} (\mathbb{B}(1) / \mathbb{V} (\nu))$ to express the dependence of the denominator in \eqref{defG} and the quantile $q_{1-\alpha}$ on the measure $\nu$ on explicitly. With these notations we obtain from \eqref{defG} for the probability of rejection by the test \eqref{rule} the approximation \begin{align*}
            \p \big( T_n (\hat k_n, m) > \Delta + q_{1-\alpha} V_n \big) \approx \p \bigg( \mathbb{B} (1) > \sqrt{n} \frac{\Delta - \| \delta \|^2}{\sigma_n} + \frac{\sqrt{n}}{\sigma_n} q_{1-\alpha} \bigg ( \frac{ \mathbb{B} (1) }{ \mathbb{V} (\nu) } \bigg ) \mathbb{V} (\nu) \bigg),
        \end{align*}
        where we used the fact that $\frac{\sqrt{n}}{\sigma_n} \big( T_n (\hat k_n, m) - \|\delta \|^2 , V_n \big) \convd  \big( \mathbb{B}(1),\mathbb{V} (\nu) \big)$, which follows from \eqref{det100}.
        As for a fixed constant $c q_{1-\alpha} (\mathbb{B} (1) / c) = q_{1-\alpha} (\mathbb{B} (1))$ it can be argued heuristically that the rejection probabilities are not very sensitive to the choice of the measure $\nu$. We also demonstrate this robustness by a small simulation study in Section S1 of the online supplement.

        \item[(c)] Obviously the hypotheses in \eqref{det11} are nested and the decision rule in \eqref{rule} is monotone with respect to $\Delta$. Therefore, by the sequential rejection principle, we can simultaneously test the hypotheses $H_0^{\Delta}: \| \delta \|^2 \leq \Delta$ for all $\Delta \geq 0$ and find the smallest $\Delta$ for which $H_0^\Delta $ cannot be rejected, that is
        \begin{align*}
            \Delta_\alpha = \inf \{ \Delta > 0 \mid T_n (\hat k_n, m) \leq \Delta + q_{1-\alpha} V_n \}.
        \end{align*}
    We then reject all hypotheses with $\Delta < \Delta_\alpha$ and accept all hypotheses with $\Delta \geq \Delta_\alpha$ (controlling the level simultaneously). Therefore, the value $\Delta_\alpha$ can be interpreted as a measure of evidence against the existence of a change point and the question about a reasonable choice can be postponed until seeing the data.
        
        \item[(d)]
        From the weak convergence \eqref{defG} and symmetry of the distribution of the random variable $\mathbb{G}$, it follows that a one-sided (upper) asymptotic $(1-\alpha)$ confidence interval for the squared normalized norm $\| \delta \|^2 = \|\delta \|^2_2/p> 0$ is given by
        \begin{align*}
            \big[ 0, T_n (\hat k_n, m) + q_{1-\alpha} V_n \big].
        \end{align*}
        Similarly, a two-sided asymptotic $(1-\alpha)$ confidence interval is given by
        \begin{align*}
            \big[ \max \big\{ 0,  T_n  (\hat k_n, m) - q_{1-\alpha / 2} V_n \big\} , T_n (\hat k_n, m) + q_{1-\alpha / 2} V_n \big].
        \end{align*}
    \end{itemize}
    }
\end{rem}

\section{Relevant hypotheses with a sparsity adjusted $\ell_2$-norm} \label{secAdaptive}
\renewcommand{\theequation}{\thesection.\arabic{equation}}
\setcounter{equation}{0}

In Section \ref{sec2}, we use the normalized $\ell_2$-norm \eqref{det11a} to measure the signal strength $\|\delta \|_2$ pro component, which is reasonable if the signal $\delta =  (\delta_1, \ldots , \delta_p)^\top$ before and after the change point is dense in the sense that the cardinality $s := |S| = \| \delta \|_0$ of the set $S$ of nonvanishing coordinates of $\delta$ in \eqref{defSetS} is close to the dimension $p$. However, with the norm \eqref{det11a}, it is not possible to distinguish vectors with the same $\ell_2$-norm but a different number of nonvanishing components. In particular, if one fixes $s$ components and fills the remaining $p-s$ components with $0$'s, the norm of the resulting vector (for example, a unit vector) converges to $0$ with increasing dimension. In such cases, a signal might not be detected, even if some of the components substantially exceed the threshold $\Delta$. A natural norm addressing such problems is the maximum norm $\|\delta \|_\infty = \max_{i=1}^p |\delta_i|$. However, this norm is not very sensitive to dense signals, where many small deviations accumulate to a relevant deviation from $\delta=0$. To ensure a balance between sparsity and signal strength, we propose to measure {\it the signal strength $\|\delta \|_2$ pro active component} and use the norm \eqref{det11b} instead of \eqref{det11a} in the definition of the hypotheses \eqref{det11}. Note that
\begin{align*}
    \frac{\| \delta \|_2}{\sqrt{p}} \leq \frac{\| \delta \|_2}{\sqrt{ \|\delta \|_0} }  \leq \| \delta \|_\infty,
\end{align*}
and that there is equality in the first inequality for vectors with only nonvanishing components and in the second inequality for vectors of the form $\delta = (c, \ldots, c, 0,  \ldots, 0)^\top$ with $c \neq 0$ (including permutations).

While the ratio of the $\ell_2$- and $\ell_1$-norm has been considered as sparsity-promoting objective by several authors \citep[see, for example,][]{pmlr-v28-lopes13,Linagetal2013,hoyer,XU2021486}, the ratio $\|\delta \|_{2,0}$ of the $\ell_2$- and $\ell_0$-norm has not been widely used so far \citep[an exception is the work of][who used this concept for learning the structure of sparse Ising models]{Dedieu.2021}. In order to construct a test for the relevant hypotheses with this norm
\begin{align}\label{hypotheses_new}
    H_0^{\Delta} : \| \delta \|^2_{2,0} \leq \Delta ~~~~~ \text{ versus } ~~~~~ H_1^{\Delta} : \| \delta \|^2_{2,0} > \Delta,
\end{align}
we recall the definition $N_m$ from \eqref{nm}, denote by $X_{i,\ell}$ the $\ell$th entry of the vector $X_i =(X_{i,1} , \ldots , X_{i,p})^\top$ 
and use the statistic
\begin{align}\label{det89}
    & T_{n, S} (\hat k_n,m;\lambda) \\
    & \nonumber
    := \frac{1}{N_m (\hat k_n)N_m (n - \hat k_n) s } \sum_{\substack{i_1,i_2=1\\|i_1-i_2| > m}}^{\gbr{\lambda \hat k_n}} \sum_{\substack{j_1,j_2= \hat k_n + 1\\|j_1-j_2| > m}}^{\hat k_n + \gbr{\lambda (n - \hat k_n)}} \sum_{\ell \in S}  (X_{i_1, \ell} - X_{j_1, \ell}) (X_{i_2, \ell} - X_{j_2, \ell})
\end{align}
as a sequential estimator of the sparsity adjusted $\ell_2$-norm $\| \delta \|_{2,0}^2$ of the difference between the mean vectors before and after the change point. We also set $T_{n, S} (\hat k_n,m;\lambda) = 0$ whenever $\hat k_n \leq m + 1$ or $\hat k_n + m \geq n - 1$ and $T_{n,S} (\hat k_n, m) = T_{n,S} (\hat k_n, m; 1)$. 
Restricting the sum to indices in $S$ has the advantage that we are able to increase the power of the resulting test by avoiding the summation of error terms corresponding to the components with no signal.

To obtain an implementable estimator of $\|\delta \|_{2,0}^2$, it is necessary to estimate the set $S$ in the statistic \eqref{det89}, and in the following discussion we develop such an estimator, which does not rely on estimates of the long-run variance. For this purpose, we propose for $\ell = 1, \ldots, p$
\begin{align*}
    \hat \delta_\ell^2 &=
    T_{n, \{ \ell \} } (\hat k_n, m ; 1) 
\end{align*}
as an estimator of the $\ell$th (squared) coordinate $\delta_\ell^2$ of the vector $\delta$. To account for the variability of $\hat \delta^2_\ell $ and to avoid long-run variance estimation, we employ the self-normalizing principle again and consider a sequential version of the estimator $\hat \delta_\ell^2$, that is,
\begin{align}\label{hat_delta_ell}
  \hat \delta_\ell^2 (\lambda )=     \frac{1}{N_m (\hat k_n ) N_m (n-\hat k_n )} \sum_{\substack{i_1, i_2 = 1 \\ |i_1 - i_2| > m}}^{\gbr{\lambda \hat k_n }}\sum_{\substack{j_1, j_2 = \hat k_n  + 1 \\ |j_1 - j_2| > m}}^{\hat k_n  + \gbr{\lambda (n-\hat k_n )}} (X_{i_1, \ell} - X_{j_1, \ell} ) (X_{i_2, \ell} - X_{j_2, \ell})~. 
\end{align}
We note that $\hat \delta_\ell^2 = \hat \delta_\ell^2 (1)$ and finally define 
\begin{align}\label{hat_v_ell}
    \hat v_\ell = \bigg( \int_0^1 \big( \hat \delta_\ell^2 (\lambda) - \lambda^4 \hat \delta_\ell^2  (1) \big)^2 \mathrm{d} \lambda \bigg)^{1/2}
\end{align}
as the self-normalizing statistic for $\hat \delta_\ell ^2$ ($\ell = 1, \ldots, p$). 
The estimator of the set $S$ in \eqref{defSetS} then takes the form
\begin{align}\label{def_S_hat}
    \hat S_n = \{ \ell = 1,  \ldots , p \mid \hat \delta_\ell^2  > \hat v_\ell \cdot \log^{3/2} (p) \}.
\end{align}
Under suitable assumptions, we will see in Theorem \ref{snconsistent} below that $\p (\hat S_n = S) \to 1$ as $n \to \infty$. With the help of Theorem \ref{thm2.1} (also stated below), this allows us to prove the weak convergence
\begin{align*}
    \Big \{  \frac{\sqrt{n}}{\sigma_{n,S}} \big(T_{n, \hat S_n} (\hat k_n, m; \lambda) - \Lambda_n (\lambda) \| \delta \|^2_{2,0} \big)\Big \}_ { \lambda \in [0,1]} \rightsquigarrow \big\{ \lambda^3 \mathbb{B} (\lambda) \big\}_{ \lambda \in [0,1]}
\end{align*}
for the estimator \eqref{det89} with $S$ replaced by $\hat S_n$, where $\Lambda_n$ is defined in \eqref{Lambda_n} and 
\begin{align*}
    \sigma_{n,S}^2 = \frac{4}{s^2 \vartheta_0 (1-\vartheta_0)} \sum_{i,j \in S} \delta_i \Gamma_{ij} \delta_j.
\end{align*}
Consequently, by defining the statistics $ \hat T_{n, \hat S_n} (\hat k_n, m) = T_{n,\hat S_n} (\hat k_n, m; 1)$ and  
\begin{align*}
    V_{n,\hat S_n} = \bigg( \int_0^1 (T_{n,\hat S_n} (\hat k_n, m; \lambda) - \Lambda_n (\lambda) \cdot T_{n,\hat S_n} (\hat k_n, m  ) )^2 \mathrm{d} \nu (\lambda) \bigg)^{1/2},
\end{align*}
it then follows by the continuous mapping theorem that 
$$
    \frac{ T_{n,\hat S_n}  (\hat k_n, m ) - \| \delta \|^2_{2,0} }{   V_{n,\hat S_n} } \convd \mathbb{G}, 
$$
where the random variable $\mathbb{G} $ is defined in \eqref{defG}. 
Therefore, we obtain a fully data-adaptive test for the hypotheses in \eqref{hypotheses_new} by rejecting the null hypothesis in \eqref{hypotheses_new} whenever
\begin{align}\label{fully_adaptive_test}
    \hat T_{n, \hat S_n} (\hat k_n, m )  > \Delta + q_{1-\alpha} V_{n, \hat S_n},
\end{align}
where $q_{1 - \alpha}$ is the $(1-\alpha)$-qantile of the distribution of $\mathbb{G}$. We will show in Theorem \ref{adaptive_consistency_thm} below that this decision rule defines an asymptotic level $\alpha$ test for the hypotheses \eqref{hypotheses_new}, that is,
\begin{align}\label{teststuff_S}
    \lim_{n \to \infty  } \p \big( T_{n, \hat S_n} (\hat k_n, m) > \Delta + q_{1-\alpha} V_{n, \hat S_n} \big) = 
    \begin{cases}   
        0,~~  \text{ if } ~ \frac{\sqrt{n}}{\sigma_{n, S}}  ( \Delta - \| \delta \|^2_{2,0} ) \to \infty,\\
        \alpha, ~~\text{ if } ~ \frac{\sqrt{n}}{\sigma_{n, S}}  ( \Delta - \| \delta \|^2_{2,0} ) \to  0,\\
        1,  ~~\text{ if } ~ \frac{\sqrt{n}}{\sigma_{n, S}}  ( \Delta - \| \delta \|^2_{2,0} ) \to  - \infty.
    \end{cases}
\end{align}

A proof of these results is complicated and requires several steps, which are described in Section S6 of the online supplement. Here, the main step lies in the proof of the consistency of the set estimator \eqref{def_S_hat}, see Theorem \ref{snconsistent} for a precise statement.
To control the probability $\p (S \not \subset \hat S_n)$, which is crucial for signal detection, it is necessary to derive bounds for the tail probabilities of the maximum deviation $M_S= \max_{\ell \in S} ({ \hat \delta_\ell^2 - \delta_\ell^2 ) / \hat v_\ell }$. Similarly, the probability $\p (\hat S_n \not \subset S)$ can be controlled by a tail probability bound for $M_{S^C}= \max_{\ell \in S^C} ({ \hat \delta_\ell^2 - \delta_\ell^2 ) / \hat v_\ell }$. The quantities $( \hat \delta_\ell^2 - \delta_\ell^2 )/ \hat v_\ell$ have different asymptotic properties depending on whether $\ell \in S$ or $\ell \in S^C$. For $\ell \in S$, the statistic $(\hat \delta_\ell^2 - \delta_\ell^2 ) / \hat v_\ell$ can be (stochastically) approximated by a random variable $\mathbb{G}_\ell$ with the same distribution as the random variable $\mathbb{G}$ in \eqref{defG}. In the case $\ell \in S^C$, there exists an approximation by the random variables $\mathbb{H}_\ell$ with distribution 
\begin{align*}
   \mathbb{H} := \frac{\mathbb{B}^2 (1) - 1}{\big ( \int_0^1 \lambda^4 \big( (\mathbb{B}^2 (\lambda) -\lambda) - \lambda^2 (\mathbb{B}^2 (1) - 1) \big)^2 \mathrm{d}\lambda \big )^{1/2}} .
\end{align*}
Note that by incorporating self-normalization, we obtain the random variables $\mathbb{G}$ and $\mathbb{H}$, whose distributions are independent of the dependence structure of the underlying time series.
The upper bounds for the tail probabilities of the maximum deviations $M_S$ and $M_{S^C}$ depend on the error rates made in these approximations and the tail behavior of the maxima of the corresponding limiting distributions. The tail decay rates for $\mathbb{G}$ and $\mathbb{H}$ are of independent interest and derived as part of Lemmas \ref{lemmasubexp} and \ref{lemmasubsubexp} in the appendix.

\begin{rem} ~
{\rm 
\begin{itemize}
    \item[(a)] 
    Note that we explicitly use the Lebesgue-measure in   \eqref{hat_v_ell} to define the self-normalizing statistic $\hat v_\ell$. This choice is made by a technical argument in the derivation of the tail bounds for the random variables $M_S$ and $M_{S^C}$. Here, we need to derive tail bounds for the random variables $\mathbb{V}_\alpha $, $ \mathbb{V}_\alpha^{-1}$ and $\mathbb{W}_\alpha^{-1}$, which are defined in \eqref{det111}. Our argument relies on an explicit representation of the Laplace transform of $\mathbb{V}_\alpha$ and $\mathbb{W}_\alpha$, which is not available for general measures. We refer to the proof of Lemmas~\ref{lemmasubexp} and \ref{lemmasubsubexp} in the online supplement for more details. 

    \item[(b)] Note that our approach of estimating the set of nonvanishing components differs from a method introduced by \cite{powerenhancement} in a different context. These authors develop a technique to boost the power of testing the classical hypothesis $H_0: \theta = 0$ for a high-dimensional vector $\theta=(\theta_1, \ldots, \theta_p)^\top$ by combining an asymptotically pivotal statistic with a \enquote{power enhancement component.} Their method is based on a screening technique for the nonvanishing components, which basically compares the statistic $\hat \theta_\ell / \hat \omega _\ell$ with a threshold, where $\hat \theta_\ell$ denotes an estimator of the $\ell$th component of $\theta$ and $\hat \omega _\ell$ is a consistent estimator of its corresponding (asymptotic) long-run variance. In the high-dimensional regime, estimating these long-run variances can be challenging, as it requires selecting $p$ tuning parameters. In contrast to this work, the set estimator $\hat S_n$ in \eqref{def_S_hat} for the set $S$ is based on self-normalized statistics and therefore avoids the problem of selecting many tuning parameters.

    \item[(c)] In Section \ref{sec4}, it will be demonstrated that the test \eqref{fully_adaptive_test} is consistent and has asymptotic level $\alpha$ for a dimension $p$ increasing with an exponential rate of the sample size, which depends on the number $s$ of nonvanishing components of the signal $\delta$. For example, in the case of dense data, that is, $s \sim \sqrt{p}$ or even $s \sim p$, it can be of the form $p = e^{c \cdot n^{1/5}}$ for some constant $c > 0$.
    In the case of very sparse data, such as $s \sim p^{n^{-a}}$, we obtain $p = e^{c \cdot n^{1/5 + a/3}}$. In general, the actual upper bound for the growth rate on $p$ depends on many components, which are described in detail in Remark \ref{growth_rate} in the following section.
\end{itemize}
}
\end{rem}

\section{Main results}
\label{sec4}
\renewcommand{\theequation}{\thesection.\arabic{equation}}
\setcounter{equation}{0}

In this section, we provide a theoretical justification of the validity of the tests proposed in Sections \ref{sec2} and \ref{secAdaptive}. All statements rely on two weak invariance principles, which are stated first in Section \ref{sec42} and of own interest.
In Section \ref{sec41}, we derive statistical guarantees for the test \eqref{rule} for the hypotheses \eqref{det11} with the norm \eqref{det11a}, which compares the norm of the difference $\delta$ before and after the change point without adjusting for sparsity.
The corresponding results for the validity of the test \eqref{fully_adaptive_test} for the hypotheses \eqref{hypotheses_new} are given in Section \ref{subsec_theoretical_prop_unif}.

\subsection{Two weak invariance principles}\label{sec42}

\noindent
In this section, let $(Y_j)_{j \in \mathbb{Z}}$ be a stationary sequence of centered $\mathbb{R}^{p}$-valued random variables with $Y_j = g(\varepsilon_j, \varepsilon_{j-1},  \ldots )$, $j \in \mathbb{Z}$, for some measurable function $g : \mathcal{S}^\mathbb{N} \to \mathbb{R}^p$ and a sequence $(\varepsilon_j)_{j \in \mathbb{Z}}$ of independent $\mathcal{S}$-valued random variables, where $\mathcal{S}$ is a measurable space. We will denote the long-run $p \times p$ covariance matrix of $(Y_j)_{j \in \mathbb{Z}}$ by
\begin{align}\label{det50h}
    \Gamma_Y = (\Gamma_{Y, ij})_{i,j = 1}^p := \sum_{h \in \mathbb{Z}} \cov (Y_0, Y_h).
\end{align}
For a vector $\xi \in \mathbb{R}^{p}$, we consider the quantity $\frac{1}{n} \sum_{j=1}^{k} Y_j^\top \xi$, where $k = 1, \ldots, n$, and develop an invariance principle for these partial sums by making use of the martingale decomposition $Y_j^\top \xi = E_j^\top \xi - F_j^\top \xi$, where
\begin{align}
    \begin{aligned}\label{martingaledecomp}
        E_j &= \sum_{k=0}^{\infty} (\e [Y_{j+k} \mid \mathcal{F}_j] - \e [Y_{j+k} \mid \mathcal{F}_{j-1}]), \quad\quad\quad F_j = G_j - G_{j-1},
    \end{aligned}
\end{align}
$G_j = ( G_{j,1}, \ldots, G_{j,p} )^\top = \sum_{k=1}^\infty \e [Y_{j+k} \mid \mathcal{F}_j]$ and $\mathcal{F}_j = \sigma (\varepsilon_j, \varepsilon_{j-1},  \ldots )$ denotes the sigma field generated by the random variables $(\varepsilon_k)_{k=j,j-1, \ldots } $. 
Note that this decomposition was also used in \cite{strongwu} to derive a strong invariance principle similar to the one we present below. However, the (weak) invariance principles stated in Theorems \ref{thm2.0} and \ref{thm2.0_unif_neu} below, and proved in the online supplement, are tailored to the high-dimensional scenario with a different set of assumptions.
To be precise, for $j \in \mathbb{N}$ we introduce the matrices $\Phi_j = (\Phi_{j,\ell_1 \ell_2} )_{\ell_1, \ell_2=1}^p \in \mathbb{R}^{p \times p}$ with entries 
\begin{align}\label{Phi}
    \Phi_{j,\ell_1 \ell_2} := \e [E_{j,\ell_1} E_{j,\ell_2} \mid \mathcal{F}_{j-1}],
\end{align}
where $E_{j, \ell}$ denotes the $\ell$th component of the vector $E_j$ in \eqref{martingaledecomp}. For the first result of this section, we state the following set of assumptions.

\begin{ass}\label{ass1}
    Let $0 < \alpha < 1$. We assume for any $j_1, j_2 \in \mathbb{N}$ as $n \to \infty$
    \begin{enumerate}[label=(A\arabic*)]
        \item $(\xi^\top \Gamma_Y \xi)^{-1} \sum_{\ell_1, \ell_2 = 1}^{p} \xi_{\ell_1} \xi_{\ell_2} \cdot \mathrm{Cov} (G_{j_1, \ell_1}, G_{j_2, \ell_2}) = O(n^{-\alpha})$, \label{ass_thm_3}
        
        \item $(\xi^\top \Gamma_Y \xi)^{-2} \sum_{\ell_1,  \ldots , \ell_4 = 1}^{p} \xi_{\ell_1} \cdots \xi_{\ell_4} \cdot \mathrm{Cov} (\Phi_{j_1, \ell_1 \ell_2}, \Phi_{j_2, \ell_3 \ell_4}) = O(n^{-\alpha / 2})$, \label{ass_thm_1}

        \item $(\xi^\top \Gamma_Y \xi)^{-2} \sum_{\ell_1,  \ldots , \ell_4 = 1}^{p} \xi_{\ell_1} \cdots \xi_{\ell_4} \cdot \mathrm{cum} (E_{0, \ell_1},  \ldots , E_{0, \ell_4}) = O(1)$.  \label{ass_thm_2}

    \end{enumerate}
\end{ass}

Condition \ref{ass_thm_3} ensures that the remainder term of the martingale approximation becomes small, whereas conditions \ref{ass_thm_1} and \ref{ass_thm_2} ensure that the martingale part is controlled and converges weakly.

\begin{thm}\label{thm2.0}
    Let Assumption \ref{ass1} be satisfied. Then, on a possibly richer probability space, there exists a sequence  of random variables $(\tilde{Y}_j)_{j \in \mathbb{Z}}$ with the same distribution as $(Y_j)_{j \in \mathbb{Z}}$ and a standard Brownian motion $\mathbb{B}$ such that
    \begin{align*}
         \frac{1}{n \sqrt{\xi^\top \Gamma_Y \xi}} \sum_{j=1}^{k} Y_j^\top \xi  ~ =^d ~  \tilde{S}_n (k) := \frac{1}{n \sqrt{\xi^\top \Gamma_Y \xi}} \sum_{j=1}^k \tilde{Y}_j^\top \xi,
    \end{align*}
    for any $k \leq n$, where the partial sums $\tilde{S}_n (k) $ satisfy for any $0 \leq \beta < \alpha / 4$
    \begin{align*}
        \sup_{0 \leq \lambda \leq 1} \big| \sqrt{n} \tilde{S}_n (\gbr{n \lambda}) - \mathbb{B} (\lambda) \big| = o_\p (n^{-\beta}), ~~~~~ \text{ as } n \to \infty.
    \end{align*}
\end{thm}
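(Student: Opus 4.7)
The plan is to exploit the martingale-coboundary decomposition $Y_j^\top\xi = E_j^\top\xi - F_j^\top\xi$ already written out in the statement, which reduces the contracted high-dimensional partial sum to a scalar martingale plus a telescoping remainder, and then apply a Skorokhod-embedding / Strassen-type strong approximation to this scalar martingale. The coupling producing $(\tilde Y_j)$ together with the Brownian motion $\mathbb B$ is inherited from the enlarged probability space on which the embedding is realized; since the decomposition is measurable with respect to the innovation $\sigma$-fields, the law of $(\tilde Y_j)$ equals that of $(Y_j)$ as soon as the joint law of $(\tilde E_j,\tilde F_j)$ matches that of $(E_j,F_j)$.

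\textbf{Negligibility of the coboundary.} Since $\sum_{j=1}^k F_j^\top\xi = G_k^\top\xi - G_0^\top\xi$ telescopes, it suffices to bound $\max_{0\le k\le n}|G_k^\top\xi|$. The diagonal case $j_1=j_2$ of Assumption \ref{ass_thm_3}, combined with stationarity, yields $\e[(G_k^\top\xi)^2]=O(n^{-\alpha})\cdot\xi^\top\Gamma_Y\xi$ uniformly in $k$, and the crude bound $\max_{k\le n}(G_k^\top\xi)^2\le\sum_{k\le n}(G_k^\top\xi)^2$ gives $\max_{0\le k\le n}|G_k^\top\xi|/\sqrt{\xi^\top\Gamma_Y\xi}=O_\p(n^{(1-\alpha)/2})$. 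Substituting into the definition of $\tilde S_n$ and multiplying by $\sqrt n$, the coboundary contributes $O_\p(n^{-\alpha/2})$ to $\sup_{0\le\lambda\le 1}|\sqrt n\,\tilde S_n(\gbr{n\lambda})-\mathbb B(\lambda)|$, which is comfortably $o_\p(n^{-\beta})$ for every $\beta<\alpha/4$.

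\textbf{Strong approximation of the martingale part.} Let $M_k:=\sum_{j=1}^k E_j^\top\xi$; this is a scalar $(\mathcal F_j)$-martingale with conditional variance $\xi^\top\Phi_j\xi$, where $\Phi_j$ is defined in \eqref{Phi}. Assumption \ref{ass_thm_1} reads precisely
\begin{equation*}
    \mathrm{Cov}\big(\xi^\top\Phi_{j_1}\xi,\,\xi^\top\Phi_{j_2}\xi\big) = O(n^{-\alpha/2})\,(\xi^\top\Gamma_Y\xi)^2
\end{equation*}
uniformly in $j_1,j_2$, so a Chebyshev argument yields the uniform quadratic-variation concentration
\begin{equation*}
    \sum_{j\le\gbr{n\lambda}}\xi^\top\Phi_j\xi = \gbr{n\lambda}\cdot\xi^\top\Gamma_Y\xi + O_\p\big(n^{1-\alpha/4}\big)\,\xi^\top\Gamma_Y\xi
\end{equation*}
uniformly in $\lambda\in[0,1]$, using the identity $\e[\Phi_0]=\Gamma_Y$ induced by the martingale decomposition. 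Assumption \ref{ass_thm_2} in addition controls the fourth cumulant of the normalized increment $E_0^\top\xi/\sqrt{\xi^\top\Gamma_Y\xi}$ uniformly in $n$ and $p$, which together with the variance identity yields a uniform fourth-moment bound. These two ingredients are precisely the hypotheses of a classical martingale strong approximation: on an enlarged probability space one constructs a Brownian motion $\mathbb B$ and a coupling $(\tilde E_j)\stackrel{d}{=}(E_j)$ such that
\begin{equation*}
    \sup_{0\le\lambda\le 1}\bigg|\frac{\tilde M_{\gbr{n\lambda}}}{\sqrt{n\,\xi^\top\Gamma_Y\xi}} - \mathbb B(\lambda)\bigg| = o_\p\big(n^{-\beta}\big)
\end{equation*}
for every $\beta<\alpha/4$, the bottleneck being the square-root loss in the Skorokhod time change induced by the $n^{-\alpha/4}$-relative fluctuation of the quadratic variation.

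\textbf{Combining and the main obstacle.} Putting the two pieces together yields the claimed rate, the martingale term being rate-determining. The principal technical obstacle is this second step: although the contraction with $\xi$ reduces the problem to a one-dimensional martingale, the strong-approximation rate $o_\p(n^{-\beta})$ for every $\beta<\alpha/4$ must be derived with constants genuinely uniform in the direction $\xi$ and in the dimension $p=p(n)$ growing with $n$. Achieving this requires tracking all embedding inequalities relative to the normalization $\sqrt{\xi^\top\Gamma_Y\xi}$, which is exactly why Assumptions \ref{ass_thm_3}--\ref{ass_thm_2} are formulated as scalar bounds after contraction with $\xi$ rather than as matrix bounds in $p$. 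This contraction is the device that makes classical one-dimensional strong-approximation theorems applicable to what would otherwise be a genuinely high-dimensional problem.
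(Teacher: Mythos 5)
Your overall architecture --- martingale plus coboundary decomposition, Skorokhod embedding for the martingale part, telescoping for the coboundary --- is the same as the paper's. Your treatment of the coboundary is in fact simpler than the paper's (which runs a dyadic maximal inequality on $\sum_j F_j^\top\xi$): the crude bound $\max_{k\le n}(G_k^\top\xi)^2\le\sum_{k\le n}(G_k^\top\xi)^2$ combined with the diagonal case of condition \ref{ass_thm_3} does give $O_\p(n^{-\alpha/2})$ for the coboundary's contribution to $\sqrt n\,\tilde S_n$, which suffices.

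The gap is in the martingale step, and it is twofold. First, your rate bookkeeping is inconsistent with the claimed conclusion. From condition \ref{ass_thm_1} and Chebyshev you obtain that the predictable quadratic variation deviates from $\gbr{n\lambda}\,\xi^\top\Gamma_Y\xi$ by $O_\p(n^{1-\alpha/4})\,\xi^\top\Gamma_Y\xi$, i.e.\ the normalized time change deviates from $\lambda$ by $O_\p(n^{-\alpha/4})$. But the Brownian modulus of continuity (Lemma~\ref{double_unif_cont_BM}) converts a time deviation of size $n^{-2\gamma}$ into a spatial deviation $o_\p(n^{-\rho})$ only for $\rho<\gamma$; a time deviation of $n^{-\alpha/4}$ therefore yields $o_\p(n^{-\beta})$ only for $\beta<\alpha/8$, not $\beta<\alpha/4$. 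To reach the exponent in the theorem one must drive the time deviation down to $n^{-\alpha/2}$, which is exactly what the paper's estimate \eqref{det4} targets. Second, the Skorokhod embedding times $T_{k,n}=\sum_{j\le k}\tau_{j,n}$ are \emph{not} the predictable quadratic variation: besides the term you treat (the paper's $J_n$), one must control $Q_n(\lambda)=\sum_{j\le\gbr{n\lambda}}(\tau_{j,n}-\e[\tau_{j,n}\mid\tilde{\mathcal F}_{j-1,n}])$, the fluctuation of the embedding times around their compensators. This is where condition \ref{ass_thm_2} actually enters: via \eqref{det3} and Hölder it gives $n\tau_{j,n}\in L^x$ uniformly only for $1<x<2$, so the step cannot be run as a routine $L^2$ Doob inequality; the paper has to invoke Gut's weak law of large numbers for martingale arrays (uniform integrability of the increments in $L^x$) followed by Doob's inequality in $L^x$ with $x=2/(2-\alpha)$. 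Your proposal compresses this entire issue into the phrase ``classical martingale strong approximation,'' which both hides the place where the fourth-cumulant assumption is used and leaves the rate claim unjustified.
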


Next, we provide a uniform weak invariance principle for the partial sums corresponding to the coordinates $(Y_{j,\ell} )_ {\ell \in A} $ of the vectors $Y_j =( Y_{j,1}, \ldots ,Y_{j,p} )^\top  \in \mathbb{R}^p$ for a set $A \subset \{1, \ldots, p\}$, which may depend on $n$. For this stronger result, the conditions stated previously have to be adapted.

\begin{ass}\label{ass1_uniform_i_neu}
    Let $0 < \alpha < 1$ and $f : \mathbb{N} \to (0, \infty ) $ be a nondecreasing function. For any $j_1, j_2 \in \mathbb{N}_0$ and $\ell \in A$, let the following conditions be satisfied.
    \begin{enumerate}[label=(B\arabic*)]
        \item $\sum_{\ell \in A} \Gamma_{Y, \ell\ell} \lesssim n^{(1 - \alpha)/2 } \log^\gamma (n) \cdot f (|A|)$ for some $\gamma \in \mathbb{N}$,\label{unif_ass_thm_0_neu}
        \item $\mathrm{Var} (G_{j_1, \ell}) \lesssim \Gamma_{Y,\ell\ell}^2 \cdot n^{\alpha / 2 - 1}$, \label{unif_ass_thm_3_neu}
        \item $| \cov (\Phi_{j_1, \ell\ell}, \Phi_{j_2, \ell\ell}) | \lesssim \Gamma_{Y, \ell\ell}^2 \cdot \rho^{|j_1 - j_2|}$ for some $0 < \rho < 1$, \label{unif_ass_thm_1_neu}
        \item $|\mathrm{cum} (E_{0, \ell}, E_{0, \ell}, E_{0, \ell}, E_{0, \ell})| \lesssim  \Gamma_{Y, \ell\ell}^2$.  \label{unif_ass_thm_2_neu}
    \end{enumerate}
\end{ass}

\noindent
Condition \ref{unif_ass_thm_0_neu} introduces a rate function $f$, which links the rate of summability of the long-run variances of each coordinate (as a function of the number of coordinates) to the convergence rate of the approximation. Similarly as before, condition \ref{unif_ass_thm_0_neu} is used to bound the remainder term in the martingale approximation, while conditions \ref{unif_ass_thm_1_neu} and \ref{unif_ass_thm_2_neu} ensure that the martingale part converges weakly.

\begin{thm}\label{thm2.0_unif_neu}
    Let Assumption \ref{ass1_uniform_i_neu} be satisfied. Then, on a possibly richer probability space, there exists a sequence of random variables $(\tilde{Y}_{j, \ell})_{j \in \mathbb{Z}}$ with the same distribution as $(Y_{j, \ell})_{j \in \mathbb{Z}}$ for any $\ell \in A$ and a standard Brownian motion $\mathbb{B}$ such that
    \begin{align*}
         \frac{1}{n} \sum_{j=1}^{k} Y_{j,\ell}  ~ =^d ~  \tilde{S}_{n, \ell} (k) := \frac{1}{n} \sum_{j=1}^k \tilde{Y}_{j,\ell},
    \end{align*}
    for any $k \leq n$ and $\ell \in A$, where the partial sums satisfy for any $0 \leq \beta < \alpha / 4$
    \begin{align*}
        \max_{\ell \in A} \sup_{0 \leq \lambda \leq 1} \big| \sqrt{n} \tilde{S}_{n, \ell} (\gbr{n \lambda}) - \mathbb{B} (\lambda \Gamma_{Y,\ell\ell} )  \big| = o_\p \big( n^{-\beta} f(|A|) \big), ~~~~~ \text{ as } n \to \infty.
    \end{align*}
\end{thm}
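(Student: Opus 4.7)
The plan is to lift the proof of Theorem~\ref{thm2.0} to a coordinate-wise uniform setting by redoing the martingale-approximation argument for each $\ell \in A$ and then paying a union-bound cost that Assumption~\ref{unif_ass_thm_0_neu} is tailor-made to absorb into the rate function $f(|A|)$. Concretely, the decomposition $Y_{j,\ell} = E_{j,\ell} - (G_{j,\ell} - G_{j-1,\ell})$ splits each partial sum into a martingale part and a telescoping remainder, and each piece is controlled uniformly over $\ell$.

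For the telescoping remainder I need $\max_{\ell \in A} \max_{0 \le k \le n} |G_{k,\ell}|/\sqrt{n} = o_\p(n^{-\beta} f(|A|))$. Since $(G_{k,\ell})_k$ is stationary, a maximal inequality together with Chebyshev applied to the variance bound $\var(G_{k,\ell}) \lesssim \Gamma_{Y,\ell\ell}^2 \, n^{\alpha/2 - 1}$ from \ref{unif_ass_thm_3_neu}, followed by a union bound over $k \in \{0,\ldots,n\}$ and $\ell \in A$, reduces the problem to controlling $\max_{\ell \in A} \Gamma_{Y,\ell\ell}$ up to polylogarithmic factors. This is exactly the content of \ref{unif_ass_thm_0_neu}, which yields the desired rate after dividing by $\sqrt{n}$.

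For the martingale part $M_{n,\ell}(k) = \sum_{j=1}^{k} E_{j,\ell}$ I use Skorokhod embedding coordinate by coordinate, producing stopping times $\tau_{k,\ell}$ with $\e[\tau_{k,\ell} - \tau_{k-1,\ell} \mid \mathcal{F}_{k-1}] = \Phi_{k,\ell\ell}$, and then decompose
\begin{align*}
    \tau_{\gbr{n\lambda}, \ell} - n\lambda\, \Gamma_{Y,\ell\ell} &= \Bigl(\tau_{\gbr{n\lambda}, \ell} - \sum_{j=1}^{\gbr{n\lambda}} \Phi_{j,\ell\ell}\Bigr) + \Bigl(\sum_{j=1}^{\gbr{n\lambda}} \Phi_{j,\ell\ell} - \gbr{n\lambda}\, \Gamma_{Y,\ell\ell}\Bigr) + O(1).
\end{align*}
Assumption \ref{unif_ass_thm_2_neu} bounds the conditional fourth cumulant of $E_{j,\ell}$, which in turn controls the Skorokhod bracket error in the first term, while \ref{unif_ass_thm_1_neu} controls the ergodic remainder in the second term via a standard stationary-sequence variance estimate exploiting the geometric covariance decay. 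A union bound over $\ell \in A$ together with the modulus of continuity of Brownian motion then transfers these time-shift errors into $|\mathbb{B}(\tau_{\gbr{n\lambda}, \ell}) - \mathbb{B}(n\lambda\, \Gamma_{Y,\ell\ell})|$ of order $o_\p(n^{-\beta + 1/2} f(|A|))$ before division by $n$. Because the conclusion only constrains the one-dimensional marginal law $(\tilde{Y}_{j,\ell})_j$ for each fixed $\ell$ (not the joint across $\ell$), all coordinate-wise constructions can be realized on a common enlarged probability space using a single Brownian motion $\mathbb{B}$.

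The main obstacle is the bookkeeping of logarithmic factors under the uniform maxima: the polylogarithmic inflation $\log^c(n|A|)$ arising from the various union bounds (in the remainder step, the Skorokhod bracket step, and the Brownian modulus step) must all be dominated by $n^\beta f(|A|)$ for every $\beta < \alpha/4$. The interplay between the exponent $(1-\alpha)/2$ in \ref{unif_ass_thm_0_neu}, the slack factor $\log^\gamma(n)$, and the target exponent $\alpha/4$ is precisely calibrated to make this work, and verifying this calibration simultaneously across all three error terms is the technically delicate part of the argument.
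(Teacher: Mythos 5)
Your proposal follows essentially the same route as the paper's proof: the martingale decomposition $Y_{j,\ell}=E_{j,\ell}-(G_{j,\ell}-G_{j-1,\ell})$, a coordinate-wise Skorokhod embedding into a single Brownian motion, the split of the embedding-time error into a bracket term (controlled via \ref{unif_ass_thm_2_neu}) and an ergodic term (controlled via \ref{unif_ass_thm_1_neu}), the variance bound \ref{unif_ass_thm_3_neu} for the telescoping remainder, with \ref{unif_ass_thm_0_neu} absorbing the union-bound cost and the Brownian modulus of continuity finishing the time-change transfer. Apart from minor wording slips (the fourth cumulant in \ref{unif_ass_thm_2_neu} is unconditional, and the final normalization is by $\sqrt{n}$ rather than $n$), the plan matches the paper's argument and the calibration of rates you describe is exactly what the paper verifies.
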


\noindent
In the following, we will use Theorems \ref{thm2.0} and \ref{thm2.0_unif_neu} frequently for the vectors $Y_j =  X_j - \e [X_j] $ in model \eqref{model}.

\subsection{Statistical guarantees for the methods developed in Section \ref{sec2}}
\label{sec41}

In order to prove the validity of the methodology developed in Section \ref{sec2}, we require some assumptions and additional notation which are introduced first.
Let $\Sigma_h := \cov (X_0, X_h) = (\Sigma_{h,ij}) _{i,j=1, \ldots , p} $  denote the autocovariance matrix between the vectors $X_0$ and $X_h$ with entries $\Sigma_{h,ij} $ and define $\bar \Gamma = \sum_{h \in \mathbb{Z}} |\Sigma_h|$, where, using a slightly unconventional notation, $|M| = (|m_{ij}|)_{i,j=1}^p$ denotes the matrix which is obtained by taking the absolute value of all elements of the matrix $M = (m_{ij})_{i,j=1}^p$. For the asymptotic analysis, we recall that $\delta$ denotes the difference before and after the change point in model \eqref{model} and that $S = \{ \ell = 1,  \ldots , p \mid \delta_\ell \neq 0 \}$ is defined as the set of nonvanishing components of the vector $\delta$. Furthermore, for a set $A \subset \{ 1, \ldots, p \}$ denote by $\bar \Gamma_A = (\bar \Gamma_{\ell_1 \ell_2})_{\ell_1, \ell_2 \in A}$ the submatrix of $\bar \Gamma_A$  defined by the index set $A$.

\begin{ass}\label{asses}
    Assume that the following conditions are satisfied for some set $A \subset \{ 1, \ldots, p\}$ with $A \cap S \neq \emptyset$.
    \begin{enumerate}[label=(C\arabic*)]
        \item $\| \bar \Gamma_A \|_{F}^2 = O(\delta^\top \Gamma_A \delta )$, \label{assA1undA2}
    
        \item $\mathrm{tr}^2(\bar \Gamma_A) = O(\| \bar \Gamma_A \|_F^2)$, \label{assA3}

        \item $\sum_{\ell_1, \ell_2 \in A} | \mathrm{cum} ({X_{0, \ell_1}, X_{h_1, \ell_1}, X_{h_2, \ell_2}, X_{h_3, \ell_2}}) | \lesssim \| \bar \Gamma_A \|_F^2 \cdot \rho^{\max_i h_i - \min_i h_i}$ for some constant $\rho \in (0,1)$, \label{assA4}

        \item $\sum_{h = 0}^m |\delta^\top \Sigma_h \delta| = O(\delta^\top \Gamma_A \delta)$, \label{assA100}

        \item $m \to \infty$ and $m^{3/2} = o(n)$, \label{assA5}

        \item $\mathrm{tr} (\bar \Gamma) = O( \| \delta \|^2_2 )$, \label{assA6}

        \item Assumption \ref{ass1} is satisfied for $\xi = (\delta_\ell)_{\ell \in A}$ and $(X_{j, \ell} - \e [X_{j, \ell}])_{j \in \mathbb{Z}, \ell \in A}$.\label{assA7}

    \end{enumerate}
\end{ass}

Before we continue, we give a brief discussion of these assumptions.
If $A = S \neq \emptyset$ and for at least for one lag $h \in \mathbb{Z}$ and a pair $(i,j) $ the element $\Sigma_{h,ij}$ of the auto-covariance matrix $\Sigma_h$ does not vanish, we obtain by condition \ref{assA1undA2} the estimate $0 < \| \bar \Gamma_S \|^2_F \lesssim s^2 \sigma_{n, S}^2 $ for the normalizing long-run variance in \eqref{variance}. This will be required for the weak convergence in \eqref{det100}, which is the basic result to make self-normalization possible.
Condition \ref{assA6} is required to prove the consistency of the change point estimator in \eqref{cpestimator}. Because this estimator takes into account all coordinates, this condition involves the matrix $\bar \Gamma$ instead of $\bar \Gamma_S$.

Condition \ref{assA5} refers to the trimming parameter $m$ and a similar assumption has been made by \cite{wangshao}. However, an important difference to the the work of these authors consists in  the fact that our approach does not directly rely on physical dependence assumptions such as the Uniform Geometric Moment Condition (UGMC). In fact, condition \ref{assA4} can be verified for UGMC(4) processes if $|A|^2 \lesssim \| \bar \Gamma_A \|_F^2$ \citep[see Remark 9.6 in the Supplementary Material of][]{wangshao}. 
Moreover, a condition similar to our condition \ref{assA4} is stated in \cite{wangvolgushev} but for simpler processes.
These authors consider only linear time series and despite the temporal dependency, this yields a simpler condition involving the cumulant. To control the nonlinear dependence structure in our case, we incorporate the constant $\rho$, which is common for this type of setting as can be seen with the UGMC.
Finally, it is worth mentioning that we only require control of the cumulants up to order four.

\begin{thm}\label{cpthm}
   If condition \ref{assA6} of Assumption \ref{asses} holds, then the estimator $\hat \vartheta_n$  in \eqref{cpestimator} of the change point $k_0 = \lfloor n \vartheta_0 \rfloor$ in model \eqref{model} satisfies
    \begin{align*}
        |\hat \vartheta_n - \vartheta_0| = O_\p \bigg( \frac{\log^2 (n)}{n} \cdot \frac{\mathrm{tr} (\bar \Gamma)}{\| \delta \|_2^2 } \bigg).
    \end{align*}
    In particular, $\hat \vartheta_n$ is consistent for $\vartheta_0$.
\end{thm}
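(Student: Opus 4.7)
The plan is to analyze the CUSUM identity directly. Write $\tilde C_k := \sum_{j=1}^k X_j - (k/n)\sum_{j=1}^n X_j$, so that (up to the cancelled factor $n^{-2}$) $\hat k_n = \argmax_k \|\tilde C_k\|_2^2$, and decompose $\tilde C_k = m_k + U_k$ with $m_k := \mathbb{E}\tilde C_k = -c_k \delta$, where
\begin{align*}
    c_k \,=\, \tfrac{k(n-k_0)}{n}\cdot\mathbbm{1}\{k \le k_0\} + \tfrac{k_0(n-k)}{n}\cdot\mathbbm{1}\{k > k_0\},
\end{align*}
and $U_k := \sum_{j=1}^k Y_j - (k/n)\sum_{j=1}^n Y_j$ with $Y_j := X_j - \mathbb{E} X_j$. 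The function $k \mapsto c_k^2$ is a tent peaking at $k_0$ with value $\vartheta_0^2(1-\vartheta_0)^2 n^2$, and a direct computation yields the curvature lower bound
\begin{align*}
    \|m_{k_0}\|_2^2 - \|m_k\|_2^2 \,\ge\, C\, n\,|k-k_0|\,\|\delta\|_2^2
\end{align*}
for every $k$ in a fixed compact subinterval of $(0,n)$, with $C = C(\vartheta_0) > 0$. Combined with the optimality $\|\tilde C_{\hat k_n}\|_2^2 \ge \|\tilde C_{k_0}\|_2^2$, expanding the squared norms gives the master inequality
\begin{align*}
    C n\,|\hat k_n - k_0|\,\|\delta\|_2^2 \,\le\, 2\bigl(m_{\hat k_n}^\top U_{\hat k_n} - m_{k_0}^\top U_{k_0}\bigr) + \bigl(\|U_{\hat k_n}\|_2^2 - \|U_{k_0}\|_2^2\bigr)
\end{align*}
on the event $\{\hat k_n/n \in (\varepsilon,1-\varepsilon)\}$, which first needs to be shown to have probability tending to one by a crude version of the same argument.

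The right-hand side is then sharpened by rewriting each piece in terms of the increments $V - W := U_{\hat k_n} - U_{k_0}$, which are partial sums of the stationary noise of length $\ell := |\hat k_n - k_0|$:
\begin{align*}
    m_{\hat k_n}^\top U_{\hat k_n} - m_{k_0}^\top U_{k_0} \,&=\, m_{k_0}^\top (V-W) + (m_{\hat k_n} - m_{k_0})^\top U_{\hat k_n}, \\
    \|U_{\hat k_n}\|_2^2 - \|U_{k_0}\|_2^2 \,&=\, \|V - W\|_2^2 + 2 U_{k_0}^\top (V-W).
\end{align*}
Since $\|m_{k_0}\|_2 \lesssim n \|\delta\|_2$ and $\|m_{\hat k_n}-m_{k_0}\|_2 \lesssim \ell\|\delta\|_2$, the dominant noise contribution is $n\,|\delta^\top (V-W)|$, for which the standard bound $\mathrm{Var}(\delta^\top S) \le \|\delta\|_2^2\,\mathbb{E}\|S\|_2^2$ gives
\begin{align*}
    \mathbb{E}(\delta^\top(V-W))^2 \,\lesssim\, \ell\,\|\delta\|_2^2\,\mathrm{tr}(\bar\Gamma), \qquad
    \mathbb{E}\|V-W\|_2^2 \,\lesssim\, \ell\,\mathrm{tr}(\bar\Gamma).
\end{align*}
A peeling over dyadic shells $I_j := \{k: 2^j \le |k-k_0| < 2^{j+1}\}$, combined with Markov's inequality and Doob/Kolmogorov-type maximal bounds within each shell (valid by the martingale approximation underlying Theorems~\ref{thm2.0} and \ref{thm2.0_unif_neu}), then yields, for $q_n := C'\log^2(n)\,\mathrm{tr}(\bar\Gamma)/\|\delta\|_2^2$,
\begin{align*}
    \mathbb{P}(|\hat k_n - k_0| > q_n) \,\le\, \sum_{j:\,2^j \ge q_n} \mathbb{P}\Bigl(\sup_{k \in I_j} \mathrm{RHS}(k) \gtrsim n\,2^j\,\|\delta\|_2^2\Bigr) \,\lesssim\, \frac{\log n\,\mathrm{tr}(\bar\Gamma)}{q_n\,\|\delta\|_2^2} \,\to\, 0 .
\end{align*}
Dividing by $n$ delivers the announced $O_\p(\log^2(n)/n \cdot \mathrm{tr}(\bar\Gamma)/\|\delta\|_2^2)$ rate, and consistency follows from (C6) since $\mathrm{tr}(\bar\Gamma)/\|\delta\|_2^2 = O(1)$.

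The main technical obstacle lies in obtaining the uniform-in-$k$ second-moment bounds for $\sup_{k \in I_j}|\delta^\top(U_k-U_{k_0})|^2$ and $\sup_k \|U_k\|_2^2$ with only polylogarithmic loss over their pointwise counterparts, under the weak short-range dependence implicit in condition (C6). For the scalar quantity, Doob's inequality applied to the martingale approximation underlying Theorem~\ref{thm2.0} (with contrast $\xi = \delta$) is the natural route; for the vector quantity, one bounds $\sup_k\|U_k\|_2^2 \le \sum_{\ell=1}^p \sup_k U_{k,\ell}^2$, applies a coordinate-wise martingale maximal inequality, and sums using $\sum_\ell \Gamma_{Y,\ell\ell} \le \mathrm{tr}(\bar\Gamma)$. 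The careful bookkeeping of the logarithmic exponents — one factor from the maximal inequality and one from the dyadic peeling — determines the final $\log^2 n$ rate.
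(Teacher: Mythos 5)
Your proposal is correct and reaches the stated rate, but it takes a genuinely different algebraic route from the paper. You expand the squared norm of the unnormalized CUSUM $\|\tilde C_k\|_2^2=\|m_k\|_2^2+2m_k^\top U_k+\|U_k\|_2^2$ and derive the ``basic inequality'' $Cn|\hat k_n-k_0|\,\|\delta\|_2^2\le \mathrm{RHS}$ from the argmax property plus the curvature of the tent $c_k^2$; the paper instead adapts the argument of \cite{harizcp}, working with the \emph{unsquared} norm $\|D_n(k)\|_2=\|B_n(k)+h(k)\delta\|_2$ and the inequality $\|B_n(\hat k_n)-B_n(k_0)\|_2\ge(1-h(\hat k_n)/h(k_0))(h(k_0)\|\delta\|_2-2\|B_n(k_0)\|_2)$, which funnels everything through $\max_k\|B_n(k)\|_2$ and the local modulus $\|B_n(\hat k_n)-B_n(k_0)\|_2$ and thereby avoids your cross terms $(m_{\hat k_n}-m_{k_0})^\top U_{\hat k_n}$ and $\|U_{\hat k_n}\|_2^2-\|U_{k_0}\|_2^2$ altogether (at the price of a preliminary step restricting to the event $\|B_n(k_0)\|_2\le h(k_0)\|\delta\|_2/4$, the analogue of your crude localization). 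The common core is identical: dyadic peeling over shells $2^j\le|k-k_0|<2^{j+1}$, Markov, and maximal inequalities for the centered CUSUM with polylogarithmic loss, which produce the two factors of $\log n$. The ``main technical obstacle'' you flag is in fact already resolved in the paper's Propositions S2.2 and S2.3, which give $\e[\max_k\|\sum_{j\le k}a_{j,n}\tilde X_j\|_2^2]\lesssim n\log^2(n)\,\mathrm{tr}(\bar\Gamma)$ and the corresponding local-increment bound by a coordinate-wise dyadic chaining summed through $\mathrm{tr}(\bar\Gamma)$ --- and you do not need the separate Doob/martingale route for the scalar term, since Cauchy--Schwarz $|\delta^\top S|\le\|\delta\|_2\|S\|_2$ reduces it to the vector bound, exactly as the paper implicitly does. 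Two cosmetic points: the curvature bound $c_{k_0}^2-c_k^2\gtrsim n|k-k_0|$ actually holds for all $k\in\{1,\ldots,n\}$ because $c_{k_0}+c_k\ge c_{k_0}\gtrsim n$, so your compact-subinterval restriction is not needed in the quadratic version; and the exponents in your final displayed tail bound are slightly off (the geometric sum over shells with $2^j\ge q_n$ yields a quantity of order $1/\sqrt{C'}$, made small by enlarging the constant or adding a final dyadic level $M\to\infty$, rather than a quantity tending to zero in $n$), but this is bookkeeping, not a gap.
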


Our main result in this section provides a weak invariance principle for the stochastic process defined in \eqref{det89}. In fact, we prove a slightly more general version, where we replace the set $S$ in \eqref{defSetS} by a an arbitrary set $A \subset \{ 1, \ldots, p \}$ satisfying $A \cap S \neq \emptyset$. As mentioned before, in the case $A = \{ 1 , \ldots, p \}$, this result implies the weak convergence statement in \eqref{det01}, which is the essential component in the proof of the validity of the test defined by \eqref{rule}. Recall that $\{\mathbb{B} (\lambda) \}_{\lambda \in [0,1]}$ defines a standard Brownian motion on the interval $[0,1]$.

\begin{thm}\label{thm2.1}
    If Assumption \ref{asses} is satisfied, then, on a possibly richer probability space, for each $n \in \mathbb{N}$, there exists a Gaussian process $\{ G_{n,A} (\lambda) \}_{\lambda \in [0,1]}$ with the same distribution as $\{ \lambda^3 \mathbb{B} (\lambda) \}_{\lambda \in [0,1]}$, which satisfies
    \begin{align*}
        \sup_{\lambda \in [0,1]} \bigg| \frac{\sqrt{n}}{\sigma_{n, A}} \big(T_{n,A} (\hat k_n, m; \lambda) - \Lambda_0 (\lambda)\| \delta \|^2_{2,A} \big) - G_{n, A} (\lambda ) \bigg| = o_\p (1)
    \end{align*}
    as $n\to \infty$, where
    \begin{align*}
        \sigma_{n,A}^2 = \frac{4}{|A|^2 \vartheta_0 (1-\vartheta_0)} \sum_{i,j \in A} \delta_i \Gamma_{ij} \delta_j,
    \end{align*}
    $\| \delta \|_{2,A}^2 = |A|^{-1} \sum_{\ell \in A} \delta_\ell^2$ and 
    \begin{align}\label{Lambda_0}
        \Lambda_0 (\lambda) := \frac{N_m (\gbr{\lambda k_0})}{N_m (k_0)} \frac{N_m (\gbr{\lambda (n- k_0)})}{N_m (n - k_0)}.
    \end{align}
\end{thm}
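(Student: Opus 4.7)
My plan is to expand the quadruple sum in $T_{n,A}(\hat k_n, m; \lambda)$ using the two-regime structure of model \eqref{model}, isolate the deterministic signal, identify the dominant stochastic term as a linear-in-noise partial sum to which Theorem \ref{thm2.0} can be applied, and show that all other contributions (quadratic $U$-statistic remainders and cross-bilinear terms) are uniformly negligible in $\lambda$. Throughout I would first work with $k_0$ in place of $\hat k_n$: Theorem \ref{cpthm} gives $|\hat k_n - k_0| = O_\p(\log^2(n)\,\mathrm{tr}(\bar\Gamma)/\|\delta\|_2^2)$, a rate small enough that after a monotonicity-in-endpoints estimate on the quadruple sum, replacing $\hat k_n$ by $k_0$ in both the summation ranges and in $\Lambda_n$ perturbs the normalized process only by $o_\p(1)$ uniformly in $\lambda$.

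\textbf{Expansion and identification of the leading term.} Writing $X_{i,\ell} - X_{j,\ell} = \eta_{i,\ell} - \rho_{j,\ell} - \delta_\ell$ for $i \le k_0 < j$ and expanding the product $(X_{i_1,\ell} - X_{j_1,\ell})(X_{i_2,\ell} - X_{j_2,\ell})$ yields nine types of terms: (a) the pure signal $\delta_\ell^2$, which produces exactly $\Lambda_0(\lambda)\|\delta\|_{2,A}^2$ after normalization; (b) quadratic $U$-statistic terms $\eta_{i_1,\ell}\eta_{i_2,\ell}$ and $\rho_{j_1,\ell}\rho_{j_2,\ell}$; (c) bilinear cross terms $\eta_{i,\ell}\rho_{j,\ell}$; and (d) linear-in-noise terms of the form $\delta_\ell \eta_{i,\ell}$ and $\delta_\ell \rho_{j,\ell}$. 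After collapsing the redundant summations in (d), the linear terms reduce to a deterministic, explicit $\lambda$-dependent combination of the two partial sums $\sum_{i=1}^{\lfloor\lambda k_0\rfloor}\sum_{\ell\in A}\delta_\ell\eta_{i,\ell}$ and $\sum_{j=k_0+1}^{k_0+\lfloor\lambda(n-k_0)\rfloor}\sum_{\ell\in A}\delta_\ell\rho_{j,\ell}$. To these, Assumption \ref{assA7} lets me invoke Theorem \ref{thm2.0} with contrast $\xi=(\delta_\ell)_{\ell\in A}$ and a Brownian approximation at rate $o_\p(n^{-\beta})$, yielding jointly two independent Brownian motions (independence coming from the disjoint time ranges and the martingale approximation). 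A direct covariance computation then shows the combined limit coincides in distribution with $\lambda^3\mathbb{B}(\lambda)$, using the variance normalization $\sigma_{n,A}^2$ and the identity $\Lambda_0(\lambda)\approx\lambda^4$; the factor $\lambda^3$ arises because each linear term carries an outer coefficient of order $\lambda^2\cdot\lambda^2$ divided by one factor of the partial-sum length $\lambda$.

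\textbf{Killing the residuals and the main obstacle.} The crux is to show that the remaining quadratic $U$-statistic and cross-bilinear terms are $o_\p(\sigma_{n,A}/\sqrt n)$ uniformly in $\lambda$. For a single $\lambda$, a direct second-moment computation using \ref{assA1undA2}, \ref{assA3} and the fourth-cumulant control \ref{assA4} with its geometric decay $\rho^{\max h_i - \min h_i}$, combined with the trimming $m \to \infty$ from \ref{assA5} to kill diagonal/near-diagonal contributions, yields variances of the quadratic remainders bounded by $\|\bar\Gamma_A\|_F^2/(n^2 m)\cdot|A|^{-2}$, which is $o(\sigma_{n,A}^2/n)$ by \ref{assA1undA2}; the bias-type contribution $\sum_{h\le m}|\delta^\top\Sigma_h\delta|$ is absorbed by \ref{assA100}. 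I expect this bounding step to be the principal obstacle: not only does it require delicate manipulation of fourth cumulants combined with the $m$-trimming, but the bound must hold \emph{uniformly} in $\lambda$. For uniformity I plan to discretize $\lambda$ on a grid of width $1/n$, apply the pointwise moment bound there via a maximal inequality, and then use the monotone, piecewise-constant dependence of the quadruple summation ranges on $\lambda$ to interpolate, paying only a logarithmic price. Once the uniform negligibility of the residuals is established, combining Steps 1--3 with the Skorokhod-type construction underlying Theorem \ref{thm2.0} produces the Gaussian process $G_{n,A}$ with the required distribution and establishes the claimed $o_\p(1)$-approximation.
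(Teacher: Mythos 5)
Your proposal is correct and follows essentially the same route as the paper's proof: the same decomposition of the quadruple sum into a pure-signal part (yielding $\Lambda_0(\lambda)\|\delta\|^2_{2,A}$), linear-in-noise cross terms that collapse to two partial sums on disjoint time ranges and are handled by Theorem \ref{thm2.0} with contrast $\xi=(\delta_\ell)_{\ell\in A}$, and quadratic/bilinear residuals killed by the fourth-cumulant condition, the trimming, and dyadic maximal inequalities. The only organizational difference is that you swap $\hat k_n$ for $k_0$ at the outset using the rate from Theorem \ref{cpthm}, whereas the paper carries $\hat k_n$ through and reconciles $\Lambda_n$ with $\Lambda_0$ at the end via the Lipschitz bounds on $N_m$ — the required estimates are the same in both orderings (note only that "monotonicity in the endpoints" should be replaced by an increment/maximal bound, since the summands are signed).
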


\noindent 
An important step in the proof of Theorem \ref{thm2.1} is to establish the stochastic approximation
\begin{align}\label{det20a}
    \begin{split}
       & \sup_{\lambda \in [0,1]} \bigg| \frac{\sqrt{n}}{\sigma_{n, A}} \big(T_{n, A} (\hat k_n, m; \lambda) - \Lambda_0 (\lambda) \| \delta \|^2_{2,A} \big) \\
       & ~~~~~~~~~~~~~~~~~~~~~~~~~~~~~~~~ - \frac{\sqrt{n}}{\sigma_{n, A}}
       \frac{2 \lambda^2 }{N_m (n-\hat k_n) |A|}  \sum_{\substack{j_1,j_2=\hat k_n+1\\|j_1-j_2| > m}}^{\hat k_n + \gbr{\lambda (n-\hat k_n)}} \sum_{\ell \in A} (X_{j_1,\ell} - \e [X_{j_1, \ell}]) \delta_\ell \\
       & ~~~~~~~~~~~~~~~~~~~~~~~~~~~~~~~~ -  \frac{\sqrt{n}}{\sigma_{n, A}} \frac{2 \lambda^2}{N_m (\hat k_n) |A| } \sum_{\substack{i_1,i_2=1\\|i_1-i_2| > m}}^{\gbr{\lambda \hat k_n}} \sum_{\ell \in A} (X_{i_1, \ell} - \e[X_{i_1, \ell}]) \delta_\ell \bigg| = o_\p (1) , 
    \end{split}
\end{align}
for which we require conditions \ref{assA1undA2} - \ref{assA4}, as well as conditions \ref{assA5} and \ref{assA6}. Next, we use Theorem \ref{thm2.0} for the sequence $(X_j - \e [X_j])_{j \in \mathbb{Z}}$ to derive a weak invariance principle for the two sums in equation \eqref{det20a}. For its application in the present context, among other conditions, we require condition \ref{assA7} for the centered random variables $(X_j - \e [X_j])_{j \in \mathbb{Z}}$.

\begin{thm}\label{testconsistent}
    If Assumption \ref{asses} is satisfied for the set $A=\{1, \ldots , p \} $, then the decision rule \eqref{rule} defines an asymptotic level $\alpha$ test. 
    Furthermore, the properties of equation \eqref{teststuff} are satisfied. In particular, the test is consistent if $\frac{\sqrt{n}}{\sigma_n}  ( \Delta - \| \delta \|^2 ) \to  - \infty$.
\end{thm}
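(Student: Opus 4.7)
The plan is to reduce the rejection event $\{T_n(\hat k_n, m) > \Delta + q_{1-\alpha} V_n\}$ to an event involving the pair
$$
(Z_n, W_n) := \frac{\sqrt{n}}{\sigma_n}\big(T_n(\hat k_n, m) - \|\delta\|^2,\, V_n\big),
$$
and to establish that $(Z_n, W_n) \convd (\mathbb{B}(1), \mathbb{V}(\nu))$, where $\mathbb{V}(\nu)$ is the $L^2(\nu)$-norm of $\{\lambda^3(\mathbb{B}(\lambda) - \lambda \mathbb{B}(1))\}_{\lambda \in [0,1]}$. Once this joint convergence is in hand, writing $c_n := \frac{\sqrt{n}}{\sigma_n}(\Delta - \|\delta\|^2)$, the rejection event becomes $\{Z_n > c_n + q_{1-\alpha} W_n\}$, and the three cases in \eqref{teststuff} become a Slutsky/continuous-mapping argument parametrized by the limit of $c_n$.

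To produce the joint convergence I would first invoke Theorem \ref{thm2.1} with $A = \{1, \ldots, p\}$, which furnishes a Gaussian process $G_n(\lambda) \stackrel{d}{=} \lambda^3 \mathbb{B}(\lambda)$ with
$$
\sup_{\lambda \in [0,1]} \Big|\tfrac{\sqrt{n}}{\sigma_n}\big(T_n(\hat k_n, m; \lambda) - \Lambda_0(\lambda)\|\delta\|^2\big) - G_n(\lambda)\Big| = o_\p(1).
$$
Setting $\lambda = 1$ yields $Z_n = G_n(1) + o_\p(1) \convd \mathbb{B}(1)$. To pass from $\Lambda_0$ (built from $k_0$) to the observable $\Lambda_n$ (built from $\hat k_n$) inside $V_n$, I decompose the integrand as
$$
T_n(\hat k_n, m; \lambda) - \Lambda_n(\lambda)\,T_n(\hat k_n, m) = \big[T_n(\hat k_n, m; \lambda) - \Lambda_0(\lambda)\|\delta\|^2\big] - \Lambda_n(\lambda)\big[T_n(\hat k_n, m) - \|\delta\|^2\big] + (\Lambda_0(\lambda) - \Lambda_n(\lambda))\|\delta\|^2,
$$
and show that the last piece is $o_\p(\sigma_n/\sqrt{n})$ uniformly in $\lambda$ using Theorem \ref{cpthm} together with conditions \ref{assA1undA2}, \ref{assA3}, \ref{assA6}. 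A continuous mapping argument for the $L^2(\nu)$-norm then yields
$$
W_n = \Big(\int_0^1 \big(G_n(\lambda) - \Lambda_n(\lambda)\,G_n(1)\big)^2 \mathrm{d}\nu(\lambda)\Big)^{1/2} + o_\p(1) \convd \mathbb{V}(\nu),
$$
jointly with $Z_n \convd \mathbb{B}(1)$, since $\Lambda_n(\lambda) \to \lambda^4$ uniformly in probability.

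With $(Z_n, W_n) \convd (\mathbb{B}(1), \mathbb{V}(\nu))$ in place, the three regimes are immediate. If $c_n \to 0$, Slutsky gives $\p(Z_n > c_n + q_{1-\alpha}W_n) \to \p(\mathbb{B}(1) > q_{1-\alpha}\mathbb{V}(\nu)) = \p(\mathbb{G} > q_{1-\alpha}) = \alpha$, where the nondegeneracy of $\mathbb{V}(\nu)$ from Lemma~\ref{lemmasubexp} (applied with the Lebesgue measure and extended to the measure $\nu$) rules out an atom at the boundary. If $c_n \to +\infty$, the event is contained in $\{Z_n - q_{1-\alpha}W_n > c_n\}$, whose probability tends to zero because the left-hand side is $O_\p(1)$; for $c_n \to -\infty$, the complementary event has vanishing probability by the same reasoning. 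The asymptotic level-$\alpha$ claim follows because under $H_0^\Delta:\|\delta\|^2 \leq \Delta$ one has $c_n \geq 0$, so the rejection probability is at most that attained at the boundary $\|\delta\|^2 = \Delta$, which is exactly $\alpha$.

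The principal technical obstacle is the negligibility of the $(\Lambda_0 - \Lambda_n)\|\delta\|^2$ term inside $V_n$ after rescaling by $\sqrt{n}/\sigma_n$. Using $\sigma_n = 2\sqrt{\delta^\top \Gamma \delta}/(p\sqrt{\vartheta_0(1-\vartheta_0)})$ one finds $\sqrt{n}\|\delta\|^2/\sigma_n \asymp \sqrt{n}\,\|\delta\|_2^2/\sqrt{\delta^\top \Gamma \delta}$, while conditions \ref{assA1undA2} and \ref{assA3} imply $\operatorname{tr}(\bar\Gamma) = O(\sqrt{\delta^\top \Gamma \delta})$; combined with the rate $|\hat\vartheta_n - \vartheta_0| = O_\p(\log^2(n)\,\operatorname{tr}(\bar\Gamma)/(n\|\delta\|_2^2))$ from Theorem \ref{cpthm}, the discrepancy is seen to be $O_\p(\log^2(n)/\sqrt{n}) = o_\p(1)$ uniformly in $\lambda$. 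All remaining steps are standard Slutsky-type consequences of the weak invariance principle in Theorem~\ref{thm2.1}.
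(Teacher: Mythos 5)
Your proposal is correct and follows essentially the same route as the paper: both rest on Theorem \ref{thm2.1}, the negligibility of $\sup_\lambda|\Lambda_n(\lambda)-\Lambda_0(\lambda)|\,\|\delta\|^2$ after rescaling by $\sqrt{n}/\sigma_n$ (the paper's \eqref{fürspäter2}, obtained from Theorem \ref{cpthm} and conditions \ref{assA1undA2}, \ref{assA3}, \ref{assA6}), and a continuous-mapping/Slutsky argument for the three regimes of $c_n$. The only cosmetic difference is that the paper passes through the intermediate statistic $\tilde V_n$ built with $\Lambda_0$ and shows $V_n/\tilde V_n\convp 1$ via a Lipschitz bound (Proposition \ref{pq_prop}), whereas you establish the joint convergence of numerator and denominator directly; these are interchangeable.
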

Note that Theorem \ref{testconsistent} refers to a fixed model. The final result of this subsection shows that the type I and type II errors of the test \eqref{rule} converge to $0$, uniformly with respect to a certain class of local null and alternative hypotheses, respectively. For its precise statement, we define the sets (for a constant $C>0$) 
 \begin{align} \label{det20b}
    \mathcal{A}_n &:= \{ \delta \in \mathbb{R}^p \mid \| \delta \|^2 > \Delta +  \| \bar \Gamma \|_F / p  \},\\
    \label{det20c}
    \mathcal{B}_n &:= \{ \delta \in \mathbb{R}^p \mid C \leq \| \delta \|^2 < \Delta - \| \bar \Gamma \|_F /p \}.
\end{align}

\begin{thm}\label{newuniformconsistency}
    Let $\sum_{h=0}^\infty \| \Sigma_h \| = o (\|\bar\Gamma \|_F)$, and conditions \ref{assA3}, \ref{assA4} and \ref{assA5} be satisfied for the set $A=\{1, \ldots , p \}$. Then, we have
    \begin{align*}
        \liminf_{n \to \infty} \inf_{\delta \in \mathcal{A}_n} \p \big( T_n (\hat k_n , m) > \Delta +  q_{1-\alpha} V_n \big) = 1,\\
        \limsup_{n \to \infty} \sup_{\delta \in \mathcal{B}_n} \p \big( T_n (\hat k_n , m) > \Delta +  q_{1-\alpha} V_n \big) = 0,
    \end{align*}
    where the sets ${\cal A}_n$ and ${\cal B}_n$ are defined in \eqref{det20b} and \eqref{det20c}, respectively.
\end{thm}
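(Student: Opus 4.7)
The approach I would take is to avoid relying on the pointwise weak convergence of Theorem \ref{thm2.1}, whose hypotheses --- in particular \ref{assA1undA2}, which requires $\|\bar\Gamma_A\|_F^2 = O(\delta^\top\Gamma_A\delta)$ --- need not hold uniformly over $\mathcal{A}_n\cup\mathcal{B}_n$ (for instance, in $\mathcal{B}_n$ the quantity $\|\bar\Gamma\|_F$ can be of the same order as $\|\delta\|_2^2$ rather than smaller). Instead, I would bound the first two moments of $T_n(\hat k_n, m)$ and $V_n$ uniformly in $\delta$ and conclude via Chebyshev's inequality. The crucial structural input is the assumption of the theorem, which gives $\|\Gamma\|\le 2\sum_{h=0}^\infty\|\Sigma_h\|=o(\|\bar\Gamma\|_F)$ and hence the uniform bound $\delta^\top\Gamma\delta\le\|\Gamma\|\|\delta\|_2^2 = o(\|\bar\Gamma\|_F\|\delta\|_2^2)$.

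First, I would verify that Theorem \ref{cpthm} applies uniformly over $\mathcal{A}_n\cup\mathcal{B}_n$. Condition \ref{assA3} provides $\mathrm{tr}(\bar\Gamma)=O(\|\bar\Gamma\|_F)$; in $\mathcal{A}_n$ the inequality $\|\delta\|_2^2 = p\|\delta\|^2 \ge p\Delta+\|\bar\Gamma\|_F$ gives $\|\delta\|_2^2\gtrsim\|\bar\Gamma\|_F$, while in $\mathcal{B}_n$ the lower bound $\|\delta\|^2\ge C$ together with $\|\bar\Gamma\|_F\le (\Delta-C)p$ yields $\|\delta\|_2^2\ge Cp\gtrsim\|\bar\Gamma\|_F$. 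Hence condition \ref{assA6} holds uniformly, and Theorem \ref{cpthm} delivers $|\hat k_n-k_0| = O_\p(\log^2 n)$ uniformly over both sets. I would then use this to show that $T_n(\hat k_n,m)-T_n(k_0,m)=o_\p(\|\bar\Gamma\|_F/p)$, and analogously for $V_n$, by bounding the contribution of the $O(n\log^2 n)$ tuples in which the quadruple sum differs when $\hat k_n$ is replaced by $k_0$.

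With $\hat k_n$ replaced by $k_0$, the bias of $T_n(k_0,m)$ is controlled via formula \eqref{tnbias} combined with condition \ref{assA5} and the autocovariance decay implicit in the summability assumption, giving bias $=o(\|\bar\Gamma\|_F/p)$ uniformly. Decomposing $T_n(k_0,m)$ into a linear cross term involving $\delta$ and a degenerate $U$-statistic piece, and invoking the cumulant bound in \ref{assA4}, produces
\begin{align*}
    \mathrm{Var}(T_n(k_0,m)) \,\lesssim\, \frac{\delta^\top\Gamma\delta}{np^2}+\frac{\|\bar\Gamma\|_F^2}{n^2p^2}
\end{align*}
uniformly in $\delta$; the same analysis carried out for the sequential statistic $T_n(k_0,m;\lambda)$ uniformly in $\lambda$ produces an analogous rate for the second moment of $V_n$. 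Combining with $\delta^\top\Gamma\delta = o(\|\bar\Gamma\|_F\|\delta\|_2^2)$ and the squared margin $(\|\delta\|^2-\Delta)^2\ge \|\bar\Gamma\|_F^2/p^2$ available throughout $\mathcal{A}_n\cup\mathcal{B}_n$, Chebyshev's inequality then yields $T_n(\hat k_n,m)-\|\delta\|^2 = o_\p(|\|\delta\|^2-\Delta|)$ and $V_n = o_\p(|\|\delta\|^2-\Delta|)$ uniformly in $\delta$.

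To finish, for $\delta \in \mathcal{A}_n$ I would split the event $\{T_n(\hat k_n,m)\le\Delta+q_{1-\alpha}V_n\}$ into $\{T_n-\|\delta\|^2\le -(\|\delta\|^2-\Delta)/2\}$ and $\{q_{1-\alpha}V_n\ge (\|\delta\|^2-\Delta)/2\}$; the moment bounds above show that both probabilities vanish uniformly. The argument for $\mathcal{B}_n$ is symmetric. The main obstacle is the uniform replacement of $\hat k_n$ by $k_0$, since the induced error must be controlled at the fine scale $\|\bar\Gamma\|_F/p$ --- well below the natural stochastic fluctuation scale --- uniformly over a potentially unbounded set of parameters $\delta$, which requires careful combinatorial bookkeeping of how the quadruple sum in $T_n(k,m;\lambda)$ depends on $k$.
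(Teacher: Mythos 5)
Your strategy is essentially the paper's: verify that Theorem \ref{cpthm} applies uniformly on $\mathcal{A}_n\cup\mathcal{B}_n$ (your argument for $\mathrm{tr}(\bar\Gamma)=O(\|\delta\|_2^2)$ matches the paper's), bound moments of the statistic uniformly in $\delta$, exploit the margin $|\|\delta\|^2-\Delta|>\|\bar\Gamma\|_F/p$ together with $\|\delta\|^2/|\|\delta\|^2-\Delta|=O(1)$, and use $\delta^\top\Sigma_h\delta\le\|\Sigma_h\|\,\|\delta\|_2^2$ with $\sum_h\|\Sigma_h\|=o(\|\bar\Gamma\|_F)$ to kill the cross term. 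You also correctly identify why Theorem \ref{thm2.1} cannot be invoked uniformly (condition \ref{assA1undA2} is not available). The two tactical differences: the paper uses first absolute moments of suprema (Markov via the maximal inequalities of Proposition \ref{lem65} and Wu's Proposition 1) applied to the decomposition $T_n=T_n^{(1)}+\cdots+T_n^{(4)}$, whereas you propose second moments and Chebyshev; both yield rates that beat the margin, so this is a matter of taste.

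The one place where your route creates an avoidable difficulty is the step you yourself flag as the main obstacle: replacing $\hat k_n$ by $k_0$ in the \emph{stochastic} part of the statistic at the scale $o_\p(\|\bar\Gamma\|_F/p)$. The paper never performs this replacement. Its maximal inequalities bound $\sup_{\lambda}|T_n^{(1)}(\lambda)|$ and $\sup_{\lambda}|T_n^{(2)}(\lambda)+T_n^{(3)}(\lambda)|$ via maxima over \emph{all} $k\le n$, so the randomness of $\hat k_n$ in the stochastic terms is absorbed for free; the change-point rate from Theorem \ref{cpthm} is needed only for the purely deterministic term $T_n^{(4)}(\lambda)-\Lambda_n(\lambda)\|\delta\|^2$, which is $O_\p(\mathrm{tr}(\bar\Gamma)\log^2(n)/(pn))=o_\p(\|\bar\Gamma\|_F/p)$ by condition \ref{assA3}. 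If you insist on the explicit replacement, note also that perturbing the boundary of the quadruple sum by $O(\log^2 n)$ affects on the order of $n^3\log^2 n$ tuples (not $n\log^2 n$), and controlling that increment uniformly in $\delta$ at the required scale is genuinely delicate; I would recommend adopting the paper's supremum-over-$k$ device instead, after which your Chebyshev bookkeeping goes through.
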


\noindent
Note that Theorem~\ref{newuniformconsistency} requires fewer conditions than Theorem~\ref{testconsistent}, because it does not cover the case $\big| \| \delta \|^2 - \Delta \big| \le \| \bar \Gamma \|_F / p$.

\subsection{Statistical guarantees for the methods developed in Section \ref{secAdaptive}}
\label{subsec_theoretical_prop_unif}

We recall $\Sigma_h = \cov (X_0, X_h)$ and its $\ell$th diagonal entry $\Sigma_{h, \ell\ell}$. Moreover, recall that $s = |S|$ and denote by $s_c = |S^C|$. To prove the consistency of the estimator \eqref{def_S_hat} for the set $S$, we require the following assumptions.

\begin{ass}\label{asses_S_equal_Shat}
    Let $0 < \alpha < 1$ and $(c_\ell)_{\ell =1, \ldots ,p}$ be a nonnegative sequence such that $\sum_{\ell \in S} c_\ell \lesssim n^{1/4} f(s)$ and $\sum_{\ell \in S^C} c_\ell \lesssim n^{\alpha / 8} f_c(s_c)$ for some nondecreasing functions $f, f_c: \mathbb{N} \to (1, \infty ) $. Assume that the following conditions are satisfied.
    \begin{enumerate}[label=(D\arabic*)]
        \item $\max_{\ell \in S} |\delta_\ell| = O(1)$ and $\max_{\ell = 1, \ldots, p} \Gamma_{\ell \ell} = O(1)$, \label{ass_shat_1}
        
        \item $\max_{\ell \in S^C} \big| \sum_{|h| > m} \Sigma_{h, \ell \ell}  \big| = O(n^{- \alpha / 4})$, \label{ass_shat_2}

        \item $\sum_{|h| \leq n } |\Sigma_{h+k, \ell \ell}| \lesssim c_\ell \cdot \gamma_k$ for some sequence $(\gamma_k)_{k \in \mathbb{N}} \in \ell^2 (\mathbb{N})$, \label{ass_shat_4}

        \item there exists a constant  $\rho \in (0,1)$ such that $|\mathrm{cum} ( X_{0, \ell},  X_{h_1, \ell},  X_{h_2, \ell},  X_{h_3, \ell})| \lesssim c_\ell^2 \cdot \rho^{\max_i h_i - \min_i h_i}$ for all $\ell = 1, \ldots, p$, \label{ass_shat_5}
        
        \item $m \to \infty$ and $m^{2}/n = O(n^{-\alpha / 2})$, \label{ass_shat_6}
        
        \item $\mathrm{tr} (\bar \Gamma) = O(\| \delta \|^2_2)$, \label{ass_shat_7}

        \item Assumption \ref{ass1_uniform_i_neu} holds with $\alpha$ and for $(X_{j} - \e [X_{j}])_{j \in \mathbb{Z} }$, the sets $A=S$ and $A=S^C$ and the corresponding functions $f$ and $f_c$, \label{ass_shat_8}

        \item $n^{-\alpha / 8} f_c(s_c) = o(1)$ and $\min_{\ell \in S^C} \Gamma_{\ell \ell} \geq \log^{3} (s_c) / \log^{3/2} (p)$.\label{ass_shat_3}

    \end{enumerate}
\end{ass}

\noindent

Again, we give a brief discussion of these assumptions before continuing. Condition \ref{ass_shat_1} ensures that the variances $\sigma_\ell^2 = 4 \delta_\ell^2 \Gamma_{\ell\ell} / ( \vartheta_0 (1-\vartheta_0))$ for $\ell \in S$ are uniformly bounded.
Condition \ref{ass_shat_2} guarantees that the differences 
$$
    \lambda \vartheta_0 \Gamma_{\ell \ell}  - \frac{1}{n} \sum_{|h| \leq m} \sum_{i = 1}^{\gbr{\lambda \hat k_n} - |h|} ( X_{i, \ell} - \e [X_{i, \ell}])  (X_{i + |h|, \ell} - \e [X_{i + |h|, \ell}])
$$
converge to $0$ in probability, uniformly with respect to $\ell \in S^C$ and $\lambda \in (0,1) $. This is an important step in the derivation of stochastic approximations for the estimators $\hat \delta_\ell^2 (\lambda)$ in \eqref{hat_delta_ell}, which hold uniformly with respect to $\ell \in S^C$ and $\lambda \in [0,1] $, see Theorem S6.2 in the online supplement for details.
Condition \ref{ass_shat_4} gives a bound on the diagonal elements of the autocovariance matrices in terms of the temporal and spatial component.
Note that $\sum_{k=1}^\infty \gamma_k^2 < \infty $ and $\sum_{\ell \in S} c_\ell \lesssim n^{1/4} f(s)$ for a nondecreasing and positive function $f$, where $s$ is the number of nonvanishing coefficients of the vector $\delta$. This function determines the decay of the variance in the spatial component restricted to the set $S$.
Condition \ref{ass_shat_5} is used to control the temporal dependency, while \ref{ass_shat_6} is used to control the bias. It is slightly stronger than the corresponding condition \ref{assA5}, because we require uniform statements. Conditions \ref{ass_shat_7} and \ref{ass_shat_8} are analogs of \ref{assA6} and \ref{assA7}, respectively.
Finally, the technical condition \ref{ass_shat_3} is required in an intermediate step of the proof of the consistency of the estimator in \eqref{def_S_hat}, that is, $\p (\hat S_n \subset S) \to 1$. In particular, it requires a bound on the size of the set $S^C$ of the vanishing components of the signal $\delta$.

\begin{thm}\label{snconsistent}
    Let Assumption \ref{asses_S_equal_Shat} be satisfied and assume that the vector $\delta = (\delta_1, \ldots , \delta_p)^\top $ satisfies 
    \begin{align}\label{fndelta}
        F_n (\delta) := \{  \ell = 1,  \ldots , p \mid 0 < \delta_\ell^2  \leq \zeta_n \}=\emptyset
    \end{align}
    for sufficiently large $n$,
    where \begin{align} \label{det30a}
        \zeta_n = \bigg( 2(n^{-\alpha / 8} f(s) \vee 1) \log (s)  + \max_{\ell \in S} \sigma_\ell \bigg) \cdot\frac{2\log^{3/2} (p)}{ \sqrt{n}} \cdot \sqrt{\log (s)} .
    \end{align}
    Then, the set estimator $\hat S_n$ defined in \eqref{def_S_hat} is consistent, that is, $\p (\hat S_n = S) \to 1$ as $n \to \infty$.
\end{thm}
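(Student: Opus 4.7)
The plan is to decompose the target event as
$$\{\hat S_n = S\} = \{S \subset \hat S_n\} \cap \{\hat S_n \subset S\}$$
and show that each inclusion has probability tending to one. Since the defining rule for $\hat S_n$ is the thresholding $\hat\delta_\ell^2 > \hat v_\ell \log^{3/2}(p)$, failure of the first inclusion is controlled by the lower tail of $(\hat\delta_\ell^2 - \delta_\ell^2)/\hat v_\ell$ over $\ell \in S$, and failure of the second by the upper tail of $\hat\delta_\ell^2/\hat v_\ell$ over $\ell \in S^C$; these are (up to sign and by symmetry of $\mathbb{G}$) precisely the quantities $M_S$ and $M_{S^C}$ introduced after the statement. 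Both are handled once we have uniform stochastic approximations
$$\frac{\hat\delta_\ell^2 - \delta_\ell^2}{\hat v_\ell} \approx \mathbb{G}_\ell,\quad \ell \in S,\qquad \frac{\hat\delta_\ell^2}{\hat v_\ell} \approx \mathbb{H}_\ell,\quad \ell \in S^C,$$
where $\mathbb{G}_\ell, \mathbb{H}_\ell$ are (correlated) copies of the limiting variables $\mathbb{G}$ and $\mathbb{H}$ from Section~\ref{secAdaptive}, together with the tail bounds from Lemmas~\ref{lemmasubexp} and~\ref{lemmasubsubexp}.

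For $\p(S \subset \hat S_n) \to 1$, I would observe that for each fixed $\ell \in S$ the pair $(\hat\delta_\ell^2(\lambda), \hat v_\ell)$ is a single-coordinate instance of the $U$-statistic process treated in Theorem~\ref{thm2.1} with $A=\{\ell\}$. To make this uniform in $\ell \in S$, I would invoke Theorem~\ref{thm2.0_unif_neu}, whose assumptions are exactly condition~\ref{ass_shat_8} with $A=S$, to construct simultaneously on one probability space partial-sum Brownian motions indexed by $\ell$ with total coupling error controlled by the rate function $f(s)$. Inserting these into the quadratic functionals defining $\hat\delta_\ell^2(\lambda)$ and $\hat v_\ell$, and using conditions~\ref{ass_shat_1} and \ref{ass_shat_4}--\ref{ass_shat_6} to absorb the cross terms and the trimming bias, yields $\hat\delta_\ell^2 - \delta_\ell^2 \approx \hat v_\ell\,\mathbb{G}_\ell$ uniformly in $\ell \in S$. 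Lemma~\ref{lemmasubexp} gives a subexponential tail for $\mathbb{G}$, so a union bound over the $s$ coordinates produces $\max_{\ell \in S}|\mathbb{G}_\ell| = O_\p(\log s)$. Since $\hat v_\ell$ concentrates at order $\sigma_\ell/\sqrt n$, the threshold $\hat v_\ell \log^{3/2}(p)$ is of order $\sigma_\ell \log^{3/2}(p)/\sqrt n$, and the precise form of $\zeta_n$ in~\eqref{det30a} together with the hypothesis $F_n(\delta) = \emptyset$ is engineered so that each $\delta_\ell^2$, $\ell \in S$, dominates this threshold by a margin strictly larger than $\hat v_\ell \log s$.

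For $\p(\hat S_n \subset S) \to 1$ the situation is more delicate because for $\ell \in S^C$ the leading term of $\hat\delta_\ell^2$ is a quadratic functional of noise, and it does not automatically center at zero after the $N_m$-trimming. Condition~\ref{ass_shat_2} is exactly what is needed to subtract the diagonal bias $\lambda \vartheta_0 \Gamma_{\ell\ell}$ uniformly in $\ell \in S^C$ with residual $O(n^{-\alpha/4})$. Applying Theorem~\ref{thm2.0_unif_neu} with $A = S^C$ and rate $f_c(s_c)$, I would obtain a Brownian motion per coordinate feeding both the numerator and denominator of $\hat\delta_\ell^2/\hat v_\ell$, giving the uniform approximation $\hat\delta_\ell^2/\hat v_\ell \approx \mathbb{H}_\ell$ with overall error $n^{-\alpha/8} f_c(s_c) = o(1)$ by condition~\ref{ass_shat_3}. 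The sub-sub-exponential tail for $\mathbb{H}$ from Lemma~\ref{lemmasubsubexp} then yields $M_{S^C} = O_\p(\log^2 s_c)$, which by the second clause of~\ref{ass_shat_3} is strictly less than $\log^{3/2}(p)$ with probability tending to one.

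The main obstacle is the uniform-in-$\ell$ control of $\hat v_\ell$ for $\ell \in S^C$, since the self-normalizing denominator can be atypically small and such an event must be ruled out simultaneously across $p - s$ coordinates. This is exactly what the tail bound on $\mathbb{W}_\alpha^{-1}$ recorded in Lemma~\ref{lemmasubsubexp} provides, and deploying it uniformly is what forces both the size restriction $n^{-\alpha/8} f_c(s_c) = o(1)$ and the lower bound $\min_{\ell \in S^C} \Gamma_{\ell\ell} \geq \log^3(s_c)/\log^{3/2}(p)$ from condition~\ref{ass_shat_3}. This is also the reason a genuine coupling statement in the form of Theorem~\ref{thm2.0_unif_neu} is needed rather than a mere weak convergence result: the numerator $\hat\delta_\ell^2$ and its self-normalizer $\hat v_\ell$ must be represented on a common probability space to extract a single joint limit $\mathbb{H}_\ell$ from both.
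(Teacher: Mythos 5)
Your proposal follows essentially the same route as the paper: the paper also splits $\{\hat S_n=S\}$ into the two inclusions, establishes uniform couplings for $\ell\in S$ and $\ell\in S^C$ via Theorem \ref{thm2.0_unif_neu} (packaged as Theorems S6.1 and S6.2 in the supplement, which deliver the approximations by $\mathbb{G}_\ell$ and $\mathbb{H}_\ell$ together with uniform control of $\hat v_\ell$), and then combines the tail bounds of Lemmas \ref{lemmasubexp} and \ref{lemmasubsubexp} with the signal condition \eqref{fndelta} exactly as you describe. The only quibble is quantitative: the tail $\p(|\mathbb{H}|>t)\lesssim \exp(-D t^{2/5})$ gives $\max_{\ell\in S^C}|\mathbb{H}_\ell|=O_\p(\log^{\omega}(s_c))$ only for $\omega>5/2$, not $O_\p(\log^2 (s_c))$ as you state, which is precisely why condition \ref{ass_shat_3} supplies the margin $\log^{3}(s_c)/\log^{3/2}(p)$; the conclusion is unaffected.
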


\begin{thm}\label{adaptive_consistency_thm}
    Let Assumption \ref{asses} for the set $A=S$ and Assumption \ref{asses_S_equal_Shat} be satisfied. If condition \eqref{fndelta} holds, the decision rule \eqref{fully_adaptive_test} defines an asymptotic level $\alpha$ test for the hypotheses \eqref{hypotheses_new}. 
    Furthermore, the statements in equation \eqref{teststuff_S} hold, and in particular, the test is consistent if $\frac{\sqrt{n}}{\sigma_{n, S}} (\Delta - \| \delta \|_{2,0}^2) \to - \infty$.
\end{thm}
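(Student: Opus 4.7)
The plan is to reduce the analysis to the situation where the index set $S$ is known, and then combine the weak invariance principle of Theorem~\ref{thm2.1} with the continuous mapping theorem, exactly as in the heuristic derivation preceding \eqref{fully_adaptive_test}. The first step is to invoke Theorem~\ref{snconsistent}, whose hypotheses are ensured by Assumption~\ref{asses_S_equal_Shat} together with \eqref{fndelta}, to obtain $\p(\hat S_n = S) \to 1$. On the event $\{\hat S_n = S\}$ one has $T_{n,\hat S_n}(\hat k_n,m;\lambda) = T_{n,S}(\hat k_n,m;\lambda)$ and $V_{n,\hat S_n} = V_{n,S}$, so the rejection event in \eqref{fully_adaptive_test} differs from its deterministic analogue (with $S$ in place of $\hat S_n$) by a set of vanishing probability, and it suffices to analyse the latter.

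Next, apply Theorem~\ref{thm2.1} with $A = S$, which is permitted by Assumption~\ref{asses}. Since $\|\delta\|_{2,S}^2 = \|\delta\|_{2,0}^2$ by the definition of $S$, this produces a Gaussian process $G_{n,S}(\lambda) \overset{d}{=} \lambda^3 \mathbb{B}(\lambda)$ with
\[
\sup_{\lambda \in [0,1]} \Bigl| \tfrac{\sqrt{n}}{\sigma_{n,S}} \bigl( T_{n,S}(\hat k_n, m; \lambda) - \Lambda_0(\lambda) \|\delta\|_{2,0}^2 \bigr) - G_{n,S}(\lambda) \Bigr| = o_\p(1).
\]
Theorem~\ref{cpthm} (applicable under \ref{ass_shat_7}) gives $\hat\vartheta_n - \vartheta_0 = O_\p(\log^2(n)/n)$, so a short estimate yields $\sup_\lambda|\Lambda_n(\lambda) - \Lambda_0(\lambda)| = o_\p(1)$; since $\|\delta\|_{2,0}^2 = O(1)$ under \ref{ass_shat_1}, $\Lambda_0$ may be replaced by $\Lambda_n$ in the display above. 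The continuous mapping theorem then produces the joint convergence
\[
\tfrac{\sqrt{n}}{\sigma_{n,S}}\bigl( T_{n,S}(\hat k_n, m) - \|\delta\|_{2,0}^2,\; V_{n,S} \bigr) \convd \bigl( \mathbb{B}(1),\; \mathbb{V}(\nu) \bigr),
\]
where $\mathbb{V}(\nu) = (\int_0^1 \lambda^6 (\mathbb{B}(\lambda) - \lambda\mathbb{B}(1))^2\, \mathrm{d}\nu(\lambda))^{1/2}$.

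For the final step, rewrite the rejection event in the centred form
\[
\Bigl\{ \tfrac{\sqrt{n}}{\sigma_{n,S}}\bigl[(T_{n,S}(\hat k_n, m) - \|\delta\|_{2,0}^2) - q_{1-\alpha} V_{n,S}\bigr] > \tfrac{\sqrt{n}}{\sigma_{n,S}}(\Delta - \|\delta\|_{2,0}^2) \Bigr\}.
\]
By the previous step the left-hand side converges in distribution to $\mathbb{B}(1) - q_{1-\alpha} \mathbb{V}(\nu)$, which is positive with probability $\p(\mathbb{G} > q_{1-\alpha}) = \alpha$ by \eqref{defG}. A Slutsky-type argument depending on whether the right-hand side tends to $+\infty$, $0$, or $-\infty$ delivers rejection probabilities $0$, $\alpha$ and $1$, which is precisely \eqref{teststuff_S}; consistency under $H_1^\Delta$ follows from the third case, and the asymptotic level $\alpha$ property follows from the boundary $\|\delta\|_{2,0}^2 = \Delta$ of $H_0^\Delta$ together with the monotonicity of the rejection probability in $\|\delta\|_{2,0}^2$.

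The main hurdle is not the distributional calculation above but rather the set-consistency statement Theorem~\ref{snconsistent}, which is the nontrivial input that enables the reduction to the known-$S$ analysis; its proof is where the subexponential tail estimates for $\mathbb{G}$ and $\mathbb{H}$ in the appendix and the uniform invariance principle in Theorem~\ref{thm2.0_unif_neu} become indispensable. Once this is in hand, the remainder of the argument is essentially a decorated version of the proof of Theorem~\ref{testconsistent}.
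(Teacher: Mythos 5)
Your proposal is correct and follows essentially the same route as the paper: the paper's proof likewise reduces to the known-$S$ case via Theorem~\ref{snconsistent} (writing the rejection probability as $\p(\hat T_{n,S} > \Delta + q_{1-\alpha} V_{n,S},\, S=\hat S_n) + o(1)$) and then appeals to the arguments of Theorem~\ref{testconsistent}, whose details you have accurately reconstructed (Theorem~\ref{thm2.1} with $A=S$, the replacement of $\Lambda_0$ by $\Lambda_n$, the continuous mapping theorem, and the three-case Slutsky argument).
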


\begin{rem} \label{growth_rate}
~~
{\rm
\begin{enumerate}[label=(\alph*)]
    \item The term $\zeta_n$ in \eqref{fndelta} defines a threshold for the detectable entries in the vector $\delta$, and a lower value generally means that the signal can be detected more easily. 
    $\zeta_n$ aggregates various sources of noise, such as the asymptotic variances $\sigma_\ell^2$ of the statistic $\hat \delta_\ell^2$, the long-run variance $\Gamma_{\ell\ell}$ for $\ell \in S$ of the sample (captured by the function $f(s)$ through the condition \ref{ass_shat_4}), but also the dimension $p$ and the size of the set $S$ of nonvanishing components of the vector $\delta$. Increasing the sample size $n$ reduces the impact of these components, but there remains a trade-off between $n$ and the dimension $p$. 
    The term $\sqrt{\log (s)}$ in \eqref{fndelta} stems from the self-normalization approach, reflecting a sub-Gaussian tail behavior of the distribution of $\mathbb{V}= (\int_0^1 \lambda^6 (\mathbb{B} (\lambda) - \lambda \mathbb{B} (1) )^2 \mathrm{d} \lambda)^{1/2}$, which appears in denominator of the limit in \eqref{defG}. The quantity $2\log^{3/2} (p)$ depends on the threshold used in the estimator $\hat S_n$ in the sense that it dominates it.

    \item For a better understanding of the delicate relation between $s$, $p$, $(\delta_\ell)_{\ell \in S}$ and $(\sigma_\ell)_{\ell \in S}$ consider the case $s = p^\theta$ (note that $p$ depends on $n$) and define $\sigma_{S, \infty} := \max_{\ell \in S} \sigma_\ell$. Obviously,
    \begin{align*}
        \zeta_n' := 2 ( n^{-\alpha / 8} f(s) \vee 1 + \sigma_{S,\infty}) \log^{3/2} (s) \frac{2\log^{3/2} (p)}{\sqrt{n}}
    \end{align*}   
    is an upper bound for the quantity $\zeta_n$ defined in \eqref{det30a}. Consequently, \eqref{fndelta} holds if $\zeta_n' < \delta_{\text{min}} := \min_{\ell \in S} \delta_\ell$. A sufficient condition for this inequality is
    $$
        4 \max \{ n^{-\alpha / 8} f(s) \vee 1, \sigma_{S,\infty}  \} \frac{2\theta^{3/2} \log^{3} (p)}{\sqrt{n}} < \delta_{\text{min}},
    $$
    or equivalently
    \begin{align*}
        \begin{split}
            p &< \exp \bigg\{ \bigg( \frac{\delta_{\text{min}} \sqrt{n}}{4 \theta^{3/2} \cdot \max \{ n^{-\alpha / 8} f(s) \vee 1, \sigma_{S, \infty} \} } \bigg)^{1/3} \bigg\} \\
            &= \min \bigg\{ \exp \bigg\{ \bigg( \frac{\delta_{\text{min}} \sqrt{n}}{8 \theta^{3/2} \cdot  (n^{-\alpha / 8} f(s) \vee 1)} \bigg)^{1/3} \bigg\} , \exp \bigg\{ \bigg( \frac{\delta_{\text{min}} \sqrt{n}}{8 \theta^{3/2} \cdot \sigma_{S,\infty} } \bigg)^{1/3} \bigg\} \bigg\}.
        \end{split}
    \end{align*}
    As $\theta = \log (s) / \log (p)$, this means that a higher level of sparsity (smaller values of $s$) yields a larger bound for the dimension for which the set $S$ can be identified asymptotically. The bound also increases with increasing $\delta_{\min}$ and decreasing $\sigma_{S,\infty}$.
    \item A careful inspection of the proofs in the online supplement shows that the results of this section remain true if the exponent $3/2$ in the definition of $\hat S_n$ in \eqref{def_S_hat} is replaced by a constant $\kappa > 1$ and the assumptions are sightly modified.
    \item The comments made in Remark \ref{remark01} (b) and (c) remain valid for the decision rule \eqref{fully_adaptive_test}.
\end{enumerate}
}
\end{rem}

\section{Finite sample study}
\label{sec3} 
\renewcommand{\theequation}{\thesection.\arabic{equation}}
\setcounter{equation}{0}

In this section, we investigate the finite sample properties of the testing procedures developed in Sections~\ref{sec2} and \ref{secAdaptive} by means of a simulation study and a data example.  Further results of the simulation study can be found in the online supplement.

\subsection{Simulation study}
We start by developing an estimator for the trimming parameter $m$ in the statistics $T_n (\hat k_n, m)$ and $T_{n,S} (\hat k_n, m)$ and investigate its finite sample performance. Afterwards, we present some results for the estimator $\hat S_n$ of the set of nonvanishing coordinates of the signal $\delta$. Finally, we illustrate the finite sample properties of the test \eqref{rule} and the fully adaptive test \eqref{fully_adaptive_test} for testing the relevant hypotheses \eqref{det11} with the norms \eqref{det11a} and \eqref{det11b}, respectively.

Throughout this section, we consider an independent (IND), moving average (MA) and autoregressive (AR) model for the sequences $(\eta_j)_{j \in \mathbb{Z}}$ and $(\rho_j)_{j \in \mathbb{Z}}$ in model \eqref{model}. To be precise, let $\{ \varepsilon_j \}_{j \in \mathbb{Z}}$ be an i.i.d sequence of random variables distributed as $\mathcal{N}(0, \mathrm{diag}_p (0.5))$ and $\{ \tilde \varepsilon_j \}_{j \in \mathbb{Z}}$ be an i.i.d sequence of random variables distributed as $\mathcal{N}(0, \Sigma)$, where the $ij$th entry of the matrix  $\Sigma$ is given by $\Sigma_{ij} = 0.5 \cdot 0.9^{|i-j|}$.  Set $f := f_1 = f_2$ (hence $\eta_j =^d \rho_j$ for any $j \in \mathbb{Z}$) in \eqref{model} and for the independent model, let $f (\varepsilon_j, \varepsilon_{j-1},  \ldots ) = \varepsilon_j$.
The MA$(q)$ model for $q = 2,4,6$, is defined as
\begin{align*}
    f(\tilde \varepsilon_{j}, \tilde  \varepsilon_{j-1}, \ldots) = \tilde \varepsilon_j + \sum_{k = 1}^q c_k \tilde \varepsilon_{j-k}, ~~~~~ ~~~~~ j \in \mathbb{Z},
\end{align*}
where $(c_1, \ldots, c_6) = (0.5, 0.25, 0.2, 0.1, 0.05, 0.025)$.
Finally, the AR$_c (1)$ model is defined for $c=0.5, 0.6$ as
\begin{align*}
    \eta_j = c \cdot \eta_{j-1} +  \tilde \varepsilon_j, ~~~~~ ~~~~~ j \in \mathbb{Z}.
\end{align*}

All simulations are based on $1000$ replications and for the tests \eqref{rule} and \eqref{fully_adaptive_test}, we choose the nominal level $\alpha = 0.05$. Moreover,  we consider the following parameters:
the common mean, is set to $\mu = (10, \ldots, 10)^\top$;
the true change point location is $\vartheta_0 = 0.6$;
the entries of the vector $\delta = (\delta_1, \ldots, \delta_s, 0, \ldots, 0)^\top$ are given by  $\delta_1 = \ldots = \delta_s$ depending on the size of $\| \delta \|^2_{2,0}$  (recall that $|S| = s$);
the threshold in the tests \eqref{rule} and \eqref{fully_adaptive_test} are given by $\Delta = 2.0$;
the probability measure $\nu$ in the definition of the self-normalizer in \eqref{vn} is  the discrete uniform distribution $\nu_K$ with support $\{ 1/K, \ldots, (K-1) / K \}$ with $K=20$.

\begin{figure}
    \centering
    \footnotesize
    \subfloat{\includegraphics[width=7cm]{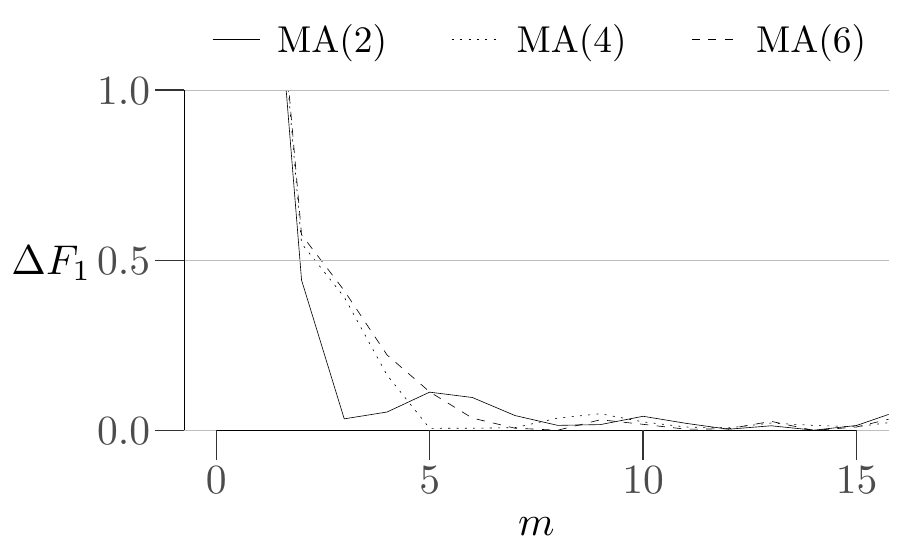}}
    \subfloat{\includegraphics[width=7cm]{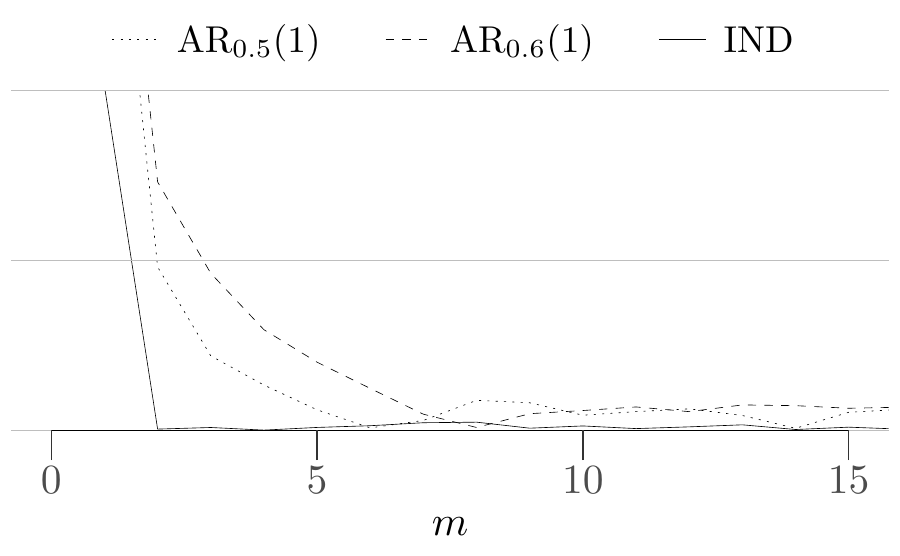}}

    \caption{\it 
        Each curve shows a single realization for the difference function $\Delta F_1$. The parameters $n$, $p$ and $S$ are set to $n = 200$, $p=400$, $S = \{ 1, \ldots, p \}$ and $\| \delta \|^2_{2,0} = 2.0$.
    }
    \label{fig_m}
\end{figure}

\subsection*{Estimation of $m$} 
The performance of the test \eqref{rule} depends on the choice of the trimming parameter $m$, and in real applications it is indispensable to give a data-adaptive rule on how to choose it. We describe one way of doing this in the slightly more general case, where the covariance matrices before and after the change point are not necessarily identical. For this purpose, define the matrices $\Sigma_h^{(1)} = \cov (\eta_0, \eta_h)$ and $\Sigma_h^{(2)} = \cov (\rho_0, \rho_h)$, $h \in \mathbb{Z}$. In order to quantify the sizes of the sums of the traces, recall the approximation of the bias derived in \eqref{tnbias}. We consider the following estimators for these sums of the traces
\begin{align*}
    \tilde F_{1} (m) &:=\frac{1}{N_m (\hat k_n)}\sum_{\substack{j_1, j_2 = 1 \\ |j_1 - j_2| > m}}^{\hat k_n} (X_{j_1} - \bar X_{n,1})^\top (X_{j_2} - \bar X_{n,1}),\\
    \tilde F_{2} (m) &:=  \frac{1}{N_m (n - \hat k_n)} \sum_{\substack{j_1, j_2 = \hat k_n + 1 \\ |j_1 - j_2| > m}}^{n} (X_{j_1} - \bar X_{n,2})^\top (X_{j_2} - \bar X_{n,2}),
\end{align*}
where $\bar X_{n,1}$  and  $\bar X_{n,2}$ denotes the empirical means of $X_1,  \ldots , X_{ \hat k_n}$  and  $X_{\hat k_n + 1},  \ldots , X_n$, respectively. Usually, the terms $\mathrm{tr} (\Sigma_{h}^{(i)})$, $i=1,2$, are larger for small values of $h$ , resulting in a larger impact of the bias, when $m$ is small. Increasing $m$ yields a progressively smaller decrease in bias. In other words, values of $m$, for which $\Delta F_i (m) := F_i(m) - F_i(m - 1)$, $i=1,2$, is close to zero, will significantly reduce the bias. In some cases, a good choice of $m$ can be obtained by visually inspecting the functions $\Delta F_1$ and $\Delta F_2$ as illustrated in Figure~\ref{fig_m}.
For a nonvisual approach, we propose to use
\begin{align}\label{mhat}
    \hat m_n = \max \big\{ \hat m_n^{(1)}, \hat m_n^{(2)} \big\},
\end{align}
where for $i=1,2$
\begin{align*}
    \hat m_n^{(i)} := \min \{ m \leq M_i \mid |\Delta F_i (m)| \leq T \} ~ - ~ 1,
\end{align*}
$M_1, M_2$ are some maximal values for $m$ and $T$ is a cutoff value. In practice, $T = 0.01$ works well for us. We suggest that the values for $M_1$ and $M_2$ should not be too large compared to $\hat k_n$ and $n - \hat k_n$, respectively; otherwise, power loss could be too high. We chose them in such a way that they cut off at most one third of the respective part of the sample, that is, $M_1 = \hat k_n/3$, $M_2 = (n-\hat  k_n)/3$. Note that in some cases, it might be advantageous to use two separate trimming parameters $\hat m_n^{(1)}$ and $\hat m_n^{(2)}$ instead of the maximum of these two, in particular, if they differ substantially.

\begin{table}[!ht]
    \centering
    \begin{tabular}{rc c ccccccccc c c}
        \hline
        &  && \multicolumn{9}{c }{\it Percentage of estimated $m$ larger than} && \\
        \cline{4-12}
        Model & $s/p$&& $\ge2$ & $\ge4$ & $\ge6$ & $\ge8$ & $\ge10$ & $\ge12$ & $\ge14$ & $\ge16$ & $\ge18$ && Mean \\
        \hline
IND            & 1.00 && 0.56 & 0.10 & 0.02 & 0.00 & 0.00 & 0.00 & 0.00 & 0.00 & 0.00 && 1.94  \\
               & 0.05 && 0.52 & 0.09 & 0.01 & 0.00 & 0.00 & 0.00 & 0.00 & 0.00 & 0.00 && 1.89  \\
MA$(2)$        & 1.00 && 1.00 & 0.89 & 0.71 & 0.54 & 0.41 & 0.31 & 0.23 & 0.17 & 0.13 && 9.76  \\
               & 0.05 && 1.00 & 0.88 & 0.71 & 0.55 & 0.43 & 0.31 & 0.23 & 0.16 & 0.12 && 9.72  \\
MA$(4)$        & 1.00 && 1.00 & 1.00 & 0.91 & 0.75 & 0.62 & 0.49 & 0.39 & 0.30 & 0.24 && 13.13 \\
               & 0.05 && 1.00 & 1.00 & 0.90 & 0.76 & 0.62 & 0.49 & 0.39 & 0.30 & 0.24 && 12.99 \\
MA$(6)$        & 1.00 && 1.00 & 1.00 & 0.98 & 0.83 & 0.70 & 0.57 & 0.45 & 0.36 & 0.29 && 14.32 \\
               & 0.05 && 1.00 & 1.00 & 0.98 & 0.85 & 0.73 & 0.61 & 0.50 & 0.40 & 0.32 && 14.88 \\
AR$_{0.5}(1)$  & 1.00 && 1.00 & 1.00 & 0.95 & 0.82 & 0.69 & 0.53 & 0.41 & 0.33 & 0.26 && 13.59 \\
               & 0.05 && 1.00 & 1.00 & 0.96 & 0.84 & 0.68 & 0.53 & 0.40 & 0.32 & 0.24 && 13.53 \\
AR$_{0.6}(1)$  & 1.00 && 1.00 & 1.00 & 1.00 & 0.96 & 0.86 & 0.74 & 0.62 & 0.52 & 0.44 && 17.49 \\
               & 0.05 && 1.00 & 1.00 & 1.00 & 0.96 & 0.88 & 0.75 & 0.65 & 0.54 & 0.45 && 17.75 \\
        \hline
    \end{tabular}
    \caption{\it
        Percentage of replications, where the estimator $\hat m_n$ in \eqref{mhat} is larger than the stated number in the top row. The value in each cell is rounded to two decimals and the parameters are set to $n = 200, p = 400$ and $\| \delta \|^2_{2,0} = 2.0$.
    }
    \label{table:hist_m}
\end{table}

In Table~\ref{table:hist_m}, we show a tabulated version of the distribution function and the mean value of $\hat m_n$, where we consider a sparse ($s/p = 0.05$) and a dense ($s/p = 1.0$) situation. We observe that the estimator $\hat m_n$ is not very sensitive with respect to these two different scenarios. Note that the estimator is not designed to estimate the order of a model, but it is worth mentioning that in all simulation runs $\hat m_n $ never produced a value smaller than $q$ in the MA($q$) models. In general, the distribution of this estimator seems to be right-skewed.

\begin{table}
    \centering
    \begin{tabular}{c c ccccc cccc c c}
        \hline
        & \multicolumn{9}{c}{$m$} && \\
        \cline{3-11}
        && 0 & 1 & 2 & 3 & 4 & 5 & 10 & 15 & 20 && $\hat m_n$ \\
        \hline
        IND            && 0.06 & 0.04 & 0.05 & 0.05 & 0.05 & 0.05 & 0.05 & 0.06 & 0.00 && 0.06 \\
        MA(2)          && 0.08 & 0.07 & 0.06 & 0.06 & 0.07 & 0.06 & 0.08 & 0.06 & 0.00 && 0.05 \\
        MA(4)          && 0.10 & 0.10 & 0.09 & 0.08 & 0.08 & 0.09 & 0.09 & 0.07 & 0.00 && 0.06 \\
        MA(6)          && 0.09 & 0.08 & 0.07 & 0.09 & 0.09 & 0.08 & 0.09 & 0.07 & 0.00 && 0.05 \\
        AR$_{0.5}(1)$  && 0.11 & 0.09 & 0.08 & 0.07 & 0.09 & 0.07 & 0.09 & 0.07 & 0.00 && 0.07 \\
        AR$_{0.6}(1)$  && 0.17 & 0.15 & 0.12 & 0.13 & 0.13 & 0.12 & 0.13 & 0.07 & 0.00 && 0.05 \\
        \hline
    \end{tabular}
    \caption{\it
        Empirical rejection probabilities of the test \eqref{fully_adaptive_test} with $S= \{ 1, \ldots, p \}$ for different choices of $m$, where $n=200, p=400$ and $\| \delta \|^2_{2,0} = 2.0$ (boundary of $H_0$).
 }
    \label{table:m_trim}
\end{table}

In Table~\ref{table:m_trim}, we display the rejection probabilities of the test \eqref{fully_adaptive_test} at the boundary of the hypotheses, that is, $\| \delta \|^2_{2,0} = \Delta$, for different (fixed) values of the trimming parameter $m$ and the estimator $\hat m_n$ (right column), where the nominal level is $\alpha = 0.05$ and the sample size and dimension are given by $n=200$ and $p=400$, respectively. 
Note that for the independent case, the nominal level is well approximated for all values of $m$, as $m=0$ suffices in this case.
From a theoretical point of view, $m=q$ should be sufficient for the MA($q$) processes. However, for the MA($4$) and MA($6$) model larger values are required to obtain a very precise approximation of the nominal level. A similar comment applies to the AR models. If $m$ is too large, the test is conservative in all cases.
In the last column, we show the simulated rejection probabilities of the test \eqref{fully_adaptive_test} with the estimator $\hat m_n$ of the trimming parameter defined in \eqref{mhat}. We observe that the data-adaptive choice of the trimming parameter yields the best results in all cases under consideration and generally improves the performance of
the test.

\subsection*{Estimation of $\vartheta_0$} 
In the online supplement, we provide simulation results for the change point estimator \eqref{cpestimator}. In particular, we investigate the accuracy of $\hat \vartheta_n$ at the boundary of the null hypothesis. Our results show that this estimator performs reasonably well in the case of a sparse signal $\delta$ for small values of $\|\delta \|^2_{2,0}$. The accuracy increases if either $\|\delta \|^2_{2,0}$ or $s/p$ increases. We observe that the dependence of the underlying model has an effect on the performance of the estimator, though only minor.

\begin{table}[!ht]
    \centering
    \begin{tabular}{cc c ccc c ccc c ccc}
        \hline
         & && \multicolumn{3}{c}{Precision} && \multicolumn{3}{c}{Recall} && \multicolumn{3}{c}{F-score} \\
        \cline{4-6} \cline{8-10} \cline{12-14}
         & && \multicolumn{3}{c}{$\| \delta \|_{2,0}^2$} && \multicolumn{3}{c}{$\| \delta \|_{2,0}^2$} && \multicolumn{3}{c}{$\| \delta \|_{2,0}^2$} \\
        \cline{4-6} \cline{8-10} \cline{12-14}
        Model & $s/p$ && 1 & 2 & 3 && 1 & 2 & 3 && 1 & 2 & 3\\
        \hline
        IND & 0.25 && 0.97 & 0.97 & 0.97 && 1.00 & 1.00 & 1.00 && 0.98 & 0.98 & 0.98 \\
            & 0.50 && 0.99 & 0.99 & 0.99 && 1.00 & 1.00 & 1.00 && 0.99 & 0.99 & 0.99 \\
            & 0.75 && 1.00 & 1.00 & 1.00 && 1.00 & 1.00 & 1.00 && 1.00 & 1.00 & 1.00 \\
            & 1.00 && 1.00 & 1.00 & 1.00 && 1.00 & 1.00 & 1.00 && 1.00 & 1.00 & 1.00 \\
        \hline
        MA(2) & 0.25 && 0.95 & 0.96 & 0.96 && 0.79 & 0.92 & 0.93 && 0.86 & 0.94 & 0.94 \\
              & 0.50 && 0.98 & 0.98 & 0.99 && 0.79 & 0.93 & 0.94 && 0.88 & 0.96 & 0.96 \\
              & 0.75 && 0.99 & 1.00 & 1.00 && 0.80 & 0.93 & 0.94 && 0.89 & 0.96 & 0.97 \\
              & 1.00 && 1.00 & 1.00 & 1.00 && 0.79 & 0.93 & 0.94 && 0.89 & 0.97 & 0.97 \\
        \hline
        MA(6) & 0.25 && 0.93 & 0.95 & 0.94 && 0.62 & 0.82 & 0.83 && 0.74 & 0.88 & 0.88 \\
              & 0.50 && 0.97 & 0.98 & 0.98 && 0.62 & 0.78 & 0.82 && 0.76 & 0.87 & 0.90 \\
              & 0.75 && 0.99 & 0.99 & 0.99 && 0.63 & 0.80 & 0.83 && 0.77 & 0.89 & 0.90 \\
              & 1.00 && 1.00 & 1.00 & 1.00 && 0.61 & 0.79 & 0.85 && 0.76 & 0.89 & 0.92 \\
        \hline
        AR$_{0.5}(1)$ & 0.25 && 0.93 & 0.95 & 0.95 && 0.67 & 0.85 & 0.85 && 0.78 & 0.90 & 0.89 \\
                      & 0.50 && 0.98 & 0.98 & 0.98 && 0.65 & 0.83 & 0.85 && 0.78 & 0.90 & 0.91 \\
                      & 0.75 && 0.99 & 0.99 & 0.99 && 0.67 & 0.83 & 0.82 && 0.80 & 0.91 & 0.90 \\
                      & 1.00 && 1.00 & 1.00 & 1.00 && 0.67 & 0.84 & 0.86 && 0.80 & 0.91 & 0.92 \\
        \hline
        AR$_{0.6}(1)$ & 0.25 && 0.91 & 0.93 & 0.94 && 0.47 & 0.68 & 0.76 && 0.62 & 0.78 & 0.84 \\
                      & 0.50 && 0.97 & 0.98 & 0.98 && 0.47 & 0.70 & 0.73 && 0.63 & 0.82 & 0.83 \\
                      & 0.75 && 0.99 & 0.99 & 0.99 && 0.48 & 0.72 & 0.74 && 0.65 & 0.83 & 0.85 \\
                      & 1.00 && 1.00 & 1.00 & 1.00 && 0.49 & 0.70 & 0.74 && 0.65 & 0.82 & 0.85 \\
        \hline
    \end{tabular}
    \caption{\it 
        Precision measures \eqref{det1000} for the estimated set $\hat S_n$ for different values of $\| \delta \|^2_{2,0}$, where $n=200, p= 400$.
    }
    \label{tab:Shat_estimation}
\end{table}

\subsection*{Estimation of the set $S$}
A crucial component of the test \eqref{fully_adaptive_test} is the estimator $\hat S_n$ of the set $S$.
In Table~\ref{tab:Shat_estimation}, we display three metrics to assess the performance of $\hat S_n$, that is,
\begin{align} \label{det1000}
    \mathrm{Precision} = \frac{|S \cap \hat S_n|}{|\hat S_n|}, ~~~~~ \mathrm{Recall} = \frac{|S \cap \hat S_n|}{|S|}, ~~~~~ \text{F-score} = \frac{2 \cdot \mathrm{Precision} \cdot \mathrm{Recall}}{\mathrm{Precision} + \mathrm{Recall}}
\end{align}
(note that in some cases, precision is referred to as \textit{positive predictive value} and recall is referred to as \textit{sensitivity}).
These results suggest that the estimator for $S$ is rather conservative, since the precision is very high in most cases, while the recall remains moderate. Moreover, a stronger dependence inhibits the overall estimation, as the F-score decreases, for example, for the $\mathrm{AR}_{0.6} (1)$ model. In the independent case $\hat S_n$ performs very well even for small values of $\| \delta \|^2_{2,0}$.

\begin{table}[!ht]
    \centering
    \scriptsize
    \begin{tabular}{ccc c cccc c cccc c cc}
        \hline
        &&&& \multicolumn{9}{c}{\eqref{fully_adaptive_test}}  && \multicolumn{2}{c}{} \\
        \cline{5-13} 
        &&&& \multicolumn{4}{c}{$H_0$} && \multicolumn{4}{c}{$H_1$} && \multicolumn{2}{c}{} \\
        \cline{5-8} \cline{10-13}
        &&&& \multicolumn{4}{c}{$s/p$} && \multicolumn{4}{c}{$s/p$} && \multicolumn{2}{c}{\eqref{rule}} \\
        \cline{5-8} \cline{10-13} \cline{15-16}
        Model & $n$ & $p$ && 0.25 & 0.50 & 0.75 & 1.00 && 0.25 & 0.50 & 0.75 & 1.00 && $H_0$ & $H_1$\\
        \hline
        IND & 200 & 200 && 0.00 & 0.01 & 0.03 & 0.05 && 0.66 & 0.99 & 1.00 & 1.00 && 0.05 & 1.00 \\
            &     & 400 && 0.00 & 0.02 & 0.03 & 0.04 && 0.98 & 1.00 & 1.00 & 1.00 && 0.05 & 1.00 \\
            &     & 800 && 0.00 & 0.02 & 0.02 & 0.06 && 1.00 & 1.00 & 1.00 & 1.00 && 0.04 & 1.00 \\
            & 400 & 200 && 0.00 & 0.01 & 0.02 & 0.06 && 0.81 & 1.00 & 1.00 & 1.00 && 0.05 & 1.00 \\
            &     & 400 && 0.00 & 0.01 & 0.02 & 0.05 && 1.00 & 1.00 & 1.00 & 1.00 && 0.04 & 1.00 \\
            &     & 800 && 0.00 & 0.01 & 0.03 & 0.05 && 1.00 & 1.00 & 1.00 & 1.00 && 0.04 & 1.00 \\
        \hline
        MA$(2)$ & 200 & 200 && 0.03 & 0.05 & 0.04 & 0.05 && 0.11 & 0.19 & 0.28 & 0.38 && 0.06 & 0.39 \\
                &     & 400 && 0.03 & 0.04 & 0.08 & 0.05 && 0.17 & 0.33 & 0.46 & 0.57 && 0.05 & 0.63 \\
                &     & 800 && 0.04 & 0.06 & 0.07 & 0.07 && 0.29 & 0.53 & 0.63 & 0.74 && 0.05 & 0.82 \\
                & 400 & 200 && 0.02 & 0.03 & 0.05 & 0.04 && 0.16 & 0.31 & 0.47 & 0.58 && 0.06 & 0.58 \\
                &     & 400 && 0.03 & 0.05 & 0.05 & 0.05 && 0.28 & 0.51 & 0.69 & 0.81 && 0.06 & 0.78 \\
                &     & 800 && 0.03 & 0.04 & 0.05 & 0.05 && 0.44 & 0.75 & 0.88 & 0.95 && 0.05 & 0.94 \\
        \hline
        MA$(6)$ & 200 & 200 && 0.03 & 0.05 & 0.05 & 0.06 && 0.09 & 0.17 & 0.21 & 0.29 && 0.07 & 0.33 \\
                &     & 400 && 0.03 & 0.05 & 0.06 & 0.05 && 0.14 & 0.24 & 0.35 & 0.36 && 0.08 & 0.50 \\
                &     & 800 && 0.03 & 0.05 & 0.05 & 0.05 && 0.20 & 0.31 & 0.38 & 0.40 && 0.07 & 0.69 \\
                & 400 & 200 && 0.03 & 0.03 & 0.06 & 0.06 && 0.12 & 0.23 & 0.39 & 0.51 && 0.06 & 0.46 \\
                &     & 400 && 0.04 & 0.04 & 0.05 & 0.07 && 0.20 & 0.40 & 0.58 & 0.68 && 0.06 & 0.71 \\
                &     & 800 && 0.03 & 0.04 & 0.06 & 0.06 && 0.35 & 0.65 & 0.78 & 0.91 && 0.05 & 0.87 \\
        \hline
        AR$_{0.5}(1)$ & 200 & 200 && 0.03 & 0.05 & 0.04 & 0.05 && 0.09 & 0.17 & 0.24 & 0.30 && 0.07 & 0.37 \\
                      &     & 400 && 0.03 & 0.05 & 0.05 & 0.06 && 0.15 & 0.26 & 0.34 & 0.42 && 0.06 & 0.54 \\
                      &     & 800 && 0.03 & 0.06 & 0.05 & 0.05 && 0.24 & 0.37 & 0.43 & 0.46 && 0.06 & 0.76 \\
                      & 400 & 200 && 0.03 & 0.04 & 0.06 & 0.05 && 0.12 & 0.28 & 0.39 & 0.50 && 0.05 & 0.50 \\
                      &     & 400 && 0.04 & 0.04 & 0.05 & 0.06 && 0.24 & 0.43 & 0.60 & 0.73 && 0.06 & 0.74 \\
                      &     & 800 && 0.03 & 0.05 & 0.05 & 0.06 && 0.39 & 0.71 & 0.82 & 0.92 && 0.06 & 0.91 \\
        \hline
        AR$_{0.6}(1)$ & 200 & 200 && 0.03 & 0.04 & 0.06 & 0.04 && 0.08 & 0.15 & 0.20 & 0.24 && 0.07 & 0.32 \\
                      &     & 400 && 0.04 & 0.05 & 0.06 & 0.06 && 0.13 & 0.18 & 0.23 & 0.26 && 0.07 & 0.42 \\
                      &     & 800 && 0.03 & 0.03 & 0.03 & 0.03 && 0.12 & 0.19 & 0.19 & 0.19 && 0.08 & 0.66 \\
                      & 400 & 200 && 0.03 & 0.04 & 0.06 & 0.06 && 0.23 & 0.26 & 0.30 & 0.40 && 0.06 & 0.42 \\
                      &     & 400 && 0.03 & 0.04 & 0.06 & 0.06 && 0.17 & 0.34 & 0.48 & 0.59 && 0.05 & 0.58 \\
                      &     & 800 && 0.04 & 0.06 & 0.07 & 0.07 && 0.32 & 0.60 & 0.74 & 0.81 && 0.06 & 0.81 \\
        \hline
    \end{tabular}
    \caption{\it
        Rejection probabilities of the tests \eqref{rule} and \eqref{fully_adaptive_test} at the boundary of the hypotheses, that is, $\| \delta \|^2 = \Delta = 2.0$ and $\| \delta \|^2_{2,0} = \Delta = 2.0$, respectively (denoted by $H_0$) and under an alternative, that is  $\| \delta \|^2 = 2.25$ and $\| \delta \|^2_{2,0} = 2.25$, respectively (denoted by $H_1$).
        The columns titled $s/p$ specify the ratio of nonzero entries of $\delta$.
    }
    \label{table:erp_fullyadaptive}
\end{table}

\subsection*{Empirical rejection probabilities}

Next, we investigate the empirical rejection probabilities of the tests \eqref{rule} and \eqref{fully_adaptive_test}, which are displayed in Table~\ref{table:erp_fullyadaptive}. For the null hypothesis, we consider $\| \delta \|^2 = \Delta = 2.0$ and $\| \delta \|^2 = 2.25$ as alternative for the test \eqref{rule}. For the test \eqref{fully_adaptive_test}, we consider $\| \delta \|^2_{2,0} = \Delta = 2.0$ as null hypothesis and $\| \delta \|^2_{2,0} = 2.25$ as an alternative. For both tests, we use the rule \eqref{mhat} to estimate the trimming parameter $m$. In the left (main) part of the table, we show the rejection probabilities of the test \eqref{fully_adaptive_test} for the hypotheses \eqref{det11} with the measure \eqref{det11b}. In the two right columns, we display the results for the test \eqref{rule} using \eqref{det11a}. Note that the results of the tests are not directly comparable, as they refer to different hypotheses (except in the case $s/p=1)$. The table consists of different models with varying sample sizes $n$ and dimensions $p$. 

The overall approximation of the nominal level by the test \eqref{fully_adaptive_test} is good, and in most cases, where the set $\hat S_n$ was estimated, the simulated level is very close to the nominal level $\alpha = 0.05$.
The test is conservative in the independent model if the ratio $s/p$ is small. Observing the right part of the table, we see that the test \eqref{rule} is approximating its nominal level in all cases under consideration.

Both tests show a reasonable performance under the alternative.
In almost all cases, the power increases with the dimension $p$. When the dependence is too strong, this effect can diminish as can be observed for the AR$_{0.6}(1)$ model, when $n=200$ and $p=800$. Here, the signal-to-noise ratio is too weak for $\hat S_n$ to be a good estimate of $S$ (cf. also Table~\ref{tab:Shat_estimation}). This is also confirmed by the performance of the test \eqref{rule}, where this set is not estimated. The power of both tests decreases with a stronger dependency.
If it is known that the change vector $\delta$ is dense, i.e. $s/p=1$, both tests refer to the same hypotheses and in this case the test \eqref{rule} should be preferred, since it has more power than the test \eqref{fully_adaptive_test}.

\subsection{Application to pollution data}\label{secData_example}

We conclude this section by presenting a real-world data example obtained via the OpenAQ application programming interface (API, see \url{openaq.org}). 
The data set we use (accessed on 24 August 2025) consists of measurements of the pollutants $\mathrm{PM}_{10}$ and $\mathrm{PM}_{2.5}$ (in $\mu g / m^3$) from different stations in southern Europe, that is, the Iberian Peninsula and most of France. The time series data are measured from 17 November 2016 to 21 August 2025 in a weekly interval, which results in $n = 458$ measurements. For the first pollutant, our data include the data of 800 measurement stations and 581 stations measuring the second pollutant (each station contributes one measurement per week), resulting in a dimension $p = 1381$. Missing data points were interpolated and faulty (negative) values were replaced by zeros. 
As mentioned in Section \ref{sec1}, we square the hypotheses in \eqref{det11} for algebraic convenience. In this section, we replace $\Delta$ by $\Delta^2$ and test the hypotheses
\begin{align*}
    H_0^\Delta: \| \delta \|_{2,0} \leq \Delta ~~~ \text{versus} ~~~ H_1^\Delta: \| \delta \|_{2,0} > \Delta
\end{align*}
(and those for $\| \delta \|$ analogously). Table~\ref{tab:dataex_estimators} lists all relevant parameters and estimators of this data set, where we note that the values for $\hat m_n^{(1)}$ and $\hat m_n^{(2)}$ have been chosen by visual inspection of the difference functions $\Delta F_1$ and $\Delta F_2$, respectively. The location of the change point $\hat k_n = 175$ corresponds to 19 March 2020.
The small value of $\hat s_n$ indicates that the significant contribution to the change in the mean function is due to only a small number of stations. 
Since $\hat S_n$ is a conservative estimator for $S$, we note that one cannot infer that the stations included in the set $\hat S_n$ are the only stations that exhibit a change.

\begin{table}
    \centering
    \begin{tabular}{ cc c cccc c cc c cc }
        \hline
        $n$ & $p$ && $\hat k_n$ & $\hat s_n$ & $\hat m_n^{(1)}$ & $\hat m_n^{(2)}$ && $T_{n, \hat S_n}$ & $V_{n, \hat S_n}$ && $T_n$ & $V_n$ \\
        \hline
        458 & 1381 && 175  & 61 &  15 & 18 &&  2364.36 & 11.31 && 134.31 & 0.50 \\
        \hline
    \end{tabular}
    \caption{\it 
        Parameters and estimators of the pollution data set, related to the application of the tests \eqref{rule} and \eqref{fully_adaptive_test}.
    }
    \label{tab:dataex_estimators}
\end{table}

We applied both tests, that is, \eqref{rule} and \eqref{fully_adaptive_test} to this data set and obtained significant results in both cases. The results consist of $\Delta_{\max}^2$ and two confidence intervals (for $\| \delta \|^2_{2,0}$ and $\| \delta \|^2$) as described in Remark~\ref{remark01}. We display their square roots in Table~\ref{tab:dataex_testresults} to obtain results for $\| \delta \|_{2,0}$ and $\| \delta \|$. The positive values for $\Delta_{\max}$ indicate that the values for $\| \delta \|_{2,0}$ and $\| \delta \|$ are significant and any null hypothesis where $\Delta < \Delta_{\max}$ will be rejected.
These results are in line with the significant increase in pollutants $\mathrm{PM}_{10}$ and $\mathrm{PM}_{2.5}$ as reported by governmental sources, which is due to a Sahara desert dust wave that occurred in March 2020 (see \url{www.miteco.gob.es} and \url{www.atmo-france.org}).

\begin{table}
    \centering
    \begin{tabular}{ll c ccc }
        \hline
        & & & \multicolumn{3}{c}{$\alpha$} \\
        \cline{4-6} 
        Test & Quantity && 10\% & 5\% & 1\% \\
        \hline
        \eqref{fully_adaptive_test} & $\Delta_{\max}$ && 46.96          & 46.26          & 44.48 \\
                                  & upper CI          && [0, 50.23]     & [0, 50.88]     & [0, 52.45] \\
                                  & two-sided CI      && [46.26, 50.88] & [45.41, 51.64] & [43.95, 52.89] \\
        \hline
        \eqref{rule} & $\Delta_{\max}$ && 11.28          & 11.15          & 10.83 \\
                     & upper CI        && [0, 11.89]     & [0, 12.01]     & [0, 12.30] \\
                     & two-sided CI    && [11.15, 12.01] & [11.00, 12.15] & [10.73, 12.39] \\
        \hline
    \end{tabular}
    \caption{\it
        Quantities based on the pollution data set for the tests \eqref{rule} and \eqref{fully_adaptive_test} at various nominal levels.
    }
    \label{tab:dataex_testresults}
\end{table}






\begin{funding}
The work of Pascal Quanz and Holger Dette has been partially supported by the Deutsche Forschungsgemeinschaft (DFG), project number 45723897 and by TRR 391 \textit{Spatio-temporal Statistics for the Transition of Energy and Transport}, project number 520388526 (DFG), respectively. 
We also acknowledge support for computational resources from the DFG under \textit{Germany’s Excellence Strategy – EXC 2092 CASA – 390781972}.
\end{funding}

\begin{appendix}

\renewcommand{\theequation}{A.\arabic{equation}}
\setcounter{equation}{0}

\section*{Tail probabilities}

\begin{lem}\label{lemmasubexp}
    Let $\mathbb{B}$ be a standard Brownian motion, $\alpha \geq 0$ and consider the random variable
    \begin{align}\label{V_alpha_def}
        \mathbb{V}_\alpha = \bigg( \int_0^1 \lambda^{\alpha} (\mathbb{B} (\lambda) - \lambda \mathbb{B} (1))^2 \mathrm{d}\lambda \bigg)^{1/2}
    \end{align}
    \begin{enumerate}[label=(\roman*)]
        \item The characteristic function of $\mathbb{V}_\alpha^2$ is given by
        \begin{align*}
            \varphi_{\mathbb{V}_\alpha^2} (t) = \bigg( \frac{(1 + i)\sqrt{t}}{\alpha + 2}  \bigg)^{\frac{1}{2(\alpha + 2)}} \bigg( \Gamma \bigg( \frac{\alpha + 3}{\alpha + 2} \bigg) J_{1/(\alpha + 2)} \bigg( \frac{2(1+i) \sqrt{t}}{\alpha + 2} \bigg)  \bigg)^{-1/2},
        \end{align*}
        where $J_\gamma$ is the Bessel function of the first kind of order $\gamma$. Moreover, $\mathbb{V}_\alpha$ has a Lebesgue-density and is positive with probability $1$.
        \item The random variables $\mathbb{V_{\alpha}}^{-1}$ and $\mathbb{G}_\alpha = \mathbb{B}(1) / \mathbb{V}_\alpha$ have subexponential distributions.
        \item Let $\mathbb{V}_\alpha^{(1)}, \ldots, \mathbb{V}_\alpha^{(p)}$ have the same distribution as $\mathbb{V}_\alpha$. Then, as $p \to \infty$
        \begin{align*}
            \max_{\ell = 1, \ldots, p} \mathbb{V}_\alpha^{(\ell)} = O_\p (\sqrt{\log (p)}).
        \end{align*}
    \end{enumerate}
\end{lem}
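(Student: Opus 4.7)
The plan is to treat the three parts sequentially, with the bulk of the work concentrated in (i); parts (ii) and (iii) then reduce to standard Gaussian machinery.

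For part (i), the approach is to expand $\mathbb{V}_\alpha^2$ via Karhunen--Lo\`eve on the Brownian bridge $B_0(\lambda):=\mathbb{B}(\lambda)-\lambda\mathbb{B}(1)$ in the weighted space $L^2((0,1),\lambda^\alpha d\lambda)$. The covariance kernel $K(s,t)=s\wedge t-st$ of $B_0$ generates an integral operator whose reciprocal eigenvalues $\mu_k$ coincide with those of the Sturm--Liouville problem $-u''=\mu\lambda^\alpha u$, $u(0)=u(1)=0$. The substitution $u(\lambda)=\lambda^{1/2}v(z)$ with $z=\frac{2\sqrt{\mu}}{\alpha+2}\lambda^{(\alpha+2)/2}$ turns this into Bessel's equation of order $\nu:=\frac{1}{\alpha+2}$, whence $\mu_k=\frac{(\alpha+2)^2}{4}j_{\nu,k}^2$ with $j_{\nu,k}$ the $k$-th positive zero of $J_\nu$. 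Inserting these eigenvalues into $\varphi_{\mathbb{V}_\alpha^2}(t)=\prod_k(1-2it/\mu_k)^{-1/2}$ and invoking the Hadamard product $J_\nu(z)=\frac{z^\nu}{2^\nu\Gamma(\nu+1)}\prod_k(1-z^2/j_{\nu,k}^2)$ with $z=\frac{2(1+i)\sqrt{t}}{\alpha+2}$ produces the stated closed form. Existence of a Lebesgue density follows either from the integrability of $\varphi_{\mathbb{V}_\alpha^2}$ (the modulus of $J_\nu$ grows exponentially along the ray $z=(1+i)r$) or, more directly, from the representation $\mathbb{V}_\alpha^2=\sum_k\xi_k^2/\mu_k$ with i.i.d.\ standard Gaussians $\xi_k$. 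Positivity with probability one is immediate, since $\mathbb{V}_\alpha=0$ would force $B_0$ to vanish on a set of positive Lebesgue measure, which has probability zero by Fubini since $\mathbb{P}(B_0(\lambda_0)=0)=0$ for every $\lambda_0\in(0,1)$.

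For part (ii), the key structural observation is that $\mathbb{B}(1)\sim\mathcal{N}(0,1)$ is independent of the Brownian bridge $B_0$ (covariance check: $\mathrm{Cov}(\mathbb{B}(1),B_0(\lambda))=\lambda-\lambda=0$, plus joint Gaussianity), and therefore independent of $\mathbb{V}_\alpha$. Hence $\mathbb{G}_\alpha=\mathbb{B}(1)\cdot\mathbb{V}_\alpha^{-1}$ is a product of independent factors, and because the product of an independent sub-Gaussian and subexponential variable is subexponential, it is enough to prove that $\mathbb{V}_\alpha^{-1}$ has subexponential tails, i.e.\ to establish the small-ball bound $\mathbb{P}(\mathbb{V}_\alpha\le\epsilon)\lesssim\epsilon^{-a}\exp(-c/\epsilon^2)$ for some $a\ge 0$, $c>0$. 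This will be extracted from (i) by substituting $t=is$: using $(1+i)\sqrt{is}=i\sqrt{2s}$ and $J_\nu(iy)=i^\nu I_\nu(y)$ together with the modified Bessel asymptotics $I_\nu(r)\sim e^r/\sqrt{2\pi r}$ as $r\to\infty$ yields $\mathbb{E}[e^{-s\mathbb{V}_\alpha^2}]\lesssim s^{a}\exp(-\tfrac{2\sqrt{2s}}{\alpha+2})$ as $s\to\infty$. Chernoff's bound $\mathbb{P}(\mathbb{V}_\alpha^2\le\epsilon^2)\le e^{s\epsilon^2}\mathbb{E}[e^{-s\mathbb{V}_\alpha^2}]$, optimised at $s\asymp\epsilon^{-4}$, then delivers the stated small-ball bound, and in particular $\mathbb{P}(\mathbb{V}_\alpha^{-1}>t)\lesssim t^{2a}e^{-ct^2}$, which is subexponential (indeed sub-Gaussian).

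For part (iii), $\mathbb{V}_\alpha$ is a seminorm of the Gaussian process $B_0$ in a Hilbert space, so the Gaussian isoperimetric (Borell--TIS) inequality yields $\mathbb{P}(\mathbb{V}_\alpha-\mathbb{E}\mathbb{V}_\alpha>t)\le\exp(-t^2/(2\sigma_*^2))$ with $\sigma_*^2$ equal to the operator norm of the weighted covariance operator from (i), namely $\sigma_*^2=1/\mu_1=4/((\alpha+2)^2 j_{\nu,1}^2)<\infty$. A union bound over the $p$ copies, combined with integration of the tail, produces $\max_{\ell\le p}\mathbb{V}_\alpha^{(\ell)}\le\mathbb{E}\mathbb{V}_\alpha+\sigma_*\sqrt{2\log p}+O_\p(1)=O_\p(\sqrt{\log p})$. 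The principal obstacle lies in part (i): while the eigenvalue computation is classical, carefully tracking the branch of $\sqrt{t}$, the Bessel index $\nu=1/(\alpha+2)$, and the Hadamard-product constants until the Laplace asymptotics needed in (ii) drop out requires meticulous bookkeeping. Once the characteristic-function formula is in hand, parts (ii) and (iii) follow from standard Chernoff and Borell--TIS arguments.
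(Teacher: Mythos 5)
Your proposal is correct, and in parts (i) and (ii) it follows essentially the same strategy as the paper, while part (iii) takes a genuinely different route. For (i), you derive the eigenvalues of the weighted covariance operator directly via the Sturm--Liouville reduction to Bessel's equation and then assemble the characteristic function from the Hadamard product of $J_\nu$; the paper instead substitutes $\rho=\lambda^{\alpha+1}$ to identify $\mathbb{V}_\alpha^2$ with $\tfrac{1}{\alpha+1}\|\mathbb{U}(\lambda^{1/(\alpha+1)})\|_{L^2}^2$ and cites the known eigenvalue/Fredholm-determinant formula for that time-changed bridge. Both yield the stated closed form; your density argument via the Karhunen--Lo\`eve representation $\sum_k\xi_k^2/\mu_k$ is simpler than the paper's careful verification that $\varphi_{\mathbb{V}_\alpha^2}$ is integrable along the rays $\arg z=\pi/4,\,3\pi/4$. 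For (ii), both proofs rest on the Laplace-transform decay $\e[e^{-s\mathbb{V}_\alpha^2}]\lesssim e^{-c\sqrt{s}}$ combined with a Chernoff bound and the independence of $\mathbb{B}(1)$ and the bridge; you obtain this decay by analytic continuation of the characteristic function and the asymptotics of $I_\nu$, whereas the paper invokes a separate closed-form Laplace transform together with an explicit inequality for $I_\nu$ (Ifantis). For (iii), the paper uses the exact tail asymptotic of $\mathbb{V}_0$ from Tolmatz (2002) and the almost-sure monotonicity $\mathbb{V}_\alpha\le\mathbb{V}_0$, while you use Borell--TIS concentration for the $L^2(\lambda^\alpha\,d\lambda)$-seminorm of the bridge; your route is more self-contained and avoids the external tail asymptotic, at the cost of a less explicit constant. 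Both give the required sub-Gaussian upper tail and hence the $O_\p(\sqrt{\log p})$ bound via a union bound (which, as in the paper, needs no independence of the copies).

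One small caveat: the sentence ``the product of an independent sub-Gaussian and subexponential variable is subexponential'' is false in general (a standard normal times an independent exponential has tails of order $e^{-ct^{2/3}}$). Your proof is unaffected, because the small-ball bound you establish actually shows that $\mathbb{V}_\alpha^{-1}$ is sub-Gaussian, and the product of two independent sub-Gaussian variables is subexponential; you should simply invoke that (correct) version of the lemma.
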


\begin{lem}\label{lemmasubsubexp}
    Let $\mathbb{B}$ be a Brownian motion, $\mathbb{M} (\lambda) = \mathbb{B} (\lambda)^2 - \lambda$, $\alpha \geq 1$ and consider the random variables
    \begin{align}\label{WA}
        \mathbb{W}_\alpha = \bigg( \int_0^1 \lambda^\alpha \big( \mathbb{M} (\lambda) - \lambda^2 \mathbb{M} (1) \big)^2 \mathrm{d}\lambda \bigg)^{1/2}
    \end{align}
    and $\mathbb{H}_\alpha = \mathbb{M} (1) / \mathbb{W}_\alpha$. Then, for sufficiently large $t$ we have the following bounds
    \begin{enumerate}[label=(\roman*)]
        \item $\e [\exp (-t \mathbb{W}_\alpha^2)] \le 9 \exp (-C_\alpha t^{1/4})$,
        \item $\p (|\mathbb{H}_\alpha| > t) \le 11 \exp (-D_\alpha t^{2/5})$,
    \end{enumerate}
    where $C_\alpha = (16 (\alpha + 3))^{-1}$ and $D_\alpha = 3 \cdot (C_\alpha / 4)^{4/3}$.
\end{lem}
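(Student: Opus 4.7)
The plan is to prove (i) via a small ball probability bound for $\mathbb{W}_\alpha^2$ combined with Markov's inequality, and then derive (ii) from (i) together with the chi-squared tail of $\mathbb{M}(1) = \mathbb{B}^2(1) - 1$. The central technical device is the decomposition $\mathbb{B}(\lambda) = \lambda g + U(\lambda)$, where $g := \mathbb{B}(1) \sim \mathcal{N}(0,1)$ is independent of the Brownian bridge $U$. This gives
\begin{align*}
\mathbb{M}(\lambda) - \lambda^2 \mathbb{M}(1) = U^2(\lambda) + 2\lambda g U(\lambda) - \lambda(1-\lambda),
\end{align*}
so that, conditional on $U$, $\mathbb{W}_\alpha^2 = A + 2gB + g^2 C$ is a quadratic in the standard Gaussian $g$, where $A = \int_0^1 \lambda^\alpha (U^2(\lambda) - \lambda(1-\lambda))^2 d\lambda \ge 0$, $B = 2\int_0^1 \lambda^{\alpha+1} U(\lambda)(U^2(\lambda) - \lambda(1-\lambda)) d\lambda$, $C = 4\int_0^1 \lambda^{\alpha+2} U^2(\lambda) d\lambda \ge 0$, with $B^2 \le AC$ by Cauchy--Schwarz and all three quantities being $\sigma(U)$-measurable. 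Crucially, $C = 4\mathbb{V}_{\alpha+2}^2$ in the notation of Lemma~\ref{lemmasubexp}, so its small-ball behavior is already known.

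For (i), performing the Gaussian integration over $g$ conditional on $U$ and exploiting $B^2 \le AC$ yields
\begin{align*}
\e[\exp(-t\mathbb{W}_\alpha^2) \mid U] \le \frac{\exp(-tR)}{\sqrt{1+2tC}}, \qquad R := A - B^2/C \ge 0.
\end{align*}
Introducing thresholds $\epsilon, \eta > 0$, I split
\begin{align*}
\e[\exp(-t\mathbb{W}_\alpha^2)] \le \p(C < \eta) + \p(R < \epsilon) + \frac{\exp(-t\epsilon)}{\sqrt{1+2t\eta}}.
\end{align*}
The first term is bounded via Lemma~\ref{lemmasubexp}(ii), giving $\p(C < \eta) = \p(\mathbb{V}_{\alpha+2}^{-1} > 2/\sqrt{\eta}) \lesssim \exp(-c/\sqrt{\eta})$. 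Choosing $\epsilon \asymp t^{-3/4}$ and $\eta \asymp t^{-1/2}$ balances the three contributions, provided the small-ball bound $\p(R < \epsilon) \lesssim \exp(-c\epsilon^{-1/3})$ holds. Tracking the $\alpha$-dependence through the weights $\lambda^\alpha$ and collecting constants yields the stated $C_\alpha = (16(\alpha+3))^{-1}$ and factor $9$.

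Part (ii) follows by the split
\begin{align*}
\p(|\mathbb{H}_\alpha| > t) = \p(|\mathbb{M}(1)| > t\mathbb{W}_\alpha) \le \p(|\mathbb{M}(1)| > t\delta) + \p(\mathbb{W}_\alpha < \delta).
\end{align*}
The first term is controlled by $\p(|\mathbb{M}(1)| > s) = \p(\mathbb{B}^2(1) > s+1) \le 2\exp(-(s+1)/2)$, whereas Markov together with (i) gives
\begin{align*}
\p(\mathbb{W}_\alpha^2 < \delta^2) \le e^{t\delta^2}\e[\exp(-t\mathbb{W}_\alpha^2)] \le 9\exp(t\delta^2 - C_\alpha t^{1/4}),
\end{align*}
whose minimization over $t$ (stationary at $t \asymp \delta^{-8/3}$) yields $\p(\mathbb{W}_\alpha < \delta) \le 9\exp(-D_\alpha \delta^{-2/3})$ with exactly $D_\alpha = 3(C_\alpha/4)^{4/3}$. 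Finally, the choice $\delta = t^{-3/5}$ equates the two exponential scales (using $D_\alpha < 1/2$, which ensures $t\delta/2 = t^{2/5}/2 \ge D_\alpha t^{2/5}$), producing the stated $\p(|\mathbb{H}_\alpha| > t) \le 11 \exp(-D_\alpha t^{2/5})$.

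The principal obstacle is the small-ball estimate $\p(R < \epsilon) \lesssim \exp(-c\epsilon^{-1/3})$. Geometrically, $R$ is the squared weighted $L^2$-distance of $\lambda^{\alpha/2}(U^2(\lambda) - \lambda(1-\lambda))$ from the one-dimensional subspace spanned by $2\lambda^{\alpha/2+1} U(\lambda)$, making it a non-Gaussian quartic functional of the Brownian bridge $U$ for which the explicit Bessel-function machinery of Lemma~\ref{lemmasubexp} is unavailable. Standard Gaussian anti-concentration tools (such as Carbery--Wright) deliver only polynomial small-ball bounds and hence cannot yield the desired subexponential rate; the exponential decay must instead be extracted by exploiting the H\"older regularity of sample paths of $U$ together with a chaining/peeling argument that rules out near-degeneracy of the quadratic residual along the bridge. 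Once this core estimate is established, the remaining ingredients assemble straightforwardly into the stated bounds.
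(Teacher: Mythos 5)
Your reduction of part (i) to a conditional Gaussian integral is correct as far as it goes: with $\mathbb{B}(\lambda)=\lambda g+U(\lambda)$ one indeed gets $\mathbb{W}_\alpha^2=A+2gB+g^2C$ with $B^2\le AC$, $C=4\mathbb{V}_{\alpha+2}^2$, and $\e[\exp(-t\mathbb{W}_\alpha^2)\mid U]\le (1+2tC)^{-1/2}\exp(-tR)$ with $R=A-B^2/C$. But the argument then stands or falls on the small-ball estimate $\p(R<\epsilon)\lesssim\exp(-c\,\epsilon^{-1/3})$, and this is precisely what you do not prove. Dropping $\exp(-tR)$ is not an option: the factor $(1+2t\eta)^{-1/2}$ alone decays only polynomially in $t$ for any admissible $\eta$, so the entire stretched-exponential rate $\exp(-C_\alpha t^{1/4})$ must come from $R$. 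As you yourself observe, $R$ is the squared $L^2$-distance of a quartic functional of the bridge from a one-dimensional random subspace, Carbery--Wright-type anti-concentration gives only polynomial small-ball bounds, and the explicit Karhunen--Lo\`eve/Bessel machinery of Lemma~\ref{lemmasubexp} does not apply to fourth-order chaos. The appeal to ``H\"older regularity plus a chaining/peeling argument'' is a statement of intent, not a proof; whether such an argument can deliver an exponential small-ball rate for this particular quartic residual, with the specific exponent $-1/3$ needed to produce $t^{1/4}$ after balancing, is exactly the hard content of the lemma and is left entirely open. Part (ii) is fine modulo (i) and in fact coincides with the paper's own derivation (Markov plus optimization giving $D_\alpha=3(C_\alpha/4)^{4/3}$, then $\varepsilon=t^{-3/5}$).

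The paper circumvents the quartic small-ball problem with a different decomposition that you may want to study: since $\mathbb{M}(\lambda)=\mathbb{B}^2(\lambda)-\lambda$ is a continuous martingale, Dambis--Dubins--Schwarz gives $\mathbb{M}(\lambda)=W(\langle\mathbb{M}\rangle_\lambda)$ with $\langle\mathbb{M}\rangle_\lambda=4\int_0^\lambda\mathbb{B}^2(t)\,\mathrm{d}t$ for a new Brownian motion $W$. Replacing the random clock by its mean $2\lambda^2$ and substituting $t=\lambda^2$ turns the main term $\int_0^1\lambda^\alpha(W(2\lambda^2)-\lambda^2W(2))^2\mathrm{d}\lambda$ into $\mathbb{V}_{\alpha/2-1/2}^2$, whose Laplace transform is already bounded by $\exp(-d_\alpha\sqrt{t})$ in Lemma~\ref{lemmasubexp}; the time-change error is absorbed via Young's inequality and standard Gaussian maximal inequalities on an event of probability $1-O(\exp(-\eta/8))$, and optimizing $\eta\asymp t^{1/4}$ produces the $t^{1/4}$ exponent. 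In short, the paper trades your quartic-chaos small-ball problem for a second-order one that is already solved; without supplying a complete proof of $\p(R<\epsilon)\lesssim\exp(-c\,\epsilon^{-1/3})$, your argument has a genuine gap at its core.
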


\end{appendix}

\bibliographystyle{apalike}
\bibliography{lit}

\begin{supplement}

\stitle{Supplement to \enquote{Practically significant change points in high dimension - measuring signal strength pro active component}}


\end{supplement}
\setcounter{section}{0}

\renewcommand\thesection{S\arabic{section}}
\renewcommand\thesubsection{\thesection.\arabic{subsection}}
\renewcommand\thesubsubsection{\thesubsection.\arabic{subsubsection}}

\section{Additional simulation results}

In this section, we present additional simulation results, complementing the results from Section \ref{sec3}.

\subsection*{Accuracy of the change point estimator $\hat \vartheta_n$}

In Tables~\ref{table:cp_heatmap_ind}, \ref{table:cp_heatmap_ma2} and \ref{table:cp_heatmap_ma4}, we provide accuracy results for the change point estimator for various models. The value in each cell represents the proportion
\begin{align*}
    | \{ | \hat \vartheta_n^{(i)} - 0.6 | \le 0.05  \mid i = 1, \ldots, 1000 \}| / 1000
\end{align*}
of replications, where the change point $\vartheta_0 = 0.6$ is estimated (almost) correctly. We consider a deviation by 0.05 a precise estimate, as minor changes of the estimate for $\vartheta_0$ do not significantly influence the outcome of the tests. Moreover, we emphasize that for almost all combinations of $\| \delta \|_{2,0}^2$ and $s/p$ in the tables, the value $\vartheta_0 = 0.6$ is attained most frequently and, as mentioned in Section \ref{sec3}, these results show that for small values of $\| \delta \|^2_{2,0}$ and $s/p$, the estimate $\hat \vartheta_n$ for $\vartheta_0$ is very precise. This justifies the use of this estimator even in the case $s \ll p$. As can be observed, the accuracy is better for the independent case and becomes slightly worse for the dependent models. Note that the values for $\| \delta \|_{2,0}^2$ and $s/p$ are still quite small.

\begin{table}[!ht]
    \centering
    $s/p$
    \begin{tabular}{c|*{13}{c} }
        0.60 & \gcell{0.04} & \gcell{0.37} & \gcell{0.98} & \gcell{1.00} & \gcell{1.00} & \gcell{1.00} & \gcell{1.00} & \gcell{1.00} & \gcell{1.00} & \gcell{1.00} & \gcell{1.00} & \gcell{1.00} & \gcell{1.00}\\
        0.55 & \gcell{0.04} & \gcell{0.37} & \gcell{0.98} & \gcell{1.00} & \gcell{1.00} & \gcell{1.00} & \gcell{1.00} & \gcell{1.00} & \gcell{1.00} & \gcell{1.00} & \gcell{1.00} & \gcell{1.00} & \gcell{1.00}\\
        0.50 & \gcell{0.04} & \gcell{0.32} & \gcell{0.94} & \gcell{1.00} & \gcell{1.00} & \gcell{1.00} & \gcell{1.00} & \gcell{1.00} & \gcell{1.00} & \gcell{1.00} & \gcell{1.00} & \gcell{1.00} & \gcell{1.00}\\
        0.45 & \gcell{0.05} & \gcell{0.32} & \gcell{0.92} & \gcell{1.00} & \gcell{1.00} & \gcell{1.00} & \gcell{1.00} & \gcell{1.00} & \gcell{1.00} & \gcell{1.00} & \gcell{1.00} & \gcell{1.00} & \gcell{1.00}\\
        0.40 & \gcell{0.04} & \gcell{0.26} & \gcell{0.88} & \gcell{1.00} & \gcell{1.00} & \gcell{1.00} & \gcell{1.00} & \gcell{1.00} & \gcell{1.00} & \gcell{1.00} & \gcell{1.00} & \gcell{1.00} & \gcell{1.00}\\
        0.35 & \gcell{0.05} & \gcell{0.23} & \gcell{0.84} & \gcell{1.00} & \gcell{1.00} & \gcell{1.00} & \gcell{1.00} & \gcell{1.00} & \gcell{1.00} & \gcell{1.00} & \gcell{1.00} & \gcell{1.00} & \gcell{1.00}\\
        0.30 & \gcell{0.06} & \gcell{0.24} & \gcell{0.77} & \gcell{0.98} & \gcell{1.00} & \gcell{1.00} & \gcell{1.00} & \gcell{1.00} & \gcell{1.00} & \gcell{1.00} & \gcell{1.00} & \gcell{1.00} & \gcell{1.00}\\
        0.25 & \gcell{0.05} & \gcell{0.17} & \gcell{0.67} & \gcell{0.97} & \gcell{1.00} & \gcell{1.00} & \gcell{1.00} & \gcell{1.00} & \gcell{1.00} & \gcell{1.00} & \gcell{1.00} & \gcell{1.00} & \gcell{1.00}\\
        0.20 & \gcell{0.04} & \gcell{0.16} & \gcell{0.52} & \gcell{0.93} & \gcell{1.00} & \gcell{1.00} & \gcell{1.00} & \gcell{1.00} & \gcell{1.00} & \gcell{1.00} & \gcell{1.00} & \gcell{1.00} & \gcell{1.00}\\
        0.15 & \gcell{0.06} & \gcell{0.10} & \gcell{0.34} & \gcell{0.80} & \gcell{0.98} & \gcell{1.00} & \gcell{1.00} & \gcell{1.00} & \gcell{1.00} & \gcell{1.00} & \gcell{1.00} & \gcell{1.00} & \gcell{1.00}\\
        0.10 & \gcell{0.04} & \gcell{0.08} & \gcell{0.24} & \gcell{0.56} & \gcell{0.88} & \gcell{0.98} & \gcell{1.00} & \gcell{1.00} & \gcell{1.00} & \gcell{1.00} & \gcell{1.00} & \gcell{1.00} & \gcell{1.00}\\
        0.05 & \gcell{0.03} & \gcell{0.08} & \gcell{0.11} & \gcell{0.26} & \gcell{0.52} & \gcell{0.79} & \gcell{0.92} & \gcell{0.98} & \gcell{1.00} & \gcell{1.00} & \gcell{1.00} & \gcell{1.00} & \gcell{1.00}\\
        0.00 & \gcell{0.05} & \gcell{0.05} & \gcell{0.04} & \gcell{0.06} & \gcell{0.04} & \gcell{0.05} & \gcell{0.05} & \gcell{0.05} & \gcell{0.04} & \gcell{0.06} & \gcell{0.05} & \gcell{0.04} & \gcell{0.05}\\
          \hline
        \multicolumn{1}{c|}{} & 0.00 & 0.05 & 0.10 & 0.15 & 0.20 & 0.25 & 0.30 & 0.35 & 0.40 & 0.45 & 0.50 & 0.55 &  0.60 \\
        \multicolumn{1}{c}{} & \multicolumn{13}{c}{$\| \delta \|^2_{2,0}$}
    \end{tabular}
    \caption{\it
        Accuracy of the change point estimator (for sparse data) in the independent model, where $n = 200$ and $p = 400$.
    }
    \label{table:cp_heatmap_ind}
\end{table}

\begin{table}[!ht]
    \centering
    $s/p$
    \begin{tabular}{c|*{13}{c} }
        0.60 & \gcell{0.08} & \gcell{0.09} & \gcell{0.18} & \gcell{0.33} & \gcell{0.53} & \gcell{0.69} & \gcell{0.83} & \gcell{0.91} & \gcell{0.94} & \gcell{0.97} & \gcell{0.99} & \gcell{1.00} & \gcell{1.00}\\
        0.55 & \gcell{0.07} & \gcell{0.09} & \gcell{0.16} & \gcell{0.29} & \gcell{0.50} & \gcell{0.64} & \gcell{0.80} & \gcell{0.94} & \gcell{0.95} & \gcell{0.96} & \gcell{0.98} & \gcell{0.99} & \gcell{1.00}\\
        0.50 & \gcell{0.08} & \gcell{0.09} & \gcell{0.14} & \gcell{0.27} & \gcell{0.44} & \gcell{0.64} & \gcell{0.80} & \gcell{0.87} & \gcell{0.94} & \gcell{0.95} & \gcell{0.98} & \gcell{0.99} & \gcell{0.99}\\
        0.45 & \gcell{0.06} & \gcell{0.09} & \gcell{0.17} & \gcell{0.25} & \gcell{0.40} & \gcell{0.60} & \gcell{0.75} & \gcell{0.83} & \gcell{0.90} & \gcell{0.94} & \gcell{0.95} & \gcell{0.99} & \gcell{0.99}\\
        0.40 & \gcell{0.08} & \gcell{0.08} & \gcell{0.15} & \gcell{0.24} & \gcell{0.36} & \gcell{0.57} & \gcell{0.67} & \gcell{0.79} & \gcell{0.87} & \gcell{0.94} & \gcell{0.95} & \gcell{0.98} & \gcell{0.98}\\
        0.35 & \gcell{0.09} & \gcell{0.11} & \gcell{0.14} & \gcell{0.22} & \gcell{0.34} & \gcell{0.48} & \gcell{0.63} & \gcell{0.75} & \gcell{0.83} & \gcell{0.90} & \gcell{0.92} & \gcell{0.97} & \gcell{0.98}\\
        0.30 & \gcell{0.06} & \gcell{0.10} & \gcell{0.12} & \gcell{0.21} & \gcell{0.31} & \gcell{0.43} & \gcell{0.62} & \gcell{0.70} & \gcell{0.80} & \gcell{0.88} & \gcell{0.91} & \gcell{0.94} & \gcell{0.96}\\
        0.25 & \gcell{0.05} & \gcell{0.07} & \gcell{0.10} & \gcell{0.17} & \gcell{0.26} & \gcell{0.39} & \gcell{0.51} & \gcell{0.63} & \gcell{0.74} & \gcell{0.83} & \gcell{0.87} & \gcell{0.93} & \gcell{0.96}\\
        0.20 & \gcell{0.08} & \gcell{0.10} & \gcell{0.12} & \gcell{0.12} & \gcell{0.21} & \gcell{0.32} & \gcell{0.42} & \gcell{0.55} & \gcell{0.64} & \gcell{0.75} & \gcell{0.80} & \gcell{0.85} & \gcell{0.92}\\
        0.15 & \gcell{0.08} & \gcell{0.07} & \gcell{0.09} & \gcell{0.13} & \gcell{0.17} & \gcell{0.24} & \gcell{0.32} & \gcell{0.42} & \gcell{0.57} & \gcell{0.64} & \gcell{0.74} & \gcell{0.79} & \gcell{0.85}\\
        0.10 & \gcell{0.07} & \gcell{0.06} & \gcell{0.08} & \gcell{0.10} & \gcell{0.12} & \gcell{0.17} & \gcell{0.24} & \gcell{0.28} & \gcell{0.37} & \gcell{0.47} & \gcell{0.59} & \gcell{0.63} & \gcell{0.70}\\
        0.05 & \gcell{0.07} & \gcell{0.09} & \gcell{0.08} & \gcell{0.09} & \gcell{0.12} & \gcell{0.14} & \gcell{0.15} & \gcell{0.18} & \gcell{0.22} & \gcell{0.25} & \gcell{0.32} & \gcell{0.37} & \gcell{0.44}\\
        0.00 & \gcell{0.08} & \gcell{0.07} & \gcell{0.07} & \gcell{0.08} & \gcell{0.08} & \gcell{0.07} & \gcell{0.08} & \gcell{0.07} & \gcell{0.07} & \gcell{0.05} & \gcell{0.07} & \gcell{0.06} & \gcell{0.07}\\
        \hline
        \multicolumn{1}{c|}{} & 0.00 & 0.05 & 0.10 & 0.15 & 0.20 & 0.25 & 0.30 & 0.35 & 0.40 & 0.45 & 0.50 & 0.55 &  0.60 \\
        \multicolumn{1}{c}{} & \multicolumn{13}{c}{$\| \delta \|^2_{2,0}$}
    \end{tabular}
    \caption{\it
        Accuracy of the change point estimator (for sparse data) in the $\mathrm{MA}(2)$ model, where $n = 200$ and $p = 400$.
    }
    \label{table:cp_heatmap_ma2}
\end{table}

\begin{table}[!ht]
    \centering
    $s/p$
    \begin{tabular}{c|*{13}{c} }        
        0.60 & \gcell{0.08} & \gcell{0.09} & \gcell{0.17} & \gcell{0.27} & \gcell{0.42} & \gcell{0.58} & \gcell{0.72} & \gcell{0.82} & \gcell{0.90} & \gcell{0.94} & \gcell{0.97} & \gcell{0.99} & \gcell{0.99}\\
        0.55 & \gcell{0.08} & \gcell{0.09} & \gcell{0.15} & \gcell{0.27} & \gcell{0.40} & \gcell{0.56} & \gcell{0.72} & \gcell{0.82} & \gcell{0.89} & \gcell{0.94} & \gcell{0.96} & \gcell{0.97} & \gcell{0.99}\\
        0.50 & \gcell{0.10} & \gcell{0.09} & \gcell{0.16} & \gcell{0.25} & \gcell{0.34} & \gcell{0.52} & \gcell{0.64} & \gcell{0.78} & \gcell{0.86} & \gcell{0.92} & \gcell{0.95} & \gcell{0.97} & \gcell{0.98}\\
        0.45 & \gcell{0.07} & \gcell{0.09} & \gcell{0.14} & \gcell{0.24} & \gcell{0.35} & \gcell{0.49} & \gcell{0.60} & \gcell{0.74} & \gcell{0.83} & \gcell{0.89} & \gcell{0.94} & \gcell{0.95} & \gcell{0.97}\\
        0.40 & \gcell{0.10} & \gcell{0.10} & \gcell{0.12} & \gcell{0.19} & \gcell{0.33} & \gcell{0.45} & \gcell{0.59} & \gcell{0.69} & \gcell{0.78} & \gcell{0.88} & \gcell{0.92} & \gcell{0.95} & \gcell{0.97}\\
        0.35 & \gcell{0.09} & \gcell{0.08} & \gcell{0.14} & \gcell{0.19} & \gcell{0.27} & \gcell{0.39} & \gcell{0.50} & \gcell{0.66} & \gcell{0.73} & \gcell{0.83} & \gcell{0.88} & \gcell{0.93} & \gcell{0.95}\\
        0.30 & \gcell{0.07} & \gcell{0.09} & \gcell{0.11} & \gcell{0.18} & \gcell{0.25} & \gcell{0.34} & \gcell{0.46} & \gcell{0.60} & \gcell{0.71} & \gcell{0.78} & \gcell{0.85} & \gcell{0.90} & \gcell{0.92}\\
        0.25 & \gcell{0.07} & \gcell{0.09} & \gcell{0.11} & \gcell{0.14} & \gcell{0.22} & \gcell{0.31} & \gcell{0.38} & \gcell{0.52} & \gcell{0.63} & \gcell{0.72} & \gcell{0.78} & \gcell{0.86} & \gcell{0.88}\\
        0.20 & \gcell{0.07} & \gcell{0.10} & \gcell{0.10} & \gcell{0.16} & \gcell{0.17} & \gcell{0.28} & \gcell{0.33} & \gcell{0.44} & \gcell{0.52} & \gcell{0.62} & \gcell{0.71} & \gcell{0.79} & \gcell{0.84}\\
        0.15 & \gcell{0.08} & \gcell{0.09} & \gcell{0.10} & \gcell{0.12} & \gcell{0.18} & \gcell{0.19} & \gcell{0.31} & \gcell{0.37} & \gcell{0.43} & \gcell{0.53} & \gcell{0.59} & \gcell{0.68} & \gcell{0.75}\\
        0.10 & \gcell{0.10} & \gcell{0.08} & \gcell{0.08} & \gcell{0.12} & \gcell{0.14} & \gcell{0.16} & \gcell{0.20} & \gcell{0.27} & \gcell{0.30} & \gcell{0.36} & \gcell{0.44} & \gcell{0.51} & \gcell{0.59}\\
        0.05 & \gcell{0.10} & \gcell{0.06} & \gcell{0.08} & \gcell{0.09} & \gcell{0.10} & \gcell{0.13} & \gcell{0.14} & \gcell{0.17} & \gcell{0.18} & \gcell{0.21} & \gcell{0.25} & \gcell{0.28} & \gcell{0.36}\\
        0.00 & \gcell{0.08} & \gcell{0.09} & \gcell{0.09} & \gcell{0.09} & \gcell{0.07} & \gcell{0.07} & \gcell{0.08} & \gcell{0.09} & \gcell{0.10} & \gcell{0.08} & \gcell{0.07} & \gcell{0.09} & \gcell{0.09}\\
          \hline
        \multicolumn{1}{c|}{} & 0.00 & 0.05 & 0.10 & 0.15 & 0.20 & 0.25 & 0.30 & 0.35 & 0.40 & 0.45 & 0.50 & 0.55 &  0.60 \\
        \multicolumn{1}{c}{} & \multicolumn{13}{c}{$\| \delta \|^2_{2,0}$}
    \end{tabular}
    \caption{\it
        Accuracy of the change point estimator (for sparse data) in the $\mathrm{MA}(4)$ model, where $n = 200$ and $p = 400$.
    }
    \label{table:cp_heatmap_ma4}
\end{table}

\subsection*{Sensitivity with respect to the choice of the measure $\nu$ in the self-normalizing statistic}

In the definition of the self-normalizing statistic $V_n = V_n (\nu)$, we choose $\nu$ as the discrete uniform probability measure on the set $\{ 1/K, \ldots,~{(K-1) / K} \}$. Table~\ref{tab:sensitivityK} shows that the choice of $K$ has a very small impact on the the performance of the test \eqref{rule}. Note that for a different choice of $K$, the quantile $q_{1-\alpha}$ in the decision rules \eqref{rule} and \eqref{fully_adaptive_test} must also be adapted (see Table~\ref{quantiletable}).

\begin{table}[!ht]
    \centering
    \begin{tabular}{ccc c cccc c cccc }
        \hline
        & \multicolumn{3}{c}{} & \multicolumn{4}{c}{$H_0$} && \multicolumn{4}{c}{$H_1$} \\
        \cline{5-8} \cline{10-13}
        & \multicolumn{3}{c}{} & \multicolumn{4}{c}{$K$} && \multicolumn{4}{c}{$K$} \\
        \cline{5-8} \cline{10-13}
        Model & $n$ & $p$ && 10 & 15 & 20 & 25 && 10 & 15 & 20 & 25 \\
        \hline
        IND & 200 & 200 && 0.05 & 0.04 & 0.05 & 0.05 && 1.00 & 1.00 & 1.00 & 1.00 \\
            &     & 400 && 0.05 & 0.04 & 0.05 & 0.06 && 1.00 & 1.00 & 1.00 & 1.00 \\
            &     & 800 && 0.06 & 0.05 & 0.04 & 0.05 && 1.00 & 1.00 & 1.00 & 1.00 \\
            & 400 & 200 && 0.06 & 0.05 & 0.05 & 0.05 && 1.00 & 1.00 & 1.00 & 1.00 \\
            &     & 400 && 0.05 & 0.04 & 0.04 & 0.05 && 1.00 & 1.00 & 1.00 & 1.00 \\
            &     & 800 && 0.05 & 0.05 & 0.04 & 0.05 && 1.00 & 1.00 & 1.00 & 1.00 \\
        \hline
        MA$(2)$ & 200 & 200 && 0.06 & 0.07 & 0.06 & 0.06 && 0.36 & 0.40 & 0.39 & 0.38 \\
                &     & 400 && 0.07 & 0.06 & 0.05 & 0.06 && 0.55 & 0.57 & 0.63 & 0.62 \\
                &     & 800 && 0.06 & 0.06 & 0.05 & 0.05 && 0.75 & 0.80 & 0.82 & 0.81 \\
                & 400 & 200 && 0.05 & 0.04 & 0.06 & 0.06 && 0.52 & 0.58 & 0.58 & 0.57 \\
                &     & 400 && 0.05 & 0.05 & 0.06 & 0.05 && 0.75 & 0.77 & 0.78 & 0.78 \\
                &     & 800 && 0.05 & 0.06 & 0.05 & 0.05 && 0.91 & 0.94 & 0.94 & 0.95 \\
        \hline
        MA$(6)$ & 200 & 200 && 0.08 & 0.06 & 0.07 & 0.08 && 0.35 & 0.31 & 0.33 & 0.38 \\
                &     & 400 && 0.07 & 0.07 & 0.08 & 0.07 && 0.47 & 0.49 & 0.50 & 0.50 \\
                &     & 800 && 0.07 & 0.08 & 0.07 & 0.07 && 0.67 & 0.69 & 0.69 & 0.71 \\
                & 400 & 200 && 0.07 & 0.05 & 0.06 & 0.07 && 0.47 & 0.47 & 0.46 & 0.50 \\
                &     & 400 && 0.06 & 0.06 & 0.06 & 0.06 && 0.66 & 0.69 & 0.71 & 0.69 \\
                &     & 800 && 0.07 & 0.07 & 0.05 & 0.07 && 0.85 & 0.86 & 0.87 & 0.88 \\
        \hline
        AR$_{0.5}(1)$ & 200 & 200 && 0.08 & 0.07 & 0.07 & 0.07 && 0.33 & 0.36 & 0.37 & 0.36 \\
                      &     & 400 && 0.08 & 0.07 & 0.06 & 0.08 && 0.50 & 0.50 & 0.54 & 0.58 \\
                      &     & 800 && 0.06 & 0.07 & 0.06 & 0.07 && 0.73 & 0.72 & 0.76 & 0.74 \\
                      & 400 & 200 && 0.06 & 0.05 & 0.05 & 0.05 && 0.47 & 0.51 & 0.50 & 0.52 \\
                      &     & 400 && 0.07 & 0.06 & 0.06 & 0.06 && 0.70 & 0.69 & 0.74 & 0.74 \\
                      &     & 800 && 0.06 & 0.04 & 0.06 & 0.05 && 0.87 & 0.91 & 0.91 & 0.91 \\
        \hline
        AR$_{0.6}(1)$ & 200 & 200 && 0.08 & 0.07 & 0.07 & 0.08 && 0.28 & 0.30 & 0.32 & 0.30 \\
                      &     & 400 && 0.07 & 0.08 & 0.07 & 0.07 && 0.40 & 0.41 & 0.42 & 0.46 \\
                      &     & 800 && 0.07 & 0.06 & 0.08 & 0.08 && 0.60 & 0.58 & 0.66 & 0.62 \\
                      & 400 & 200 && 0.05 & 0.07 & 0.06 & 0.07 && 0.40 & 0.36 & 0.42 & 0.40 \\
                      &     & 400 && 0.08 & 0.06 & 0.05 & 0.07 && 0.53 & 0.58 & 0.58 & 0.60 \\
                      &     & 800 && 0.05 & 0.05 & 0.06 & 0.06 && 0.76 & 0.78 & 0.81 & 0.79 \\
        \hline
    \end{tabular}
    \caption{\it
        Sensitivity of the test \eqref{rule} for the hypotheses \eqref{det11} with the norm \eqref{det11a} with respect to the choice of $\nu$ in the self-normalizing statistic. Different discrete uniform distributions on $\{ 1 / K, \ldots, (K-1)/K \}$ are considered. Under $H_0$ the data was generated with $\| \delta \|^2 = 2.0$ (boundary of the hypotheses) and under the alternative $H_1$ the data was generated with $\| \delta \|^2 = 2.25$.
    }
    \label{tab:sensitivityK}
\end{table}

Moving to the second test \eqref{fully_adaptive_test}, we require the Lebesgue measure in the definition of $\hat v_\ell$ for each $\ell = 1, \ldots, p$. However, in practice, one must use an approximation for this, similar to the discrete uniform probability measure that we use for $\nu$ in $V_n$. Hence, one is left with the choice for $K = K_{\hat S_n}$ as well. In Table~\ref{tab:sensitivityK2}, we display results for the test \eqref{fully_adaptive_test} for some choices of $K_{\hat S_n}$ and independently some choices of $K = K_{\hat V_n}$ in the definition of $V_n$. In general, the final test performance is not much affected from this either. Minor differences can be seen between the cases $K_{\hat S_n} = 10$ and $20$, where the larger of these two values mainly performs better, since the approximation to the Lebesgue measure is better.

\begin{table}
    \centering
    \begin{tabular}{ccc c cc c cc c cc c cc}
        \hline
        & & && \multicolumn{5}{c}{$H_0$} && \multicolumn{5}{c}{$H_1$} \\
        \cline{5-9} \cline{11-15}
        & & && \multicolumn{5}{c}{$K_{\hat V_n}$} && \multicolumn{5}{c}{$K_{\hat V_n}$} \\
        \cline{5-9} \cline{11-15}
        & & && \multicolumn{2}{c}{$10$} && \multicolumn{2}{c}{$20$} && \multicolumn{2}{c}{$10$} && \multicolumn{2}{c}{$20$} \\
        \cline{5-6} \cline{8-9} \cline{11-12} \cline{14-15}
        & & && \multicolumn{2}{c}{$K_{\hat S_n}$} && \multicolumn{2}{c}{$K_{\hat S_n}$} && \multicolumn{2}{c}{$K_{\hat S_n}$} && \multicolumn{2}{c}{$K_{\hat S_n}$} \\
        \cline{5-6} \cline{8-9} \cline{11-12} \cline{14-15}
        Model & $n$ & $p$ && 10 & 20 && 10 & 20 && 10 & 20 && 10 & 20 \\
        \hline
        IND & 200 & 200 && 0.06 & 0.05 && 0.05 & 0.04 && 1.00 & 1.00 && 1.00 & 1.00 \\
            &     & 400 && 0.05 & 0.05 && 0.05 & 0.04 && 1.00 & 1.00 && 1.00 & 1.00 \\
            &     & 800 && 0.07 & 0.07 && 0.05 & 0.06 && 1.00 & 1.00 && 1.00 & 1.00 \\
            & 400 & 200 && 0.05 & 0.06 && 0.06 & 0.06 && 1.00 & 1.00 && 1.00 & 1.00 \\
            &     & 400 && 0.05 & 0.05 && 0.05 & 0.05 && 1.00 & 1.00 && 1.00 & 1.00 \\
            &     & 800 && 0.07 & 0.06 && 0.07 & 0.06 && 1.00 & 1.00 && 1.00 & 1.00 \\
        \hline
        MA(2) & 200 & 200 && 0.07 & 0.06 && 0.07 & 0.05 && 0.32 & 0.36 && 0.36 & 0.38 \\
              &     & 400 && 0.06 & 0.05 && 0.05 & 0.05 && 0.49 & 0.50 && 0.52 & 0.57 \\
              &     & 800 && 0.06 & 0.08 && 0.08 & 0.07 && 0.66 & 0.70 && 0.73 & 0.74 \\
              & 400 & 200 && 0.06 & 0.06 && 0.06 & 0.04 && 0.53 & 0.50 && 0.56 & 0.58 \\
              &     & 400 && 0.04 & 0.05 && 0.05 & 0.05 && 0.73 & 0.75 && 0.80 & 0.81 \\
              &     & 800 && 0.06 & 0.05 && 0.06 & 0.05 && 0.90 & 0.92 && 0.95 & 0.95 \\
        \hline
        MA(6) & 200 & 200 && 0.06 & 0.05 && 0.06 & 0.06 && 0.26 & 0.28 && 0.25 & 0.29 \\
              &     & 400 && 0.06 & 0.06 && 0.06 & 0.05 && 0.31 & 0.35 && 0.36 & 0.36 \\
              &     & 800 && 0.04 & 0.06 && 0.05 & 0.05 && 0.36 & 0.39 && 0.39 & 0.40 \\
              & 400 & 200 && 0.06 & 0.06 && 0.06 & 0.06 && 0.41 & 0.43 && 0.49 & 0.51 \\
              &     & 400 && 0.06 & 0.06 && 0.06 & 0.07 && 0.68 & 0.64 && 0.69 & 0.68 \\
              &     & 800 && 0.08 & 0.07 && 0.07 & 0.06 && 0.85 & 0.86 && 0.89 & 0.91 \\
        \hline
        AR$_{0.5}(1)$ & 200 & 200 && 0.06 & 0.05 && 0.06 & 0.05 && 0.26 & 0.27 && 0.30 & 0.30 \\
                      &     & 400 && 0.06 & 0.07 && 0.06 & 0.06 && 0.38 & 0.37 && 0.39 & 0.42 \\
                      &     & 800 && 0.06 & 0.07 && 0.05 & 0.05 && 0.42 & 0.43 && 0.46 & 0.46 \\
                      & 400 & 200 && 0.07 & 0.06 && 0.07 & 0.05 && 0.52 & 0.47 && 0.52 & 0.50 \\
                      &     & 400 && 0.07 & 0.06 && 0.06 & 0.06 && 0.67 & 0.70 && 0.71 & 0.73 \\
                      &     & 800 && 0.07 & 0.07 && 0.07 & 0.06 && 0.87 & 0.88 && 0.92 & 0.92 \\
        \hline
        AR$_{0.6}(1)$ & 200 & 200 && 0.06 & 0.06 && 0.06 & 0.04 && 0.20 & 0.19 && 0.22 & 0.24 \\
                      &     & 400 && 0.05 & 0.04 && 0.06 & 0.06 && 0.25 & 0.23 && 0.23 & 0.26 \\
                      &     & 800 && 0.02 & 0.03 && 0.03 & 0.03 && 0.17 & 0.19 && 0.17 & 0.19 \\
                      & 400 & 200 && 0.06 & 0.07 && 0.06 & 0.06 && 0.38 & 0.39 && 0.38 & 0.40 \\
                      &     & 400 && 0.07 & 0.07 && 0.06 & 0.06 && 0.55 & 0.56 && 0.61 & 0.59 \\
                      &     & 800 && 0.08 & 0.09 && 0.07 & 0.07 && 0.76 & 0.74 && 0.80 & 0.81 \\
        \hline
    \end{tabular}
    \caption{\it 
        Sensitivity of the test \eqref{fully_adaptive_test} for the hypotheses \eqref{hypotheses_new} with respect to the choice of $\nu = \nu_{V_n}$ in the definition of $V_n$ and with respect to the choice of $\nu = \nu_{\hat S_n}$ in the definition of each $\hat v_\ell$ for $\ell = 1, \ldots, p $. Different discrete uniform distributions on $\{ 1 / K_{V_n}, \ldots, (K_{V_n}-1)/K_{V_n} \}$ are considered and the Lebesgue measure in the definition of each $\hat v_\ell$ is approximated by different discrete distributions on $\{ 1 / K_{\hat S_n}, \ldots, (K_{\hat S_n}-1)/K_{\hat S_n} \}$. Under $H_0$ the data was generated with $\| \delta \|^2_{2,0} = 2.0$ (boundary of the hypotheses) and under the alternative $H_1$ the data was generated with $\| \delta \|_{2,0}^2 = 2.25$.
    }
    \label{tab:sensitivityK2}
\end{table}

\newpage
\subsection*{Empirical rejection probabilities}

Recall that $\{ \varepsilon_j \}_{j \in \mathbb{Z}}$ is an i.i.d sequence of $\mathcal{N}(0, \mathrm{diag}_p (0.5))$-distributed random variables. For the remainder of this section, we assume that the $\mathrm{MA}^* (6)$ model is defined as
\begin{align*}
    f(\varepsilon_{j},  \varepsilon_{j-1}, \ldots) = \varepsilon_j + \sum_{k = 1}^6 c_k  \varepsilon_{j-k}, ~~~~~ ~~~~~ j \in \mathbb{Z},
\end{align*}
where $(c_1, \ldots, c_6) = (0.5, 0.25, 0.2, 0.1, 0.05, 0.025)$ and the $\mathrm{AR}_c^* (1)$ model is defined for $c=0.5, 0.6$ as
\begin{align*}
    \eta_j = c \cdot \eta_{j-1} +  \varepsilon_j, ~~~~~ ~~~~~ j \in \mathbb{Z}.
\end{align*}

For these models, we display additional simulation results for the performance of the estimator $\hat S_n$ in Table~\ref{tab:additional_shat_estimation} and empirical rejection probabilities of the tests \eqref{rule} and \eqref{fully_adaptive_test} in Table~\ref{table:erp_fullyadaptive_additional}.
Generally, these results look very similar to the ones presented in Section \ref{sec3}.
The most noticable difference is the improved recall of $\mathrm{MA}^*(6)$ compared to the $\mathrm{MA}(6)$ model, which is due to spatial independence. A similar observation can be made for the AR models. The precision is almost identical to the models that exhibit spatial dependence.

\begin{table}[]
    \centering
    \begin{tabular}{cc c ccc c ccc c ccc}
        \hline
         & && \multicolumn{3}{c}{Preicison} && \multicolumn{3}{c}{Recall} && \multicolumn{3}{c}{F1-Score} \\
        \cline{4-6} \cline{8-10} \cline{12-14}
         & && \multicolumn{3}{c}{$\| \delta \|_{2,0}^2$} && \multicolumn{3}{c}{$\| \delta \|_{2,0}^2$} && \multicolumn{3}{c}{$\| \delta \|_{2,0}^2$} \\
        \cline{4-6} \cline{8-10} \cline{12-14}
        Model & $s/p$ && 1 & 2 & 3 && 1 & 2 & 3 && 1 & 2 & 3\\
        \hline
        $\mathrm{MA}^* (6)$ & 0.25 && 0.93 & 0.95 & 0.95 && 0.76 & 0.97 & 0.99 && 0.84 & 0.96 & 0.97 \\
                            & 0.50 && 0.98 & 0.98 & 0.98 && 0.76 & 0.97 & 0.99 && 0.86 & 0.98 & 0.98 \\
                            & 0.75 && 0.99 & 0.99 & 0.99 && 0.76 & 0.97 & 0.99 && 0.86 & 0.98 & 0.99 \\
                            & 1.00 && 1.00 & 1.00 & 1.00 && 0.76 & 0.97 & 0.99 && 0.86 & 0.99 & 1.00 \\
        \hline
        $\mathrm{AR}^*_{0.5}(1)$ & 0.25 && 0.94 & 0.95 & 0.95 && 0.79 & 0.98 & 0.99 && 0.86 & 0.96 & 0.97 \\
                                 & 0.50 && 0.98 & 0.98 & 0.98 && 0.79 & 0.98 & 0.99 && 0.88 & 0.98 & 0.99 \\
                                 & 0.75 && 0.99 & 0.99 & 0.99 && 0.79 & 0.98 & 0.99 && 0.88 & 0.99 & 0.99 \\
                                 & 1.00 && 1.00 & 1.00 & 1.00 && 0.88 & 0.99 & 0.99 && 0.88 & 0.99 & 1.00 \\
        \hline
        $\mathrm{AR}^*_{0.6}(1)$ & 0.25 && 0.91 & 0.94 & 0.94 && 0.60 & 0.90 & 0.96 && 0.73 & 0.92 & 0.95 \\
                                 & 0.50 && 0.97 & 0.98 & 0.98 && 0.60 & 0.89 & 0.95 && 0.74 & 0.93 & 0.96 \\
                                 & 0.75 && 0.99 & 0.99 & 0.99 && 0.60 & 0.90 & 0.95 && 0.75 & 0.95 & 0.97 \\
                                 & 1.00 && 1.00 & 1.00 & 1.00 && 0.60 & 0.89 & 0.95 && 0.75 & 0.94 & 0.97 \\
        \hline
    \end{tabular}
    \caption{\it 
        Precision measures \eqref{det1000} for the estimated set $\hat S_n$ for different values of $\| \delta \|^2_{2,0}$, where $n=200, p= 400$.
    }
    \label{tab:additional_shat_estimation}
\end{table}

Although the precision metrics in Table~\ref{tab:additional_shat_estimation} show some differences when it comes to dependence that arises along the coordinates, the test \eqref{rule} does not seem to be impacted. This indicates that the main impact of spatial dependence comes from the estimation of the set $\hat S_n$, which is relevant for the test \eqref{fully_adaptive_test}. When comparing the rejection probabilities from Table~\ref{table:erp_fullyadaptive_additional} to those from Table~\ref{table:erp_fullyadaptive}, most results look similar, but the test \eqref{fully_adaptive_test} is more conservative and has more power when there is no dependence along the coordinates.

\begin{table}
    \centering
    \scriptsize
    \begin{tabular}{ccc c cccc c cccc c cc}
        \hline
        &&&& \multicolumn{9}{c}{\eqref{fully_adaptive_test}}  && \multicolumn{2}{c}{} \\
        \cline{5-13} 
        &&&& \multicolumn{4}{c}{$H_0$} && \multicolumn{4}{c}{$H_1$} && \multicolumn{2}{c}{} \\
        \cline{5-8} \cline{10-13}
        &&&& \multicolumn{4}{c}{$s/p$} && \multicolumn{4}{c}{$s/p$} && \multicolumn{2}{c}{\eqref{rule}} \\
        \cline{5-8} \cline{10-13} \cline{15-16}
        Model & $n$ & $p$ && 0.25 & 0.50 & 0.75 & 1.00 && 0.25 & 0.50 & 0.75 & 1.00 && $H_0$ & $H_1$\\
        \hline
        $\mathrm{MA}^*(6)$ & 200 & 200 && 0.01 & 0.02 & 0.04 & 0.09 && 0.30 & 0.77 & 0.91 & 0.94 && 0.05 & 0.99 \\
                           &     & 400 && 0.01 & 0.03 & 0.06 & 0.09 && 0.55 & 0.88 & 0.90 & 0.94 && 0.06 & 1.00 \\
                           &     & 800 && 0.01 & 0.01 & 0.01 & 0.01 && 0.68 & 0.84 & 0.88 & 0.88 && 0.08 & 1.00 \\
                           & 400 & 200 && 0.00 & 0.01 & 0.04 & 0.06 && 0.42 & 0.94 & 1.00 & 1.00 && 0.06 & 1.00 \\
                           &     & 400 && 0.01 & 0.01 & 0.04 & 0.07 && 0.84 & 1.00 & 1.00 & 1.00 && 0.07 & 1.00 \\
                           &     & 800 && 0.01 & 0.02 & 0.06 & 0.08 && 1.00 & 1.00 & 1.00 & 1.00 && 0.06 & 1.00 \\
        \hline
        $\mathrm{AR}^*_{0.5}(1)$ & 200 & 200 && 0.00 & 0.01 & 0.05 & 0.08 && 0.33 & 0.79 & 0.94 & 0.96 && 0.08 & 1.00 \\
                                 &     & 400 && 0.01 & 0.03 & 0.06 & 0.09 && 0.63 & 0.90 & 0.94 & 0.96 && 0.07 & 1.00 \\
                                 &     & 800 && 0.01 & 0.01 & 0.01 & 0.00 && 0.68 & 0.86 & 0.90 & 0.90 && 0.05 & 1.00 \\
                                 & 400 & 200 && 0.00 & 0.02 & 0.03 & 0.06 && 0.46 & 0.95 & 1.00 & 1.00 && 0.07 & 1.00 \\
                                 &     & 400 && 0.00 & 0.02 & 0.03 & 0.06 && 0.88 & 1.00 & 1.00 & 1.00 && 0.07 & 1.00 \\
                                 &     & 800 && 0.01 & 0.02 & 0.05 & 0.09 && 1.00 & 1.00 & 1.00 & 1.00 && 0.06 & 1.00 \\
        \hline
        $\mathrm{AR}^*_{0.6}(1)$ & 200 & 200 && 0.01 & 0.02 & 0.06 & 0.09 && 0.23 & 0.57 & 0.72 & 0.79 && 0.09 & 0.98 \\
                                 &     & 400 && 0.01 & 0.01 & 0.02 & 0.02 && 0.30 & 0.52 & 0.61 & 0.64 && 0.08 & 1.00 \\
                                 &     & 800 && 0.00 & 0.00 & 0.00 & 0.00 && 0.15 & 0.25 & 0.27 & 0.31 && 0.08 & 1.00 \\
                                 & 400 & 200 && 0.00 & 0.01 & 0.04 & 0.07 && 0.37 & 0.89 & 0.98 & 1.00 && 0.07 & 1.00 \\
                                 &     & 400 && 0.00 & 0.02 & 0.04 & 0.09 && 0.76 & 0.99 & 1.00 & 1.00 && 0.06 & 1.00 \\
                                 &     & 800 && 0.01 & 0.04 & 0.04 & 0.06 && 0.97 & 1.00 & 1.00 & 1.00 && 0.06 & 1.00 \\
        \hline
    \end{tabular}
    \caption{\it
        Rejection probabilities of the tests \eqref{rule} and \eqref{fully_adaptive_test} at the boundary of the hypotheses, that is, $\| \delta \|^2 = \Delta = 2.0$ and $\| \delta \|^2_{2,0} = \Delta = 2.0$, respectively (denoted by $H_0$) and under an alternative, that is  $\| \delta \|^2 = 2.25$ and $\| \delta \|^2_{2,0} = 2.25$, respectively (denoted by $H_1$).
        The columns titled $s/p$ specify the ratio of nonzero entries of $\delta$.
    }
    \label{table:erp_fullyadaptive_additional}
\end{table}

\clearpage
\newpage

\section{Auxiliary results and maximal inequalities}

\subsection{Auxiliary results}
For ease of reference, we will state the following simple but useful results. The proof of Lemma~\ref{lem1} is elementary and is therefore omitted.
\begin{lem}\label{lem1}
    Let $(s_i)_{i=1, \ldots ,n}$ be a sequence, $0 \leq k < n - m$ and $m \geq 0$, then
    \begin{align*}
        \sum_{\substack{i,j=k + 1 \\ |i-j| > m}}^n s_i = \sum_{i=k + 1}^{n-m} (n-m-i) \cdot s_i ~~+~~  \sum_{i=k + m+1}^{n} (i-k - m-1) \cdot s_i.
    \end{align*}
\end{lem}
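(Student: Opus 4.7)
The plan is to prove the identity by a direct counting argument that exploits the fact that the summand $s_i$ on the left-hand side depends only on $i$, not on $j$. The first step is to rewrite
\begin{align*}
\sum_{\substack{i,j=k+1 \\ |i-j|>m}}^{n} s_i = \sum_{i=k+1}^{n} s_i \cdot c(i), \qquad c(i) := |\{j \in \{k+1, \ldots, n\} : |i-j| > m\}|,
\end{align*}
reducing the problem to a closed-form evaluation of the counting function $c(i)$.

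Next I would split the constraint $|i-j| > m$ into the disjoint cases $j \geq i + m + 1$ and $j \leq i - m - 1$. For $i \in \{k+1, \ldots, n\}$, the number of $j \in \{k+1, \ldots, n\}$ satisfying the first case equals $\max(n - m - i, 0)$, where the lower bound $j \geq k+1$ is automatic since $i + m + 1 \geq k + m + 2 > k + 1$. Likewise, the number satisfying $j \leq i - m - 1$ equals $\max(i - m - k - 1, 0)$, where the upper bound $j \leq n$ is automatic because $i - m - 1 \leq n - 1$. Hence $c(i) = \max(n - m - i, 0) + \max(i - m - k - 1, 0)$. Substituting this back and noting that the first maximum is nonzero only for $i \leq n - m - 1$ and the second only for $i \geq k + m + 2$, while the boundary indices $i = n - m$ and $i = k + m + 1$ contribute zero summands (harmlessly included for notational convenience), yields precisely the right-hand side of the claimed identity.

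The proof is entirely elementary, and I do not anticipate any real obstacle beyond careful bookkeeping of the index ranges. The only subtlety is to verify that the two cases $j \geq i + m + 1$ and $j \leq i - m - 1$ are indeed disjoint, which follows immediately from $m \geq 0$, ensuring the counts add without double-counting.
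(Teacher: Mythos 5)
Your counting argument is correct: the identity reduces to evaluating $c(i)=\max(n-m-i,0)+\max(i-m-k-1,0)$, the two cases are disjoint since $m\geq 0$, and the boundary indices contribute zero coefficients, so the bookkeeping checks out. The paper omits its own proof as elementary, and your argument is exactly the natural one it has in mind.
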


Setting $s_i = 1$ in Lemma~\ref{lem1} for all $i$ yields
\begin{align*}
    \sum_{\substack{i,j= k+1 \\ |i-j| > m}}^n 1 = N_m (n -k),
\end{align*}
where $N_m$ is defined in \eqref{nm}.

\begin{lem}\label{lemNmUpperbound}
    Let $(r_n)_{n \geq 1}$ be some nonnegative sequence. Then, for the quantity $N_m$ defined in \eqref{nm}, we have
    \begin{align}\label{p1}
        \max_{\substack{1 \leq a,b \leq n \\ |a-b| \lesssim r_n}} | N_m (a) - N_m (b) | \lesssim r_n (n+m).
    \end{align}
    Moreover, if $m = O(n)$,
    \begin{align*}
        N_m (a_n) = O(n^2)
    \end{align*}
    for any sequence $(a_n)_{n \in \mathbb{N}}$ with $a_n = O(n)$.
\end{lem}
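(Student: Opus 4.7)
The plan is to exploit the explicit polynomial form of $N_m$ and reduce both claims to elementary algebra.

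Recall from \eqref{nm} that $N_m(k) = (k-m)(k-m-1)$ for $k > m$ and $N_m(k) = 0$ otherwise. For the first assertion, I would separate cases according to whether $a$ and $b$ exceed $m$. The main case is $a, b > m$, where a direct expansion gives the identity
\begin{align*}
    N_m(a) - N_m(b) = (a-m)(a-m-1) - (b-m)(b-m-1) = (a-b)\bigl(a+b-2m-1\bigr).
\end{align*}
Since $1 \le a, b \le n$, the second factor satisfies $|a+b-2m-1| \le 2n + 2m + 1 \lesssim n+m$, and the constraint $|a-b| \lesssim r_n$ in the supremum yields the desired bound $|N_m(a) - N_m(b)| \lesssim r_n(n+m)$.

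The boundary cases are handled as follows. If $a, b \le m$, the difference vanishes. If, say, $a \le m < b$, then $N_m(a)=0$ and $N_m(b) = (b-m)(b-m-1) \le (b-m)^2 \le (b-a)^2 \le r_n(b-a) \le r_n \cdot n \lesssim r_n(n+m)$, using $b - m \le b - a$. Taking the maximum over the three cases gives \eqref{p1}.

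For the second assertion, assuming $a_n = O(n)$ and $m = O(n)$, if $a_n \le m$ then $N_m(a_n) = 0 = O(n^2)$, while if $a_n > m$ then $N_m(a_n) = (a_n - m)(a_n - m - 1) \le a_n^2 = O(n^2)$. The only potential obstacle is careful bookkeeping of the edge cases $k \le m$, but since $N_m$ is set to zero there, all estimates remain valid and the proof is essentially immediate from the closed form of $N_m$.
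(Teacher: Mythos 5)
Your proof is correct and follows essentially the same route as the paper: the first claim rests on the same factorization $N_m(a)-N_m(b)=(a-b)(a+b-2m-1)$, and the second is an elementary consequence of the closed form (the paper derives it by applying \eqref{p1} with $b=m+1$, whereas you bound $N_m(a_n)\le a_n^2$ directly, which is equally valid). Your explicit treatment of the boundary cases $a\le m$ or $b\le m$, where $N_m$ is set to zero rather than given by the polynomial formula, is actually more careful than the paper's argument, which applies the algebraic identity without comment.
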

\begin{proof}
    By the definition of $N_m$, we have for $1 \leq a,b \leq n$
    \begin{align*}
        |N_m (a) - N_m (b) | &= |(a-m)^2 - (b-m)^2 - (a-b)|\\
         &= |a-b||a+b-2m-1| \\
        &\lesssim r_n ( |a+b - 2m| + 1)\\
        &\lesssim r_n (n + m).
    \end{align*}
    Taking the maximum on both sides proves \eqref{p1}. For the second part, note that we have $N_m (m+1) = 0$ and
    \begin{align*}
        |a_n - (m+1)| \lesssim n+m
    \end{align*}
    by the fact that $a_n = O(n)$. Now, we apply \eqref{p1} and obtain
    \begin{align*}
        N_m (a_n) = N_m (a_n) - N_m (m+1) \leq \max_{\substack{1 \leq a,b \leq 1 \\ |a-b| \lesssim n+m}} |N_m (a) - N_m (b)| \lesssim (n+m)^2 = O(n^2),
    \end{align*}
    since $m = O(n)$, which completes the proof.
\end{proof}

\begin{lem}[Uniform continuity of the Brownian motion]\label{double_unif_cont_BM}
    Let $n \in \mathbb{N}$, $A \subset \{1, \ldots , p\}  $ and $(\alpha_{n,\ell} (\lambda) ~  | ~ \lambda \in [0,1] )_{\ell \in A}$ and $(\beta_{n,\ell} (\lambda) ~  | ~ \lambda \in [0,1] )_{\ell \in A }$ be random sequences such that
    \begin{align*}
        \p \Big( \max_{\ell \in A} \sup_{0 \leq \lambda \leq 1} \big| \alpha_{n,\ell} (\lambda) - \beta_{n, \ell} (\lambda) \big| > n^{-2\gamma} \Big) = o(1) 
    \end{align*}
    for some $\gamma > 0$. Then, for a standard Brownian motion $\mathbb{B}$ we have
    \begin{align*}
        \max_{\ell \in A} \sup_{0 \leq \lambda \leq 1} \big  |\mathbb{B} (\alpha_{n,\ell} (\lambda)) - \mathbb{B} (\beta_{n,\ell}  (\lambda)) \big  | = o_\p (n^{-\rho})
    \end{align*}
    for any $0 < \rho < \gamma$.
\end{lem}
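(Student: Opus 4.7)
The plan is to reduce the claim to the classical Lévy modulus of continuity for Brownian motion. The crucial observation is that the process $\mathbb{B}$ on the right-hand side is shared across all indices $\ell \in A$ and all $\lambda \in [0,1]$, so we can dominate the doubly indexed supremum by a single random quantity that depends only on the trajectory of $\mathbb{B}$. This reduces a ``uniform over $\ell$ and $\lambda$'' problem to a continuity bound for a single random function.

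First I would set
\[
\Delta_n := \max_{\ell \in A} \sup_{0 \leq \lambda \leq 1} \big|\alpha_{n,\ell}(\lambda) - \beta_{n,\ell}(\lambda)\big|,
\]
and note that by hypothesis $\p(\Delta_n > n^{-2\gamma}) = o(1)$. Next, denoting by $T$ a deterministic upper bound on the range of all $\alpha_{n,\ell}(\lambda)$ and $\beta_{n,\ell}(\lambda)$ (in every application of this lemma in the paper these quantities lie in a fixed compact subset of $[0,\infty)$, so such a $T$ exists), I would introduce
\[
\omega_\mathbb{B}(\delta; T) := \sup\{|\mathbb{B}(s)-\mathbb{B}(t)|: 0 \leq s,t \leq T, \ |s-t| \leq \delta\}.
\]
On the event $\{\Delta_n \leq n^{-2\gamma}\}$ we have the pathwise domination
\[
\max_{\ell \in A} \sup_{0 \leq \lambda \leq 1} \big|\mathbb{B}(\alpha_{n,\ell}(\lambda)) - \mathbb{B}(\beta_{n,\ell}(\lambda))\big| \leq \omega_\mathbb{B}(n^{-2\gamma}; T),
\]
which completely decouples the probabilistic control of $\Delta_n$ from the regularity of the Brownian path.

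The core estimate is then the Lévy modulus of continuity, which guarantees that $\omega_\mathbb{B}(\delta; T) = O\bigl(\sqrt{\delta \log(1/\delta)}\bigr)$ almost surely as $\delta \to 0$. Applied with $\delta = n^{-2\gamma}$ this yields
\[
\omega_\mathbb{B}(n^{-2\gamma}; T) = O_\p\bigl(n^{-\gamma}\sqrt{\log n}\bigr) = o_\p(n^{-\rho})
\]
for every $0 < \rho < \gamma$, since $\sqrt{\log n} = o(n^{\gamma - \rho})$. A simple union bound finally combines the two ingredients: for any $\eta > 0$,
\[
\p\Bigl( \max_{\ell \in A} \sup_\lambda |\mathbb{B}(\alpha_{n,\ell}(\lambda)) - \mathbb{B}(\beta_{n,\ell}(\lambda))| > \eta n^{-\rho} \Bigr) \leq \p(\Delta_n > n^{-2\gamma}) + \p\bigl(\omega_\mathbb{B}(n^{-2\gamma}; T) > \eta n^{-\rho}\bigr),
\]
and both terms tend to zero.

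The main obstacle is not the probabilistic step itself but identifying (or verifying) the deterministic upper bound $T$ on the range of $\alpha_{n,\ell}(\lambda)$ and $\beta_{n,\ell}(\lambda)$. Once such a $T$ is in place, the modulus-of-continuity estimate is essentially automatic. If in some application the range is only stochastically bounded (say by $T_n$ growing polynomially in $n$), one can absorb this into the argument by splitting on the event $\{\max_{\ell,\lambda} \alpha_{n,\ell}(\lambda) \vee \beta_{n,\ell}(\lambda) \leq T_n\}$ and using the scaling $\mathbb{B}(T_n \cdot) = \sqrt{T_n}\,\tilde{\mathbb{B}}(\cdot)$ together with the modulus on $[0,1]$, which only inflates the bound by a $\sqrt{\log T_n}$ factor and still gives $o_\p(n^{-\rho})$ for any $\rho < \gamma$.
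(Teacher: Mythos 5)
Your argument is correct and follows essentially the same route as the paper: on the high-probability event $\{\Delta_n \le n^{-2\gamma}\}$ one dominates the doubly indexed supremum by the modulus of continuity of the single Brownian motion at scale $n^{-2\gamma}$, and then a modulus-of-continuity estimate (you use the a.s.\ L\'evy modulus, the paper a quantitative tail bound from Lemma 3 of Fischer, which give the same rate $n^{-\gamma}\sqrt{\log n} = o(n^{-\rho})$). The boundedness of the range of $\alpha_{n,\ell}$ and $\beta_{n,\ell}$ that you flag is indeed not part of the lemma's hypotheses, but it is equally implicit in the paper's proof (which restricts the modulus to $[0,1]$), so your explicit treatment of it is, if anything, more careful.
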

\begin{proof}
    By assumption, the event
    \begin{align*}
        \Big \{ \forall \ell \in A: \sup_{0 \leq \lambda \leq 1} \big| \alpha_{n,\ell} (\lambda) - \beta_{n, \ell} (\lambda) \big| \leq n^{-2\gamma}  \Big \}
    \end{align*}
    has asymptotic probability one for some $\gamma > 0$. Therefore, it follows for any $0 <\rho < \gamma$
    \begin{align*}
        &\p \bigg( \max_{\ell \in A} \sup_{0 \leq \lambda \leq 1} |\mathbb{B} (\alpha_{n,\ell} (\lambda)) - \mathbb{B} (\beta_{n,\ell}  (\lambda)) | > n^{-\rho} \bigg)\\
        & ~~~~~ ~~~~~ ~~~~~ ~~~~~ ~~~~~ \leq \p \bigg( \sup_{0 \leq \lambda \leq 1} \sup_{\lambda' : |\lambda' - \lambda| \leq n^{-2\gamma}} |\mathbb{B} (\lambda') - \mathbb{B} (\lambda) | > n^{-\rho} \bigg) + o(1).
    \end{align*}
    By Lemma 3 in \cite{fischer}, we obtain
    \begin{align*}
        \p \bigg( \sup_{0 \leq \lambda \leq 1} \sup_{\lambda' : |\lambda' - \lambda| \leq n^{-2\gamma}} |\mathbb{B} (\lambda') - \mathbb{B} (\lambda) | > n^{-\rho} \bigg) &\lesssim  n^{\rho} \big( n^{-2\gamma} \log (2 n^{2\gamma}) \big)^{1/2} \\
        &= n^{\rho - \gamma} \big( \log (2 n^{2\gamma}) \big)^{1/2} = o(1),
    \end{align*}
    where the last estimate follows, because $0 < \rho < \gamma$.
\end{proof}

\subsection{Maximal inequalities}

In this section, we establish some maximal inequalities, which will be used throughout the upcoming proofs. These maximal inequalities and also some that will be established later are proved using either Proposition 1 in \cite{strongwu} or Proposition \ref{maxmax}, which refers to a maximum of $n = 2^d$ random variables.

We will apply these results for a maximum taken over $1 \leq k \leq n$, where $n \to \infty $. The following remark shows that it is no restriction to consider only the case $n=2^d$.

\begin{rem}\label{wu_n2hochd}
{\rm 
    Let $(h_{n, j})_{j \in \mathbb{N}}$ be any sequence of random variables for $n \in \mathbb{N}$ and define $H_{k, n} =\sum_{j =1}^k h_{n, j}  $. For any $n \in \mathbb{N} $ there exists a $d=d(n) \in \mathbb{N}$ such that  $2^d \leq n \leq 2^{d+1}$ and (obviously)  
    \begin{align*}
        \e \Big[ \max_{1 \leq k \leq 2^d} |H_{k, n} | \Big] \leq \e \Big[ \max_{1 \leq k \leq n} |H_{k, n}| \Big] \leq \e \Big[ \max_{1 \leq k \leq 2^{d+1}} |H_{k, n}| \Big].
    \end{align*}
    If we find an upper bound for the term on the right hand side that is of order $O(f(d+1))$ for a nonnegative, nondecreasing function $f$, then the left hand side is of order $O(f(d))$. If these rates coincide, then we can conclude that the term in the middle, has a rate of $O(f(d))$ as well. To see this, note that, since then
    \begin{align*}
        O(f(d+1)) = O(f(d)) \leq \e \Big[  \max_{1 \leq k \leq n} |H_{k, n}| \Big] \leq O(f(d+1)) = O(f(d)).
    \end{align*}
    Using monotonicity of $f$ yields
    \begin{align*}
        O\big (f(\log (n))\big ) \leq \e \Big[  \max_{1 \leq k \leq n} |H_{k, n}| \Big] \leq O\big (f(\log (n))\big ).
    \end{align*}
    In our cases the condition $O(f(d)) = O(f(d+1))$ will always be satisfied and we will only show that the term on the left-hand side is of order $O(f(d))$.
    }
\end{rem}

\begin{prop}\label{max2}
    Let $n \in \mathbb{N}$ and $(a_{j,n})_{j=1, \ldots ,n}$ be an arbitrary deterministic sequence. Then, we have
    \begin{align*}
        \e \bigg[ \max_{1 \leq k \leq n} \bigg\| \sum_{j=1}^k a_{j,n} (X_j - \e [X_j]) \bigg\|_2^2 \bigg] \lesssim {\max_{j = 1, \ldots ,n} a_{j,n}^2} \cdot n \log (n)^2 \mathrm{tr} (\bar \Gamma),
    \end{align*}
    where $\mathrm{tr}( \bar \Gamma ) = \sum_{\ell = 1}^{p} \sum_{h=0}^{\infty} |\cov ({X_{0, \ell}, X_{h, \ell}})|$.
\end{prop}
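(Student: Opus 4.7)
The plan is to pull the $\ell_2^2$-norm inside the maximum componentwise and then reduce to a one-dimensional maximal inequality for each coordinate. Write $Y_j = X_j - \e[X_j]$ and $Y_{j,\ell}$ for its $\ell$-th component. Since $\|v\|_2^2 = \sum_{\ell=1}^p v_\ell^2$, we have the pointwise (in $\omega$) inequality
\begin{align*}
    \max_{1 \le k \le n} \Big\| \sum_{j=1}^k a_{j,n} Y_j \Big\|_2^2
    = \max_{1 \le k \le n} \sum_{\ell=1}^p \Big( \sum_{j=1}^k a_{j,n} Y_{j,\ell} \Big)^2
    \le \sum_{\ell=1}^p \max_{1 \le k \le n} \Big( \sum_{j=1}^k a_{j,n} Y_{j,\ell} \Big)^2,
\end{align*}
so by Tonelli/linearity it suffices to bound $\e \max_{1 \le k \le n} ( \sum_{j=1}^k a_{j,n} Y_{j,\ell} )^2$ for each $\ell$ by $\max_j a_{j,n}^2 \cdot n \log(n)^2 \sum_{h \ge 0} |\cov(X_{0,\ell}, X_{h,\ell})|$, and then sum over $\ell$ to recognise $\mathrm{tr}(\bar \Gamma)$.

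For the one-dimensional maximal inequality, I would invoke Proposition \ref{maxmax} in the dyadic form $n = 2^d$ (extending to general $n$ via Remark \ref{wu_n2hochd}). This type of Móricz/Rademacher--Menshov-style inequality reduces
\begin{align*}
    \e \max_{1 \le k \le 2^d} \Big( \sum_{j=1}^k a_{j,n} Y_{j,\ell} \Big)^2
    \;\lesssim\; d^2 \max_{1 \le k \le 2^d} \e \Big( \sum_{j=1}^k a_{j,n} Y_{j,\ell} \Big)^2,
\end{align*}
which after substituting $d = \log_2 n$ produces the required $\log(n)^2$ factor.

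For each fixed $k$, the variance of the partial sum is controlled by pulling out the maximum coefficient,
\begin{align*}
    \e \Big( \sum_{j=1}^k a_{j,n} Y_{j,\ell} \Big)^2
    = \sum_{j_1, j_2 = 1}^k a_{j_1,n} a_{j_2,n} \cov(X_{j_1,\ell}, X_{j_2,\ell})
    \le \max_{j} a_{j,n}^2 \cdot k \cdot 2 \sum_{h=0}^\infty |\cov(X_{0,\ell}, X_{h,\ell})|,
\end{align*}
using stationarity to collapse the double sum and the trivial bound $\sum_{h \in \mathbb{Z}} |\cdot| \le 2 \sum_{h \ge 0} |\cdot|$. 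Combining the two displays yields the one-dimensional bound, and summation over $\ell = 1, \ldots, p$ produces the factor $\sum_{\ell} \sum_{h \ge 0} |\cov(X_{0,\ell}, X_{h,\ell})| = \mathrm{tr}(\bar \Gamma)$, completing the proof.

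The only substantive step is the application of the dyadic maximal inequality (Proposition \ref{maxmax}); everything else is a straightforward coordinatewise reduction combined with the elementary stationary-covariance bound. I do not foresee any genuine obstacle, provided Proposition \ref{maxmax} indeed delivers the $\log(n)^2 \cdot \max_k \e(\cdot)^2$ bound in the form stated; if it is phrased in terms of increments rather than partial sums, one may need an intermediate step writing the partial sum as a sum of dyadic blocks before applying it.
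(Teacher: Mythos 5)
Your proposal is correct and follows essentially the same route as the paper: reduce coordinatewise, apply the dyadic (Rademacher--Menshov/Wu) maximal inequality to each scalar partial-sum process, bound each dyadic block's second moment by $\max_j a_{j,n}^2$ times the block length times $\sum_{h\ge 0}|\cov(X_{0,\ell},X_{h,\ell})|$ via stationarity, and sum over $\ell$ to recover $\mathrm{tr}(\bar\Gamma)$. The only point to tighten is the display $\e\max_k S_k^2 \lesssim d^2\max_k \e S_k^2$, which is not valid for arbitrary sequences and must instead be applied to the dyadic increments $S_{2^r u}-S_{2^r(u-1)}$ (exactly the caveat you flag at the end); since your stationarity computation bounds $\e[(S_b-S_a)^2]\lesssim \max_j a_{j,n}^2\,(b-a)\sum_{h\ge0}|\cov(X_{0,\ell},X_{h,\ell})|$ verbatim, this is immediate and the argument closes.
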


\begin{proof}
    Let $\tilde{X}_j = X_j - \e [X_j]$, then
    \begin{align*}
        &\e \bigg [  \max_{1 \leq k \leq 2^d} \bigg\| \sum_{j=1}^k a_{j,n} \tilde{X}_j \bigg\|_2^2 \bigg] \\
        & \quad\quad\quad\quad\quad \leq \sum_{\ell =1}^{p} \e \bigg( \max_{1 \leq k \leq 2^d} \bigg| \sum_{j=1}^k a_{j,n} \tilde{X}_{j, \ell} \bigg| \bigg)^2\\
        & \quad\quad\quad\quad\quad
        \leq \sum_{\ell=1}^{p} \bigg( \sum_{r=0}^d \bigg( 
            \sum_{u = 1}^{2^{d-r}} \e \bigg(
                \sum_{j=2^r (u - 1) + 1}^{2^r u} a_{j,n} \tilde{X}_{j, \ell}
            \bigg)^2    
        \bigg)^{1/2} \bigg)^2 \\
        & \quad\quad\quad\quad\quad\leq \sum_{\ell = 1}^{p} \bigg( \sum_{r=0}^d \bigg( 
            \sum_{u = 1}^{2^{d-r}} \sum_{j_1, j_2 =2^r (u - 1) + 1}^{2^r u} a_{j_1,n} a_{j_2, n} \e [{\tilde{X}_{j_1, \ell} \tilde{X}_{j_2, \ell}}]
        \bigg)^{1/2} \bigg)^2\\
        & \quad\quad\quad\quad\quad\lesssim \sum_{\ell = 1}^{p} \bigg( \sum_{r=0}^d \bigg( 
            \sum_{u = 1}^{2^{d-r}} \sum_{h=0}^{2^r - 1} \sum_{j = 2^r (u - 1) + 1}^{2^r u - h} a_{j,n} a_{j + h, n} \e [{\tilde{X}_{0, \ell} \tilde{X}_{h, \ell}}]
        \bigg)^{1/2} \bigg)^2\\
        & \quad\quad\quad\quad\quad\leq \big( \max_{j = 1, \ldots ,n} |a_{j,n}| \big)^2 \sum_{\ell = 1}^{p} \bigg( \sum_{r=0}^d \bigg( 
            2^{d-r} \sum_{h=0}^{2^r - 1} (2^r - h) |\e [{\tilde{X}_{0, \ell} \tilde{X}_{h, \ell}}]| \bigg)^{1/2} 
        \bigg)^2\\
        & \quad\quad\quad\quad\quad\leq \max_{j = 1, \ldots ,n} a_{j,n}^2  \cdot 2^d d^2  \sum_{\ell = 1}^{p} \sum_{h=0}^{2^d - 1}  |\e [{\tilde{X}_{0, \ell} \tilde{X}_{h, \ell}}]|\\
        & \quad\quad\quad\quad\quad= \max_{j = 1, \ldots ,n} a_{j,n}^2  \cdot 2^d d^2 \mathrm{tr} (\bar \Gamma).
    \end{align*}
\end{proof}

\begin{prop}\label{max3}
    Let $n \in \mathbb{N}$ and $(a_{j,n})_{j=1, \ldots ,n}$ be an arbitrary deterministic sequence. For $k = 1,  \ldots , n$ let
    \begin{align*}
        W_n (k) := \frac{1}{n} \sum_{j=1}^{k} a_{j,n} (X_j - \e [X_j]).
    \end{align*}
    Then, we have for $0<\kappa  \leq 1 $ and $\varphi \in [0,1]$
    \begin{align*}
        \e \bigg[ \sup_{\vartheta \in (0,1) : |\vartheta - \varphi| < \kappa} \big\|  W_n (\gbr{n \vartheta}) - W_n (\gbr{n \varphi}) \big\|_2^2  \bigg] \lesssim \max_{j = 1,  \ldots , 2n} a_{j,n}^2 \frac{\kappa}{n} \log^2 (n) \cdot \mathrm{tr} (\bar \Gamma).
    \end{align*}
\end{prop}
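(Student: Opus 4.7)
The idea is to reduce the statement to Proposition~\ref{max2} applied to a shifted subsequence of length $\asymp \kappa n$. Since $\vartheta \mapsto \lfloor n\vartheta \rfloor$ is piecewise constant, the supremum over $\vartheta \in (0,1)$ with $|\vartheta - \varphi| < \kappa$ coincides with a maximum over the finite set of integers $k \in \{0, 1, \ldots, n-1\}$ with $|k - \lfloor n\varphi \rfloor| \le \lfloor \kappa n\rfloor + 1$. Writing $m := \lfloor n\varphi \rfloor$ and splitting according to the sign of $k - m$, one has for $k \ge m$
\begin{align*}
    W_n(k) - W_n(m) = \tfrac{1}{n}\sum_{i=1}^{k-m} a_{m+i,n}\bigl(X_{m+i} - \e[X_{m+i}]\bigr),
\end{align*}
and an analogous expression (with a minus sign and shifted running index) for $k \le m$. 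Thus, up to the factor $1/n$ and a constant from combining the two sides, one needs to control the expectation of $\max_{1 \le k' \le N} \bigl\|\sum_{i=1}^{k'} a_{m+i,n}(X_{m+i} - \e[X_{m+i}])\bigr\|_2^2$ where $N := \lfloor \kappa n \rfloor + 1$.

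Next I would apply Proposition~\ref{max2} to this shifted length-$N$ process. The dyadic chaining argument in the proof of Proposition~\ref{max2} only uses the covariances $|\e[\tilde X_{0,\ell}\tilde X_{h,\ell}]|$ and these are invariant under a time shift by joint second-order stationarity of $(X_j)_{j\in\mathbb{Z}}$. Hence the same bound applies to the shifted sample, yielding
\begin{align*}
    \e\Big[\max_{1 \le k' \le N}\Big\|\sum_{i=1}^{k'} a_{m+i,n}(X_{m+i} - \e[X_{m+i}])\Big\|_2^2\Big] \lesssim \max_{j=1,\ldots,2n} a_{j,n}^2 \cdot N \log^2(N) \cdot \mathrm{tr}(\bar\Gamma),
\end{align*}
and the analogous statement on the other side. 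Combining both halves, dividing by $n^2$, and using $N \le \kappa n + 1$ together with $\log(N) \le \log(n)$ gives the asserted bound $\lesssim \max_j a_{j,n}^2 \cdot (\kappa/n)\log^2(n) \cdot \mathrm{tr}(\bar\Gamma)$.

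The main obstacle is essentially bookkeeping: one must verify that the range of the floor function on $\{\vartheta : |\vartheta - \varphi| < \kappa\}$ does not exceed $\lfloor \kappa n\rfloor + 1$ integers on either side, and that the shifted coefficients $a_{m+i,n}$ stay dominated by $\max_j |a_{j,n}|$. The only mildly nontrivial point is the use of stationarity to re-use the Proposition~\ref{max2} chaining estimate for an arbitrary starting index; since the bound there depends only on the autocovariance sums $\sum_{h} |\e[\tilde X_{0,\ell}\tilde X_{h,\ell}]|$, stationarity of $(X_j)_{j \in \mathbb{Z}}$ is exactly what is needed.
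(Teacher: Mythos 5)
Your proposal is correct and follows essentially the same route as the paper: re-index the increment $W_n(\gbr{n\vartheta}) - W_n(\gbr{n\varphi})$ as a partial sum of length at most $\gbr{\kappa n}+1$ starting from index one, apply Proposition~\ref{max2} to the shifted sequence (valid because its bound depends only on the shift-invariant autocovariances), and then use $\kappa n + 1 \lesssim \kappa n$ and $\log(\kappa n + 1) \lesssim \log(n)$. Your explicit treatment of the case $k \le \gbr{n\varphi}$ is slightly more careful than the paper's, which writes only the one-sided sum, but the substance is identical.
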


\begin{proof}
    We use Proposition \ref{max2} to obtain
    \begin{align*}
        &\e \bigg[ \sup_{\vartheta: |\vartheta -  \varphi| < \kappa} \big\| W_n (\gbr{n \vartheta}) - W_n (\gbr{n \varphi}) \big\|_2^2  \bigg]\\
        & ~~~~~ ~~~~~ ~~~~~ ~~~~~ ~~~~~ = \frac{1}{n^2} \e \bigg[ \sup_{\vartheta: |\vartheta -  \varphi| < \kappa} \bigg\| \sum_{j= \gbr{n \varphi} + 1}^{\gbr{n \vartheta}} \!\!\! a_{j,n} (X_j - \e [X_j]) \bigg\|_2^2  \bigg]\\
        & ~~~~~ ~~~~~ ~~~~~ ~~~~~ ~~~~~ \leq \frac{1}{n^2} \e \bigg[ \sup_{\vartheta: |\vartheta -  \varphi| < \kappa} \bigg\| \sum_{j=1}^{\gbr{n \vartheta}- \gbr{n \varphi} } \!\!\! a_{j + \gbr{n \varphi},n}  (X_{j+\gbr{n \varphi}} - \e [X_{j + \gbr{n \varphi}}]) \bigg\|_2^2 \bigg]\\
        & ~~~~~ ~~~~~ ~~~~~ ~~~~~ ~~~~~ \leq \frac{1}{n^2} \e \bigg[ \max_{1 \leq k \leq \kappa n + 1} \bigg\| \sum_{j=1}^{k} a_{j + \gbr{n \varphi},n}  (X_{j+\gbr{n \varphi}} - \e [X_{j + \gbr{n \varphi}}]) \bigg\|_2^2 \bigg]\\
        & ~~~~~ ~~~~~ ~~~~~ ~~~~~ ~~~~~ \lesssim \max_{j = 1,  \ldots , \gbr{\kappa n}} a_{j+\gbr{n \varphi}, n}^2 \frac{1}{n^2}  (\kappa n + 1) \log^2 (\kappa n + 1) \cdot \mathrm{tr} (\bar \Gamma) \\
        & ~~~~~ ~~~~~ ~~~~~ ~~~~~ ~~~~~ \leq \max_{j = 1,  \ldots , 2n} a_{j, n}^2 \frac{\kappa}{n} \log^2 (n) \mathrm{tr} (\bar \Gamma), 
    \end{align*}
    where $\bar  \Gamma = \sum_{h \in \mathbb{Z}} |\cov (X_0, X_h)|$.
\end{proof}

\begin{prop}\label{maxmax}
    Let $q > 1$ and $Z_{j,\ell}$ be random variables in $L^q$, $1 \leq j \leq 2^d$, $\ell \in A$, where $d \in \mathbb{N}$ and $A$ is a finite set. Then, for $S_{k, \ell} = Z_{1, \ell} + \cdots + Z_{k, \ell}$ we have
    \begin{align*}
        \e \Big[ \Big( \max_{\ell \in A} \max_{1 \leq k \leq 2^d} |S_{k, \ell}| \Big)^q \Big]^{1/q} \leq \sum_{r = 0}^d \bigg( \sum_{m=1}^{2^{d-r}} \sum_{\ell \in A} \e [| S_{2^rm, \ell} - S_{2^r (m-1), \ell} |^q] \bigg)^{1/q}
    \end{align*}
\end{prop}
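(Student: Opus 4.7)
The plan is to extend the single-coordinate dyadic chaining argument of Proposition 1 in \cite{strongwu} to handle the additional maximum over $\ell \in A$. The overall strategy has two main ingredients: a deterministic decomposition of $|S_{k,\ell}|$ via binary expansion of $k$, followed by an $L^q$-estimate that combines Minkowski's inequality with the trivial bound of a maximum by an $\ell^q$-sum.

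First, I would fix $\ell \in A$ and any $k \in \{1,\dots,2^d\}$, and write $k$ in binary as $k = \sum_{r=0}^{d} \epsilon_r(k)\, 2^r$ with $\epsilon_r(k)\in\{0,1\}$. Reading the bits from highest to lowest, this yields a chain $0 = k_0 < k_1 < \cdots < k_J = k$ in which each increment $k_{j+1}-k_j$ equals $2^r$ for some $r$ with $\epsilon_r(k)=1$, and in which the corresponding endpoints $k_{j+1}=2^r m$ lie on the $2^r$-lattice. Consequently
\[
    S_{k,\ell} \;=\; \sum_{j=0}^{J-1} \bigl( S_{k_{j+1},\ell} - S_{k_j,\ell} \bigr)
\]
is a sum of at most $d+1$ dyadic block increments of the form $S_{2^r m,\ell}-S_{2^r(m-1),\ell}$, with at most one term per scale $r\in\{0,\dots,d\}$. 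By the triangle inequality,
\[
    |S_{k,\ell}| \;\le\; \sum_{r=0}^{d} \max_{1\le m \le 2^{d-r}} \bigl|S_{2^r m,\ell} - S_{2^r(m-1),\ell}\bigr|,
\]
where the bound now no longer depends on $k$. Taking the maximum over $k$ and over $\ell\in A$ on the left-hand side yields
\[
    \max_{\ell\in A}\max_{1\le k\le 2^d} |S_{k,\ell}| \;\le\; \sum_{r=0}^{d} \max_{\ell\in A}\max_{1\le m\le 2^{d-r}} \bigl|S_{2^r m,\ell}-S_{2^r(m-1),\ell}\bigr|.
\]

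Second, I apply Minkowski's inequality in $L^q$ to the right-hand side to pull the $L^q$-norm inside the sum over $r$, and then bound each maximum by an $\ell^q$-sum via $\max_i |X_i| \le (\sum_i |X_i|^q)^{1/q}$ applied pointwise and combined with monotonicity of the expectation. This gives
\[
    \e\bigl[\max_{\ell}\max_{k}\max_{m}|S_{2^r m,\ell}-S_{2^r(m-1),\ell}|^q\bigr]^{1/q} \;\le\; \bigg( \sum_{m=1}^{2^{d-r}}\sum_{\ell\in A} \e\bigl[|S_{2^r m,\ell}-S_{2^r(m-1),\ell}|^q\bigr] \bigg)^{1/q},
\]
and summing in $r$ produces exactly the claimed inequality.

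The only genuinely nontrivial step is the dyadic chaining in the first paragraph; once the deterministic envelope is in place, the rest is a direct application of Minkowski's inequality and the elementary estimate $\|\max_i X_i\|_q \le (\sum_i \|X_i\|_q^q)^{1/q}$. The extension from the single-sequence case of \cite{strongwu} to the present double-maximum form requires no new ideas beyond carrying the index $\ell\in A$ inside the outer maximum and then combining it with the dyadic block index $m$ when passing from a maximum to an $\ell^q$-sum.
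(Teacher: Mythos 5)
Your proof is correct. The dyadic chaining is sound: writing $k$ in binary and adding the bits from the highest scale down produces a chain $0=k_0<k_1<\cdots<k_J=k$ in which each partial sum $k_j$ is a multiple of the corresponding block length $2^{r_j}$, so every increment really is of the form $S_{2^r m,\ell}-S_{2^r(m-1),\ell}$ with $m\le 2^{d-r}$, and there is at most one increment per scale; the resulting envelope $\sum_{r=0}^d\max_m|S_{2^rm,\ell}-S_{2^r(m-1),\ell}|$ is independent of $k$, after which Minkowski and the bound $\|\max_i X_i\|_q\le(\sum_i\|X_i\|_q^q)^{1/q}$ finish the argument exactly as you say. (The stray $\max_k$ in your second display is vacuous and harmless.)

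The paper reaches the same inequality by a shorter reduction rather than by re-deriving the chaining: it sets $S_k:=\max_{\ell\in A}|S_{k,\ell}|$, writes this scalar sequence as a telescoping sum of its own increments so that Proposition 1 of the cited reference applies directly, and then uses the reverse triangle inequality for maxima, $|S_{2^rm}-S_{2^r(m-1)}|\le\max_{\ell\in A}|S_{2^rm,\ell}-S_{2^r(m-1),\ell}|$, followed by the same max-to-sum step over $\ell$. Your version is more self-contained, since it does not treat the scalar result as a black box and makes the dyadic block structure explicit (which is also what lets you see where the factor "one increment per scale" comes from); the paper's version is shorter and cleanly isolates the only new ingredient needed for the double maximum, namely the passage from increments of $\max_\ell|S_{k,\ell}|$ to maxima of increments. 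Both buy the same constant-free bound, so the difference is purely one of presentation.
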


\begin{proof}
    Define $S_k := \max_{\ell \in A} |S_{k, \ell} |$ and note that 
    \begin{align*}
        \e \Big[ \Big( \max_{\ell \in A} \max_{1 \leq k \leq 2^d} |S_{k, \ell}| \Big)^q \Big]^{1/q} =\e \Big[ \Big( \max_{1 \leq k \leq 2^d} S_{k}  \Big)^q \Big]^{1/q}.
    \end{align*}
    Using a telescopic sum, we can represent $S_k$ as a sum up to $k$, and therefore Proposition 1 of \cite{strongwu} yields 
    \begin{align*}
        \e \Big[ \Big( \max_{\ell \in A} \max_{1 \leq k \leq 2^d} |S_{k, \ell}| \Big)^q \Big]^{1/q} &\leq \sum_{r = 0}^d \bigg( \sum_{m=1}^{2^{d-r}} \e [(|S_{2^rm} - S_{2^r (m-1)}|)^q] \bigg)^{1/q}.
    \end{align*}
    Observing that 
    \begin{align*}
        |S_{2^rm} - S_{2^r (m-1)}| \leq \max_{\ell \in A} \big| |S_{2^rm, \ell}| - |S_{2^r(m -1), \ell}| \big| \leq \max_{\ell \in A} | S_{2^rm, \ell} - S_{2^r(m - 1), \ell} |,
    \end{align*}
    we obtain
    \begin{align*}
        \e \Big[ \Big( \max_{\ell \in A} \max_{1 \leq k \leq 2^d} |S_{k, \ell}| \Big)^q \Big]^{1/q} &\leq \sum_{r = 0}^d \bigg( \sum_{m=1}^{2^{d-r}} \e \Big[ \Big(\max_{\ell \in A} |S_{2^rm, \ell} - S_{2^r (m-1), \ell}| \Big)^q \Big] \bigg)^{1/q}\\
        &\leq \sum_{r = 0}^d \bigg( \sum_{m=1}^{2^{d-r}} \sum_{\ell \in A}  \e [|S_{2^rm, \ell} - S_{2^r (m-1), \ell}|^q] \bigg)^{1/q},
    \end{align*}
    where we have used the fact that $\max_{\ell \in A} |x_\ell|^q = ( \max_{\ell \in A}  |x_\ell| )^q$, since the set $A$ is finite.
\end{proof}

\begin{prop}\label{prop_maxmax_m} 
    Let $A$ be a finite set and for each $k = 1,  \ldots , n$ let $a_{k+1} , \ldots , a_{k+m}$ be constants, then
    \begin{align*}
        \e \bigg[ \max_{\ell \in A} \max_{1 \leq k \leq n} \bigg| \sum_{j = k + 1}^{k + m} a_{j,k}  \big(X_{j, \ell} - \e [X_{j, \ell}] \big) \bigg| \bigg] \lesssim  A_n \sqrt{m n} \bigg( \sum_{\ell \in A} \sum_{h = 0}^{m-1} \big| \cov ( X_{0, \ell}, X_{h, \ell} ) \big| \bigg)^{1/2},
    \end{align*}
    where $A_n:= \max_{k = 1}^n \max_{j = k+1}^{k+m} |a_{j, k}|$.
\end{prop}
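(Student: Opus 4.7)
My plan is to bypass any dyadic decomposition --- which is unavailable here because the coefficients $a_{j,k}$ depend on the starting index $k$ in an essentially arbitrary way --- and instead exploit the crude deterministic bound $\max_\ell \max_k |Y_{\ell,k}|^2 \leq \sum_\ell \sum_k Y_{\ell,k}^2$, where I set $Y_{\ell,k} := \sum_{j=k+1}^{k+m} a_{j,k}(X_{j,\ell} - \e[X_{j,\ell}])$. Taking square roots and applying Jensen's inequality to move the expectation inside then reduces the problem to estimating $\sum_{\ell \in A} \sum_{k=1}^n \e[Y_{\ell,k}^2]$.

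For each fixed $\ell$ and $k$ I would expand the square, bound the products of coefficients by $A_n^2$, and use the (joint) second-order stationarity of the underlying noise sequences to dominate $|\cov(X_{j_1,\ell}, X_{j_2,\ell})|$ by $|\cov(X_{0,\ell}, X_{|j_1-j_2|,\ell})|$ up to an absolute constant. Grouping the pairs $(j_1, j_2)$ in the window $\{k+1, \ldots, k+m\}$ by their lag $h = |j_1 - j_2| \in \{0, \ldots, m-1\}$ --- each such lag produces at most $2(m-h) \leq 2m$ pairs --- gives $\e[Y_{\ell,k}^2] \lesssim A_n^2 m \sum_{h=0}^{m-1} |\cov(X_{0,\ell}, X_{h,\ell})|$. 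Summing over the $n$ values of $k$ and over $\ell \in A$ produces a total of order $A_n^2\, m n \sum_{\ell \in A} \sum_{h=0}^{m-1} |\cov(X_{0,\ell}, X_{h,\ell})|$; taking the square root delivers the claimed inequality.

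The only real obstacle is conceptual rather than technical: one has to notice that the dyadic maximal inequality of Proposition \ref{maxmax} cannot be applied because the sliding windows $\{k+1, \ldots, k+m\}$ do not nest and the $a_{j,k}$ are $k$-dependent, so the required partial-sum structure is absent. The payoff of the crude $\ell^2$-bound is that it is essentially lossless in this setting --- each of the $n$ windows contributes a variance of order $m \sum_h |\cov|$, and the $\sqrt{mn}$ rate is then automatic, with no $\sqrt{\log n}$ factor to shave off. A secondary subtlety is that model \eqref{model} contains a change point, so strictly speaking $\cov(X_{j_1,\ell}, X_{j_2,\ell})$ is not a function of $|j_1 - j_2|$ across the regime boundary; this is absorbed by dominating cross-regime covariances by their within-regime stationary counterparts, or equivalently by interpreting $\cov(X_{0,\ell}, X_{h,\ell})$ in the stated bound as a uniform upper bound in the spirit of the notation $\bar\Gamma$ introduced in Section~\ref{sec41}.
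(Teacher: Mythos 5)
Your proof is correct, but it takes a genuinely different route from the paper's. The paper does apply the dyadic maximal inequality (Proposition \ref{maxmax}): it sets $S_{k,\ell}=\sum_{j=k+1}^{k+m}a_{j,k}\tilde X_{j,\ell}$ and observes that $S_{k,\ell}=\sum_{i=1}^k(S_{i,\ell}-S_{i-1,\ell})$ is a telescoping partial sum of its own increments, which restores exactly the partial-sum structure you claim is absent; it then bounds each block second moment by $A_n^2\, m\sum_{h=0}^{m-1}|\cov(X_{0,\ell},X_{h,\ell})|$ and sums the dyadic scales to get $A_n\sqrt{mn}$. So your stated reason for abandoning that route is mistaken --- the sliding windows and $k$-dependent coefficients are not an obstruction. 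That said, your alternative is valid and arguably cleaner: the chain $\e[\max_{\ell,k}|Y_{\ell,k}|]\le\e[(\sum_{\ell,k}Y_{\ell,k}^2)^{1/2}]\le(\sum_{\ell,k}\e[Y_{\ell,k}^2])^{1/2}$ via Jensen, combined with the lag decomposition $\e[Y_{\ell,k}^2]\lesssim A_n^2\, m\sum_{h=0}^{m-1}|\cov(X_{0,\ell},X_{h,\ell})|$, gives the same $A_n\sqrt{mn}$ rate with no chaining and no logarithmic loss --- the crude $\ell^2$ bound is lossless here precisely because each of the $n$ windows already carries variance of order $m\sum_h|\cov|$. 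Your explicit handling of the cross-regime covariances near the change point is also more careful than the paper's, which silently absorbs this into a $\lesssim$; in the paper's actual applications the windows lie within a single regime, so the issue is moot, but flagging it is appropriate.
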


\begin{proof}
    With the notation $\tilde X_j = X_j - \e [X_j]$, we have by Proposition \ref{maxmax} (see also Remark \ref{wu_n2hochd}) for $S_{k,\ell} = \sum_{j = k + 1}^{k + m} a_{j, k} \tilde X_{j,\ell}$ (which can be written as a telescopic sum)
    \begin{align*}
        \e \bigg[ \max_{\ell \in A} \max_{1 \leq k \leq 2^d} \big| S_{k,\ell} \big| \bigg] &\leq \sum_{r = 0}^d \bigg( \sum_{u = 1}^{2^{d-r}} \e \big[  (S_{2^r u, \ell} - S_{2^r (u-1), \ell})^2\big] \bigg)^{1/2}\\
        &\lesssim \sum_{r = 0}^d \bigg( \sum_{u = 1}^{2^{d-r}} \e \big[  S_{2^r u, \ell}^2\big] \bigg)^{1/2} + \sum_{r = 0}^d \bigg( \sum_{u = 1}^{2^{d-r}} \e \big[  S_{2^r (u-1), \ell}^2 \big] \bigg)^{1/2}.
    \end{align*}
    For $\ell \in A$, we have by stationarity
    \begin{align*}
         \e \big[  S_{2^r u, \ell}^2 \big] = \sum_{j_1, j_2 = 2^r u + 1}^{2^r u + m} a_{j_1, k} a_{j_2, k} \e [\tilde X_{j_1, \ell} \tilde X_{j_2, \ell} ] &\lesssim A_n^2 \sum_{h = 0}^{m-1} (m - h) \cdot |\cov (X_{0, \ell}, X_{h, \ell})|\\
         &\leq A_n^2 m \sum_{h = 0}^{m-1} |\cov (X_{0, \ell}, X_{h, \ell})|
    \end{align*}
    and therefore, we obtain
    \begin{align*}
        \sum_{r = 0}^d \bigg( \sum_{\ell \in A} \sum_{u = 1}^{2^{d-r}} \e \big[  S_{2^r u, \ell}^2\big] \bigg)^{1/2} &\lesssim \sum_{r = 0}^d \bigg( \sum_{\ell \in A} \sum_{u = 1}^{2^{d-r}} A_n^2 m \sum_{h = 0}^{m-1} |\cov (X_{0, \ell}, X_{h, \ell})| \bigg)^{1/2}\\
        &\lesssim A_n \sqrt{m  2^{d}} \bigg( \sum_{\ell \in A} \sum_{h = 0}^{m-1} |\cov (X_{0, \ell}, X_{h, \ell})| \bigg)^{1/2}. 
    \end{align*}
    A similar bound can be derived for the second term involving $S_{2^r(u-1), \ell}$ which completes the proof.
\end{proof}

\subsection{Proof of Lemma~\ref{lemmasubexp}} 
\begin{enumerate}[label=(\roman*)]
    \item We show that the random variable $\mathbb{V}_\alpha^2$ has an integrable characteristic function. From this fact it follows that $\mathbb{V}_\alpha^2$ and therefore also $\mathbb{V}_\alpha$ have a Lebesgue density \citep[see pp. 347f. in][]{billingsley1995probability}.

    Define the Brownian bridge as $\mathbb{U} (\lambda) := \mathbb{B} (\lambda) - \lambda \mathbb{B} (1)$ and $\mathbb{U}_\nu (\lambda) := \mathbb{U} (\lambda^\nu)$ for $\nu > 0$. Then, by substituting $\rho = \lambda^{\alpha + 1}$, we have
    \begin{align}\label{relVM}
        \mathbb{V}_\alpha^2 = \int_0^1 \lambda^{\alpha} \mathbb{U}(\lambda)^2 \mathrm{d}\lambda = \frac{1}{\alpha + 1} \int_0^1 \mathbb{U} (\rho^{1/(\alpha + 1)})^2 \mathrm{d} \rho = \frac{1}{\alpha + 1} \| \mathbb{U}_\nu \|_{L^2}^2
    \end{align}
    where $\nu = 1/(\alpha + 1)$. 
    Similar to the discussion after Corollary 1 in \cite{laplacetransform}, we therefore obtain for the characteristic function of $\mathbb{V}_\alpha^2$
    \begin{align}\label{charf_0}
        \e [\exp (it \mathbb{V}_\alpha^2) ] =\e [\exp (\tfrac{it}{\alpha + 1} \|\mathbb{U}_\nu \|_2^2) ] = \bigg( \prod_{n=1}^\infty (1 - \tfrac{2 it }{\alpha + 1} a_n)  \bigg)^{-1/2},
    \end{align}
    where the quantities $a_n$ are the coefficients in the Karhunen-Loève expansion
    \begin{align*}
        \| \mathbb{U}_\nu \|_2^2 = \sum_{n=1}^\infty a_n \xi_n^2
    \end{align*}
    (that is the sequence of eigenvalues repeated according to their multiplicity of the covariance operator of $\mathbb{U}_\nu$) and $\{\xi_n\}_{n \geq 1}$ is a sequence of i.i.d standard normal random variables. Analogously to their Corollary 2, we have for an entire function $f$ of order $\lambda < 1$, such that $1/a_n$, $n\geq 1$ are the only zeros, counting multiplicities, of $f$ that
    \begin{align}\label{charf}
        \e [\exp (it \mathbb{V}_\alpha^2) ] = \bigg( \frac{f(2it / (\alpha + 1))}{f(0)} \bigg)^{-1/2},
    \end{align}
    where $f(z) / f(0) = \prod_{n=1}^\infty (1 - a_nz)$ (note that this function converges for every $z = 2it / (\alpha + 1)$ with $t \in \mathbb{R}$ by \eqref{charf_0}).
    The eigenvalues of the covariance operator of $\mathbb{U}_\nu$ are defined by the equation
    \begin{align}\label{eigenvaleq}
        \int_0^1 (t^\nu \wedge s^\nu - t^\nu s^\nu) y_i(t) \mathrm{d}t = a_i y_i(s), ~~~~ i=1,2, \ldots 
    \end{align}
    Finally, in Theorem 5 in \cite{laplacetransform}, it is shown that the eigenvalues of \eqref{eigenvaleq} are given by the roots of the entire function (with order $1/2$)
    \begin{align*}
        f ( \rho ) = \frac{J_{1/(\alpha + 2)}(c_\alpha \rho^{1/2} )}{\rho^{1/(2\alpha + 4)}},
    \end{align*}
    where $c_\alpha = 2\sqrt{\alpha + 1} / (\alpha + 2)$. Note that $f(\rho) \to (c_\alpha/2)^{1/(\alpha + 2)} \Gamma(\tfrac{\alpha + 3}{\alpha + 2} )^{-1}$ when $\rho \to 0$ \citep[cf. (B.22) in Appendix B in][]{mainardifrancesco}.
    Hence, we have with \eqref{charf} that 
    \begin{align*}
        \varphi_{\mathbb{V}_\alpha^2}(t) &= \e [\exp (it \mathbb{V}_\alpha^2) ]\\
        &= \bigg( \frac{(1 + i)\sqrt{t}}{\alpha + 2} \bigg)^{\frac{1}{2(\alpha + 2)}} \bigg( \Gamma \bigg( \frac{\alpha + 3}{\alpha + 2} \bigg) J_{1/(\alpha + 2)} \bigg( \frac{2(1+i) \sqrt{t}}{\alpha + 2} \bigg)  \bigg)^{-1/2}.
    \end{align*}

    Next, we show that $\varphi_{\mathbb{V}_\alpha^2}$ is integrable. Using $|\varphi_{\mathbb{V}_\alpha^2} (t)| \leq 1$ for each $t \in \mathbb{R}$, we have
    \begin{align}\label{twointegrals}
        \int_{\mathbb{R}} |\varphi_{\mathbb{V}_\alpha^2} (t)| \mathrm{d}t \leq 2 + \int_{1}^\infty |\varphi_{\mathbb{V}_\alpha^2} (t)| \mathrm{d}t + \int_{1}^\infty |\varphi_{\mathbb{V}_\alpha^2} (-t)| \mathrm{d}t.
    \end{align}
    We will show that both integrals on the right-hand side are finite and we will start with the first one.
    By (B.24) in Appendix B in \cite{mainardifrancesco}, we have 
    \begin{align}\label{asmyptoticequiv}
        J_{1/(\alpha + 2)} (z) \sim \sqrt{\frac{2}{\pi z}} \cos \bigg( z - \frac{\pi}{2 (\alpha + 2)} - \frac{\pi}{4} \bigg), ~~~~~ \text{ as } z \to \infty
    \end{align}
    where $z = \frac{2(1+i) \sqrt{t}}{\alpha + 2}$, which satisfies $|\arg (z)| = \pi / 4 < \pi$. Therefore, it follows $|\varphi_{\mathbb{V}_\alpha^2} (t)| \sim \tilde \varphi_{\mathbb{V}_\alpha^2} (t)$ as $t \to \infty$, where
    \begin{align*}
        \tilde \varphi_{\mathbb{V}_\alpha^2} (t) = C_\alpha \cdot \sqrt{\frac{t^{\frac{\alpha + 4}{4(\alpha + 2)}}}{|\cos (\frac{2(1+i)\sqrt{t}}{\alpha + 2} - \frac{\pi}{2(\alpha + 2)} - \frac{\pi}{4}) |}}, ~~~~~ t \geq 0
    \end{align*}
    with $C_\alpha = (\frac{\sqrt{2}}{\alpha + 2})^{\frac{\alpha + 4}{4 (\alpha + 2)}} \pi^{1/4} \Gamma(\frac{\alpha + 3}{\alpha + 2})^{-1/2}$.
    Hence, by the limit comparison test for integrals, it suffices to show that
    \begin{align*}
        \int_{1}^\infty \tilde \varphi_{\mathbb{V}_\alpha^2} (t) \mathrm{d}t < \infty.
    \end{align*}
    For this, we note that a simple calculation shows that 
    \begin{align*}
        |\cos(x+iy)|^2 \geq \sinh^2 (y)
    \end{align*}
    for any $x,y \in \mathbb{R}$, which yields the estimate 
    \begin{align*}
        \int_{1}^\infty \tilde \varphi_{\mathbb{V}_\alpha^2} (t) \mathrm{d}t \leq C_\alpha \int_{1}^\infty \sqrt{\frac{(\sqrt{t})^{\frac{\alpha + 4}{2(\alpha + 2)}}}{\sinh (\frac{2}{\alpha + 2} \sqrt{t})} } \mathrm{d}t \leq C_\alpha \int_{1}^\infty \sqrt{\frac{\sqrt{t}}{\sinh (\frac{2}{\alpha + 2} \sqrt{t})}} \mathrm{d}t,
    \end{align*}
    because $\frac{\alpha + 4}{2 (\alpha + 2)} \leq 1$ and 
    $t \geq 1$.
    Since $t \mapsto \sinh (t)$ grows exponentially, the integral on the right hand side is convergent. Therefore, the integral on the left hand side converges as well.

    Next, we turn to the second integral in \eqref{twointegrals} and have similarly as above, using \eqref{asmyptoticequiv} with $z = \frac{2(i-1)\sqrt{t}}{\alpha + 2}$ (note that $|\arg (z) | = \frac{3\pi}{4} < \pi$)
    \begin{align*}
        |\varphi_{\mathbb{V}_\alpha^2} (-t)| \sim C_\alpha \cdot \sqrt{\frac{t^{\frac{\alpha + 4}{4(\alpha + 2)}}}{|\cos (\frac{2(i - 1)\sqrt{t}}{\alpha + 2} - \frac{\pi}{2(\alpha + 2)} - \frac{\pi}{4}) |}}, ~~~~~ \text{ as } t \to \infty \text{ with } t \geq 0.
    \end{align*}
    Now, we can argue exactly as above and obtain the same bound for the integral. Hence, we have shown that the characteristic function of $\mathbb{V}_\alpha^2$ is integrable and the proof is finished.

    \item We start by computing the Laplace transformation of the square of the denominator. Recall that $\mathbb{U}_{\nu} (\lambda) = \mathbb{U} (\lambda^\nu)$. Then, by Theorem 5 in \cite{laplacetransform},
    \begin{align*}
        \e \big[ \exp \big( \negthickspace - \negthickspace t \| \mathbb{U}_\nu \|_{L^2}^2 \big) \big] = \bigg( \frac{\sqrt{\alpha + 1}}{\alpha + 2} \sqrt{2t}  \bigg)^{\frac{1}{2(\alpha + 2)}} \negthickspace \bigg( \Gamma \bigg(\frac{\alpha + 3}{\alpha + 2} \bigg) I_{1/(\alpha + 2)} \bigg( \negthickspace \frac{\sqrt{\alpha + 1}}{\alpha + 2} \sqrt{8t} \bigg) \bigg)^{-1/2}
    \end{align*}
    for the Laplace transformation of  $\| \mathbb{U}_\nu \|_{L^2}^2$, where $t > 0$ and  $I_s$ is the modified Bessel function of the first kind of order $s$. Using this and \eqref{relVM} we obtain
    \begin{align}\label{laplacetrafo}
        \e \big[ \exp \big( - t \mathbb{V}_\alpha^2  \big) \big] &= \bigg( \frac{\sqrt{2t}}{\alpha + 2} \bigg)^{1/(2(\alpha + 2))} \bigg( \Gamma \bigg(\frac{\alpha + 3}{\alpha + 2} \bigg) I_{1/(\alpha + 2)} \bigg( \frac{\sqrt{8t}}{\alpha + 2}  \bigg) \bigg)^{-1/2}
    \end{align}
    for  the Laplace transformation of $\mathbb{V}_\alpha^2$.
    Introducing the notation $x = \sqrt{2t} / (\alpha + 2)$ and using Theorem 2.1 (iv) in \cite{Ifantis1991} yields
    \begin{align*}
        \bigg( \Gamma \bigg(\frac{\alpha + 3}{\alpha + 2} \bigg) I_{1/(\alpha + 2)} ( 2x ) \bigg)^{-1/2} \negthickspace\negthickspace < e^{-x} x^{- \frac{1}{2(\alpha + 2)} } \bigg( \frac{2 \frac{\alpha + 3}{\alpha + 2}}{2x + 2 \frac{\alpha + 3}{\alpha + 2}} \bigg)^{\frac{\alpha + 3}{\alpha + 2}} \negthickspace < e^{-x} x^{-1/(2(\alpha + 2))}.
    \end{align*}
    Combining this estimate with \eqref{laplacetrafo} gives
    \begin{align}\label{finalbound}
        \e \big[ \exp \big( - t \mathbb{V}_\alpha^2 \big) \big] < e^{-x} = \exp \bigg( - \frac{\sqrt{2t}}{\alpha + 2} \bigg).
    \end{align}
    
    Next, turning to $\mathbb{G}_\alpha = \mathbb{B}(1) / \mathbb{V}_\alpha$, we note that the numerator and the denominator are independent (this can easily be seen by expanding the Brownian bridge into its Karhunen-Loève expansion). If we denote by $c = \| \mathbb{B} (1) \|_{\psi_2}$ the sub-Gaussian norm of $\mathbb{B} (1)$, then we have $\e [\exp (\mathbb{B}(1)^2 / c^2)] \leq 2$ and obtain with \eqref{finalbound} for $t > 0$
    \begin{align*}
        \p \bigg( \bigg| \frac{\mathbb{B} (1)}{\mathbb{V}_\alpha} \bigg| \geq t \bigg) &\leq \p \big( \exp (\mathbb{B} (1)^2 / c^2) \geq \exp \big( (t/c)^2 \mathbb{V}_\alpha^2 \big)  \big) \\
        & \leq \e \bigg[ \exp (\mathbb{B} (1)^2 / c^2) \exp \big( - (t/c)^2 \mathbb{V}_\alpha^2  \big) \bigg]\\
        & = \e \big[ \exp (\mathbb{B} (1)^2 / c^2) \big] \e \big[ \exp \big( - (t/c)^2 \mathbb{V}_\alpha^2 \big) \big]\\
        & \leq 2 \exp \bigg( - \frac{\sqrt{2} t }{c(\alpha + 2)}  \bigg) \\
        &= 2 \exp ( - K_\alpha t ),
    \end{align*}
    where $K_\alpha = \frac{\sqrt{2}}{c(\alpha + 2)} $. By a similar argument, one easily sees that $\mathbb{V}_\alpha^{-1}$ has a subexponential distribution as well.

    \item Observing equation (49) in \cite{tolmatz2002distribution} and $\mathbb{V}_\alpha \leq \mathbb{V}_0$ (a.s.), we have for sufficiently large $t > 0$
    \begin{align}
        \p (\mathbb{V}_\alpha > t) \leq \p (\mathbb{V}_0 > t) 
        = \frac{2}{\pi^{3/2} t} \exp \big( - \pi^2 t^2 / 2 \big) \big( 1 + O(t^{-2}) \big)\leq \exp (-C t^2),
        \label{det20d}
    \end{align}
    where $C>0$ is a positive constant. 
    Let $\varepsilon > 0$ and choose $M_\varepsilon^2 = \frac{\log (p_0 / \varepsilon)}{C \log (p_0)}$ for $p_0$ sufficiently large, such that \eqref{det20d}  is satisfied for $t = \sqrt{C\log (p_0)}$). Then, for $\mathbb{V}_\alpha^{(1)}, \ldots, \mathbb{V}_\alpha^{(p)}$ having the same distribution as $\mathbb{V}_\alpha$ and $p \geq p_0$
    \begin{align*}
        \p \Big( \max_{\ell = 1, \ldots, p}  \mathbb{V}_\alpha^{(\ell)} > \sqrt{\log (p)} \cdot M_\varepsilon \Big) \leq p \p (\mathbb{V}_\alpha > \sqrt{\log (p)} M_\varepsilon) \leq p\exp(-C \log (p) M_\varepsilon^2) \leq \varepsilon.
    \end{align*}
\end{enumerate}

\subsection{Proof of Lemma~\ref{lemmasubsubexp}}

\begin{enumerate}[label=(\roman*)]
    \item
        Recall that $\mathbb{M}(\lambda) = \mathbb{B} (\lambda)^2 - \lambda$, and note that $\{ \mathbb{M}(\lambda) \}_{\lambda \in [0,1]}$ is a continuous martingale. Therefore, by the Dambis-Dubins-Schwarz Theorem \citep[see Theorem 5.13 in][]{LeGall2016}, there exists a Brownian motion $\{ W(\lambda) \}_{\lambda \geq 0}$, such that $\mathbb{M} (\lambda) = W(\langle \mathbb{M} \rangle_\lambda)$, where $\{ \langle \mathbb{M} \rangle_\lambda \}_{\lambda \in [0,1]}$ is the quadratic variation process of $\mathbb{M}$, that is
        \begin{align*}
            \langle \mathbb{M} \rangle_\lambda = 4 \int_0^\lambda \mathbb{B}^2 (t) \mathrm{d}t.
        \end{align*}
        With the notation $\bar W(\lambda) = W(2 \lambda^2) - \lambda^2 W(2)$, $D(\lambda) = W(\langle \mathbb{M} \rangle_\lambda) - W(2 \lambda^2)$ and $E (\lambda) = D(\lambda) - \lambda^2 D(1)$, we obtain the representation $\mathbb{M} (\lambda) - \lambda^2 \mathbb{M} (1) = \bar W(\lambda) + E(\lambda)$. Now, Young's inequality (with $\varepsilon = 2$) yields
        \begin{align*}
            (\mathbb{M} (\lambda) - \lambda^2 \mathbb{M} (1) )^2 &= \bar W (\lambda)^2 + 2 \bar W (\lambda) E(\lambda) + E(\lambda)^2\\
            &\geq \bar W (\lambda)^2 - 2 |\bar W(\lambda)| |E (\lambda)| + E(\lambda)^2\\
            &\geq \frac{1}{2} \bar W (\lambda)^2 - E (\lambda)^2.
        \end{align*}
        Hence, by the definition of $\mathbb{W}_\alpha$ in \eqref{WA}, we obtain
        \begin{align}\label{W_and_E}
            \mathbb{W}_\alpha^2 \geq \frac{1}{2} Z_\alpha - \int_0^1 \lambda^\alpha E(\lambda)^2 \mathrm{d} \lambda,
        \end{align}
        with
        \begin{align*}
            Z_\alpha  = \int_0^1 \lambda^\alpha \bar W (\lambda)^2 \mathrm{d} \lambda 
            &= \int_0^1 \lambda^\alpha \big(W (2 \lambda^2) - \lambda^2 W(2) \big)^2 \mathrm{d} \lambda  \\
            & = \frac{1}{2} \int_0^1 t^{\alpha / 2 - 1/2} \big( W(2t) - t W(2) \big)^2 \mathrm{d}t\\
            &= \int_0^1 t^{\alpha / 2 - 1/2} \big( \beta (t) - t \beta(t) \big)^2 \mathrm{d}t\\
            &=^d \mathbb{V}_{\alpha / 2 - 1/2}^2,
        \end{align*}
        where we used the substitution $t = \lambda^2$ in the second line,  $\beta(t) = \frac{1}{\sqrt{2}} W(2t)$ is a standard Brownian motion and $\mathbb{V}_{\alpha / 2 - 1 / 2}$ is defined in \eqref{V_alpha_def}. Moreover, let $\eta = \eta (t) \geq 2$ and note that on the event $A_\eta := \{ \sup_{0 \leq \lambda \leq 1}  |D (\lambda)| \leq \eta \}$, we obtain
        \begin{align}\label{elambda_eta}
            \int_0^1 \lambda^\alpha E(\lambda)^2 \mathrm{d} \lambda \leq 4 \sup_{0 \leq \lambda \leq 1} D(\lambda)^2 \int_0^1 \lambda^\alpha \mathrm{d} \lambda = c_\alpha \eta^2 ,
        \end{align}
        where $c_\alpha = 4/(\alpha + 1)$. Hence, by \eqref{finalbound}, \eqref{W_and_E} and \eqref{elambda_eta} we obtain (since $\alpha \ge 1$)
        \begin{align}\label{first_result}
            \begin{aligned}
                \e [\exp (-t\mathbb{W}_\alpha^2)  \mathbbm{1}_{A_\eta}] &\le \e \bigg[\exp \bigg(-\frac{t}{2}  \mathbb{V}_{\alpha / 2 - 1/2}^2 \bigg) \bigg] \exp \big( c_\alpha \eta^2 \big) \\
                &\le \exp \big( - d_\alpha \sqrt{t} + c_\alpha \eta^2 \big),
            \end{aligned}
        \end{align}
        where $d_\alpha = 2/(\alpha + 3)$.\\
        For the complementary event $A_\eta^C$, we first define the event $B_\eta := \{ \sup_{0 \leq \lambda \leq 1} \langle \mathbb{M} \rangle_\lambda \leq \eta \}$, on which we have (note that $\eta \ge 2$)
        \begin{align*}
            \sup_{0 \leq \lambda \leq 1} |D (\lambda)| = \sup_{0 \leq \lambda \leq 1} |W(\langle \mathbb{M} \rangle_\lambda) - W(2 \lambda^2)| \leq 2 \sup_{0 \leq \lambda \leq \eta} |W(\lambda)|.
        \end{align*}
        This implies
        \begin{align}\label{second_result}
            \begin{aligned}
                \p \big( A_\eta^C \cap B_\eta \big) = \p \Big( \sup_{0 \le \lambda \le 1} |D(\lambda)| > \eta, B_\eta \Big) &\le \p \Big( \sup_{0 \le \lambda \le \eta} |W(\lambda)| > \eta / 2 \Big)\\
                & \le 4 \exp \Big( - \frac{\eta}{8} \Big),
            \end{aligned}
        \end{align}
        where in the last inequality we used that for any Brownian motion $\{ B(\lambda) \}_{0 \le \lambda \le r}$ on the interval $[0,r]$ with $r>0$ it holds (as a consequence of Doob's martingale inequality) that for $a > 0$
        \begin{align*}
            \p \Big( \sup_{0 \leq \lambda \leq r} |B (\lambda)| > a \Big) &\leq \p \Big( \sup_{0 \leq \lambda \leq r} B(\lambda) > a \Big) + \p \Big( \inf_{0 \leq \lambda \leq r}  B(\lambda) < -a \Big)\\
            &= 2 \p \Big( \sup_{0 \leq \lambda \leq r}  B(\lambda) > a \Big)\\
            &= 4 \p (B(r) > a) \\
            &\leq 4 \exp \Big( - \frac{a^2}{2r} \Big).
        \end{align*}
        From this it also follows that
        \begin{align}\label{third_result}
            \p \big( B_\eta^C \big) = \p \Big( \sup_{0 \le \lambda \le 1} 4 \int_0^\lambda \mathbb{B} (t)^2 \mathrm{d} t > \eta \Big)
            &\le \p \bigg( \int_0^1 \mathbb{B} (t)^2 \mathrm{d}t > \eta / 4 \bigg) \nonumber\\
            &\le \p \Big( \sup_{0 \le \lambda \le 1} |\mathbb{B} (\lambda)| > \sqrt{\eta} / 2 \Big)\nonumber\\
            &\le 4 \exp \Big( - \frac{\eta}{8}  \Big).
        \end{align}
        Therefore, combining \eqref{first_result}, \eqref{second_result} and \eqref{third_result} yields
        \begin{align*}
            \e [\exp (-t \mathbb{W}_\alpha^2)] &\le \e [\exp (-t \mathbb{W}_\alpha^2) \mathbbm{1}_{A_\eta}] + \p (A_\eta^C \cap B_\eta) + \p (B_\eta^C) \\
            &\le \exp (- d_\alpha \sqrt{t} + c_\alpha \eta^2) + 8 \exp \Big( - \frac{\eta}{8} \Big).
        \end{align*}
        Choosing $\eta = \sqrt{\frac{d_\alpha}{2c_\alpha}} t^{1/4}$ and $t$ large enough (such that $\eta \geq 2$), yields
        \begin{align}\label{finalbound_W}
            \e [\exp (-t \mathbb{W}_\alpha^2)] &\le \exp \Big(- \frac{d_\alpha}{2} \sqrt{t} \Big) + 8 \exp (- \eta / 8) \le 9 \exp (- C_\alpha t^{1/4}),
        \end{align}
        where $C_\alpha = (16 (\alpha + 3))^{-1}$.

    \item 
        For $\varepsilon = \varepsilon (t) > 0$, we split the probability as follows
        \begin{align}\label{eps_split}
            \p ( | \mathbb{H}_\alpha| > t) \le \p (\mathbb{W}_\alpha \le \varepsilon) + \p (|\mathbb{M} (1)| > t \varepsilon).
        \end{align}
        By \eqref{finalbound_W} and Markov's inequality, we have for $a > 0$
        \begin{align*}
            \p (\mathbb{W}_\alpha \le \varepsilon) \le \exp (a \varepsilon^2) \cdot \e [\exp (-a \mathbb{W}_\alpha^2)] \le 9 \exp (a \varepsilon^2 - C_\alpha a^{1/4}).
        \end{align*}
        Minimizing with respect to $a$, one gets $a =  C_\alpha^{4/3} (2 \varepsilon)^{-8/3}$ leading to
        \begin{align*}
            \p (\mathbb{W}_\alpha \le \varepsilon) \le 9 \exp (-\tilde C_\alpha \varepsilon^{-2/3}),
        \end{align*}
        where $\tilde C_\alpha = 3 \cdot (C_\alpha / 4)^{4/3}$. For the second term in \eqref{eps_split} note that $\mathbb{M} (1) = \mathbb{B}^2 (1) - 1$ is a centered $\chi^2_1$ random variable and therefore has a subexponential distribution by the centering Lemma. We will denote its subexponential norm by $\| \mathbb{M} (1) \|_{\psi_1}$. Therefore,
        \begin{align*}
            \p (|\mathbb{M} (1)| > t \varepsilon) &\le \exp \bigg(- \frac{t \varepsilon}{\| \mathbb{M} (1) \|_{\psi_1}} \bigg) \cdot \e \bigg[ \exp \bigg( \frac{|\mathbb{M} (1)|}{\| \mathbb{M} (1) \|_{\psi_1}} \bigg) \bigg]\\
            &\le 2  \exp \bigg(- \frac{t \varepsilon}{\| \mathbb{M} (1) \|_{\psi_1}} \bigg).
        \end{align*}
        An easy calculation shows that $\varepsilon = t^{-3/5}$ equalizes the exponents in \eqref{eps_split}. Hence,
        \begin{align*}
            \p (|\mathbb{H}_\alpha| > t) \le 9 \exp (- \tilde C_\alpha t^{2/5}) + 2 \exp \bigg( - \frac{t^{2/5}}{\| \mathbb{M} (1) \|_{\psi_1}} \bigg) \le 11 \exp (-\tilde D_\alpha t^{2/5}),
        \end{align*}
        where $\tilde D_\alpha = \min \{ 3 \cdot (C_\alpha/4)^{4/3}, \| \mathbb{M} (1) \|^{-1}_{\psi_1} \}$. By a simple calculation, one can show $\e [\exp(|\mathbb{M} (1) | / 4)] \le 2$, implying $\| \mathbb{M} (1) \|_{\psi_1} \le 4$. Since $3 \cdot (C_\alpha / 4)^{4/3} \le 1/4$, we have $\tilde D_\alpha \ge \min \{ 3 \cdot (C_\alpha / 4)^{4/3}, 1/4 \} = 3 \cdot (C_\alpha / 4)^{4/3} = D_\alpha$.
\end{enumerate}

\section{Proofs of the results in Section \ref{sec42}}

\subsection{Proof of Theorem \ref{thm2.0}}

We assume without loss of generality \citep[cf.][p. 333]{strassen} that $Y_1, Y_2,  \ldots $ are defined on a probability space that is rich enough to support a Brownian motion, and consider the decomposition \eqref{martingaledecomp}
\begin{align}\label{det6}
    Y_j^\top \xi = E_j^\top \xi - F_j^\top \xi,
\end{align}
where $E_j$ and $F_j$ are defined therein. 
We also introduce the notation
\begin{align*}
    D_{j,n} := \frac{1}{\sqrt{n} \sigma_n} E_j^\top \xi ~~~~~~~~ \text{and} ~~~~~~~~ R_{j,n} := \frac{1}{\sqrt{n} \sigma_n} F_j^\top \xi
\end{align*}
with $\sigma_n^2 = \xi^\top \Gamma_Y \xi$, where $\Gamma_Y$ is defined in \eqref{det50h}.
Straightforward calculations show that $\{ D_{j,n} \}_{j=1, \ldots ,n}$ is a martingale difference sequence with respect to the filtration $\{ \mathcal{F}_j \}_{j=1, \ldots ,n}$, where $\mathcal{F}_j = \sigma (\varepsilon_j, \varepsilon_{j-1},  \ldots )$ is the sigma field generated by $\varepsilon_{j}, \varepsilon_{j-1}, \ldots$. This implies that $\{ M_{k,n} \}_{k=1, \ldots ,n}$ with $M_{k,n} := \sum_{j=1}^k D_{j,n}$ is a martingale with respect to $\{ \mathcal{F}_j \}_{j=1, \ldots ,k}$ for any $n \in \mathbb{N}$. 
Therefore, it follows from the Skorokhod Representation Theorem (Theorem 4.3 in \citealp{strassen}; Chapter 5 in \citealp{scott1973central}; Theorem A.1. in \citealp{hallheyde}) that for a Brownian motion $\mathbb{B}$ and for any $n$ there exist nonnegative random variables $\tau_{1,n}, \tau_{2,n},  \ldots $ with partial sums 
$$
T_{k,n} = \sum_{j=1}^k \tau_{j,n},
$$
such that for $k \geq 1$ and $j \leq k$
\begin{align}\label{det1} 
    & M_{k,n}  = \mathbb{B} (T_{k,n}), \\
    & \e [{\tau_{j,n} \mid \tilde{\mathcal{F}}_{j-1, n}}]  = \e [{D_{j,n}^2  \mid {\mathcal{F}}_{j-1}}], \label{det2} \\
    & \e [{\tau_{j,n}^q \mid \tilde{\mathcal{F}}_{j-1, n}}] \leq C_q \e [{|D_{j,n} |^{2q} \mid {\mathcal{F}}_{j-1}}], ~~~~~ \text{ for any } q \geq 1 \label{det3}
\end{align}
almost surely, where $\tilde{\mathcal{F}}_{j, n}$ denotes the $\sigma$-field generated by $\varepsilon_j, \varepsilon_{j-1},  \ldots $ and $\{ \mathbb{B} (t) \} _{0 \leq t \leq T_{j,n}}$. 
Moreover,  $\tau_{j,n}$ is measurable with respect to $\tilde{\mathcal{F}}_{j,n}$.

From the decomposition \eqref{det6}, we obtain
\begin{align*}
    \sup_{0 \leq \lambda \leq 1} |\sqrt{n} \tilde{S}_n (\lfloor n \lambda \rfloor) - \mathbb{B} (\lambda)| \leq \sup_{0 \leq \lambda \leq 1} |M_{\gbr{n \lambda}, n} - \mathbb{B} (\lambda)| + \max_{1 \leq k \leq n} \bigg| \sum_{j=1}^k R_{j,n} \bigg|,
\end{align*}
and we will now show that both terms on the right-hand side converge to $0$ at the desired order, that is,
\begin{align}\label{det50f}
    \sup_{0 \leq \lambda \leq 1} |M_{\gbr{n \lambda}, n} - \mathbb{B} (\lambda)| &= o_\p (n^{-\beta}), \\
    \label{det50g}
    \max_{1 \leq k \leq n} \bigg| \sum_{j=1}^k R_{j,n} \bigg| &= o_\p (n^{-\beta}).
\end{align}

{\bf Proof of \eqref{det50f}.}
We obtain from \eqref{det1}
\begin{align*}
    \sup_{0 \leq \lambda \leq 1} |M_{\gbr{n \lambda}, n} - \mathbb{B} (\lambda)| = \sup_{0 \leq \lambda \leq 1} \big|\mathbb{B} \big(T_{\gbr{n \lambda}, n} \big) - \mathbb{B} (\lambda) \big|,
\end{align*}
and show below that
\begin{align}
    \label{det4}
    \p \Big( \sup_{0 \leq \lambda \leq 1} |T_{\gbr{n \lambda}, n} - \lambda| > n^{-\alpha / 2} \Big) = o(1), ~~~~~ \text{ as }n \to \infty .
\end{align}
Combining this result with Lemma~\ref{double_unif_cont_BM} yields
\begin{align*}
    \sup_{0 \leq \lambda \leq 1} \big|\mathbb{B} \big(T_{\gbr{n \lambda}, n} \big) - \mathbb{B} (\lambda) \big| = o_\p (n^{-\beta})
\end{align*}
whenever $\beta < \alpha / 4$, and therefore \eqref{det50f} follows.

For a proof of \eqref{det4} we recall the definition of $T_{\gbr{n \lambda}, n}$ and use \eqref{det2} to obtain the inequality
\begin{align}\label{det5}
    |T_{\gbr{n \lambda}, n} - \lambda| \leq |Q_n (\lambda)| + |J_n (\lambda)|
\end{align}
almost surely, where
\begin{align*}
    Q_n (\lambda) &:= \sum_{j=1}^{\gbr{n \lambda}} (\tau_{j,n} - \e [{\tau_{j,n} \mid \tilde{\mathcal{F}}_{j-1, n}}]) \quad\text{and}\quad J_n (\lambda) := \sum_{j=1}^{\gbr{n \lambda}} \e [{D_{j,n}^2 \mid {\mathcal{F}}_{j-1}}]  - \lambda .
\end{align*}
In order to show that
\begin{align}\label{det8}
    \p \Big( \sup_{0 \leq \lambda \leq 1 } |Q_n (\lambda)| > n^{-\alpha / 2} \Big) = o(1), ~~~~~ \text{ as } n \to \infty,
\end{align}
we will first define $\rho_{j,n} := n \cdot \tau_{j,n}$ and use \eqref{det3} and Hölder's inequality to obtain for any $1 < x < 2$
\begin{align*}
    \e \big[\rho_{j,n}^x \big] &= n^x \e \big[ \tau_{j,n}^x \big] \lesssim n^x \e \big[ |D_{j,n} |^{2x} \big] = \frac{1}{\sigma_n^{2x}} \e \big[|E_j^\top \xi|^{2x} \big]
    \leq \frac{1}{\sigma_n^{2x}} \e \big[ (E_j^\top \xi)^4 \big]^{x/2}\\
    &= \frac{1}{\sigma_n^{2x}} \bigg( \sum_{i_1,  \ldots , i_4 = 1}^{p} \xi_{i_1} \cdots \xi_{i_4} \big( 3\e\big[ E_{j,i_1} E_{j, i_2} \big] \e \big[ E_{j,i_3} E_{j, i_4} \big]\\
    & \quad\quad\quad\quad\quad\quad\quad\quad\quad\quad\quad\quad\quad\quad + \mathrm{cum} ( E_{j,i_1},  \ldots , E_{j,i_4} ) \big) \bigg)^{x/2}.
\end{align*}
By Proposition 9.7 in the Supplementary Material of \cite{wangshao}, it follows that $\e [E_{j,i_1} E_{j,i_2}] = (\Gamma_{Y})_{i_1,i_2}$ for any $j$. Therefore, by condition \ref{ass_thm_2} and the fact that $\sigma_n^{2} = \xi^\top \Gamma_{Y} \xi$,
\begin{align*}
    \e \big[ \rho_{j,n}^x \big] &\lesssim \frac{1}{\sigma_n^{2x} } \bigg( 3 (\xi^\top \Gamma_{Y} \xi)^2 + \sum_{i_1,  \ldots , i_4 = 1}^{p} \xi_{i_1} \cdots \xi_{i_4} \mathrm{cum} (E_{0,i_1},  \ldots , E_{0, i_4}) \bigg)^{x/2} = O(1).
\end{align*}
This implies by Minkowski's and Hölder's inequality that for any $j \leq n$ and any $1 < x < 2$
\begin{align*}
    V_{j,n} := \rho_{j,n} - \e [{ \rho_{j,n} \mid  \tilde{\mathcal{F}}_{j-1, n}}] ~~ \in ~~ L^x.
\end{align*}
Consequently,  the triangular array  $\{ |V_{j,n}|^x \mid j \leq n, n \geq 1 \}$ is uniformly integrable for any $1 < x < 2$ and we can apply the main theorem from \cite{gut} (note that in our notation $x$ refers to the constant $p$ in this paper), which gives
\begin{align*}
    \frac{1}{n} \e \bigg[ \bigg| \sum_{j=1}^{ n} (
    \rho_{j,n} - \e [{\rho_{j,n} \mid \tilde{\mathcal{F}}_{j-1, n} }] ) \bigg|^x \bigg] = \frac{1}{n} \e \bigg[ \bigg| \sum_{j=1}^{ n} (V_{j,n} - \e \big[ V_{j,n} \mid \mathcal{G}_{j-1, n} \big]) \bigg|^x \bigg] = o(1),
\end{align*}
where the sigma field $\mathcal{G}_{j,n}$ is defined by $\mathcal{G}_{j, n} = \sigma (V_{i,n} \mid i \leq j)$ for $j \leq n$, and the first identity follows from the Tower property (note that $\mathcal{G}_{j, n} \subseteq \tilde{\mathcal{F}}_{j, n}$), which shows 
\begin{align*}   
   V_{j,n} - 
      \e \big[ V_{j,n} \mid \mathcal{G}_{j-1, n} \big] = \rho_{j,n} - \e [{\rho_{j,n} \mid \tilde{\mathcal{F}}_{j-1, n} }].
\end{align*}
Finally, note that $\{ Q_n(k/n) \}_{k=1,\ldots,n}$ is a martingale with respect to $\{ \tilde{\mathcal{F}}_{j} \}_{j=1,\ldots,n}$ (and hence $\{ |Q_n(k/n)|^x \}_{k=1,\ldots,n}$ is a submartingale). Therefore, we obtain by Doob's martingale inequality that
\begin{align*}
    \p \Big(\sup_{0 \leq \lambda \leq 1} |Q_n (\lambda) | > n^{-\alpha / 2} \Big) &\le n^{x \cdot \alpha / 2} \cdot \e [|Q_n (1)|^x]\\
    &= \frac{n^{x \cdot \alpha  /2 - x + 1}}{n} \e \bigg[ \bigg| \sum_{j=1}^n (\rho_{j,n} - \e [\rho_{j,n} \mid \tilde {\mathcal{F}}_{j-1, n}]) \bigg|^x \bigg] = o(1),
\end{align*}
since $n^{x \cdot \alpha  /2 - x + 1} = O(1)$, if we choose $x = 2 / (2- \alpha) \in (1,2)$. This proves \eqref{det8}.

\medskip

For a proof of the estimate
\begin{align}\label{jn2020}
    \p \Big( \sup_{0 \leq \lambda \leq 1} |J_n (\lambda) | > n^{-\alpha / 2} \Big) = o(1), ~~~~~ \text{ as } n \to \infty,
\end{align}
we use Proposition 1 from \cite{strongwu}, which gives
\begin{align*}
    \e \Big[ \max_{1 \leq k \leq n} | J_n (k / n) | \Big] \le \sum_{r=0}^d \bigg( \sum_{m=1}^{2^{d - r} } \e \Big[ \Big( J_n (2^{r-d}m) - J_n (2^{r-d} (m-1)) \Big)^2  \Big] \bigg)^{1/2}.
\end{align*}
Rewriting the inner term, we have
\begin{align*}
    &\e \Big[ \Big( J_n (2^{r-d}m) - J_n (2^{r-d} (m-1)) \Big)^2  \Big]\\
    & ~~~~~ ~~~~~ ~~~~~ ~~~~~ ~~~~~ ~~~~~ ~~~~~ = \e \Big[ \Big( \sum_{j = 2^r (m-1) + 1}^{2^r m} \e [D_{j,n}^2 \mid \mathcal{F}_{j-1} ] - 2^{r-d} \Big)^2 \Big]\\
    & ~~~~~ ~~~~~ ~~~~~ ~~~~~ ~~~~~ ~~~~~ ~~~~~ = 2^{-2d} \cdot \e \Big[ \Big( \sum_{j = 2^r (m-1) + 1}^{2^r m} \frac{1}{\sigma_n^2} \e [(E_j^\top \xi)^2 \mid \mathcal{F}_{j-1} ] - 2^{r} \Big)^2 \Big]\\
    & ~~~~~ ~~~~~ ~~~~~ ~~~~~ ~~~~~ ~~~~~ ~~~~~ = 2^{-2d} \cdot \e \Big[ \Big( \sum_{j = 2^r (m-1) + 1}^{2^r m}  \frac{\xi^\top \Phi_j \xi}{\sigma_n^2}  - 2^r \Big)^2 \Big],
\end{align*}
where the matrix $\Phi_j$ is defined in \eqref{Phi}. Then, using the definition of the covariance and Proposition 9.7 of the Supplementary Material of \cite{wangshao} yields
\begin{align*}
    &\e \Big[ \Big( \sum_{j = 2^r (m-1) + 1}^{2^r m}   \frac{\xi^\top \Phi_j \xi}{\sigma_n^2}  - 2^r \Big)^2 \Big]\\
    & \quad\quad\quad\quad\quad = \sum_{j_1,j_2 = 2^r (m-1) + 1}^{2^r m}  \frac{\e [\xi^\top \Phi_{j_1} \xi \cdot \xi^\top \Phi_{j_2} \xi]}{\sigma_n^4} - 2 \cdot 2^r \sum_{j=2^r (m-1) + 1}^{2^r m} \frac{\e [\xi^\top \Phi_j \xi]}{\sigma_n^2} + 2^{2r}\\
    & \quad\quad\quad\quad\quad = \frac{1}{\sigma_n^4} \sum_{j_1, j_2 = 2^r (m-1) + 1}^{2^r m} \sum_{i_1, \ldots, i_4 = 1}^p  \xi_{i_1} \cdots \xi_{i_4} \cov (\Phi_{j_1, i_1 i_2}, \Phi_{j_2, i_3 i_4})\\
    & \quad\quad\quad\quad\quad \lesssim 2^{2r} \cdot 2^{-d \alpha / 2},
\end{align*}
where the last inequality follows from condition \ref{ass_thm_1}. Therefore,
\begin{align*}
    \p \Big( \sup_{0 \leq \lambda \leq 1} |J_n (\lambda) | > n^{-\alpha / 2} \Big) &\lesssim 2^{-d \alpha / 2} \cdot \sum_{r=0}^d \bigg( \sum_{m=1}^{2^{d - r} } 2^{-2d} \cdot 2^{2r} \cdot 2^{-d \alpha / 2} \bigg)^{1/2}\\
    &= 2^{-d/2 - \alpha / 4} \sum_{r=0}^d 2^{r/2}\\
    &= o(1)
\end{align*}
and \eqref{jn2020} follows.
Combining \eqref{det5}, \eqref{det8} and \eqref{jn2020} proves \eqref{det4}.
\medskip

{\bf Proof of \eqref{det50g}.}
We apply Proposition 1 from \cite{strongwu} for $n = 2^d$ (see Remark \ref{wu_n2hochd}) and use that $F_j = G_j - G_{j-1}$ (see \eqref{martingaledecomp}) to obtain
\begin{align}
    \e \bigg[ \max_{1 \leq k \leq 2^d} \bigg| \sum_{j=1}^k R_{j,n}  \bigg| \bigg] &\leq \frac{2^{-d/2}}{\sqrt{\xi^\top \Gamma_Y \xi}} \sum_{r=0}^d \bigg( \sum_{m=1}^{2^{d-r}} \e \bigg( \sum_{j=2^r (m-1) + 1}^{2^r m} F_j^\top \xi \bigg)^2 \bigg)^{1/2}\nonumber\\
    &=  \frac{2^{-d/2}}{\sqrt{\xi^\top \Gamma_Y \xi}} \sum_{r=0}^d \bigg( \sum_{m=1}^{2^{d-r}} \e \big[(G_{2^r m }^\top \xi - G_{2^r (m-1)}^\top \xi)^2 \big] \bigg)^{1/2}\nonumber\\
    &\lesssim \frac{2^{-d/2}}{\sqrt{\xi^\top \Gamma_Y \xi}} \sum_{r=0}^d \bigg( \sum_{m=1}^{2^{d-r}} \xi^\top \e \big[G_{2^r m } G_{2^r m }^\top  \big] \xi \bigg)^{1/2} \label{derersteterm}\\
    & ~~~~~ ~~~~~ + \frac{2^{-d/2}}{\sqrt{\xi^\top \Gamma_Y \xi}} \sum_{r=0}^d \bigg( \sum_{m=1}^{2^{d-r}} \xi^\top \e \big[G_{2^r (m-1) } G_{2^r (m-1) }^\top  \big] \xi \bigg)^{1/2}\label{derzweiteterm}
\end{align}
From condition \ref{ass_thm_3} it follows for the term in \eqref{derersteterm}
\begin{align*}
    &\frac{2^{-d/2}}{\sqrt{\xi^\top \Gamma_Y \xi}} \sum_{r=0}^d \bigg( \sum_{m=1}^{2^{d-r}} \xi^\top \e \big[G_{2^r m } G_{2^r m }^\top  \big] \xi \bigg)^{1/2}\\
    & ~~~~~ ~~~~~ ~~~~~ ~~~~~ ~~~~~ ~~~~~ = 2^{-d/2} \sum_{r=0}^d \bigg( \frac{1}{\xi^\top \Gamma_Y \xi}  \sum_{m=1}^{2^{d-r}} \xi^\top \e \big[G_{2^r m } G_{2^r m }^\top  \big] \xi \bigg)^{1/2}\\
    & ~~~~~ ~~~~~ ~~~~~ ~~~~~ ~~~~~ ~~~~~ = O (2^{-d/2} n^{-\alpha} ) \sum_{r=0}^d 2^{(d-r) / 2} \\
    & ~~~~~ ~~~~~ ~~~~~ ~~~~~ ~~~~~ ~~~~~ = O(n^{-\alpha/2})\\
    & ~~~~~ ~~~~~ ~~~~~ ~~~~~ ~~~~~ ~~~~~ = o (n^{-\beta}).
\end{align*}
An identical calculation shows that the same rate holds for the term in \eqref{derzweiteterm}. The estimate \eqref{det50g} now follows by Markov's inequality.

\subsection{Proof of Theorem \ref{thm2.0_unif_neu}}

For $\ell \in A$ let $\xi = (0, \ldots ,0, 1, 0,  \ldots , 0)^\top$ denote the $\ell$th unit vector in $\mathbb{R}^p$, then the decomposition in \eqref{det6} becomes
\begin{align*}
    Y_j^\top \xi = Y_{j,\ell} = E_{j, \ell} - F_{j, \ell}.
\end{align*}
In this case, we define for $\ell \in A$
\begin{align*}
    D_{j,n, \ell} = \frac{E_{j, \ell}}{\sqrt{n} }  ~~~~~ \text{ and } ~~~~~ R_{j,n, \ell} = \frac{F_{j, \ell}}{\sqrt{n} } ,
\end{align*}
and $\{ M_{k, n, \ell}\} _{k=1, \ldots ,n} = \{ \sum_{j=1}^k D_{j,n,\ell }\}_{k=1, \ldots ,n}$  is a martingale with respect to the filtration $\{\mathcal{F}_j\}_{j=1, \ldots ,n}$. For a Brownian motion $\mathbb{B}$ the Skorokhod Representation Theorem will be applied to these martingales to obtain nonnegative random variables $\tau_{1,n, \ell}, \tau_{2,n, \ell},  \ldots $ for each $\ell \in A$ that satisfy
\begin{align}\label{Det1} 
    & M_{k,n, \ell}  = \mathbb{B} (T_{k, n, \ell}), \\
    & \e [{\tau_{j,n, \ell} \mid \tilde{\mathcal{F}}_{j-1, n, \ell}}]  = \e [{D_{j,n, \ell}^2  \mid {\mathcal{F}}_{j-1}}], \label{Det2} \\
    & \e [{\tau_{j,n, \ell}^q \mid \tilde{\mathcal{F}}_{j-1, n, \ell}}] \leq C_q \e \big[ {|D_{j,n, \ell} |^{2q} \mid {\mathcal{F}}_{j-1}} \big], ~~~~~ \text{ for any } q \geq 1 \label{Det3}
\end{align}
almost surely, where $\tilde{\mathcal{F}}_{j, n, \ell}$ is the sigma field generated by $\varepsilon_j, \varepsilon_{j-1}, \ldots $ and $\{ \mathbb{B} (t) \} _{0 \leq t \leq T_{j,n, \ell}}$.

Using \eqref{Det1}, we now need to show that
\begin{align}\label{show_unif_op_neu}
    \max_{\ell \in A} \sup_{0 \leq \lambda \leq 1} |\mathbb{B} (T_{\gbr{n \lambda}, n, \ell}) - \mathbb{B} (\lambda {\Gamma_{Y, \ell\ell}} )| + \max_{\ell \in A} \max_{1 \leq k \leq n} \bigg| \sum_{j=1}^k R_{j,n, \ell} \bigg| = o_\p (n^{-\beta} f(|A|)).
\end{align}
For the first term, recall that $\beta < \alpha / 4$. Pick a constant $\rho > 0$ such that $\beta < \rho < \alpha / 4$ and denote by $\gamma_n = n^{-\rho} \cdot f(|A|) = o \big( n^{-\beta} f(|A|) \big)$. Then, by Lemma~\ref{double_unif_cont_BM} (note that we modified this lemma slightly by multiplying $f(|A|)$ to $n^{-\rho}$, which does not change the result), it suffices to show that
\begin{align}\label{qn_pre}
     \p \Big( \exists \ell \in A : \sup_{0 \leq \lambda \leq 1} \big| T_{\gbr{n \lambda}, n, \ell} - \lambda {\Gamma_{Y, \ell\ell}}  \big| > \gamma_n^2  \Big) = o(1)
\end{align}
for the first term in \eqref{show_unif_op_neu} to converge to zero. For this purpose, we use \eqref{Det2}, which gives
\begin{align}
    \p \Big( \exists \ell \in A:& ~ \sup_{0 \leq \lambda \leq 1}  \big|T_{\gbr{n \lambda}, n, \ell} - \lambda \Gamma_{Y, \ell \ell} \big|  > \gamma_n^2  \Big) \nonumber \\
    &\leq \p \Big( \max_{\ell \in A} \sup_{0 \leq \lambda \leq 1} |Q_{n, \ell} ( \lambda ) | > \gamma_n^2 / 2 \Big) + \p \Big( \max_{\ell \in A} \sup_{0 \leq \lambda \leq 1} |J_{n, \ell} ( \lambda ) | > \gamma_n^2 / 2 \Big), \label{qnjngamma_neu}
\end{align}
where for $\ell \in A$
\begin{align*}
    Q_{n, \ell} (\lambda) &:= \sum_{j=1}^{\gbr{n \lambda}} (\tau_{j,n, \ell} - \e [{\tau_{j,n, \ell} \mid \tilde{\mathcal{F}}_{j-1, n, \ell}}]),\\
    J_{n, \ell} (\lambda) &:= \sum_{j=1}^{\gbr{n \lambda}} \e [{D_{j,n, \ell}^2 \mid {\mathcal{F}}_{j-1}}]  - \lambda {\Gamma_{Y, \ell\ell}}.
\end{align*}

Using Doob's martingale inequality and the union bound, we obtain for the first term in \eqref{qnjngamma_neu}
\begin{align}\label{maxsupgammaq1}
    \p \Big( \max_{\ell \in A} \sup_{0 \leq \lambda \leq 1} |Q_{n, \ell} ( \lambda ) | > \gamma_n^2 / 2 \Big) \lesssim \gamma_n^{-4} \sum_{\ell \in A} \e [Q_{n, \ell} (1)^2].
\end{align}

Observing that for $j_1 < j_2$
\begin{align*}
    \e [\tau_{j_1,n, \ell} \tau_{j_2, n, \ell}] &= \e \big[\tau_{j_1,n, \ell} \e [\tau_{j_2, n, \ell} \mid \tilde{\mathcal{F}}_{j_2 - 1, n, \ell}] \big] \\
    \e \big[ \e [\tau_{j_1, n, \ell} \mid \tilde{\mathcal{F}}_{j_1 - 1, n, \ell} ] \e [\tau_{j_2, n, \ell} \mid \tilde{\mathcal{F}}_{j_2 - 1, n, \ell} ] \big] &= \e \big[ \tau_{j_2, n, \ell}  \e [\tau_{j_1, n, \ell} \mid \tilde{\mathcal{F}}_{j_1 - 1, n, \ell} ]  \big]
\end{align*}
and using \eqref{Det3}, we obtain
\begin{align*}
    \sum_{\ell \in A} \e \big[ Q_{n, \ell}^2 (1) \big] &= \sum_{\ell \in A} \sum_{j = 1}^{n} \e \big[ \big(\tau_{j,n, \ell} - \e [ \tau_{j,n, \ell} \mid \tilde{\mathcal{F}}_{j - 1, n, \ell} ] \big)^2 \big] \\
    &\leq \sum_{\ell \in A} \sum_{j = 1}^{n}  \e \big[ \tau_{j,n,\ell}^2 \big] \\
    &\lesssim \sum_{\ell \in A}\sum_{j = 1}^{n} \e [D_{j,n, \ell}^4]\\
    &\lesssim  \frac{3}{n} \sum_{\ell \in A} \Big(  \Gamma_{Y, \ell\ell}^2  + \mathrm{cum} (E_{0, \ell}, E_{0, \ell},E_{0, \ell},E_{0, \ell})  \Big)\\
    &\lesssim  \frac{1}{n} \bigg( \sum_{\ell \in A} \Gamma_{Y, \ell\ell} \bigg)^2\\
    &\lesssim n^{-\alpha} \log^{2\gamma} (n) \cdot f^2(|A|)
\end{align*}
by conditions \ref{unif_ass_thm_0_neu} and \ref{unif_ass_thm_2_neu}. Observing \eqref{maxsupgammaq1} and the definition of $\gamma_n$, it follows that the first term of \eqref{qnjngamma_neu} converges to zero.

\medskip

Turning to the $J_{n, \ell} (\lambda)$ term in \eqref{qnjngamma_neu}, we have by Proposition \ref{maxmax}
\begin{align*}
    &\p \Big( \max_{\ell \in A} \sup_{0 \leq \lambda \leq 1} |J_{n, \ell} (\lambda) | > \gamma_n^2 / 2 \Big)\\
    & \quad\quad\quad\quad\quad\quad\quad \lesssim \gamma_n^{-2} \sum_{r=0}^d \bigg( \sum_{m=1}^{2^{d-r}}  \sum_{\ell \in A} \e \Big[ \Big(J_{n, \ell} (2^{r-d}m) - J_{n, \ell} (2^{r-d}(m - 1)) \Big)^2 \Big] \bigg)^{1/2}.
\end{align*}
Recall from \eqref{Phi} that $\Phi_{j, \ell\ell} = \e [E_{j, \ell}^2 \mid \mathcal{F}_{j - 1}]$, which gives $\e [\Phi_{j, \ell\ell}] = \Gamma_{Y, \ell\ell}$ \citep[again by Proposition 9.7 of the Supplementary Material of ][]{wangshao}. Moreover, by the definition of $D_{j,n,\ell}$ we have $\e [D_{j,n,\ell}^2 \mid \mathcal{F}_{j-1}] = \Phi_{j, \ell\ell} / n $ and using \ref{unif_ass_thm_1_neu} implies for the inner term
\begin{align*}
    &\e \Big[ \Big(J_{n, \ell} (2^{r-d}m) - J_{n, \ell} (2^{r-d}(m - 1)) \Big)^2 \Big]\\
    & ~~~~~~~ = \e \Big[  \Big(  \sum_{j=2^r (m-1) + 1}^{2^r m} \e [D_{j,n, \ell}^2 \mid \mathcal{F}_{j-1} ] - 2^{r-d} \Gamma_{Y, \ell\ell}   \Big)^2   \Big]\\
    & ~~~~~~~ = 2^{-2d} \bigg( \sum_{j_1, j_2 = 2^r (m-1) + 1}^{2^r m} \e \big[ \Phi_{j_1, \ell \ell} \Phi_{j_2, \ell\ell}  \big] - 2 \cdot 2^{r} \Gamma_{Y, \ell \ell} \sum_{j=2^r (m-1) + 1}^{2^r m} \e [\Phi_{j, \ell \ell}] + 2^{2r} \Gamma_{Y,\ell\ell}^2 \bigg)\\
    & ~~~~~~~ = 2^{-2d} \sum_{j_1, j_2 = 2^r (m-1) + 1}^{2^r m} \cov (\Phi_{j_1, \ell\ell}, \Phi_{j_2, \ell \ell})\\
    & ~~~~~~~ \lesssim 2^{-2d} \cdot \Gamma_{Y,\ell\ell}^2 \sum_{j_1, j_2 = 2^r (m-1) + 1}^{2^r m} \rho^{|j_1 - j_2|}\\
    & ~~~~~~~ \lesssim 2^{r - 2d} \cdot \Gamma_{Y,\ell\ell}^2.
\end{align*}
Therefore, using condition \ref{unif_ass_thm_0_neu}, we obtain for the second term in \eqref{qnjngamma_neu}
\begin{align*}
    \p \Big( \max_{\ell \in A} \sup_{0 \leq \lambda \leq 1} |J_{n, \ell} (\lambda) | > \gamma_n^2 / 2 \Big) &\lesssim \gamma_n^{-2} \sum_{r=0}^d \bigg( \sum_{m=1}^{2^{d-r}}  \sum_{\ell \in A} 2^{r - 2d} \cdot \Gamma_{Y,\ell\ell}^2 \bigg)^{1/2}\\
    &\le \gamma_n^{-2} \cdot 2^{-d/2} d \cdot \sum_{\ell \in A} \Gamma_{Y, \ell\ell}\\
    & \lesssim \gamma_n^{-2}\cdot 2^{-d /2} d \cdot 2^{d(1-\alpha) / 2} d^\gamma \cdot f(|A|)\\
    &= o(1),
\end{align*}
because $\gamma_n^2 = n^{-2\rho} f^2(|A|)$ and $\rho < \alpha / 4$. This shows that the second term in \eqref{qnjngamma_neu} also converges to zero and \eqref{qn_pre} follows.

For the second term in \eqref{show_unif_op_neu}, we use Proposition \ref{maxmax} with $n = 2^d$ (see Remark \ref{wu_n2hochd}) and that $F_{j,\ell} = G_{j, \ell} - G_{j-1, \ell}$ (see \eqref{martingaledecomp}) to obtain
\begin{align}
    \e \bigg[ \max_{\ell \in A} \max_{1 \leq k \leq 2^d} \bigg| \sum_{j = 1}^k R_{j,n, \ell} \bigg| \bigg] & \leq 2^{-d/2}  \e \bigg[ \max_{1 \leq k \leq 2^d} \max_{\ell \in A}  \bigg| \sum_{j = 1}^k F_{j, \ell} \bigg| \bigg] \nonumber \\
    & \lesssim 2^{-d/2} \sum_{r=0}^d \bigg( \sum_{m=1}^{2^{d-r}} \sum_{\ell \in A} \e \bigg[ \bigg( \sum_{j=2^r (m-1) + 1}^{2^r m} F_{j,\ell} \bigg)^2 \bigg] \bigg)^{1/2} \nonumber\\
    & \leq 2^{-d/2} \sum_{r=0}^d \bigg( \sum_{m=1}^{2^{d-r}} \sum_{\ell \in A} \e \big[ \big( G_{2^rm, \ell} - G_{2^r (m-1), \ell} \big)^2 \big] \bigg)^{1/2} \nonumber\\
    & \lesssim 2^{-d/2} \sum_{r=0}^d \bigg( \sum_{m=1}^{2^{d-r}} \sum_{\ell \in A} \var (G_{2^rm, \ell}) \bigg)^{1/2} \label{gterm1_neu}\\
    & ~~~~~ ~~~~~ + 2^{-d/2} \sum_{r=0}^d \bigg( \sum_{m=1}^{2^{d-r}} \sum_{\ell \in A} \var (G_{2^r (m - 1), \ell}) \bigg)^{1/2}. \label{gterm2_neu}
\end{align}
Using condition \ref{unif_ass_thm_3_neu}, we obtain for the term in \eqref{gterm1_neu} that
\begin{align*}
    2^{-d/2} \sum_{r=0}^d \bigg( \sum_{m=1}^{2^{d-r}} \sum_{\ell \in A} \var (G_{2^rm, \ell}) \bigg)^{1/2} &\lesssim 2^{-d/2} \sum_{r=0}^d \bigg( \sum_{m=1}^{2^{d-r}} 2^{d(\alpha / 2 - 1)} \sum_{\ell \in A} \Gamma_{Y,\ell\ell}^2 \bigg)^{1/2}\\
    &\lesssim 2^{- d/2} \cdot 2^{-d \alpha / 4}  \sum_{r = 0 }^d 2^{-r / 2} \sum_{\ell \in A} \Gamma_{Y, \ell\ell} \\
    &\lesssim 2^{-d \alpha / 4} \cdot d^\gamma \cdot f(|A|) \\
    &= o(n^{-\beta} f(|A|)),
\end{align*}
since $0 < \beta < \alpha / 4$.
A similar bound can be found for the term \eqref{gterm2_neu}, which completes the proof.

\section{Proofs of the results in Section \ref{sec41}}

\subsection{Proof of Theorem \ref{cpthm}}

We will use similar arguments as given in the proof of Theorem 1 from \cite{harizcp}. However, several changes are necessary to adapt to the high-dimensional regime. Define
\begin{align*}
    D_n (k) := \frac{k(n-k)}{n^2} \bigg( \frac{1}{k} \sum_{j=1}^k X_j - \frac{1}{n-k} \sum_{j= k + 1}^{n} X_j \bigg),
\end{align*}
which appeared in \eqref{cpestimator} inside the norm
and let $B_n (k) := W_n (k) - \frac{k}{n} W_n (n)$, where
\begin{align*}
    W_n (k) = \frac{1}{n} \sum_{j=1}^{k} (X_j - \e [X_j])
\end{align*}
and
\begin{align*}
    h(k) = \frac{k}{n} \bigg(1- \frac{k_0}{n} \bigg) \mathbbm{1} \{ k \leq k_0 \} + \frac{k_0}{n} \bigg(1 - \frac{k}{n} \bigg) \mathbbm{1} \{ k > k_0 \}.
\end{align*}
Then, we have by the triangle inequality and Proposition \ref{max2}
\begin{align}\label{bnmaxeq}
    \e \Big[ \max_{1 \leq k \leq n} \| B_n (k) \|_2 \Big] \leq C \cdot \frac{\log (n)}{\sqrt{n}} \sqrt{\mathrm{tr} (\bar \Gamma)}.
\end{align}
Next, we rewrite $D_n$ as
\begin{align*}
    D_n (k) = B_n (k) + h(k) \cdot \delta , 
\end{align*}
then, straightforward calculations \citep[see][]{harizcp} show that
\begin{align}\label{bneq}
    \| B_n (\hat k_n) - B_n (k_0) \|_2 \geq \bigg(1 - \frac{h(\hat k_n)}{h(k_0)} \bigg) \big( h(k_0) \| \delta \|_2 - 2 \| B_n (k_0) \|_2 \big),
\end{align}
where we have used that $\hat k_n$ is the maximizer of the function $k \mapsto \| D_n (k) \|_2$. In addition, it is easy to see that
\begin{align*}
    1 - \frac{h(\hat k_n)}{h(k_0)} \geq C \cdot \frac{|k_0 - \hat k_n|}{n \cdot h(k_0)}, 
\end{align*}
where the constant $C$ can be chosen independently of $n$ if $n$ is sufficiently large. By \eqref{bneq} and $h(k_0) \leq 1$, we have (note that $h(k_0) > C $ for sufficiently large $n$ as $\vartheta_0 \in (0, 1)$)
\begin{align}\label{bneq2}
    \| B_n (\hat k_n) - B_n (k_0) \|_2 \geq C \frac{|k_0 - \hat k_n|}{n h(k_0)} \big( \| \delta \|_2 h(k_0) - 2 \| B_n (k_0) \|_2 \big).
\end{align}
First, consider $\p ({ |\hat \vartheta_n - \vartheta_0 | > \alpha})$, for which we have
\begin{align*}
    \p (|\vartheta_0 - \hat \vartheta_n| > \alpha) &= \p \big( |\vartheta_0 - \hat \vartheta_n| > \alpha, ~~ \| B_n (k_0) \|_2  \leq \| \delta \|_2 h(k_0) / 4 \big)\\
    & ~~~~~  + \p \big( |\vartheta_0 - \hat \vartheta_n| > \alpha, ~~ \| B_n (k_0) \|_2  > \| \delta \|_2  h(k_0) / 4 \big)\\
    & \leq \p \big( |\vartheta_0 - \hat \vartheta_n| > \alpha, ~~ \| B_n (k_0) \|_2  \leq \| \delta \|_2  h(k_0) / 4 \big)\\
    & ~~~~~  + \p \big( \| B_n (k_0) \|_2  > \| \delta \|_2  h(k_0)  / 4 \big).
\end{align*}
For the first probability, we have by \eqref{bneq2}, Markov's inequality and the fact that $h(k_0) > C$ (follows from $\vartheta_0 \in (0,1)$)
\begin{align*}
   \p \big( |\vartheta_0 - \hat \vartheta_n| > \alpha, ~~ & \| B_n (k_0) \|_2  \leq \| \delta \|_2  h(k_0) / 4 \big)\\
    & \leq \p \Big( \| B_n (\hat k_n) - B_n (k_0) \|_2 \geq C \cdot \alpha \| \delta \|_2  h(k_0) / 2 \Big) \\
    &\leq  \frac{C}{ \| \delta \|_2 } \e \Big[ \max_{1 \leq k \leq n} \| B_n (k) \|_2 \Big]\\
    &\leq C \cdot \frac{\log(n)}{\sqrt{n}} \frac{\sqrt{\mathrm{tr} (\bar \Gamma)}}{ \| \delta \|_2 },
\end{align*}
where in the last line, we have used \eqref{bnmaxeq}. For the second probability, we have
\begin{align}\label{bndelta4}
\p \big( \| B_n (k_0) \|_2  > \tfrac{\| \delta \|_2 h(k_0) }{4} \big) \leq C \cdot \frac{\log(n)}{\sqrt{n}} \frac{\sqrt{\mathrm{tr} (\bar \Gamma)}}{\| \delta \|_2}
\end{align}
using Markov's inequality and \eqref{bnmaxeq} again. Combining these estimates gives
\begin{align}\label{firstalpha}
    \p ({ |\hat \vartheta_n - \vartheta_0 | > \alpha}) \leq C \cdot \frac{\log (n)}{\sqrt{n}} \frac{\sqrt{\mathrm{tr} (\bar \Gamma)}}{\| \delta \|_2}.
\end{align}

Next, let $r_n = \frac{n}{\log (n)^2} \frac{\| \delta \|_2^2}{\mathrm{tr} (\bar \Gamma)}$ be the inverse of the convergence rate proposed in Theorem \ref{cpthm} and consider for $M \in \mathbb{N}$ and some $\alpha$ that will be chosen later the probability
\begin{align*}
    \p (r_n |\vartheta_0 - \hat \vartheta_n| > 2^M) &= \p \big( r_n^{-1} 2^M < |\vartheta_0 - \hat \vartheta_n| \leq \alpha, ~ \| B_n (k_0) \|_2 \leq \| \delta \|_2 h(k_0) / 4 \big) \\
    & ~~~~~ + \p \big( r_n^{-1} 2^M < |\vartheta_0 - \hat \vartheta_n| \leq \alpha, ~ \| B_n (k_0) \|_2 > \| \delta \|_2 h(k_0) / 4 \big)\\
    & ~~~~~ + \p \big({|\vartheta_0 - \hat \vartheta_n| > \alpha } \big)\\
    &\leq \p \big( r_n^{-1} 2^M < |\vartheta_0 - \hat \vartheta_n| \leq \alpha, ~ \| B_n (k_0) \|_2 \leq \| \delta \|_2 h(k_0) / 4 \big) \\
    & ~~~~~ + \p \big( \| B_n (k_0) \|_2 > \| \delta \|_2 h(k_0) / 4 \big) + \p \big({|\vartheta_0 - \hat \vartheta_n| > \alpha } \big).
\end{align*}
By \eqref{bneq2} and $h(k_0) > C$, we have for the first term 
\begin{align*}
    & \p \big(r_n^{-1} 2^M < |\vartheta_0 - \hat \vartheta_n| \leq \alpha, ~ \| B_n (k_0) \|_2 \leq \| \delta \|_2 h(k_0) / 4 \big) \\
    & ~~~~~~~~~~ \leq E_1 := \p \big( r_n^{-1} 2^M < |\vartheta_0 - \hat \vartheta_n| \leq \alpha, ~  \| B_n (\hat k_n) - B_n (k_0) \|_2 \geq C \tfrac{|k_0 - \hat k_n|}{n} \cdot \tfrac{ \| \delta \|_2 }{2} \big).
\end{align*}
Define $S_{n,j} := \{ t \mid 2^j < r_n |\vartheta_0 - t| \leq 2^{j+1} \}$ for $j \in \mathbb{N}$ and now choose $\alpha$ such that $0 < \alpha < \min (\vartheta_0, 1-  \vartheta_0)/2$.
Then, for $J = J(n, \alpha)$, we have $2^J < r_n \alpha \leq 2^{J+1}$ (note that $J > M$, since $r_n \alpha > 2^M$)
\begin{align*}
    E_1 \leq \sum_{j=M}^{J} \p \big( \hat \vartheta_n \in S_{n,j}, ~  \| B_n (\hat k_n) - B_n (k_0) \|_2 \geq C \tfrac{|k_0 - \hat k_n|}{n} \cdot \tfrac{\| \delta \|_2 }{2} \big).
\end{align*}
Note that $\hat \vartheta_n \in S_{n,j}$ implies that $|\vartheta_0 - \hat \vartheta_n| \geq 2^j r_n^{-1}$, i.e.
\begin{align*}
    \bigg\{ \| B_n (\hat k_n) - B_n (k_0) \|_2 \geq C  \frac{|k_0 - \hat k_n|}{n}  \frac{ \| \delta \|_2 }{2} \bigg\} \subseteq \big\{ \| B_n (\hat k_n) - B_n (k_0) \|_2 \geq C \cdot 2^j r_n^{-1} \| \delta \|_2 \big\}.
\end{align*}
Therefore, and also by Proposition \ref{max3}
\begin{align*}
    E_1 &\leq \sum_{j=M}^{J} \p \big( |\vartheta_0 - \hat \vartheta_n| \leq r_n^{-1} 2^{j+1}, ~  \| B_n (\hat k_n) - B_n (k_0) \|_2 \geq C \cdot 2^j r_n^{-1} \| \delta \|_2 \big)\\
    & {\leq \sum_{j=M}^{J} \frac{1}{C \cdot 2^j r_n^{-1} \| \delta \|_2 } \e \bigg[ \sup_{\vartheta: |\vartheta - \vartheta_0| \leq r_n^{-1}2^{j+1} } \| B_n (\gbr{n \vartheta}) - B_n (\gbr{n \vartheta_0}) \|_2 \bigg] }\\
    &   \leq C \frac{r_n}{ \| \delta \|_2 } \sum_{j=M}^{J} 2^{-j} \\
    & ~~~~~ ~~~~~ ~~~~~ \times \e \bigg[  \sup_{\vartheta: |\vartheta - \vartheta_0| \leq r_n^{-1}2^{j+1} } \bigg( \| W_n (\gbr{n \vartheta}) - W_n (\gbr{n \vartheta_0}) \|_2 + \bigg| \frac{\gbr{n \vartheta} - \gbr{n \vartheta_0}}{n} \bigg| \bigg) \bigg] \\
    & \leq C \cdot \sqrt{r_n} \frac{\log (n)}{\sqrt{n}} \frac{\sqrt{\mathrm{tr} (\bar \Gamma) }}{ \| \delta \|_2 } \sqrt{2} \sum_{j=M}^J (\sqrt{2})^{-j}.
\end{align*}
By \eqref{bndelta4}, \eqref{firstalpha} and the choice of $r_n$, we finally obtain
\begin{align*}
    \p \big(r_n |\vartheta_0 - \hat \vartheta_n| > 2^M \big) \leq C \cdot \bigg( \sum_{j=M}^J (\sqrt{2})^{-j} + 2 \frac{\log(n)}{\sqrt{n}} \frac{\sqrt{\mathrm{tr} (\bar \Gamma)}}{\| \delta \|_2 } \bigg)
\end{align*}
and, if first $n \to \infty$ and then $M \to \infty$, we obtain the desired result.

\section{Proof of Theorem \ref{thm2.1}}

We only prove the statement in the case $A = \{ 1, \ldots, p \}$. The general result follows by exactly the same arguments. We start by showing the following statement recalling the definition of $\sigma_n^2$ in \eqref{variance}.

\begin{prop}\label{lem65}
    Let conditions \ref{assA3} and \ref{assA4} of Assumption \ref{asses} be satisfied. If $m$ is either fixed or satisfies condition \ref{assA5}, we have
    \begin{align*}
        \e \bigg[ \max_{1 \leq k \leq n} \bigg| \frac{1}{\| \bar \Gamma \|_F} \sum_{\substack{i,j = 1 \\ |i-j| > m}}^{k} \tilde{X}_i^\top \tilde{X}_j \bigg| \bigg] \lesssim n \log (n).
    \end{align*}
\end{prop}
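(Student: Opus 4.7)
The plan is to split $S_k := \sum_{i,j=1,\,|i-j|>m}^{k} \tilde X_i^\top \tilde X_j$ into a deterministic mean and a centered stochastic part, and to apply the dyadic maximal inequality of Proposition~\ref{maxmax} to the latter. The deterministic part is easy: stationarity gives $|\e S_k| \le 2n\sum_{h>m}|\mathrm{tr}(\Sigma_h)| \le n\,\mathrm{tr}(\bar\Gamma)$, so by condition \ref{assA3} we immediately obtain $\max_{k\le n}|\e S_k| \lesssim n\|\bar\Gamma\|_F$. Setting $\tilde S_k := S_k - \e S_k$ and applying Proposition~\ref{maxmax} with $q=2$ (for $n=2^d$, cf.\ Remark~\ref{wu_n2hochd}), one gets
$$
\e\Big[\max_{1\le k \le 2^d}|\tilde S_k|\Big] \le \sum_{r=0}^d \bigg(\sum_{u=1}^{2^{d-r}} \e\big[(\tilde S_{2^r u}-\tilde S_{2^r(u-1)})^2\big]\bigg)^{1/2},
$$
reducing the proof to a block-variance estimate.

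The key step will be the bound $\var(S_b-S_a) \lesssim (b-a)\,n\,\|\bar\Gamma\|_F^2$ for $0\le a<b \le n$. To establish it, I would split the index set underlying $S_b-S_a$ into the ``both-new'' square $(a,b]^2$ and the two cross strips $(a,b]\times[1,a]$ and $[1,a]\times(a,b]$ (intersected with $|i-j|>m$), and apply the Isserlis-plus-cumulant decomposition
$$
\cov\!\big(\tilde X_{i_1}^\top \tilde X_{j_1},\tilde X_{i_2}^\top \tilde X_{j_2}\big) = \sum_{\ell_1,\ell_2}\!\Big[\Sigma_{i_2-i_1,\ell_1\ell_2}\Sigma_{j_2-j_1,\ell_1\ell_2} + \Sigma_{j_2-i_1,\ell_1\ell_2}\Sigma_{i_2-j_1,\ell_1\ell_2} + \mathrm{cum}_4\Big]
$$
to the resulting quadruple sum. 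Each Gaussian pairing decouples into a product of two one-dimensional sums of the form $\sum_{i,i'\in I}|\Sigma_{i'-i,\ell_1\ell_2}|$, which is bounded by $|I|\,(\bar\Gamma)_{\ell_1\ell_2}$ via stationarity; a Cauchy--Schwarz step in $(\ell_1,\ell_2)$ then produces $\|\bar\Gamma\|_F^2$ multiplied by a combinatorial factor of at most $(b-a)^2$ for the ``both-new'' block and $(b-a)\cdot a$ for each cross strip, giving altogether the claimed $(b-a)\, n\,\|\bar\Gamma\|_F^2$. The fourth-cumulant term is controlled directly by condition~\ref{assA4}, whose geometric decay in $\rho^{\mathrm{range}}$ is summable over the time differences and keeps this contribution below the Gaussian pairings.

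Plugging $\e[(\tilde S_{2^r u}-\tilde S_{2^r(u-1)})^2]\lesssim 2^r n\|\bar\Gamma\|_F^2$ into the dyadic inequality yields
$$
\e\Big[\max_{1\le k\le n}|\tilde S_k|\Big] \lesssim \|\bar\Gamma\|_F \sum_{r=0}^d \sqrt{2^{d-r}\cdot 2^r\, n} = (d+1)\|\bar\Gamma\|_F\sqrt{n\cdot 2^d} \lesssim n\log(n)\,\|\bar\Gamma\|_F,
$$
and together with the deterministic estimate this gives the claim. The main technical obstacle is the careful bookkeeping of the Gaussian-pairing sums on the cross strips: it is essential to exploit the product structure allowing a Cauchy--Schwarz step that returns exactly $\|\bar\Gamma\|_F^2$ rather than the (potentially much larger) $\mathrm{tr}(\bar\Gamma)^2$ that a naive bound would produce. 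Conveniently, the constraint $|i-j|>m$ only shrinks the index set and cannot inflate the estimate, which is why the conclusion holds uniformly in $m$, whether fixed or growing under condition \ref{assA5}.
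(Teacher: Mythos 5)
Your proposal is correct and follows essentially the same route as the paper's proof: the dyadic maximal inequality applied to block increments of the quadruple sum, with the block second moments controlled via the Isserlis-plus-cumulant decomposition and conditions \ref{assA3} and \ref{assA4}, yielding the same $(d+1)2^d\|\bar\Gamma\|_F$ bound. The only difference is cosmetic: you center $S_k$ first and bound the deterministic mean separately through $\mathrm{tr}(\bar\Gamma)\lesssim\|\bar\Gamma\|_F$, whereas the paper works with the uncentered statistic and absorbs the product-of-traces pairing $\mathrm{tr}(\Sigma_{h_1})\mathrm{tr}(\Sigma_{h_2})$ into the second-moment estimate using the very same condition \ref{assA3}.
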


\begin{proof}
    The double sum can be expanded as follows:
    \begin{align*}
        S_{n,k} := \sum_{\substack{i,j = 1 \\ |i-j| > m}}^{k} \tilde{X}_i^\top \tilde{X}_j = 2 \sum_{h = m+1}^{k-1} \sum_{j = 1}^{k-h} \tilde{X}_j^\top \tilde{X}_{j + h}
    \end{align*}
    (note that $S_{n,k}=0$  if $k \leq m+1$).
    Rewriting $S_{n,k}$ as a telescopic sum, we obtain by Proposition 1 (with $q=2$, see also Remark \ref{wu_n2hochd}) in \cite{strongwu} that
    \begin{align}\label{gleichung10hoch6}
        \begin{aligned}
            \e \bigg[ { \max_{1 \leq k \leq 2^d} \bigg| \sum_{\substack{i,j = 1 \\ |i - j| > m}}^k \tilde{X}_{i}^\top \tilde{X}_{j} \bigg|} \bigg] &= \e \Big[  \max_{1 \leq k \leq {2^d}} 
            \big | S_{n,k} \big | \Big] \\
            &\leq 2 \sum_{r = 0}^d \bigg(  \sum_{\ell = 1}^{2^{d-r}}  \e \big[ ( S_{n, 2^r\ell}  - S_{2^r (\ell -1 )} )^2 \big]  \bigg)^{1/2}\\
            &= 2 \sum_{r = 0}^d \bigg(  \sum_{\ell = \lceil 2^{-r} (m+2) \rceil + 1}^{2^{d-r}}  \e \big[ ( S_{n, 2^r\ell}  - S_{2^r (\ell -1 )} )^2 \big]  \bigg)^{1/2},
        \end{aligned}
    \end{align}
    since $S_{n,k} = 0$ for $k \leq m+1$. 
    Observing that 
    \begin{align*}
        S_{n, 2^r \ell} - S_{n, 2^r (\ell - 1)} &= 2 \sum_{h = m+1}^{2^r \ell-1} \sum_{j = 1}^{2^r \ell-h} \tilde{X}_j^\top \tilde{X}_{j + h} - 2 \sum_{h = m+1}^{2^r (\ell - 1)-1} \sum_{j = 1}^{2^r (\ell - 1)-h} \tilde{X}_j^\top \tilde{X}_{j + h} \\
        &= 2 \sum_{h = m+1}^{2^r (\ell - 1) - 1} \bigg( \sum_{j = 1}^{2^r\ell - h} \tilde{X}_j^\top \tilde{X}_{j + h} -  \sum_{j = 1}^{2^r (\ell - 1) - h} \tilde{X}_j^\top \tilde{X}_{j + h}  \bigg)\\
        & ~~~~~ ~~~~~ + 2 \sum_{h = 2^r (\ell - 1)}^{2^r \ell - 1} \sum_{j = 1}^{2^r \ell - h}  \tilde{X}_j^\top \tilde{X}_{j + h}\\
        &=2 \sum_{h = m+1}^{2^r (\ell - 1) - 1} \sum_{j = 2^r (\ell - 1) - h + 1}^{2^r\ell - h} \tilde{X}_j^\top \tilde{X}_{j + h}  + 2 \sum_{h = 2^r (\ell - 1)}^{2^r \ell - 1} \sum_{j = 1}^{2^r \ell - h}  \tilde{X}_j^\top \tilde{X}_{j + h}\\
        &=2 \sum_{h = m+1}^{2^r (\ell - 1) - 1} \sum_{j = 2^r (\ell - 1) + 1}^{2^r\ell} \tilde{X}_j^\top \tilde{X}_{j - h}  + 2 \sum_{h = 2^r (\ell - 1)}^{2^r \ell - 1} \sum_{j = 1}^{2^r \ell - h}  \tilde{X}_j^\top \tilde{X}_{j + h}
    \end{align*}
    gives for the right-hand side of \eqref{gleichung10hoch6}
    \begin{align}
        &\e \bigg[ { \max_{1 \leq k \leq {2^d}} \bigg| \sum_{\substack{i,j = 1 \\ |i - j| > m}}^k \tilde{X}_{i}^\top \tilde{X}_{j} \bigg|} \bigg]  \lesssim  R_{n1} + R_{n  2},
        \label{term0}
    \end{align}
    where the terms $R_{n1}$ and $R_{n2} $ are defined by 
    \begin{align}      
        R_{n1}  &=  \sum_{r=0}^d \bigg(
            \sum_{\ell= \lceil 2^{-r} (m+2) \rceil + 1}^{2^{d-r}} \sum_{h_1, h_2 = m + 1}^{2^r (\ell - 1) - 1} \sum_{j_1, j_2 = 2^r(\ell - 1) + 1}^{2^r \ell}  \e \big[ \tilde{X}_{j_1}^\top \tilde{X}_{j_1 - h_1} \tilde{X}_{j_2}^\top \tilde{X}_{j_2 - h_2} \big]
        \bigg)^{1/2} \label{term1}\\
        R_{n2}  & = \sum_{r=0}^d \bigg(
            \sum_{\ell=\lceil 2^{-r} (m+2) \rceil + 1}^{2^{d-r}} \sum_{h_1, h_2 = 2^r(\ell - 1)}^{2^r \ell - 1} \sum_{j_1 = 1}^{2^r \ell - h_1} \sum_{j_2 = 1}^{2^r \ell - h_2} \e \big[ \tilde{X}_{j_1}^\top \tilde{X}_{j_1 + h_1} \tilde{X}_{j_2}^\top \tilde{X}_{j_2 + h_2} \big]
        \bigg)^{1/2}. \label{term2}
    \end{align}
    We consider these two new terms separately and denote in the upcoming calculations the entry in the $i$th row and $j$th column of the matrix $\Sigma_h$ by $\Sigma_{h, ij}$.
    \medskip
    
    \noindent
    {\bf Estimation of the term $ R_{n1}$ in \eqref{term1}. } With the notation of cumulants \citep[see, for example][for a definition and simple properties]{brillinger} we have 
    \begin{align}
        \e [{\tilde{X}_{j_1}^\top \tilde{X}_{j_1 - h_1} \tilde{X}_{j_2}^\top \tilde{X}_{j_2 - h_2}}] &= \sum_{i_1, i_2 = 1}^{p} \Big \{ 
            \e [{\tilde{X}_{j_1,i_1} \tilde{X}_{j_1 - h_1, i_1}}] \e [{\tilde{X}_{j_2,i_2} \tilde{X}_{j_2 - h_2, i_2}}]\nonumber\\
            & ~~~~~~~~~~~~~~~~~~ + 
            \e[{\tilde{X}_{j_1,i_1} \tilde{X}_{j_2, i_2}}] \e [{\tilde{X}_{j_1 - h_1,i_1} \tilde{X}_{j_2 - h_2, i_2}}]\nonumber\\
            & ~~~~~~~~~~~~~~~~~~ + \e[{\tilde{X}_{j_1,i_1} \tilde{X}_{j_2 - h_2, i_2}}] \e [{\tilde{X}_{j_2, i_2} \tilde{X}_{j_1 - h_1, i_1}}]\nonumber\\
            & ~~~~~~~~~~~~~~~~~~ + \mathrm{cum} ({\tilde{X}_{j_1, i_1}, \tilde{X}_{j_1 - h_1, i_1}, \tilde{X}_{j_2}, \tilde{X}_{j_2 - h_2, i_2}})
        \Big \} \nonumber\\
        &\begin{aligned}\label{det12}
            &=\sum_{i_1, i_2 = 1}^{p} \Big \{
            \Sigma_{h_1, i_1 i_1} \Sigma_{h_2, i_2 i_2} \\
            & ~~~~~~~~~~~~~~~~~~ + 
            \Sigma_{j_1 - j_2, i_1 i_2} \Sigma_{j_1 - j_2 + h_2 - h_1, i_1 i_2}\\
            & ~~~~~~~~~~~~~~~~~~ +
            \Sigma_{j_1 - j_2 + h_2, i_1 i_2} \Sigma_{j_1 - j_2 - h_1, i_1 i_2}\\
            & ~~~~~~~~~~~~~~~~~~ + \mathrm{cum} ({\tilde{X}_{j_1, i_1}, \tilde{X}_{j_1 - h_1, i_1}, \tilde{X}_{j_2}, \tilde{X}_{j_2 - h_2, i_2}})
       \Big \}.
        \end{aligned}
    \end{align}
    Using this decomposition gives a representation with four summands for $R_{n1}$ in \eqref{term1}, which are evaluated separately. For the first one, we have
    \begin{align*}
        \sum_{\ell = \lceil 2^{-r} (m+2) \rceil + 1}^{2^{d-r}} &\sum_{h_1, h_2 = m + 1}^{2^r (\ell - 1) - 1} \sum_{j_1, j_2 = 2^r(\ell - 1) + 1}^{2^r \ell} \sum_{i_1, i_2 = 1}^{p}   \Sigma_{h_1, i_1 i_1} \Sigma_{h_2, i_2 i_2} \\
        & ~~~~~ ~~~~~ ~~~~~ ~~~~~ ~~~~~ = 2^{2r} \sum_{\ell = \lceil 2^{-r} (m+2) \rceil + 1}^{2^{d-r}} \bigg( \sum_{h = m + 1}^{2^r (\ell - 1) - 1}  \mathrm{tr} (\Sigma_h) \bigg)^2\\
        & ~~~~~ ~~~~~ ~~~~~ ~~~~~ ~~~~~ \leq 2^{d} 2^r \bigg( \sum_{h \in \mathbb{Z}} \mathrm{tr} (| \Sigma_h |) \bigg)^2\\ 
        & ~~~~~ ~~~~~ ~~~~~ ~~~~~ ~~~~~ = 2^d 2^r \mathrm{tr} ( \bar \Gamma )^2\\
        & ~~~~~ ~~~~~ ~~~~~ ~~~~~ ~~~~~ \lesssim 2^d 2^r \| \bar \Gamma \|_F^2
    \end{align*}
    by condition \ref{assA3}. Therefore, we can bound the first term of $R_{n1}$ (corresponding to the first summand in \eqref{det12}) by
    \begin{align*}
        \frac{1}{\| \bar \Gamma \|_F} \sum_{r=0}^d \big( 2^{d} 2^r \| \bar \Gamma \|_F^2 \big)^{1/2} \lesssim 2^{d}.
    \end{align*}
    Turning to the second term in \eqref{det12}, we use the estimate $\mathrm{tr} (AB) \leq \| A \|_F \| B \|_F$ and obtain
    \begin{align*}
        &\sum_{\ell = \lceil 2^{-r} (m+2) \rceil + 1}^{2^{d-r}} \sum_{h_1, h_2 = m + 1}^{2^r (\ell - 1) - 1} \sum_{j_1, j_2 = 2^r(\ell - 1) + 1}^{2^r \ell}  \sum_{i_1, i_2 = 1}^{p} \Sigma_{j_1 - j_2, i_1 i_2} \Sigma_{j_1 - j_2 + h_2 - h_1, i_1 i_2}\\
        & ~~~~~~~~~~~~ = \sum_{\ell = \lceil 2^{-r} (m+2) \rceil + 1}^{2^{d-r}} \mathrm{tr} \bigg(  \sum_{h_1, h_2 = m + 1}^{2^r (\ell - 1) - 1} \sum_{j_1, j_2 = 2^r(\ell - 1) + 1}^{2^r \ell} \Sigma_{j_1 - j_2} \Sigma_{j_1 - j_2 + h_2 - h_1 }^\top \bigg)\\
        & ~~~~~~~~~~~~ \leq \sum_{\ell = \lceil 2^{-r} (m+2) \rceil + 1}^{2^{d-r}} \mathrm{tr} \bigg( \sum_{|j| \leq 2^r - 1} (2^r - |j|) \cdot |\Sigma_{j}|\\
        & ~~~~~~~~~~~~ ~~~~~~~~~~~~ ~~~~~~~~~~~~ ~~~~~~~~~~~~ \times \sum_{|h| \leq 2^r (\ell -1) -m - 1} (2^r (\ell -1) -m - 1 - |h|) \cdot |\Sigma_{j + h}^\top| \bigg)\\
        & ~~~~~~~~~~~~ \leq 2^r \sum_{\ell = \lceil 2^{-r} (m+2) \rceil + 1}^{2^{d-r}} 2^r (\ell -1) \cdot \mathrm{tr} \bigg( \sum_{|j| \leq 2^r - 1} |\Sigma_{j}| \sum_{|h| \leq 2^r (\ell -1) -m - 1} |\Sigma_{j + h}^\top| \bigg)\\
        & ~~~~~~~~~~~~ \leq 2^d 2^r \sum_{\ell = \lceil 2^{-r} (m+2) \rceil + 1}^{2^{d-r}}  \bigg\| \sum_{|j| \leq 2^r - 1} |\Sigma_{j}| \bigg\|_F \bigg\| \sum_{|h| \leq 2^r (\ell -1) -m - 1} |\Sigma_{j + h}^\top| \bigg\|_F \\
        & ~~~~~~~~~~~~ \leq  2^{2d} \bigg\| \sum_{h \in \mathbb{Z}} |\Sigma_h| \bigg\|_F^2.
    \end{align*}
    Therefore, by the definition of $\bar \Gamma$ we obtain for the second term in the representation of $R_{n1}$ the bound
    \begin{align*}
        \frac{1}{\| \bar \Gamma \|_F} \sum_{r=0}^d \bigg( 2^{2d} \bigg\| \sum_{h \in \mathbb{Z}} |\Sigma_h| \bigg\|_F^2 \bigg)^{1/2} \lesssim d \cdot 2^{d}.
    \end{align*}
    We continue with the third term, which can be treated by the same arguments, that is,\begin{align*}
        &\sum_{\ell = \lceil 2^{-r} (m+2) \rceil + 1}^{2^{d-r}} \sum_{h_1, h_2 = m + 1}^{2^r (\ell - 1) - 1} \sum_{j_1, j_2 = 2^r(\ell - 1) + 1}^{2^r \ell }  \sum_{i_1, i_2 = 1}^{p} \Sigma_{j_1 - j_2 + h_2, i_1 i_2} \Sigma_{j_1 - j_2 - h_1, i_1 i_2}\\
        & ~~~~~~~~~~~~ \leq 2^r \sum_{\ell = \lceil 2^{-r} (m+2) \rceil + 1}^{2^{d-r}} \mathrm{tr} \bigg( 
            \sum_{h_1, h_2 = m + 1}^{2^r (\ell - 1) - 1} \sum_{|j| \leq 2^r - 1} | \Sigma_{j + h_2} | \cdot | \Sigma_{j - h_1}^\top | 
        \bigg)\\
        & ~~~~~~~~~~~~ \leq 2^r \sum_{\ell = \lceil 2^{-r} (m+2) \rceil + 1}^{2^{d-r}} \sum_{|j| \leq 2^r - 1} \bigg\| \sum_{h=m+1}^{2^r (\ell - 1) - 1} |\Sigma_{j + h}| \bigg\|_F \bigg\| \sum_{h=m+1}^{2^r (\ell - 1) - 1} |\Sigma_{j - h}| \bigg\|_F \\
        & ~~~~~~~~~~~~ \lesssim 2^d2^{r} \bigg\| \sum_{h \in \mathbb{Z}} |\Sigma_{h}| \bigg\|_F^2.
    \end{align*}
    As before, we obtain
    \begin{align*}
        \frac{1}{\| \bar \Gamma \|_F} \sum_{r=0}^d \bigg( 2^d 2^{r} \bigg\| \sum_{h \in \mathbb{Z}} |\Sigma_{h}| \bigg\|_F^2 \bigg)^{1/2} \lesssim 2^{d}.
    \end{align*}
    Finally, turning to the term involving the cumulant, we use condition \ref{assA4} to obtain
    \begin{align*}
        &\sum_{\ell = \lceil 2^{-r} (m+2) \rceil + 1}^{2^{d-r}} \sum_{h_1, h_2 = m + 1}^{2^r (\ell - 1) - 1} \sum_{j_1, j_2 = 2^r(\ell - 1) + 1}^{2^r \ell }  \sum_{i_1, i_2 = 1}^{p} \mathrm{cum} ({\tilde{X}_{j_1, i_1}, \tilde{X}_{j_1 - h_1, i_1}, \tilde{X}_{j_2, i_2}, \tilde{X}_{j_2 - h_2, i_2}})\\
        & ~~~~~~~~~ \lesssim \sum_{\ell = \lceil 2^{-r} (m+2) \rceil + 1}^{2^{d-r}} \sum_{h_1, h_2 = m + 1}^{2^r (\ell - 1) - 1} \sum_{j=0}^{2^r -1} (2^r - j )   \sum_{i_1, i_2 = 1}^{p} | \mathrm{cum} ({\tilde{X}_{0, i_1}, \tilde{X}_{-h_1, i_1}, \tilde{X}_{j, i_2}, \tilde{X}_{j - h_2, i_2}}) |\\
        & ~~~~~~~~~ \lesssim 2^r \| \bar \Gamma \|_F^2 \sum_{\ell = \lceil 2^{-r} (m+2) \rceil + 1}^{2^{d-r}} \sum_{h_1, h_2 = m + 1}^{2^r (\ell - 1) - 1} \sum_{j=0}^{2^r -1} \rho^{j + h_1} \\
        & ~~~~~~~~~ \lesssim 2^d 2^r \| \bar \Gamma \|_F^2 \sum_{\ell = \lceil 2^{-r} (m+2) \rceil + 1}^{2^{d-r}} \sum_{h_1 = m + 1}^{\infty} \rho^{h_1} \sum_{j=0}^{\infty} \rho^{j}\\
        & ~~~~~~~~~ \lesssim 2^{2d} \| \bar \Gamma \|_F^2,
    \end{align*}
    since $\rho < 1$. Therefore,
    \begin{align*}
        &\frac{1}{\| \bar \Gamma \|_F} \sum_{r=0}^d \bigg( \sum_{\ell = \lceil 2^{-r} (m+2) \rceil + 1}^{2^{d-r}} \sum_{h_1, h_2 = m + 1}^{2^r (\ell - 1) - 1} \sum_{j_1, j_2 = 2^r(\ell - 1) + 1}^{2^r \ell}   \sum_{i_1, i_2 = 1}^{p} \\
        & ~~~~~ ~~~~~ ~~~~~ ~~~~~ ~~~~~ ~~~~~ ~~~~~ ~~~~~ ~~~~~ ~~~~~ ~~~~~ ~~~~~\mathrm{cum} ({\tilde{X}_{j_1, i_1}, \tilde{X}_{j_1 - h_1, i_1}, \tilde{X}_{j_2, i_2}, \tilde{X}_{j_2 - h_2, i_2}}) \bigg)^{1/2}\\
        & ~~~~~ ~~~~~ \lesssim d 2^{d}.
    \end{align*}
Summarizing these calculations yields
\begin{align} \label{rn1}
    \frac{1}{\| \bar \Gamma \|_F } R_{n1} \lesssim d \cdot 2^{d}.
\end{align}

\noindent {\bf Estimation of the term $ R_{n2}$ in \eqref{term2}. }
Note that the crucial argument for deriving the estimate for  \eqref{rn1} is that the summation with respect to $j_1,j_2$ has at most $2^r$ summands (i.e. it does not depend on $\ell$). For the term in $R_{n2}$ in \eqref{term2}, this is the case as well, since the number of terms in the sum is at most $2^r \ell - h_i \leq 2^r \ell - 2^r (\ell - 1) = 2^r$. Consequently, the same arguments as used for $R_{n1}$ yields $\| \bar \Gamma \|_F^{-1} R_{n2} \lesssim d 2^{d}$, and combining this estimate with \eqref{term0} and \eqref{rn1} gives
\begin{align*}
    &\e \bigg[ { \max_{1 \leq k \leq n} \bigg| \frac{1}{\| \bar \Gamma \|_F } \sum_{\substack{i,j = 1 \\ |i - j| > m}}^k \tilde{X}_{i}^\top \tilde{X}_{j} \bigg|} \bigg]  \lesssim n \log (n),
\end{align*}
which completes the proof of Proposition \ref{lem65}.
\end{proof}

With Proposition \ref{lem65} at hand, we continue with the proof of Theorem \ref{thm2.1}. We start with a decomposition of the statistic $T_n ( \hat k_n, m; \lambda)$ in \eqref{statistic}, that is,
\begin{align}\label{decomp1}
    T_n ( \hat k_n,m; \lambda) &= T_n^{(1)} (\lambda) + T_n^{(2)}(\lambda) + T_n^{(3)}(\lambda) + T_n^{(4)}(\lambda),
\end{align}
where
\begin{align}
    \label{det2a}
    T_n^{(1)} (\lambda)  &= 
        \frac{1}{N_m (\hat k_n) N_m (n-\hat k_n) p }  \sum_{\substack{i_1,i_2=1\\|i_1-i_2| > m}}^{\gbr{\lambda \hat k_n}} \sum_{\substack{j_1,j_2=\hat k_n+1\\|j_1-j_2| > m}}^{\hat k_n + \gbr{\lambda (n-\hat k_n)}} (\tilde{X}_{i_1} - \tilde{X}_{j_1})^\top (\tilde{X}_{i_2} - \tilde{X}_{j_2}),\\
        \label{det2b}
    T_n^{(2)} (\lambda)  &= 
        \frac{1}{N_m (\hat k_n) N_m (n-\hat k_n) p }  \sum_{\substack{i_1,i_2=1\\|i_1-i_2| > m}}^{\gbr{\lambda \hat k_n}} \sum_{\substack{j_1,j_2=\hat k_n+1\\|j_1-j_2| > m}}^{\hat k_n + \gbr{\lambda (n-\hat k_n)}} (\e [X_{i_1}] - \e [X_{j_1}])^\top (\tilde{X}_{i_2} - \tilde{X}_{j_2}),\\
        \label{det2c}
    T_n^{(3)} (\lambda)  &= 
        \frac{1}{N_m (\hat k_n) N_m (n-\hat k_n) p }  \sum_{\substack{i_1,i_2=1\\|i_1-i_2| > m}}^{\gbr{\lambda \hat k_n}} \sum_{\substack{j_1,j_2=\hat k_n+1\\|j_1-j_2| > m}}^{\hat k_n + \gbr{\lambda (n-\hat k_n)}} (\tilde{X}_{i_1} - \tilde{X}_{j_1})^\top (\e [X_{i_2}] - \e [X_{j_2}]), \\
        \label{det2d}
    T_n^{(4)} (\lambda) &= \frac{1}{N_m (\hat k_n) N_m (n-\hat k_n) p } \\
        & ~~~~~ ~~~~~ ~~~~~ ~~~~~ \times \sum_{\substack{i_1,i_2=1\\|i_1-i_2| > m}}^{\gbr{\lambda \hat k_n}} \sum_{\substack{j_1,j_2=\hat k_n+1\\|j_1-j_2| > m}}^{\hat k_n + \gbr{\lambda (n-\hat k_n)}} (\e [X_{i_1}] - \e [X_{j_1}])^\top (\e [X_{i_2}] - \e [X_{j_2}]). \nonumber
\end{align}
In the following, we will show
\begin{align}
    \sup_{0 \leq \lambda \leq 1} \bigg| \frac{\sqrt{n}}{\sigma_n} T_n^{(1)} (\lambda)  \bigg| &= o_\p (1), \label{tn1_eq} \\
    \sup_{0 \leq \lambda \leq 1} \bigg| \frac{\sqrt{n}}{\sigma_n} \big( T_n^{(2)} (\lambda) + T_n^{(3)} (\lambda) \big) - G_n (\lambda) \bigg| &= o_\p (1), \label{tn23_eq} \\
    \sup_{0 \leq \lambda \leq 1} \big| T_n^{(4)} (\lambda) - \Lambda_n (\lambda) \| \delta \|^2 \big| &= o_\p (\sigma_n / \sqrt{n}), \label{tn4_eq}\\
    \sup_{0 \leq \lambda \leq 1} \big| \Lambda_n (\lambda) - \Lambda_0 (\lambda) \big| \| \delta \|^2 &= o_\p (\sigma_n / \sqrt{n} ), \label{lambdalambda}
\end{align}
where $\Lambda_0$ and $\Lambda_n$ have been defined in \eqref{Lambda_0} and \eqref{Lambda_n}, respectively, and $G_n$ is a centered Gaussian process with covariance function $(\lambda_1, \lambda_2 ) \mapsto \lambda_1^3 \lambda_2^3 (\lambda_1 \wedge \lambda_2)$.  From these results and the decomposition \eqref{decomp1} the statement of Theorem \ref{thm2.1} follows. 

\noindent
{\bf Proof of \eqref{tn1_eq}.}
Note that $T_n^{(1)}$ can be decomposed further into terms
\begin{align}
    T_n^{(1)} (\lambda) &= \frac{N_m (\gbr{\lambda (n-\hat k_n)})}{N_m (n-\hat k_n)} P_n (\lambda) - Q_n (\lambda) + \frac{N_m (\gbr{\lambda \hat k_n})}{N_m (\hat k_n)} R_n (\lambda),\label{eq_1}
\end{align}
where 
\begin{align}\label{det1a}
    P_n (\lambda) &:= \frac{1}{N_m (\hat k_n) p } \sum_{\substack{i_1,i_2=1\\|i_1-i_2| > m}}^{\gbr{\lambda \hat k_n}} \tilde{X}_{i_1}^\top \tilde{X}_{i_2}, \\
    Q_n (\lambda) &:= \frac{1}{N_m (\hat k_n) N_m (n-\hat k_n) p } \sum_{\substack{i_1,i_2=1\\|i_1-i_2| > m}}^{\gbr{\lambda \hat k_n}} \sum_{\substack{j_1,j_2=\hat k_n+1\\|j_1-j_2| > m}}^{\hat k_n + \gbr{\lambda (n-\hat k_n)}} (\tilde{X}_{j_1}^\top \tilde{X}_{i_2} + \tilde{X}_{i_1}^\top \tilde{X}_{j_2}), \label{det1b} \\
    R_n (\lambda) &:= \frac{1}{N_m (n-\hat k_n) p } \sum_{\substack{j_1,j_2=\hat k_n+1\\|j_1-j_2| > m}}^{\hat k_n + \gbr{\lambda (n-\hat k_n)}} \tilde{X}_{j_1}^\top \tilde{X}_{j_2}. \label{det1c}
\end{align}
By Proposition \ref{lem65} (together with condition \ref{assA1undA2}) and Theorem \ref{cpthm} (note that the error term in this result is of order $o_\p (1)$ by condition \ref{assA6}), we obtain
\begin{align}\label{comb1}
    \sup_{0 \leq \lambda \leq 1} \bigg| \frac{\sqrt{n}}{\sigma_n} P_n (\lambda) \bigg| = o_\p (1) ~~~~~ \text{and} ~~~~~ \sup_{0 \leq \lambda \leq 1} \bigg| \frac{\sqrt{n}}{\sigma_n} R_n (\lambda) \bigg| = o_\p (1).
\end{align}
Furthermore, applying Lemma~\ref{lem1} twice yields for $Q_n (\lambda)$
\begin{align}\label{eq_2}
    N_m (\hat k_n)& N_m (n- \hat k_n) p \cdot Q_n (\lambda)\\
    &= 2 \sum_{i=1}^{\gbr{\lambda \hat k_n} - m} \sum_{j=\hat k_n+1}^{\hat k_n + \gbr{\lambda (n-\hat k_n)} -m} (\gbr{\lambda \hat k_n} - m - i) (\hat k_n + \gbr{\lambda (n-\hat k_n)} - m - j) \cdot \tilde{X}_i^\top \tilde{X}_j\nonumber\\
    &~~~~~+ 2 \sum_{i=1}^{\gbr{\lambda \hat k_n} - m} \sum_{j=\hat k_n+m+1}^{\hat k_n + \gbr{\lambda (n-\hat k_n)}} (\gbr{\lambda \hat k_n} - m - i) (j - \hat k_n - m - 1) \cdot \tilde{X}_i^\top \tilde{X}_j\nonumber\\
    &~~~~~+ 2 \sum_{i=m+1}^{\gbr{\lambda \hat k_n}} \sum_{j=\hat k_n+1}^{\hat k_n + \gbr{\lambda (n-\hat k_n)} - m} (i-m-1) (\hat k_n + \gbr{\lambda (n-\hat k_n)} - m - j) \cdot \tilde{X}_i^\top \tilde{X}_j\nonumber\\
    &~~~~~+ 2 \sum_{i=m+1}^{\gbr{\lambda \hat k_n}} \sum_{j=\hat k_n+m+1}^{\hat k_n + \gbr{\lambda (n-\hat k_n)}} (i-m-1) (j - \hat k_n - m - 1) \cdot \tilde{X}_i^\top \tilde{X}_j.\nonumber
\end{align}
These four terms are now estimated separately. As a representative example consider the first term. We have using 
the Cauchy-Schwarz and Young's inequality that
\begin{align}
    &\Bigg | \sum_{i=1}^{\gbr{\lambda \hat k_n} - m} \sum_{j=\hat k_n+1}^{\hat k_n + \gbr{\lambda (n-\hat k_n)} -m} (\gbr{\lambda \hat k_n} - m - i) (\hat k_n + \gbr{\lambda (n-\hat k_n)} - m - j) \cdot \tilde{X}_i^\top \tilde{X}_j    \Bigg |\nonumber\\
    & ~~~~~ ~~~~~  = \Bigg | \bigg( \sum_{i=1}^{\gbr{\lambda \hat k_n} - m}  (\gbr{\lambda \hat k_n} - m - i) \cdot \tilde{X}_i \bigg)^\top \nonumber\\
    & ~~~~~ ~~~~~ ~~~~~ ~~~~~ ~~~~~ ~~~~~ ~~~~~  \times \bigg( \sum_{j=\hat k_n+1}^{\hat k_n + \gbr{\lambda (n-\hat k_n)} -m} (\hat k_n + \gbr{\lambda (n-\hat k_n)} - m - j) \cdot \tilde{X}_j \bigg)    \Bigg |\nonumber\\
    & ~~~~~  \leq \bigg\| \sum_{i=1}^{\gbr{\lambda \hat k_n} - m}  (\gbr{\lambda \hat k_n} - m - i) \cdot \tilde{X}_i \bigg\|_2 \nonumber \\
    & ~~~~~ ~~~~~ ~~~~~ ~~~~~ ~~~~~ ~~~~~ ~~~~~  \times \bigg\| \sum_{j=\hat k_n+1}^{\hat k_n + \gbr{\lambda (n-\hat k_n)} -m} (\hat k_n + \gbr{\lambda (n-\hat k_n)} - m - j) \cdot \tilde{X}_j  \bigg\|_2 \nonumber\\
    & ~~~~~  \lesssim \bigg\| \sum_{i=1}^{\gbr{\lambda \hat k_n} - m}  (\gbr{\lambda \hat k_n} - m - i) \cdot \tilde{X}_i \bigg\|_2^2 \label{eq_3} \\
    & ~~~~~ ~~~~~ ~~~~~ ~~~~~ ~~~~~ ~~~~~ ~~~~~ + \bigg\| \sum_{j=\hat k_n+1}^{\hat k_n + \gbr{\lambda (n-\hat k_n)} -m} (\hat k_n + \gbr{\lambda (n-\hat k_n)} - m - j) \cdot \tilde{X}_j  \bigg\|_2^2.\nonumber
\end{align}
Since we are interested in an estimate for $Q_n (\lambda)$, which is multiplied by the factor $N_m (\hat k_n) N_m (n-\hat k_n) p$ of order $n^4 p$ in \eqref{eq_2}, we consider 
\begin{align*}
    &\frac{1}{n^4  p } \sup_{0 \leq \lambda \leq 1} \bigg\| \sum_{i=1}^{\gbr{\lambda \hat k_n} - m}  (\gbr{\lambda \hat k_n} - m - i) \cdot \tilde{X}_i \bigg\|_2^2 \\
    & ~~~~~~~~ \lesssim \sup_{0 \leq \lambda \leq 1} \frac{(\gbr{\lambda \hat k_n} - m)^2}{n^4  p } \bigg\| \sum_{i=1}^{\gbr{\lambda \hat k_n} - m} \tilde{X}_i \bigg\|_2^2 + \frac{1}{n^2 p } \sup_{0 \leq \lambda \leq 1} \bigg\| \sum_{i=1}^{\gbr{\lambda \hat k_n} - m} \frac{i}{n} \tilde{X}_i \bigg\|_2^2 \\
    & ~~~~~~~~ \leq\frac{1}{n^2 p } \bigg( \sup_{0 \leq \lambda \leq 1} \bigg\| \sum_{i=1}^{\gbr{\lambda \hat k_n} - m} \tilde{X}_i \bigg\|_2^2 + \sup_{0 \leq \lambda \leq 1} \bigg\| \sum_{i=1}^{\gbr{\lambda \hat k_n} - m} \frac{i}{n} \tilde{X}_i \bigg\|_2^2 \bigg) 
    \\ & ~~~~~~~~ \lesssim \frac{\log^2 (n)}{n p }  \mathrm{tr} (\bar \Gamma),
\end{align*}
where the last inequality follows from Proposition \ref{max2}. The same argument shows for the second term in \eqref{eq_3} that 
\begin{align*}
    \frac{1}{n^4 p } \e \bigg[ \sup_{0 \leq \lambda \leq 1} \bigg\| \sum_{j=\hat k_n+1}^{\hat k_n + \gbr{\lambda (n-\hat k_n)} -m} (\hat k_n + \gbr{\lambda (n-\hat k_n)} - m - j) \cdot \tilde{X}_j  \bigg\|_2^2 \bigg] \lesssim \frac{\log^2 (n)}{n p }  \mathrm{tr} (\bar \Gamma).
\end{align*}
Since all other terms that appear in the decomposition of $Q_n (\lambda)$ have a similar structure, the same arguments show that their (absolute) expected values are of the order $\frac{\log^2 (n)}{n p }  \mathrm{tr} (\bar \Gamma)$, as well. Consequently, with conditions \ref{assA1undA2} and \ref{assA3} it follows that 
\begin{align}\label{comb2}
    \e \bigg [ \sup_{0 \leq \lambda \leq 1} \bigg| \frac{\sqrt{n}}{\sigma_n} Q_n (\lambda) \bigg| \bigg] \lesssim \frac{\log^2 (n)}{\sqrt{n}} \frac{\mathrm{tr}(\bar \Gamma)}{ \| \bar \Gamma \|_F } \frac{\| \bar \Gamma \|_F }{\sqrt{\delta^\top \Gamma \delta}} = o (1).
\end{align}
Combining \eqref{comb1} and \eqref{comb2} with 
\begin{align*}
    \sup_{0 \leq \lambda \leq 1} \bigg| \frac{N_m (\gbr{\lambda \hat k_n})}{N_m (\hat k_n)} \bigg| = O_\p(1) ~~~~~ \text{and} ~~~~~  \sup_{0 \leq \lambda \leq 1} \bigg| \frac{N_m (\gbr{\lambda (n-\hat k_n)})}{N_m (n-\hat k_n)} \bigg| = O_\p(1)
\end{align*}
finally yields the estimate \eqref{tn1_eq}.

\noindent
\textbf{Proof of \eqref{tn23_eq}.}
Note that
\begin{align}\label{exex}
    \e[X_i] - \e [X_j] = \begin{cases}
        - \mathbbm{1} \{ i \leq k_0 \} \cdot \delta, & \text{if } \hat k_n > k_0, \\
        - \mathbbm{1} \{ j \geq k_0 + 1 \} \cdot \delta, &\text{if } \hat k_n \leq k_0.
    \end{cases}
\end{align}
From now on, we will consider the case $\hat k_n > k_0$.  The other case can be treated in the same way. Hence, using \eqref{exex} we can rearrange the terms inside the two quadruple sums in $T_n^{(2)} (\lambda) + T_n^{(3)} (\lambda)$, henceforth $(T_n^{(2)} (\lambda) + T_n^{(3)} (\lambda)) \mathbbm{1} \{ \hat k_n > k_0 \} $, as
\begin{align}\label{tn23_kn_ln}
    (T_n^{(2)} (\lambda) + T_n^{(3)} (\lambda)) \mathbbm{1} \{ \hat k_n > k_0 \} = ( K_n (\lambda) - L_n (\lambda) )\mathbbm{1} \{ \hat k_n > k_0 \},
\end{align}
where
\begin{align*}
    K_n (\lambda) &:= \frac{1}{N_m (\hat k_n) N_m (n-\hat k_n) p }\sum_{\substack{i_1,i_2=1\\|i_1-i_2| > m}}^{\gbr{\lambda \hat k_n}}   \sum_{\substack{j_1,j_2=\hat k_n+1\\|j_1-j_2| > m}}^{\hat k_n + \gbr{\lambda (n-\hat k_n)}} \negthickspace\negthickspace\negthickspace \big( \mathbbm{1} \{ i_1 \leq k_0 \} \tilde{X}_{j_2}^\top \delta + \mathbbm{1} \{ i_2 \leq k_0 \} \tilde{X}_{j_1}^\top \delta \big)\\
    L_n (\lambda) &:= \frac{1}{N_m (\hat k_n) N_m (n-\hat k_n) p }  \sum_{\substack{i_1,i_2=1\\|i_1-i_2| > m}}^{\gbr{\lambda \hat k_n}} \sum_{\substack{j_1,j_2=\hat k_n+1\\|j_1-j_2| > m}}^{\hat k_n + \gbr{\lambda (n-\hat k_n)}} \negthickspace \negthickspace \negthickspace\big( \mathbbm{1} \{ i_1 \leq k_0 \} \tilde{X}_{i_2}^\top \delta + \mathbbm{1} \{ i_2 \leq k_0 \} \tilde{X}_{i_1}^\top \delta \big)\\
    &= \frac{N_m (\gbr{\lambda (n - \hat k_n)})}{N_m (n-\hat k_n)} \frac{1}{N_m (\hat k_n) p}  \sum_{\substack{i_1,i_2=1\\|i_1-i_2| > m}}^{\gbr{\lambda \hat k_n}} \big( \mathbbm{1} \{ i_1 \leq k_0 \} \tilde{X}_{i_2}^\top \delta + \mathbbm{1} \{ i_2 \leq k_0 \} \tilde{X}_{i_1}^\top \delta \big).
\end{align*}
The term $K_n (\lambda)$ can be rewritten as 
\begin{align}
    K_n (\lambda)
    &= \frac{2}{N_m (\hat k_n)  }\sum_{\substack{i_1,i_2=1\\|i_1-i_2| > m}}^{\gbr{\lambda \hat k_n}} \mathbbm{1} \{ i_1 \leq k_0 \} \cdot \frac{1}{N_m (n-\hat k_n) p}  \sum_{\substack{j_1,j_2=\hat k_n+1\\|j_1-j_2| > m}}^{\hat k_n + \gbr{\lambda (n-\hat k_n)}} \tilde{X}_{j_2}^\top \delta.\label{kn}
\end{align}
We will show below that 
\begin{align}\label{op1}
    \sup_{0 \leq \lambda \leq 1} \bigg| N_m (\hat k_n)^{-1} \negthickspace \negthickspace \sum_{\substack{i_1,i_2=1\\|i_1-i_2| > m}}^{\gbr{\lambda \hat k_n}} \negthickspace\negthickspace \mathbbm{1} \{ i_1 \leq k_0 \} -  \lambda^2 \bigg| = O_\p \bigg ( \frac{m}{n} \bigg  )
\end{align}
and 
\begin{align}
    \begin{aligned}\label{convergence1}
        & \frac{\sqrt{n}}{\sigma_n} \frac{1}{N_m (n-\hat k_n) p } \negthickspace \negthickspace \sum_{\substack{j_1,j_2=\hat k_n+1\\|j_1-j_2| > m}}^{\hat k_n + \gbr{\lambda (n-\hat k_n)}} \tilde{X}_{j_1}^\top \delta\\
        & ~~~~~ ~~~~~ = \frac{\lambda}{1 - \vartheta_0} \frac{p^{-1} \sqrt{\delta^\top \Gamma \delta}}{\sigma_n} \big(
            \mathbb{B} (\vartheta_0 + \lambda (1 - \vartheta_0)) - \mathbb{B} (\vartheta_0)
        \big) + o_\p (1)
    \end{aligned}
\end{align}
uniformly with respect to $\lambda \in [0,1]$.
Combining these estimates  with \eqref{kn} yields
\begin{align}\label{kn_en}
    \sup_{0 \leq \lambda \leq 1} \bigg| \frac{\sqrt{n}}{\sigma_n} K_n (\lambda) - E_n (\lambda) \bigg| = o_\p (1),
\end{align}
where 
\begin{align*}
    E_n (\lambda) := \frac{2 \lambda^3}{1 - \vartheta_0} \frac{p^{-1} \sqrt{\delta^\top \Gamma \delta}}{\sigma_n} \big(
        \mathbb{B} (\vartheta_0 + \lambda (1 - \vartheta_0)) - \mathbb{B} (\vartheta_0)
    \big).
\end{align*}
To derive a corresponding result for the term $L_n (\lambda)$ we use the following two estimates
\begin{align}\label{secondterm1}
    \sup_{0 \leq \lambda \leq 1} \bigg| \frac{N_m (\gbr{\lambda (n - \hat k_n)})}{N_m (n- \hat k_n)} - \lambda^2 \bigg| = O_\p (m/n)
\end{align}
and
\begin{align}
    \begin{aligned}\label{secondterm2}
        &\frac{\sqrt{n}}{\sigma_n} \frac{1}{N_m (\hat k_n) p } \sum_{\substack{i_1,i_2=1\\|i_1-i_2| > m}}^{\gbr{\lambda \hat k_n}} \big( 
        \mathbbm{1} \{ i_1 \leq k_0 \} \tilde{X}_{i_2}^\top \delta + \mathbbm{1} \{ i_2 \leq k_0 \} \tilde{X}_{i_1}^\top \delta
    \big)
    \\
    & ~~~~~ ~~~~~ ~~~~~ ~~~~~  = \frac{2\lambda}{\vartheta_0} \frac{p^{-1} \sqrt{\delta^\top \Gamma \delta}}{\sigma_n} \mathbb{B} \big( \lambda \vartheta_0 \big) + o_\p (1)
    \end{aligned}
\end{align}
(uniformly with respect to $\lambda \in [0,1]$). 
The proof of \eqref{secondterm1} follows by the same arguments as given in the proof of \eqref{op1}; the proof of \eqref{secondterm2} will be provided below.

With \eqref{secondterm1} and \eqref{secondterm2}, we thus obtain
\begin{align}\label{ln_dn}
    \sup_{0 \leq \lambda \leq 1} \bigg| \frac{\sqrt{n}}{\sigma_n} L_n (\lambda) - D_n (\lambda) \bigg| = o_\p (1),
\end{align}
where
\begin{align*}
    D_n (\lambda) := \frac{2\lambda^3}{\vartheta_0} \frac{p^{-1} \sqrt{\delta^\top \Gamma \delta}}{\sigma_n} \mathbb{B} \big( \lambda \vartheta_0 \big).
\end{align*}

Combining \eqref{tn23_kn_ln}, \eqref{kn_en} and \eqref{ln_dn} yields
\begin{align*}
    \sup_{0 \leq \lambda \leq 1} \bigg| \frac{\sqrt{n}}{\sigma_n} \big( T_n^{(2)} (\lambda) + T_n^{(3)} (\lambda) \big) - \big( E_n (\lambda) - D_n (\lambda) \big)
   \bigg| \mathbbm{1} \{ \hat k_n > k_0 \}  = o_\p (1),
\end{align*}
where $\{ E_n (\lambda) - D_n (\lambda) \}_{\lambda \in [0,1]}$ is a Gaussian process. Similar arguments show that the estimate holds if the indicator $\{ \hat k_n > k_0 \} $ is replaced by the indicator $\{ \hat k_n \leq k_0 \} $.

The statement in \eqref{tn23_eq} now follows if we show that the covariance function of the process $\{ E_n (\lambda) - D_n (\lambda) \}_{\lambda \in [0,1]}$ is given by $(\lambda_1, \lambda_2) \mapsto \lambda_1^3 \lambda_2^3 (\lambda_1 \wedge \lambda_2)$. Indeed, we have
\begin{align*}
    \cov \big( D_n (\lambda_1), D_n (\lambda_2) \big) &= \frac{4 \lambda_1^3 \lambda_2^3}{\vartheta_0} \bigg( \frac{p^{-1} \sqrt{\delta^\top \Gamma \delta}}{\sigma_n} \bigg)^2 (\lambda_1 \wedge \lambda_2),\\
    \cov \big( E_n (\lambda_1), E_n (\lambda_2) \big) &= \frac{4 \lambda_1^3 \lambda_2^3}{1-\vartheta_0} \bigg( \frac{p^{-1} \sqrt{\delta^\top \Gamma \delta}}{\sigma_n} \bigg)^2 (\lambda_1 \wedge \lambda_2)
\end{align*}
and $\cov \big( D_n (\lambda_1), E_n (\lambda_2) \big) = 0$ for any $\lambda_1, \lambda_2 \in [0,1]$. This yields
\begin{align*}
    \cov \big( E_n (\lambda_1) - D_n (\lambda_1) , E_n (\lambda_2) - D_n (\lambda_2) \big) &=  \frac{\lambda_1^3 \lambda_2^3 (\lambda_1 \wedge \lambda_2)}{\sigma_n^2} \cdot \frac{4}{p^2} \bigg( \frac{\delta^\top \Gamma \delta}{\vartheta_0} + \frac{\delta^\top \Gamma \delta}{1 - \vartheta_0} \bigg)\\
    &= \lambda_1^3 \lambda_2^3 (\lambda_1 \wedge \lambda_2)
\end{align*}
by the definition of $\sigma_n^2$ in \eqref{variance}.

\bigskip
\noindent
\textbf{Proof of \eqref{tn4_eq}.}
Using \eqref{exex}, we obtain
\begin{align}\nonumber
    T_n^{(4)} (\lambda) &= \frac{ \mathbbm{1} \{ \hat k_n > k_0 \} \| \delta \|_2^2}{N_m (\hat k_n)N_m (n- \hat k_n) p } \sum_{\substack{i_1,i_2=1\\|i_1-i_2| > m}}^{\gbr{\lambda \hat k_n}}  \sum_{\substack{j_1,j_2=\hat k_n+1\\|j_1-j_2| > m}}^{\hat k_n + \gbr{\lambda (n-\hat k_n)}} \mathbbm{1} \{ i_1 \leq k_0 \} \mathbbm{1} \{ i_2 \leq k_0 \}\\
    \nonumber 
    & ~~~~~  + \frac{ \mathbbm{1} \{ \hat k_n \leq k_0 \} \| \delta \|_2^2}{N_m (\hat k_n)N_m (n- \hat k_n) p } \sum_{\substack{i_1,i_2=1\\|i_1-i_2| > m}}^{\gbr{\lambda \hat k_n}}  \sum_{\substack{j_1,j_2=\hat k_n+1\\|j_1-j_2| > m}}^{\hat k_n + \gbr{\lambda (n-\hat k_n)}} \negthickspace \negthickspace \mathbbm{1} \{ j_1 \geq k_0 + 1\} \mathbbm{1} \{ j_2 \geq k_0 + 1 \}\\
    &= \| \delta \|^2 \mathbbm{1} \{ \hat k_n > k_0 \}  \frac{ N_m (\gbr{\lambda (n - \hat k_n)}) }{N_m (n- \hat k_n)} C_n^{(1)} (\lambda) \label{hd1} \\
    & ~~~~~ ~~~~~ ~~~~~ ~~~~~ ~~~~~ ~~~~~ ~~~~~ ~~~~~ ~~~~~ ~~~~~  + \| \delta \|^2 \mathbbm{1} \{ \hat k_n \leq k_0 \} \frac{ N_m (\gbr{\lambda \hat k_n}) }{N_m (\hat k_n)}  C_n^{(2)} (\lambda), \nonumber
\end{align}
where
\begin{align}
    \begin{aligned}\label{gnhn}
        C_n^{(1)} (\lambda) &:=  \frac{1}{N_m (\hat k_n)} \sum_{\substack{i_1,i_2=1\\|i_1-i_2| > m}}^{\gbr{\lambda \hat k_n}} \mathbbm{1} \{ i_1 \leq k_0 \} \mathbbm{1} \{ i_2 \leq k_0 \},\\
        C_n^{(2)} (\lambda) &:= \frac{ 1 }{N_m (n- \hat k_n)}  \sum_{\substack{j_1,j_2=\hat k_n+1\\|j_1-j_2| > m}}^{\hat k_n + \gbr{\lambda (n-\hat k_n)}} \mathbbm{1} \{ j_1 \geq k_0 + 1\} \mathbbm{1} \{ j_2 \geq k_0 + 1 \}.
    \end{aligned}
\end{align}
Observing that $C_n^{(1)}$  in \eqref{hd1} is multiplied by the factor $\mathbbm{1} \{ \hat k_n > k_0 \} $, we have 
\begin{align*}
    C_n^{(1)} (\lambda) &= \frac{\mathbbm{1} \{ \gbr{\lambda \hat k_n} \leq k_0 \} }{N_m (\hat k_n)} \sum_{\substack{i_1,i_2=1\\|i_1-i_2| > m}}^{\gbr{\lambda \hat k_n}} \mathbbm{1} \{ i_1 \leq k_0 \} \mathbbm{1} \{ i_2 \leq k_0 \}\\
    &~~~~~~~~~~ + \frac{\mathbbm{1} \{ \gbr{\lambda \hat k_n} > k_0 \} }{N_m (\hat k_n)} \sum_{\substack{i_1,i_2=1\\|i_1-i_2| > m}}^{\gbr{\lambda \hat k_n}} \mathbbm{1} \{ i_1 \leq k_0 \} \mathbbm{1} \{ i_2 \leq k_0 \}\\
    &= \mathbbm{1} \{ \gbr{\lambda \hat k_n} \leq k_0 \}  \frac{N_m (\gbr{\lambda \hat k_n})}{N_m (\hat k_n)} + \mathbbm{1} \{ \gbr{\lambda \hat k_n} > k_0 \} \frac{N_m (k_0)}{N_m (\hat k_n)}\\
    &= \frac{N_m (\gbr{\lambda \hat k_n} \wedge k_0)}{N_m (\hat k_n)} .
\end{align*}

Below, we will prove the following statement:
\begin{align}\label{napproxes1}
    \sup_{\substack{0 \leq \lambda \leq 1 \\ \gbr{\lambda \hat k_n} > k_0  }} \bigg| \frac{N_m (k_0)}{N_m (\hat k_n)} - \frac{N_m (\gbr{\lambda \hat k_n})}{N_m (\hat k_n)} \bigg| = O_\p \bigg(  \frac{\mathrm{tr} (\bar \Gamma)}{ \| \delta \|_2^2} \frac{\log^2 (n)}{n} \bigg),
\end{align}
which yields for the case $\hat k_n > k_0$ that
\begin{align}\label{gn_eq}
    \sup_{0 \leq \lambda \leq 1} \bigg| C_n^{(1)} (\lambda) - \frac{N_m (\gbr{\lambda \hat k_n})}{N_m (\hat k_n)} \bigg| = O_\p \bigg(  \frac{\mathrm{tr} (\bar \Gamma)}{ \| \delta \|_2^2} \frac{\log^2 (n)}{n} \bigg).
\end{align}
Define
\begin{align}\label{hd5}
    \mathcal{L} := \{ \lambda \in [0,1] \mid k_0 \leq \hat k_n + \gbr{\lambda (n - \hat k_n)} \}. 
\end{align}
Noting that $C_n^{(2)}$ is multiplied by the factor $\mathbbm{1} \{ \hat k_n \leq k_0\}$, then this term can be rewritten as
\begin{align*}
    C_n^{(2)} (\lambda) =  \frac{ \mathbbm{1}_\mathcal{L} }{N_m (n- \hat k_n)}   \sum_{\substack{j_1,j_2=k_0+1\\|j_1-j_2| > m}}^{\hat k_n + \gbr{\lambda (n-\hat k_n)}} 1 + 0 \cdot  \mathbbm{1}_{\mathcal{L}^C}    &=  \frac{ N_m (\hat k_n - k_0 + \gbr{\lambda(n - \hat k_n)}) }{N_m (n- \hat k_n)} \mathbbm{1}_\mathcal{L}.
\end{align*}
We will prove below that for $\hat k_n \le k_0$
\begin{align}\label{napproxes2}
    \sup_{\lambda \in \mathcal{L}} \bigg| \frac{ N_m (\hat k_n - k_0 + \gbr{\lambda(n - \hat k_n)}) }{N_m (n- \hat k_n)} - \frac{ N_m (\gbr{\lambda(n - \hat k_n)}) }{N_m (n- \hat k_n)} \bigg| &= O_\p \bigg(  \frac{\mathrm{tr} (\bar \Gamma)}{ \| \delta \|_2^2} \frac{\log^2 (n)}{n} \bigg),
    \\
    \label{napproxes3}
    \sup_{\lambda \in \mathcal{L}^C} \bigg| \frac{ N_m (\gbr{\lambda(n - \hat k_n)}) }{N_m (n- \hat k_n)}\bigg| &= O_\p \bigg(  \frac{\mathrm{tr} (\bar \Gamma)}{ \| \delta \|_2^2} \frac{\log^2 (n)}{n} \bigg),
\end{align}
which yields for the case $\hat k_n \leq k_0$
\begin{align}\label{hn_eq}
    \sup_{0 \leq \lambda \leq 1} \bigg| C_n^{(2)} (\lambda) - \frac{ N_m (\gbr{\lambda(n - \hat k_n)}) }{N_m (n- \hat k_n)} \bigg| = O_\p \bigg(  \frac{\mathrm{tr} (\bar \Gamma)}{ \| \delta \|_2^2} \frac{\log^2 (n)}{n} \bigg).
\end{align}
Combining \eqref{gn_eq} and \eqref{hn_eq} and observing the definition of $\Lambda_n (\lambda)$ in \eqref{Lambda_n}
gives
\begin{align}
    \begin{aligned}\label{gnhnlambda}
        \sup_{0 \leq \lambda \leq 1 } \bigg| \mathbbm{1} \{ \hat k_n > k_0 \}  \frac{ N_m (\gbr{\lambda (n - \hat k_n)}) }{N_m (n- \hat k_n)} C_n^{(1)} (\lambda) + \mathbbm{1} \{ \hat k_n \leq k_0 \} \frac{ N_m (\gbr{\lambda \hat k_n}) }{N_m (\hat k_n)} C_n^{(2)} (\lambda) - \Lambda_n (\lambda) \bigg|\\
         = O_\p \bigg(  \frac{\mathrm{tr} (\bar \Gamma)}{ \| \delta \|_2^2} \frac{\log^2 (n)}{n} \bigg).
    \end{aligned}
\end{align}
Finally, we obtain for the  term $T_n^{(4)}$ 
in \eqref{hd1} that 
\begin{align*}
    \sup_{0 \leq \lambda \leq 1} \big| T_n^{(4)} - \Lambda_n (\lambda) \| \delta \|^2 \big| = O_\p \bigg( \frac{\mathrm{tr} (\bar \Gamma)}{ p} \frac{\log^2 (n)}{n} \bigg) = o_\p (\sigma_n / \sqrt{n}), 
\end{align*}
where the last estimate holds by conditions \ref{assA1undA2} and \ref{assA3}, which imply
\begin{align}\label{opinclusion}
     \frac{\mathrm{tr} (\bar \Gamma)}{p} \frac{\log^2 (n)}{n} \cdot \frac{\sqrt{n}}{\sigma_n} = \frac{\vartheta_0 (1-\vartheta_0)}{4} \frac{\mathrm{tr} (\bar \Gamma)}{ \sqrt{\delta^\top \Gamma \delta} } \frac{\log^2 (n)}{\sqrt{n}}= o(1).
\end{align}
This finishes the proof of equality \eqref{tn4_eq}.
\medskip

\noindent
\textbf{Proof of \eqref{lambdalambda}.}
By the triangle inequality, we obtain
\begin{align}
    \big| \Lambda_n (\lambda) - \Lambda_0 (\lambda) \big| &\leq \bigg| \frac{N_m (\gbr{\lambda \hat k_n})}{N_m (\hat k_n)}  \frac{ N_m (\gbr{\lambda(n - \hat k_n)}) }{N_m (n- \hat k_n)} - \frac{N_m (\gbr{\lambda k_0})}{N_m (\hat k_n)}  \frac{ N_m (\gbr{\lambda(n - k_0)}) }{N_m (n- \hat k_n)} \bigg|\label{gleichung1}\\
    & ~~~ + \bigg| \frac{N_m (\gbr{\lambda k_0})}{N_m (\hat k_n)}  \frac{ N_m (\gbr{\lambda(n - k_0)}) }{N_m (n- \hat k_n)}  - \frac{N_m (\gbr{\lambda k_0})}{N_m (k_0)} \frac{N_m (\gbr{\lambda (n - k_0)})}{N_m (n-k_0)}  \bigg|. \label{gleichung2}
\end{align}
To derive a bound for \eqref{gleichung1}, we proceed as follows
\begin{align*}
    &\bigg| \frac{N_m (\gbr{\lambda \hat k_n})}{N_m (\hat k_n)}  \frac{ N_m (\gbr{\lambda(n - \hat k_n)}) }{N_m (n- \hat k_n)} - \frac{N_m (\gbr{\lambda k_0})}{N_m (\hat k_n)}  \frac{ N_m (\gbr{\lambda(n - k_0)}) }{N_m (n- \hat k_n)} \bigg|\\
    &~~~~~ \leq \frac{1}{N_m (\hat k_n) N_m (n- \hat k_n)} \Big\{ \Big| N_m (\gbr{\lambda \hat k_n}) \big( N(\gbr{\lambda (n - \hat k_n)}) - N_m (\gbr{\lambda (n - k_0)}) \big) \Big|\\
    & ~~~~~ ~~~~~ ~~~~~ ~~~~~ ~~~~~ ~~~~~ ~~~~~ ~~~~~ + \Big|N_m (\gbr{\lambda (n - k_0)}) \big( N_m (\gbr{\lambda \hat k_n}) - N_m (\gbr{\lambda k_0}) \big) \Big|\Big\}\\
    & ~~~~~ \leq \frac{N_m (\gbr{\lambda \hat k_n}) +  N_m (\gbr{\lambda (n - k_0)}) }{N_m (\hat k_n) N_m (n- \hat k_n)} \max_{\substack{1 \leq a,b \leq n  \\ |a-b| \lesssim |\hat k_n - k_0|}} \big| N_m (a) - N_m (b) \big|,
\end{align*}
since by the definition of $\gbr{\cdot}$
\begin{align*}
    | \gbr{\lambda \hat k_n} - \gbr{\lambda k_0} | \leq |\hat k_n - k_0 | ~~~~~ \text{and} ~~~~~ | \gbr{\lambda (n - \hat k_n)} - \gbr{\lambda (n - k_0)} | \leq |\hat k_n - k_0|.    
\end{align*}
Using similar arguments as before, we have
\begin{align*}
    \sup_{0 \leq \lambda \leq 1} \bigg| \frac{N_m (\gbr{\lambda \hat k_n}) +  N_m (\gbr{\lambda (n - k_0)}) }{N_m (\hat k_n) N_m (n- \hat k_n)} n^2 \bigg| = O_\p (1)
\end{align*}
and by Lemma~\ref{lemNmUpperbound}
\begin{align*}
    \frac{1}{n^2}\max_{\substack{1 \leq a,b \leq n  \\ |a-b| \lesssim |\hat k_n - k_0|}} \big| N_m (a) - N_m (b) \big| \lesssim \frac{(m+n) |\hat k_n - k_0|}{n^2} = O_\p \bigg(\frac{\log^2 (n)}{n} \frac{\mathrm{tr} (\bar \Gamma)}{\| \delta \|_2^2} \bigg).
\end{align*}
For \eqref{gleichung2}, we add and subtract $N_m (\hat k_n)^{-1} N_m (n - k_0)^{-1}$ and obtain
\begin{align*}
    &\bigg| \frac{N_m (\gbr{\lambda k_0})}{N_m (\hat k_n)}  \frac{ N_m (\gbr{\lambda(n - k_0)}) }{N_m (n- \hat k_n)}  - \frac{N_m (\gbr{\lambda k_0})}{N_m (k_0)} \frac{N_m (\gbr{\lambda (n - k_0)})}{N_m (n-k_0)}  \bigg|\\
    & ~~~~~ \leq N_m (\gbr{\lambda k_0}) N _m (\gbr{\lambda (n - k_0)}) \bigg\{ N_m(\hat k_n)^{-1} \Big| N_m (n - \hat k_n)^{-1} - N_m (n - k_0)^{-1} \Big| \\
    & ~~~~~ ~~~~~ ~~~~~ ~~~~~ ~~~~~ ~~~~~ ~~~~~ ~~~~~ ~~~~~ ~~~~~   + N_m (n - k_0)^{-1} \Big| N_m (\hat k_n)^{-1} - N_m (k_0)^{-1} \Big| \bigg\}.
\end{align*}
Using $\lambda \leq 1$, we have for the first term
\begin{align*}
    &N_m (\gbr{\lambda k_0}) N _m (\gbr{\lambda (n - k_0)})  N_m(\hat k_n)^{-1} \Big| N_m (n - \hat k_n)^{-1} - N_m (n - k_0)^{-1} \Big|\\
    & ~~~~~ ~~~~~ ~~~~~ ~~~~~ ~~~~~ ~~~~~ \leq \frac{N_m (k_0)}{N_m(\hat k_n)}   \bigg| \frac{ N_m (n - k_0) - N_m (n - \hat k_n)}{N_m (n - \hat k_n)}  \bigg|\\
    & ~~~~~ ~~~~~ ~~~~~ ~~~~~ ~~~~~ ~~~~~ \leq \frac{N_m (k_0) n^2}{N_m(\hat k_n) N_m (n - \hat k_n)} \frac{1}{n^2} \max_{\substack{1 \leq a,b \leq n \\ |a-b| \leq |k_0 - \hat k_n| }} \big|N_m (a) - N_m (b) \big|\\
    & ~~~~~ ~~~~~ ~~~~~ ~~~~~ ~~~~~ ~~~~~ = O_\p \bigg( \frac{\log^2 (n)}{n} \frac{\mathrm{tr} (\bar \Gamma)}{\| \delta \|_2^2} \bigg)
\end{align*}
by the same arguments used before. Similarly, we obtain for the second term
\begin{align*}
    &N_m (\gbr{\lambda k_0}) N _m (\gbr{\lambda (n - k_0)})  N_m(n - \hat k_0)^{-1} \Big| N_m (\hat k_n)^{-1} - N_m (k_0)^{-1} \Big|\\
    & ~~~~~ ~~~~~ ~~~~~ ~~~~~ ~~~~~ ~~~~~ ~~~~~ ~~~~~ ~~~~~ ~~~~~ ~~~~~ ~~~~~ ~~~~~ ~~~~~ ~~~~~ ~~~~~ ~~~~~ ~~~~~ ~~~~~ = O_\p \bigg( \frac{\log^2 (n)}{n} \frac{\mathrm{tr} (\bar \Gamma)}{\| \delta \|_2^2} \bigg).
\end{align*}
Combining \eqref{gleichung1} and \eqref{gleichung2} yields
\begin{align}\label{fürspäter}
    \sup_{0 \leq \lambda \leq 1} \big| \Lambda_n (\lambda) - \Lambda_0 (\lambda) \big| = O_\p \bigg( \frac{\log^2 (n)}{n} \frac{\mathrm{tr} (\bar \Gamma)}{\| \delta \|_2^2} \bigg)
\end{align}
and this in turn implies
\begin{align}\label{fürspäter2}
    \sup_{0 \leq \lambda \leq 1} \big| \Lambda_n (\lambda) - \Lambda_0 (\lambda) \big| \| \delta \|^2 = O_\p \bigg(  \frac{\mathrm{tr} (\bar \Gamma)}{p} \frac{\log^2 (n)}{n} \bigg) = o_\p (\sigma_n / \sqrt{n}),
\end{align}
where the last equality follows by \eqref{opinclusion}. This shows the final equality \eqref{lambdalambda} and the proof is finished.

\subsection{ Proof of the remaining statements for the proof of Theorem \ref{thm2.1}  }
Finally, we will provide proofs for the statements \eqref{op1}, \eqref{convergence1},  \eqref{secondterm2}, \eqref{napproxes1}, \eqref{napproxes2} and \eqref{napproxes3} that are required for the proof of Theorem \ref{thm2.1}.


\medskip

\noindent
\textbf{Proof of \eqref{op1}}.
A straightforward calculation gives 
\begin{align*}
    \sum_{\substack{i_1,i_2=1\\|i_1-i_2| > m}}^{\gbr{\lambda \hat k_n}} \negthickspace\negthickspace \mathbbm{1} \{ i_1 \leq k_0 \} =  N_m (\gbr{\lambda \hat k_n})  -  \mathbbm{1} \{ k_0 \leq \gbr{\lambda \hat k_n} \} J_n (\lambda),
\end{align*}
where
\begin{align*}
    J_n (\lambda) := \sum_{i=1}^{\gbr{\lambda \hat k_n} - m} (\gbr{\lambda \hat k_n} - m - i) \cdot \mathbbm{1} \{ i \geq k_0 + 1 \} + \sum_{i=m+1}^{\gbr{\lambda \hat k_n}} (i - m - 1) \cdot \mathbbm{1} \{ i \geq k_0 + 1 \}.
\end{align*}
This yields 
\begin{align}
\label{hd2}
    \begin{aligned}
        &\sup_{0 \leq \lambda \leq 1} \bigg| \frac{1}{N_m (\hat k_n)} \negthickspace \negthickspace \sum_{\substack{i_1,i_2=1\\|i_1-i_2| > m}}^{\gbr{\lambda \hat k_n}} \negthickspace\negthickspace \mathbbm{1} \{ i_1 \leq k_0 \} -  \lambda^2 \bigg|\\
        & ~~~~~ ~~~~~ ~~~~~ ~~~~~ ~~~~~ ~~~~~  ~~~~~ ~~~~~ ~~~~~ \leq \sup_{0 \leq \lambda \leq 1} \bigg| \frac{N_m (\gbr{\lambda \hat k_n})}{N_m (\hat k_n)} - \lambda^2 \bigg| + \sup_{\substack{0 \leq \lambda \leq 1 \\k_0 \leq \gbr{\lambda \hat k_n}}} \bigg| \frac{J_n (\lambda)}{N_m (\hat k_m)} \bigg|
    \end{aligned}
\end{align}
and in the following, we show that both terms are of order $O_\p (m/n)$. 
For the first term, we use the decomposition 
\begin{align}
    \sup_{0 \leq \lambda \leq 1} \bigg| \frac{N_m (\gbr{\lambda \hat k_n})}{N_m (\hat k_n)} - \lambda^2 \bigg| &\leq \sup_{0 \leq \lambda \leq 1} \bigg| \frac{N_m (\gbr{\lambda \hat k_n})}{N_m (\hat k_n)} - \frac{N_m (\gbr{\lambda \hat k_n})}{\hat k_n^2} \bigg| \label{sup1}\\
    & ~~~~~ ~~~~~ + \sup_{0 \leq \lambda \leq 1} \bigg|  \frac{N_m (\gbr{\lambda \hat k_n})}{\hat k_n^2}  - \frac{N_m (\gbr{\lambda \hat k_n} + m + 1)}{\hat k_n^2} \bigg| \label{sup2}\\
    & ~~~~~ ~~~~~ + \sup_{0 \leq \lambda \leq 1} \bigg| \frac{N_m (\gbr{\lambda \hat k_n} + m + 1)}{\hat k_n^2} - \lambda^2 \bigg|. \label{sup3}
\end{align}
By Theorem \ref{cpthm}, we have (note that $\vartheta_0>0)$
\begin{align}\label{taylorconv}
    \frac{1}{\hat \vartheta_n} =
    \frac{1}{\vartheta_0} + o_\p (1),
\end{align}
which gives for the term \eqref{sup1}
\begin{align*} 
    \sup_{0 \leq \lambda \leq 1} \bigg| \frac{N_m (\gbr{\lambda \hat k_n})}{N_m (\hat k_n)} - \frac{N_m (\gbr{\lambda \hat k_n})}{\hat k_n^2} \bigg| &\leq \sup_{0 \leq \lambda \leq 1} \bigg| \frac{N_m(\gbr{\lambda \hat k_n})}{N_m (\hat k_n)} \bigg| \cdot \bigg| \frac{\hat k_n^2 - N_m (\hat k_n)}{\hat k_n^2} \bigg|\\
    &\leq \sup_{0 \leq \lambda \leq 1} \bigg| \frac{2m + 1}{\hat k_n} - \frac{m (m+1)}{\hat k_n^2} \bigg|\\
    &\leq \sup_{0 \leq \lambda \leq 1} \bigg| \frac{2m / n + 1 / n}{\hat \vartheta_n} - \frac{m/n \cdot (m/n+1/n)}{\hat \vartheta_n^2} \bigg|\\
    &= O_\p (m/n).
\end{align*}
For the term in \eqref{sup2} we have by Lemma~\ref{lemNmUpperbound} and \eqref{taylorconv}
\begin{align*}
    \sup_{0 \leq \lambda \leq 1} \bigg|  \frac{N_m (\gbr{\lambda \hat k_n})}{\hat k_n^2}  - \frac{N_m (\gbr{\lambda \hat k_n} + m + 1)}{\hat k_n^2} \bigg|  &\leq \hat k_n^{-2} \max_{\substack{1 \leq a,b \leq n \\ |a-b| \lesssim m}} |N_m (a) - N_m (b)|\\
    &= O_\p (m (m+n) / n^2) = O_\p (m/n),
\end{align*}
because $|\gbr{\lambda \hat k_n} - (\gbr{\lambda \hat k_n} + m  + 1)| \lesssim m$. 
Finally, by the definition of $\gbr{\cdot}$ we have for \eqref{sup3}
\begin{align*}
    \lambda^2 - \frac{\lambda}{\hat k_n} \leq \frac{N_m (\gbr{\lambda \hat k_n} + m +1)}{\hat k_n^2} \leq \lambda^2 + \frac{\lambda}{\hat k_n}.
\end{align*}
This implies that
\begin{align*}
    \sup_{0 \leq \lambda \leq 1} \bigg| \frac{N_m (\gbr{\lambda \hat k_n} + m + 1)}{\hat k_n^2} - \lambda^2 \bigg| = O_\p (n^{-1}),
\end{align*}
which proves
\begin{align}\label{for_later2}
    \sup_{0 \leq \lambda \leq 1} \bigg| \frac{N_m (\gbr{\lambda \hat k_n})}{N_m (\hat k_n)} - \lambda^2 \bigg| = O_\p (m/n).
\end{align}
Consequently, the first term in \eqref{hd2} is of order $O_\p (m/n)$ and it remains to show that the second term is of the same order.

Rewriting the two sums contained in $J_n (\lambda)$, we obtain
\begin{align*}
    J_n(\lambda) &= \sum_{i=1}^{\gbr{\lambda \hat k_n}} (\gbr{\lambda \hat k_n} - 1) \mathbbm{1} \{ i \geq k_0 + 1 \} + \sum_{i = \gbr{\lambda \hat k_n} - m + 1}^{\gbr{\lambda \hat k_n}} (i - \gbr{\lambda \hat k_n} + m) \mathbbm{1} \{ i \geq k_0 + 1 \}\\
    & ~~~~~~ ~~~~~~ ~ + \sum_{i = 1}^m (m - i + 1) \mathbbm{1} \{ i \geq k_0 + 1 \}\\
    & \lesssim \sum_{i = k_0 + 1}^{\gbr{\lambda \hat k_n}} (\gbr{\lambda \hat k_n} -1 ) + m^2\\
    & = (\gbr{\lambda \hat k_n} - k_0) (\gbr{\lambda \hat k_n} -1 ) + m^2\\
    & \leq (\hat k_n - k_0) \hat k_n + m^2.
\end{align*}
Therefore, by Theorem \ref{cpthm}, it follows that 
\begin{align*}
    \sup_{\substack{0 \leq \lambda \leq 1 \\ k_0 \leq \gbr{\lambda \hat k_n}}} \bigg| \frac{J_n(\lambda)}{N_m (\hat k_n )} \bigg| \lesssim \frac{n^2}{N_m (\hat k_n)} \bigg( |\hat \vartheta_n - \vartheta_0|  + \frac{m^2}{n^2} \bigg) = O_\p (m/n),
\end{align*}
because $n^2 / N_m (\hat k_n) = O_\p (1)$. This completes the proof of \eqref{op1}.

\bigskip

\noindent\textbf{Proof of \eqref{convergence1}}.
Note that this equality has to be shown for the case $\hat k_n > k_0$ as the term $K_n$ in \eqref{kn} is multiplied by the indicator $\mathbbm{1} \{ \hat k_n > k_0 \}$ in \eqref{tn23_kn_ln}. By Lemma~\ref{lem1}, we have
\begin{align}
    \sum_{\substack{j_1,j_2=\hat k_n+1\\|j_1-j_2| > m}}^{\hat k_n + \gbr{\lambda (n-\hat k_n)}} \tilde{X}_{j_1}^\top \delta &= \sum_{\substack{j_1,j_2=\hat k_n+1\\|j_1-j_2| > m}}^{\hat k_n + \gbr{\lambda (n-\hat k_n)}} \rho_{j_1}^\top \delta \nonumber \\
    &= \sum_{j = \hat k_n + 1}^{\hat k_n + \gbr{\lambda (n - \hat k_n)} - m } (\hat k_n + \gbr{\lambda (n - \hat k_n)} - m - j) \cdot \rho_j^\top \delta \nonumber \\
    & ~~~~~ + \sum_{j = \hat k_n + m + 1}^{\hat k_n + \gbr{\lambda (n - \hat k_n)}} (j - \hat k_n - m - 1) \cdot \rho_j^\top \delta \nonumber\\
    &=  R_n^{(1)}(\lambda) +  R_n^{(2)} (\lambda) - R_n^{(3)}, \label{hd8}
\end{align}
where
\begin{align}
    R_n^{(1)} (\lambda)&= \sum_{j = \hat k_n + 1}^{\hat k_n + \gbr{\lambda (n - \hat k_n)}} (\gbr{\lambda (n - \hat k_n)} - 2 m - 1) \cdot \rho_j^\top \delta, \label{eintausend}\\
    R_n^{(2)}(\lambda) & = \sum_{j = \hat k_n + \gbr{\lambda (n - \hat k_n)} - m + 1}^{\hat k_n + \gbr{\lambda (n - \hat k_n)}} (j + m - \hat k_n - \gbr{\lambda (n - \hat k_n)} )
    \cdot \rho_j^\top \delta, \label{dreitausend}\\
    R_n^{(3)}  & = \sum_{j = \hat k_n + 1}^{\hat k_n + m} (j - \hat k_n - m - 1) \cdot \rho_j^\top \delta. \label{zweitausend}
\end{align}
Including the additional factors from \eqref{convergence1}, we obtain by Theorem \ref{thm2.0} for the first term
\begin{align*}
    & \frac{\sqrt{n}}{\sigma_n} \cdot \frac{1}{N_m (n - \hat k_n) p } R_n^{(1)}  (\lambda) \\
    & ~~~~~ ~~~~~ ~~~~~ = \frac{\sqrt{n}}{\sigma_n} \cdot \frac{\gbr{\lambda (n - \hat k_n)} - 2m - 1}{N_m (n - \hat k_n) p } \sum_{j = \hat k_n + 1}^{\hat k_n + \gbr{\lambda (n - \hat k_n)} } \rho_j^\top \delta\\
    & ~~~~~ ~~~~~ ~~~~~ = \frac{\gbr{\lambda (n - \hat k_n)} - 2m - 1}{n - \hat k_n - m} \frac{n}{n - \hat k_n - m - 1} \frac{ p^{-1} \sqrt{\delta^\top \Gamma \delta} }{\sigma_n} \\
    & ~~~~~ ~~~~~ ~~~~~ ~~~~~ ~~~~~ \times \bigg\{ \frac{1}{\sqrt{n} \sqrt{\delta^\top \Gamma \delta}} \sum_{j = 1}^{\hat k_n + \gbr{\lambda (n - \hat k_n)} }  \rho_j^\top \delta  -  \frac{1}{\sqrt{n} \sqrt{\delta^\top \Gamma \delta}} \sum_{j=1}^{\hat k_n} \rho_j^\top \delta \bigg\}\\
    & ~~~~~ ~~~~~ ~~~~~ = \frac{\lambda}{1 - \vartheta_0} \frac{ p^{-1} \sqrt{\delta^\top \Gamma \delta}}{\sigma_n}  \bigg( \mathbb{B} \big( (\hat k_n + \gbr{\lambda (n - \hat k_n)}) / n \big) - \mathbb{B} \big( \hat k_n / n \big) \bigg) + o_\p (1) \\
    & ~~~~~ ~~~~~ ~~~~~ = \frac{\lambda}{1 - \vartheta_0} \frac{ p^{-1} \sqrt{\delta^\top \Gamma \delta}}{\sigma_n}  \Big( \mathbb{B} \big( \vartheta_0 + \lambda (1 - \vartheta_0) \big) - \mathbb{B} \big( \vartheta_0 \big) \Big) + o_\p (1)
\end{align*}
uniformly with respect to $\lambda \in [0, 1]$, where we used 
\begin{align*}
    \sup_{0 \leq \lambda \leq 1} \big| (\hat k_n + \gbr{\lambda (n - \hat k_n)})/n - (\vartheta_0 + \lambda (1 - \vartheta_0)) \big| = o_\p (1) ~~~~~ \text{and} ~~~~~ \big| \hat k_n / n - \vartheta_0 \big| = o_\p (1),
\end{align*}
and the uniform continuity of the Brownian motion in the last step.

For the terms $R_n^{(2)} (\lambda) $ in \eqref{dreitausend} and $R_n^{(3)} $ in \eqref{zweitausend}, we apply Proposition \ref{prop_maxmax_m} to $(\rho_j^\top\delta)_{j \in \mathbb{Z}}$ to obtain 
\begin{align*}
     \e \bigg[ \sup_{0 \leq \lambda \leq 1} \big| R_n^{(2)} (\lambda) \big| \bigg] &\leq \e \bigg[ \max_{1 \leq k \leq n} \bigg|\sum_{j = k+1}^{k+m} (j-k) \cdot \rho_{j-m}^\top \delta \bigg| \bigg] \lesssim m^{3/2} \sqrt{n} \bigg( \sum_{h = 0}^{m-1} |\delta^\top \Sigma_h \delta| \bigg)^{1/2}
\end{align*}
and
\begin{align*}
    \e \big[\big| R_n^{(3)} \big| \big]  &\leq \e \bigg[ \max_{1 \leq k \leq n} \bigg|\sum_{j = k+1}^{k+m} (j-k - m - 1) \cdot \rho_j^\top \delta \bigg| \bigg] \lesssim  m^{3/2} \sqrt{n} \bigg( \sum_{h = 0}^{m-1} \big|\delta^\top \Sigma_h \delta \big| \bigg)^{1/2}.
\end{align*}
Including additional factors from \eqref{convergence1} (note that $N_m (n - \hat k_n) \simp n^2$), we conclude
\begin{align*}
    \frac{\sqrt{n}}{n^2 p \sigma_n} \e \bigg[ \sup_{0 \leq \lambda \leq 1} \Big| R_n^{(2)} (\lambda ) - R_{n}^{(3)} \Big| \bigg] \lesssim \frac{m^{3/2}}{n} \bigg( \frac{1}{\delta^\top \Gamma \delta} \sum_{h = 0}^{m-1} \big|\delta^\top \Sigma_h \delta \big| \bigg)^{1/2} = o(1),
\end{align*}
which proves \eqref{convergence1}.

\bigskip

\noindent
\textbf{Proof of \eqref{secondterm2}}.
A straightforward calculation yields 
\begin{align}  \label{toinduce}      &\sum_{\substack{i_1,i_2=1\\|i_1-i_2| > m}}^{\gbr{\lambda \hat k_n}} \big( 
            \mathbbm{1} \{ i_1 \leq k_0 \} \tilde{X}_{i_2}^\top \delta + \mathbbm{1} \{ i_2 \leq k_0 \} \tilde{X}_{i_1}^\top \delta
        \big)\\
        & ~~~~~ ~~~~~ = \sum_{\substack{i_1, i_2 = 1 \\ |i_1 - i_2| > m}}^{k_0 \wedge \gbr{\lambda \hat k_n}} (\eta_{i_1}^\top \delta + \eta_{i_2}^\top \delta) + 2 \cdot \mathbbm{1} \{ k_0 +1\leq \gbr{\lambda \hat k_n} \} \sum_{i= k_0 + 1}^{\gbr{\lambda \hat k_n}} \big ((i - m - 1) \wedge k_0  \big )\cdot \rho_i^\top \delta. \nonumber 
\end{align}
By Lemma~\ref{lem1} and similar arguments as given in the derivation of \eqref{eintausend} -- \eqref{zweitausend}, we obtain for the first term on the right-hand side of \eqref{toinduce}

\begin{align*}
    \sum_{\substack{i_1, i_2 = 1 \\ |i_1 - i_2| > m}}^{k_0 \wedge \gbr{\lambda \hat k_n}} (\eta_{i_1}^\top \delta + \eta_{i_2}^\top \delta) = 2 \big( S_n^{(1)} (\lambda) + S_n^{(2)} (\lambda) + S_n^{(3)} \big),
\end{align*}
where
\begin{align*}
    S_{n}^{(1)} (\lambda) &= \sum_{j = 1}^{k_0 \wedge \gbr{\lambda \hat k_n}} (k_0 \wedge \gbr{\lambda \hat k_n} - 2m - 1) \cdot \eta_{j}^\top \delta,\\
    S_{n}^{(2)} (\lambda) &= \sum_{j = k_0 \wedge \gbr{\lambda \hat k_n} - m + 1}^{k_0 \wedge \gbr{\lambda \hat k_n}} (j + m - k_0 \wedge \gbr{\lambda \hat k_n}) \cdot \eta_{j}^\top \delta, \\
    S_{n}^{(3)} &= \sum_{j = 1}^m (m+1-j) \cdot \eta_{j}^\top \delta.
\end{align*}
For the term $S_n^{(2)}$, we have by Proposition \ref{prop_maxmax_m}
\begin{align*}
    \e \Big[ \sup_{0 \leq \lambda \leq 1} |S_{n}^{(2)} (\lambda)| \Big] \leq \e \bigg[ \max_{1 \leq k \leq n} \Big| \sum_{j = k + 1}^{k +m} (j-k) \cdot \rho_{j-m}^\top \delta \Big| \bigg] &\lesssim m^{3/2} \sqrt{n} \bigg( \sum_{h = 0}^{m-1} |\delta^\top \Sigma_h \delta| \bigg)^{1/2}
\end{align*}
and for $S_n^{(3)}$ Hölder's inequality yields 
\begin{align*}
    \e \big[ \big|S_{n}^{(3)} \big| \big] &\leq \bigg( \sum_{j_1,j_2 = 1}^m (m+1-j_1) (m+1 - j_2) \cdot \delta^\top \Sigma_{j_1 - j_2} \delta \bigg)^{1/2}\\
    &\lesssim m^{3/2} \bigg( \sum_{h = 0}^{m-1} |\delta^\top \Sigma_h \delta| \bigg)^{1/2}.
\end{align*}
Therefore, observing that $N_m (\hat k_n) \simp n^2$ we conclude with condition \ref{assA100} that
\begin{align*}
    \sup_{\lambda \in [0, 1]} \frac{\sqrt{n}}{N_m (\hat k_n) p \sigma_n}  \big| S_{n}^{(2)} (\lambda) + S_{n}^{(3)} \big| &= o_\p (1).
\end{align*}
Turning to the first term, that is, $S_n^{(1)}$ we have by Theorem \ref{thm2.0} that 
\begin{align*}
    \frac{\sqrt{n}}{\sigma_n} \frac{1}{N_m (\hat k_n) p } S_n^{(1)} (\lambda) &= \frac{n}{\hat k_n - m} \frac{k_0 \wedge \gbr{\lambda \hat k_n} - 2m - 1}{\hat k_n - m - 1} \frac{ p^{-1} \sqrt{\delta^\top \Gamma \delta}}{\sigma_n} \frac{1}{\sqrt{n}\sqrt{\delta^\top \Gamma \delta}} \sum_{i=1}^{k_0 \wedge \gbr{\lambda \hat k_n} } \eta_i^\top \delta\\
    & = \frac{\lambda}{\vartheta_0} \frac{ p^{-1} \sqrt{\delta^\top \Gamma \delta}}{\sigma_n} \mathbb{B} \big((k_0 \wedge \gbr{\lambda \hat k_n}) / n \big)  + o_\p (1)\\
    & = \frac{\lambda}{\vartheta_0} \frac{ p^{-1} \sqrt{\delta^\top \Gamma \delta}}{\sigma_n} \mathbb{B} \big(\lambda \vartheta_0 \big)  + o_\p (1)
\end{align*}
uniformly with respect to $\lambda \in [0,1]$, where we have used the uniform continuity of the Brownian motion again with
\begin{align*}
    \sup_{0 \leq \lambda \leq 1} \big| (k_0 \wedge \gbr{\lambda \hat k_n}) / n - \lambda \vartheta_0 \big| = o_\p (1).
\end{align*}
Therefore, it remains to show that the second term in \eqref{toinduce} converges to zero. For this purpose, note that for $k_0 + 1 \leq \gbr{\lambda \hat k_n}$
\begin{align}\label{furtherdecomp}
    \sum_{i = k_0 + 1}^{\gbr{\lambda \hat k_n}} \big ((i-m-1) \wedge k_0  \big ) \cdot \rho_i^\top \delta = \sum_{i = k_0 + 1}^{k_0 + m}  (i-m-1) \cdot \rho_i^\top \delta + k_0 \sum_{i=k_0 + m + 1}^{\gbr{\lambda \hat k_n}} \rho_i^\top \delta.
\end{align}
For the first term in \eqref{furtherdecomp}, we have by Proposition \ref{prop_maxmax_m} (note that $i-m-1 \leq n$) 
\begin{align*}
    \e \bigg[ \max_{1 \leq k \leq n} \sum_{i = k + 1}^{k + m} (i-m - 1) \cdot \rho_i^\top \delta \bigg] \lesssim m \cdot n^{3/2} \bigg( \sum_{h= 0 }^{m-1} | \delta^\top \Sigma_h \delta| \bigg)^{1/2}.
\end{align*}
Therefore, noting that $N_m (\hat k_n) \simp n^2$ and conditions \ref{assA100} and \ref{assA5} we obtain
\begin{align*}
    \frac{\sqrt{n}}{N_m (\hat k_n) p \sigma_n} \bigg| \sum_{i = k_0 + 1}^{k_0 + m} (i-m-1) \cdot \rho_i^\top \delta \bigg| = o(1).
\end{align*}
Finally, for the second term in \eqref{furtherdecomp} we have by Theorem \ref{thm2.0} and the modulus of continuity of the Brownian motion that
\begin{align*}
    \frac{\sqrt{n} k_0}{N_m (\hat k_n) p \sigma_n} \sum_{i=k_0 + m + 1}^{\gbr{\lambda \hat k_n}} \rho_i^\top \delta &= \frac{n}{\hat k_n - m} \frac{k_0}{\hat k_n - m - 1} \frac{p^{-1} \sqrt{\delta^\top \Gamma \delta}}{\sigma_n} \\ & ~~~~~~~~~~~~~~ \times  \bigg( \frac{1}{\sqrt{n} \sqrt{\delta^\top \Gamma \delta}} \sum_{i = 1}^{\gbr{\lambda \hat k_n} } \rho_i^\top \delta - \frac{1}{\sqrt{n} \sqrt{\delta^\top \Gamma \delta}} \sum_{i=1}^{k_0 + m} \rho_i^\top \delta \bigg)\\
    &=\frac{1}{\vartheta_0} \frac{p^{-1} \sqrt{\delta^\top \Gamma \delta}}{\sigma_n} \Big( \mathbb{B} \big( \gbr{\lambda \hat k_n} / n \big) - \mathbb{B} \big((k_0 + m) / n \big) \Big) + o_\p (1)\\
    &= o_\p (1)
\end{align*}
uniformly with respect to $\lambda \in \{ \lambda  \in [0,1] \mid k_0 + 1 \leq \gbr{\lambda \hat k_n} \}$, because
\begin{align*}
    \big| \gbr{\lambda \hat k_n} / n - (k_0 + m) / n \big| \leq \frac{\gbr{\lambda \hat k_n} - k_0}{n} + \frac{m}{n} \leq \frac{\hat k_n - k_0}{n} + \frac{m}{n} = o_\p (1) . 
\end{align*}
This proves \eqref{secondterm2}.
\medskip

\noindent
\textbf{Proof of \eqref{napproxes1}}.
We must show this equality for the case $\gbr{\lambda \hat k_n} > k_0$. In this case, we have by Theorem \ref{cpthm} that 
\begin{align}\label{rateRN}
    |k_0 - \gbr{\lambda \hat k_n}| = \gbr{\lambda \hat k_n} - k_0 \leq \hat k_n - k_0  \lesssim n (\hat \vartheta_n - \vartheta_0) \lesssim 
     r_n := \log^2 (n) \mathrm{tr} (\bar \Gamma) / \| \delta \|_2^2.
\end{align}
This implies by Lemma~\ref{lemNmUpperbound} that we have
\begin{align*}
    \sup_{\substack{0 \leq \lambda \leq 1 \\ \gbr{\lambda \hat k_n} > k_0 }} \bigg| \frac{N_m (k_0)}{N_m (\hat k_n)} - \frac{N_m (\gbr{\lambda \hat k_n})}{N_m (\hat k_n)} \bigg| &\leq \frac{1}{N_n (\hat k_n)} \max_{\substack{1 \leq a,b \leq n \\ |a-b| \lesssim r_n}} | N_m (a) - N_m (b) |\\
    &\lesssim \frac{r_n (n+m)}{N_m (\hat k_n)}\\
    &=O_\p \bigg( \frac{\log^2 (n)}{n} \frac{\mathrm{tr} (\bar \Gamma)}{\| \delta \|_2^2} \bigg),
\end{align*}
since $N_m (\hat k_n) \simp n^2$ and we obtain \eqref{napproxes1}.
\medskip

\noindent
\textbf{Proof of \eqref{napproxes2}}.
We must show this equality for the case $\hat k_n \leq k_0$.
Recall the definition of $\mathcal{L}$ in \eqref{hd5}, then we have for $\lambda \in \mathcal{L}$ that  
\begin{align*}
    |\hat k_n - k_0 + \gbr{\lambda (n-\hat k_n)} - \gbr{\lambda (n-\hat k_n)} | = k_0 - \hat k_n \lesssim n (\hat \vartheta_n - \vartheta_0) \lesssim r_n,
\end{align*}
where $r_n$ was defined in \eqref{rateRN}. By the arguments as in the proof of \eqref{napproxes1}, we obtain
\begin{align*}
    &\sup_{\lambda \in \mathcal{L}} \bigg| \frac{ N_m (\hat k_n - k_0 + \gbr{\lambda(n - \hat k_n)}) }{N_m (n- \hat k_n)} - \frac{ N_m (\gbr{\lambda(n - \hat k_n)}) }{N_m (n- \hat k_n)} \bigg|\\
    & \quad\quad\quad\quad\quad\quad \quad\quad\quad\quad\quad \quad\quad\quad\quad\quad
    \le \frac{1}{N_m (n- \hat k_n)} \max_{\substack{1 \leq a,b \leq n \\ |a-b| \lesssim r_n}} |N_m (a) - N_m (b)| 
\end{align*}
and the remaining part of the proof is now identical to the proof of \eqref{napproxes1}, noting that $N_m (n - \hat k_n) \simp n^2$.
\medskip

\noindent
\textbf{Proof of \eqref{napproxes3}}.
We must show this equality for the case $\hat k_n \leq k_0$, as well. For $\lambda \in \mathcal{L}^C$, we have
\begin{align*}
    |\gbr{\lambda (n - \hat k_n)} | < k_0 - \hat k_n \lesssim n (\hat \vartheta_n - \vartheta_0) \lesssim r_n,
\end{align*}
where $r_n$ was defined in \eqref{rateRN}. This leaves
\begin{align*}    
    \sup_{\lambda \in \mathcal{L}^C} \bigg| \frac{ N_m (\gbr{\lambda(n - \hat k_n)}) }{N_m (n- \hat k_n)}\bigg| \leq \frac{1}{N_m (n- \hat k_n)} \max_{\substack{m + 1 \leq a,b \leq n \\ |a-b| \lesssim r_n}} |N_m (a) - N_m (b)|
\end{align*}
and the remaining part of the proof is now identical to the proof of \eqref{napproxes1} again.

\subsection{Proof of Theorem \ref{testconsistent}}

Before we begin, we state the following result.

\begin{prop}\label{pq_prop}
    Let $\Lambda_1, \Lambda_2 : [0,1] \to [0,1]$ be functions and $p,q : \ell^\infty ([0, 1]) \to \mathbb{R}$ maps defined as
    \begin{align*}
        p(f) &:= \bigg( \int_0^1 \Big(f(\lambda) - \Lambda_1 (\lambda) \cdot f(1) \Big)^2 \mathrm{d} \lambda \bigg)^{1/2},  \\ q(f) & := \bigg( \int_0^1 \big(f(\lambda) - \Lambda_2 (\lambda) \cdot  f(1) \big)^2 \mathrm{d} \lambda \bigg)^{1/2}.
    \end{align*}
    Then,
    \begin{align*}
        |p(f) - q(g)| \leq 2 \sup_{0 \leq \lambda \leq 1} |f(\lambda) - g(\lambda)| + |g(1)| \cdot \sup_{0 \leq \lambda \leq 1} \big| \Lambda_1 (\lambda)  - \Lambda_2 (\lambda) \big|.
    \end{align*}
    In particular, the mappings $p,q : \ell^\infty ([0,1]) \to \mathbb{R}$ are Lipschitz-continuous.
\end{prop}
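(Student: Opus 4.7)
The plan is to rewrite both quantities as $L^2$-norms and apply the reverse triangle inequality. Specifically, since
\begin{align*}
    p(f) = \| f - \Lambda_1 \cdot f(1) \|_{L^2([0,1])}, \qquad q(g) = \| g - \Lambda_2 \cdot g(1) \|_{L^2([0,1])},
\end{align*}
the reverse triangle inequality yields
\begin{align*}
    |p(f) - q(g)| \leq \big\| (f - \Lambda_1 f(1)) - (g - \Lambda_2 g(1)) \big\|_{L^2([0,1])}.
\end{align*}

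Next, I would algebraically decompose the integrand as
\begin{align*}
    (f - \Lambda_1 f(1)) - (g - \Lambda_2 g(1)) = (f - g) \; - \; \Lambda_1 \cdot (f(1) - g(1)) \; - \; (\Lambda_1 - \Lambda_2) \cdot g(1),
\end{align*}
and apply the triangle inequality in $L^2([0,1])$ to the three resulting terms. Using the elementary bound $\| h \|_{L^2([0,1])} \leq \sup_{0 \leq \lambda \leq 1} |h(\lambda)|$ together with $\sup_{\lambda} |\Lambda_1(\lambda)| \leq 1$ (which follows from the assumption $\Lambda_1 : [0,1] \to [0,1]$), the three contributions are bounded by $\sup_\lambda |f(\lambda) - g(\lambda)|$, $|f(1) - g(1)|$, and $|g(1)| \sup_\lambda |\Lambda_1(\lambda) - \Lambda_2(\lambda)|$, respectively. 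Since $|f(1) - g(1)| \leq \sup_\lambda |f(\lambda) - g(\lambda)|$, the first two contributions combine to give the factor of $2$, producing the stated bound.

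The argument is essentially a two-line computation with no real obstacle; the only thing to be careful about is that the constant bound $\sup_\lambda |\Lambda_1(\lambda)| \leq 1$ is used to absorb the $\Lambda_1$ factor in front of $f(1) - g(1)$ without producing an additional $\sup |\Lambda_1|$ term. Lipschitz continuity of $p$ and $q$ on $\ell^\infty([0,1])$ is then immediate by specializing to $\Lambda_1 = \Lambda_2$, which zeroes out the second term and leaves $|p(f) - p(g)| \leq 2 \| f - g \|_\infty$.
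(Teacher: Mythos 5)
Your proposal is correct and follows essentially the same route as the paper: both apply the reverse Minkowski (triangle) inequality in $L^2$, use the decomposition $(f-g)-\Lambda_1(f(1)-g(1))-(\Lambda_1-\Lambda_2)g(1)$, bound $L^2$ norms by sup norms together with $|\Lambda_1(\lambda)|\leq 1$, and absorb $|f(1)-g(1)|$ into $\sup_\lambda|f(\lambda)-g(\lambda)|$ to obtain the factor $2$. The only cosmetic difference is that the paper groups the last two terms into a single Minkowski application while you split into three; nothing substantive changes.
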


\begin{proof}
    Let $f,g \in \ell^\infty([0, 1])$. Then, we have by the reverse Minkowski inequality in $L^2$ 
    \begin{align*}
        |p(f) - q(g)| &\leq  \bigg( \int_0^1 \Big( f(\lambda) - g(\lambda) + \Lambda_1 (\lambda) \cdot f(1) - \Lambda_2 (\lambda) \cdot g(1) \Big)^2 \mathrm{d} \lambda \bigg)^{1/2} \\
        &\le  \bigg( \int_0^1 \big( f(\lambda) - g(\lambda) \big)^2 \mathrm{d} \lambda \bigg)^{1/2}   \\
        & ~~~~~ ~~~~~ ~~~~~ + \bigg( \int_0^1 \Big( \Lambda_1 (\lambda) \big(f(1) - g(1) \big) + g(1) \big( \Lambda_1 (\lambda) - \Lambda_2 (\lambda) \big) \Big)^2\mathrm{d} \lambda \bigg)^{1/2}   \\
        &\leq \sup_{0 \leq \lambda \leq 1} |f(\lambda) - g(\lambda)| + |f(1) - g(1)| + |g(1)| \sup_{0 \leq \lambda \leq 1} \big| \Lambda_1 (\lambda)  - \Lambda_2 (\lambda) \big|  \\
        &\leq 2 \sup_{0 \leq \lambda \leq 1} |f(\lambda) - g(\lambda)| + |g(1)| \sup_{0 \leq \lambda \leq 1} \big| \Lambda_1 (\lambda)  - \Lambda_2 (\lambda) \big|,
    \end{align*}
    where we have used that $\Lambda_1 (\lambda) \leq 1$ for any $\lambda \in [0,1]$.
\end{proof}

\noindent
Now, we turn to proving Theorem \eqref{testconsistent} and define 
\begin{align*}
    \tilde V_n := \bigg( \int_0^1 (T_n (\hat k_n, m; \lambda) - \Lambda_0 (\lambda) T_n (\hat k_n, m  ) )^2 \mathrm{d} \nu (\lambda) \bigg)^{1/2},
\end{align*}
where $\Lambda_0$ is defined in \eqref{Lambda_0}. Recall $\Lambda_n$ from \eqref{Lambda_n} and let $p,q$ be the maps from Proposition \ref{pq_prop} with $\Lambda_1 = \Lambda_0$ and $\Lambda_2 = \Lambda_n$. With the notation $u_n (\lambda) = T_n (\hat k_n, m ;\lambda)$, we have $p(u_n) = \tilde V_n$ and $q(u_n) = V_n$, where $V_n$ was defined in \eqref{vn}.
Now Proposition \ref{pq_prop} and Theorem \ref{thm2.1} yield 
\begin{align*}
    \frac{\sqrt{n}}{\sigma_n} |\tilde V_n -  V_n| = \frac{\sqrt{n}}{\sigma_n} |p(u_n) - q (u_n)| &\lesssim \frac{\sqrt{n}}{\sigma_n} |T_n (\hat k_n, m; 1)| \cdot \sup_{0 \leq \lambda \leq 1} |\Lambda_0 (\lambda) - \Lambda_n (\lambda)|\\
    &= \sup_{0 \leq \lambda \leq 1} |\Lambda_0 (\lambda) - \Lambda_n (\lambda)| \bigg( O_\p (1) + \frac{\sqrt{n}}{\sigma_n} \| \delta \|^2 \bigg).
\end{align*}
By \eqref{fürspäter}, \eqref{fürspäter2} and conditions \ref{assA1undA2}, \ref{assA3} and \ref{assA6}, we have 
\begin{align*}
    \frac{\sqrt{n}}{\sigma_n}|\tilde V_n -  V_n| = o_\p (1),
\end{align*}
and therefore
\begin{align}\label{vv_neu}
    \Big | \frac{V_n}{\tilde V_n} - 1 \Big | = \frac{ \frac{\sqrt{n}}{\sigma_n} |V_n - \tilde V_n |}{\frac{\sqrt{n}}{\sigma_n} \tilde V_n} = o_\p (1),
\end{align}
since the denominator converges in distribution. We now consider the map
\begin{align*}
  f \mapsto \frac{f(1)}{\Big( \int_0^1 (f(\lambda) - \Lambda_0 (\lambda) f(1)  )^2 ~ \mathrm{d} \nu (\lambda) \Big)^{1/2}}
\end{align*}
from $\ell^\infty  ([0,1]$ onto $\mathbb{R}$
(for a  given  probability measure $\nu$ on $[0, 1]$), and obtain by Theorem \ref{thm2.1} and the continuous mapping theorem \citep[see][page 67]{vanweak}
\begin{align*}
    \frac{T_n (\hat k_n , m) - \| \delta \|^2}{\tilde V_n} \convd 
    \mathbb{G}, 
\end{align*}
where $\mathbb{G} $ is defined in \eqref{defG}.
Combining this with \eqref{vv_neu} yields
\begin{align}\label{convw}
    \frac{T_n (\hat k_n , m) - \| \delta \|^2}{V_n} \convd 
    \mathbb{G}.
\end{align}
Next, we rearrange the decision rule \eqref{rule} and obtain 
\begin{align}
    \frac{T_n (\hat k_n , m) - \| \delta \|^2}{ V_n} > \frac{\Delta - \| \delta \|^2}{ V_n} + q_{1-\alpha}.
    \label{det112}
\end{align}
First, note that under the null hypothesis, we have $\| \delta \|^2 \leq \Delta $. Therefore, in this case, we obtain for the probability of rejection from \eqref{convw} and \eqref{det112}
$$
    \limsup_{n \to \infty } \p  \big (  {T_n (\hat k_n , m) >    \Delta    } + { V_n}  q_{1-\alpha} \big ) \leq \limsup_{n \to \infty } \p \bigg( \frac{T_n (\hat k_n , m) - \| \delta \|^2}{ V_n}  >   q_{1-\alpha} \bigg) = \alpha ,
$$
which shows that the test \eqref{rule} has asymptotic level $\alpha$. Next, note that it follows from Theorem \ref{thm2.1} that $\sqrt{n} \tilde V_n / \sigma_n \convd ( \int_0^1 \lambda^6 (\mathbb{B} (\lambda) - \lambda \mathbb{B} (1) )^2 \mathrm{d} \nu (\lambda) )^{1/2}$, which is a positive random variable with probability $1$ by Lemma~\ref{lemmasubexp}. Therefore, if $\frac{\sqrt{n}}{\sigma_n}  | \Delta - \| \delta \|^2 | \to 0$, it follows by \eqref{vv_neu} that
\begin{align*}
    \p \bigg(\frac{T_n (\hat k_n , m) - \| \delta \|^2}{ V_n} >q_{1-\alpha} \bigg) \longrightarrow \alpha
\end{align*}
as $n \to \infty$, by \eqref{convw} (note that the distribution function of $\mathbb{G}$ is continuous). By the same argument, we have in the case $\frac{\sqrt{n}}{\sigma_n}  |\Delta - \| \delta \|^2 | \to \infty$ that
\begin{align*}
    \bigg| \frac{\Delta - \| \delta \|^2}{V_n} \bigg|  = \bigg| \frac{\sigma_n^{-1}(\Delta - \| \delta \|^2) / \sqrt{n}}{ \sigma_n^{-1}   V_n / \sqrt{n} } \bigg| \convp  \infty. 
\end{align*}
The statement now follows by considering the cases $\Delta > \| \delta \|^2$ and $\Delta < \| \delta \|^2$ separately.

\subsection{Proof of Theorem \ref{newuniformconsistency}}

Since
\begin{align*}
    \inf_{\delta \in \mathcal{A}_n } \p \big( T_n (\hat k_n, m) > \Delta + V_n q_{1- \alpha} \big)= 1 - \sup_{\delta \in  \mathcal{A}_n} \p \big( T_n (\hat k_n, m) \leq \Delta + V_n q_{1- \alpha} \big),
\end{align*}
it suffices to show that the supremum converges to zero.
For a proof of this statement, first note that for $\delta \in \mathcal{A}_n \cup \mathcal{B}_n$, we have $\mathrm{tr} (\bar \Gamma) = O(\| \delta \|_2^2)$. 
For $\delta \in \mathcal{A}_n$, this result follows from condition \ref{assA3} and the estimate
\begin{align*}
    \frac{\mathrm{tr} (\bar \Gamma)}{\| \delta \|_2^2} < \frac{\mathrm{tr} (\bar \Gamma)}{\| \bar \Gamma \|_F} \frac{\| \delta \|^2 - \Delta}{ \| \delta \|_2^2/p} \leq O(1) ,
\end{align*}
while for $\delta \in \mathcal{B}_n$ it is a a consequence  of the estimate (note that $ 0 < C \leq   \| \delta \|^2  = \| \delta \|_2^2/p$ in this case)
\begin{align*}
    \frac{\mathrm{tr} (\bar \Gamma)}{\| \delta \|_2^2} < \frac{\mathrm{tr} (\bar \Gamma)}{\| \bar \Gamma \|_F} \frac{\Delta - \| \delta \|^2}{ \| \delta \|_2^2/p } \leq \frac{\mathrm{tr} (\bar \Gamma)}{ \| \bar \Gamma \|_F} \frac{\Delta}{C} = O(1). 
\end{align*}
This shows that the assumptions from Theorem \ref{cpthm} are satisfied for $\delta \in \mathcal{A}_n \cup \mathcal{B}_n$.
Recall the definition of $\Lambda_n$ in \eqref{Lambda_n} and the decomposition in \eqref{decomp1}. A careful inspection of the proof of \eqref{tn4_eq} shows that
\begin{align*}
    T_n (\hat k_n, m; \lambda) - \Lambda_n (\lambda) \| \delta \|^2 = \bar T_n (\hat k_n, m; \lambda) +  \mathcal{R}_n (\lambda),
\end{align*}
where the remainder $\mathcal{R}_n (\lambda) = T_n^{(4)} (\lambda) - \Lambda_n (\lambda) \| \delta \|^2$ satisfies
\begin{align*}
    \sup_{0 \leq \lambda \leq 1} | \mathcal{R}_n  (\lambda) | = o_\p (\| \bar \Gamma \|_F / p)
\end{align*}
uniformly with respect to $\delta \in \mathcal{A}_n \cup \mathcal{B}_n$ and $\lambda \in [0,1]$
\begin{align*}
    \bar T_n (\hat k_n, m; \lambda) := T_n^{(1)} (\lambda) + T_n^{(2)} (\lambda) + T_n^{(3)} (\lambda)
\end{align*}
and $T_n^{(1)}, T_n^{(2)}$ and $ T_n^{(3)}$ are defined in \eqref{det2a}, \eqref{det2b} and \eqref{det2c}, respectively.
Moreover, note that
\begin{align*}
    V_n &= \Big ( 
    \int_0^1 \big( T_n (\hat k_n, m; \lambda) - \Lambda_n (\lambda) T_n (\hat k_n, m) \big)^2 \mathrm{d} \nu \Big  )^{1/2}\\
    &= \Big  ( \int_0^1 \big( T_n (\hat k_n, m; \lambda) - \Lambda_n (\lambda) \| \delta \|^2 - \Lambda_n (\lambda) (T_n (\hat k_n, m) - \| \delta \|^2) \big)^2 \mathrm{d} \nu
    \Big  )^{1/2}
    \\
    &\lesssim \sup_{0 \leq \lambda \leq 1} \big| T_n (\hat k_n, m; \lambda ) - \Lambda_n (\lambda) \| \delta \|^2 \big|\\
    &=  \sup_{0 \leq \lambda \leq 1} |\bar T_n (\hat k_n, m ;\lambda)| + \sup_{0 \leq \lambda \leq 1} | \mathcal{R}_n (\lambda)| 
\end{align*}
uniformly with respect to $\delta \in \mathcal{A}_n \cup \mathcal{B}_n$, and clearly we also have
\begin{align*}
    |T_n (\hat k_n, m) - \| \delta \|^2| &\leq \sup_{0 \leq \lambda \leq 1} |T_n(\hat k_n, m; \lambda) - \Lambda_n (\lambda) \|\delta\|^2 | \\
    &= \sup_{0 \leq \lambda \leq 1} | \bar T_n (\hat k_n, m; \lambda) | + \sup_{0 \leq \lambda \leq 1} |  \mathcal{R}_n (\lambda) | .
\end{align*}
Consequently,
\begin{align*}
    \sup_{\delta \in \mathcal{A}_n  } & \p \big( T_n (\hat k_n, m) \leq \Delta + V_n q_{1- \alpha} \big) \\
    &\leq \sup_{\delta \in \mathcal{A}_n  } \p \big( \| \delta \|^2 - \Delta \leq |V_n q_{1- \alpha}|  + |T_n (\hat k_n, m) - \| \delta \|^2| \big) \\
    &\leq \sup_{\delta \in \mathcal{A}_n  } \p \bigg( \| \delta \|^2 - \Delta \leq C \Big \{ \sup_{0 \leq \lambda \leq 1} | \bar T_n (\hat k_n, m; \lambda) | + \sup_{0 \leq \lambda \leq 1} | \mathcal{R}_n (\lambda) |  \Big \}  \bigg) \\
    &\leq \sup_{\delta \in \mathcal{A}_n \cup \mathcal{B}_n} \p \bigg( |\| \delta \|^2 - \Delta| < C \Big \{  \sup_{0 \leq \lambda \leq 1} | \bar T_n (\hat k_n, m; \lambda) | + \sup_{0 \leq \lambda \leq 1} |\mathcal{R}_n (\lambda)| \Big \}   \bigg)
\end{align*}
and analogously
\begin{align*}
    \sup_{\delta \in \mathcal{B}_n  } &\p \big(T_n (\hat k_n, m) > \Delta + V_n q_{1- \alpha} \big)\\
    &\leq \sup_{\delta \in \mathcal{B}_n  } \p \big( |T_n (\hat k_n, m) - \| \delta \|^2| + |V_n q_{1- \alpha}|> \Delta  - \| \delta \|^2 \big) \\
    &\leq \sup_{\delta \in \mathcal{B}_n  } \p \bigg( C \Big \{ \sup_{0 \leq \lambda \leq 1} | \bar T_n (\hat k_n, m; \lambda) | + \sup_{0 \leq \lambda \leq 1} | \mathcal{R}_n (\lambda) | \Big \} > \Delta - \| \delta \|^2  \bigg)\\
    &\leq \sup_{\delta \in \mathcal{A}_n \cup \mathcal{B}_n} \p \bigg( C \Big \{ \sup_{0 \leq \lambda \leq 1} | \bar T_n (\hat k_n, m; \lambda) | + \sup_{0 \leq \lambda \leq 1}| \mathcal{R}_n (\lambda)| \Big \} > | \|\delta\|^2 - \Delta |  \bigg).
\end{align*}
Noting that $o (\| \bar \Gamma \|_F /p) = o (| \| \delta \|^2 -  \Delta |)$ (because $\| \bar \Gamma \|_F /p < |\| \delta \|^2 - \Delta|$ for $\delta \in \mathcal{A}_n \cup \mathcal{B}_n$), it follows that $\sup_{0 \leq \lambda \leq 1} |\mathcal{R}_n (\lambda) | = o_\p (| \| \delta \|^2 -  \Delta |)$,  and therefore we can continue by considering
\begin{align} \nonumber 
    \sup_{\delta \in  \mathcal{A}_n  \cup \mathcal{B}_n }  &\p \Big( \sup_{0 \leq \lambda \leq 1} | \bar T_n (\hat k_n, m; \lambda) | > |\|\delta\|^2 - \Delta|  \Big)\\
    & ~~~~~~~~~~ ~~~~~~~~~~ \begin{aligned}
        &\leq \sup_{\delta \in  \mathcal{A}_n  \cup \mathcal{B}_n } \e \bigg[ \frac{1}{\| \bar \Gamma \|_F / p } \sup_{0 \leq \lambda \leq 1} \big| T_n^{(1)} (\lambda) \big| \bigg]\\
        &\quad\quad + \sup_{\delta \in  \mathcal{A}_n  \cup \mathcal{B}_n } \e \bigg[ \frac{1}{|\|\delta\|^2 - \Delta|} \sup_{0 \leq \lambda \leq 1} \big| T_n^{(2)} (\lambda) + T_n^{(3)} (\lambda) \big| \bigg],
    \end{aligned}
    \label{det1d} 
\end{align}
where we have used Markov's inequality and $|\|\delta\|^2 - \Delta| > \| \bar \Gamma \|_F / p$ for $\delta \in \mathcal{A}_n \cup \mathcal{B}_n$. The first term containing $T_n^{(1)}$ can be rewritten similarly as in the proof of Theorem \ref{thm2.1} recalling the decomposition  \eqref{eq_1}, that is 
\begin{align*}
    T_n^{(1)} (\lambda) 
    &= \frac{N_m (\gbr{\lambda (n-\hat k_n)})}{N_m (n-\hat k_n)} P_n (\lambda) - Q_n (\lambda) + \frac{N_m (\gbr{\lambda \hat k_n})}{N_m (\hat k_n)} R_n (\lambda),
\end{align*}
where $P_n$, $Q_n$ and $R_n$ are defined by \eqref{det1a}, \eqref{det1b} and \eqref{det1c}, respectively.
For the term $Q_n$, we can apply \eqref{comb2} and obtain
\begin{align*}
    \sup_{\delta \in \mathcal{A}_n  \cup \mathcal{B}_n } \e \bigg[ \sup_{0 \leq \lambda \leq 1} \bigg| \frac{1}{\| \bar \Gamma \|_F / p} Q_n (\lambda) \bigg| \bigg]
    &= \sup_{\delta \in \mathcal{A}_n  \cup \mathcal{B}_n } \frac{\sigma_n}{\sqrt{n} \| \bar \Gamma \|_F / p}  \e \bigg[ \sup_{0 \leq \lambda \leq 1} \bigg| \frac{\sqrt{n}}{\sigma_n} Q_n (\lambda) \bigg| \bigg]\\
    & \lesssim \frac{\log^2 (n)}{n} \frac{\mathrm{tr} (\bar \Gamma)}{ \| \bar \Gamma \|_F} = o(1)
\end{align*}
by the definition of $\sigma_n$ and condition \ref{assA3}.

To find an upper bound for $P_n$ and $R_n$ we use Proposition \ref{lem65}, which gives
\begin{align*}
    \e \bigg[ \max_{1 \leq k \leq n} \bigg| \frac{1}{\| \bar \Gamma \|_F} \sum_{\substack{i,j = 1 \\ |i-j| > m}}^k \tilde{X}_i^\top \tilde{X}_j \bigg| \bigg] \lesssim n \log (n).
\end{align*}
Therefore, observing the definition of $P_n$ and $R_n$ in \eqref{det1a} and \eqref{det1c}, respectively, yields the estimates 
\begin{align*}
    \e \bigg[ \sup_{0 \leq \lambda \leq 1} \bigg| \frac{1}{\| \bar \Gamma \|_F / p} P_n (\lambda) \bigg| \bigg] = o (1) ~~~~~ \text{and} ~~~~~ \e \bigg[ \sup_{0 \leq \lambda \leq 1} \bigg| \frac{1}{\| \bar \Gamma \|_F / p} R_n (\lambda) \bigg| \bigg]= o (1)
\end{align*}
uniformly with respect to $\delta \in \mathcal{A}_n \cup \mathcal{B}_n$.  This shows that the first term in \eqref{det1d} is of the order $o(1)$.
 
In order to derive a similar bound for the remaining term in \eqref{det1d} containing the terms $T_n^{(2)}$ and $T_n^{(3)}$ defined in \eqref{det2b} and \eqref{det2c}, we will prove the estimate 
\begin{align*}
    &\e \bigg[ \frac{1}{ p |\|\delta\|^2 - \Delta| } \sup_{0 \leq \lambda \leq 1} \bigg |\sum_{\substack{i_1,i_2=1\\|i_1-i_2| > m}}^{\gbr{\lambda \hat k_n}} \sum_{\substack{j_1,j_2=\hat k_n+1\\|j_1-j_2| > m}}^{\hat k_n + \gbr{\lambda (n-\hat k_n)}} \Big( (\e [X_{i_1}] - \e [X_{j_1}])^\top (\tilde{X}_{i_2} - \tilde{X}_{j_2})\\
    & ~~~~~~~~~~ ~~~~~~ ~~~~~~~~~~ ~~~~~~~~~~ ~~~~~~~~~~~  + (\tilde{X}_{i_1} - \tilde{X}_{j_1})^\top (\e [X_{i_2}] - \e [X_{j_2}]) \Big) \bigg | \bigg] = o(n^4)
\end{align*}
uniformly with respect to $\delta \in \mathcal{A}_n \cup \mathcal{B}_n$.
Using \eqref{exex} and similar arguments as in the proof of Theorem \ref{thm2.1}, this in turn will be implied if
\begin{align}\label{dasletztedaszzist_1}
    \e \bigg[ \sup_{0 \leq \lambda \leq 1} \bigg| \frac{1}{ p |\|\delta\|^2 - \Delta| } \sum_{\substack{i_1,i_2=1\\|i_1-i_2| > m}}^{\gbr{\lambda \hat k_n} } \tilde{X}_{i_1}^\top \delta \bigg|\bigg] = o(n^2)
\end{align}
and
\begin{align*}
    \e \bigg[ \sup_{0 \leq \lambda \leq 1} \bigg| \frac{1}{ p |\|\delta\|^2 - \Delta| } \sum_{\substack{j_1, j_2 = \hat k_n + 1\\|j_1 - j_2| > m}}^{\hat k_n + \gbr{\lambda (n - \hat k_n)} } \tilde{X}_{j_1}^\top \delta \bigg|\bigg] = o(n^2).
\end{align*}
Without loss of generality, we only show the first assertion in \eqref{dasletztedaszzist_1}.
Using Lemma~\ref{lem1}, we obtain the decomposition 
\begin{align*}
    \sum_{\substack{i_1,i_2=1\\|i_1-i_2| > m}}^{\gbr{\lambda \hat k_n} } \tilde{X}_{i_1}^\top \delta = \sum_{j=1}^{\gbr{\lambda \hat k_n} - m} (\gbr{\lambda \hat k_n} - m - j) \tilde X_j^\top \delta + \sum_{j=m+1}^{\gbr{\lambda \hat k_n}} (j - m - 1) \tilde X_j^\top \delta
\end{align*}
and we restrict ourselves to the first term (the estimate corresponding to the second term is obtained by similar arguments). By Proposition 1 in \cite{strongwu} (see also Remark \ref{wu_n2hochd}), we have
\begin{align*}
     \e \bigg[ \sup_{0 \leq \lambda \leq 1} \bigg| \sum_{j=1}^{\gbr{\lambda \hat k_n} - m} (\gbr{\lambda \hat k_n}& - m - j) \tilde X_j^\top \delta \bigg|\bigg]\\
     &\leq \e \bigg[ \max_{1 \leq k \leq n = 2^d} \bigg| \sum_{j=1}^{k} (k - j) \tilde X_j^\top \delta \bigg|\bigg]\\
     &\leq \sum_{r=0}^d \bigg( \sum_{\ell = 1}^{2^{d-r}} \e \bigg[ \bigg( \sum_{j=2^r (\ell -1) + 1}^{2^r \ell} (2^r \ell - j) \tilde X_j^\top \delta \bigg)^2 \bigg] \bigg)^{1/2}.
\end{align*}
Using $\ell \leq 2^{d-r}$ and $\delta^\top \Sigma_{h} \delta \leq \| \Sigma_h \| \| \delta \|_2^2$, the expected value can be rewritten as
\begin{align*}
     \e \bigg[ \bigg( \sum_{j=2^r (\ell -1) + 1}^{2^r \ell} (2^r \ell - j) \tilde X_j^\top \delta \bigg)^2 \bigg] &= \sum_{j_1, j_2 = 2^r (\ell -1 ) + 1}^{2^r \ell} (2^r \ell - j_1) (2^r \ell - j_2 ) \delta^\top \Sigma_{j_2 - j_1} \delta\\
     &\lesssim 2^{2d} \sum_{h = 0}^{2^r - 1} (2^r - h)  \| \Sigma_h \| \| \delta \|^2_2\\
     & =  o(2^{2d+r} \| \bar \Gamma \|_F \| \delta \|_2^2)\\
     &= o(2^{2d+r}  \| \delta \|_2^2 \cdot p | \|\delta \|^2 - \Delta | ),
\end{align*}
where we have used $\sum_{h = 0}^\infty \| \Sigma_h \| = o(\| \bar \Gamma \|_F)$ and the fact $\| \bar \Gamma \|_F /p < | \Delta - \| \delta \|^2 |$ for all for $\delta \in \mathcal{A}_n \cup \mathcal{B}_n$.
This yields 
\begin{align*}
    \e \bigg[  \sup_{0 \leq \lambda \leq 1} \frac{1}{p| \|\delta\|^2 - \Delta |} \bigg| \sum_{j=1}^{\gbr{\lambda \hat k_n} - m}  \negthickspace (\gbr{\lambda \hat k_n} - m - j) \tilde X_j^\top \delta \bigg|\bigg] = o \Big( 2^{3d/2} \| \delta \|_2 / \sqrt{p(|\|\delta\|^2 - \Delta|)} \Big).
\end{align*}
Noting that $|\| \delta \|^2 - \Delta | > 0$ implies
\begin{align*}
    \sup_{\delta \in \mathcal{A}_n \cup \mathcal{B}_n} \frac{\|\delta \|^2}{|\| \delta \|^2 - \Delta|} = O(1),
\end{align*}
which gives 
\begin{align*}
    \sup_{\delta \in \mathcal{A}_n \cup \mathcal{B}_n } \e \bigg[  \sup_{0 \leq \lambda \leq 1} \frac{1}{p| \|\delta\|^2 - \Delta |} \bigg| \sum_{j=1}^{\gbr{\lambda \hat k_n} - m} (\gbr{\lambda \hat k_n} - m - j) \tilde X_j^\top \delta \bigg|\bigg] = o(2^{2d}).
\end{align*}
For a general $n \in \mathbb{N}$, we therefore have
\begin{align*}
    \sup_{\delta \in \mathcal{A}_n \cup \mathcal{B}_n} \e \bigg[ \sup_{0 \leq \lambda \leq 1}   \frac{1}{p |\|\delta\|^2 - \Delta| } \bigg| \sum_{j=1}^{\gbr{\lambda \hat k_n} - m} (\gbr{\lambda \hat k_n} - m - j) \tilde X_j^\top \delta \bigg|\bigg] = o(n^2),
\end{align*}
which proves \eqref{dasletztedaszzist_1}.

\section{Proofs of the results in Section \ref{subsec_theoretical_prop_unif}}

\subsection{Proof of Theorem \ref{snconsistent}}

We begin specializing Theorem \ref{thm2.1} to the statistics
\begin{align}\label{hd13}
    \{ \hat \delta_\ell^2 (\lambda) \} _{\ell \in A} = \{ T_{n, \{ \ell \}} (\hat k_n, m; \lambda) \} _{\ell \in A}
\end{align}
and derive invariance principles with explicit rates for the sets $A=S$ and $A=S^C$, which hold uniformly with respect to these index sets. The proofs of the following two results are given in Sections \ref{sec_proof_d1} and \ref{sec_proof_d2}.

\begin{thm}\label{unif_conv_thm_S}
    Let conditions \ref{ass_shat_1} and \ref{ass_shat_4} - \ref{ass_shat_8} of Assumption \ref{asses_S_equal_Shat} be satisfied. Then, on a possibly richer probability space, for every $\ell \in S$, there exists a Gaussian process $\{ G_{\ell} (\lambda) \}_{\lambda \in [0,1]}$ with the same distribution as $\{ \lambda^3 \mathbb{B} (\lambda) \}_{\lambda \in [0, 1]}$ such that 
    \begin{align*}
        \max_{\ell \in S} \sup_{0 \leq \lambda \leq 1} \big| \sqrt{n} \big( \hat \delta_\ell^2 (\lambda) - \lambda^4 \delta_\ell^2 \big) - \sigma_\ell G_{\ell} (\lambda ) \big| = o_\p \big( n^{-\beta} f(s) \big),
    \end{align*}
    for any $0 \leq \beta < \alpha / 4$, where $\sigma_\ell^2 = 4 \delta_\ell^2 \Gamma_{\ell \ell} / (\vartheta_0 (1-\vartheta_0))$.
\end{thm}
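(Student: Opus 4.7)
The plan is to adapt the proof of Theorem \ref{thm2.1} to the single-coordinate case ($A = \{\ell\}$), and then upgrade every bound to hold uniformly in $\ell \in S$ at the claimed rate $o_\p(n^{-\beta} f(s))$. For a fixed $\ell$, Theorem \ref{thm2.1} already gives the weak Gaussian approximation with $\sigma_{n,\{\ell\}}^2 = \sigma_\ell^2 = 4\delta_\ell^2 \Gamma_{\ell\ell}/(\vartheta_0(1-\vartheta_0))$, so the only new content is (i) the uniformity in $\ell$ and (ii) the explicit rate. The natural strategy is to redo the decomposition
\[
    \hat\delta_\ell^2(\lambda) \;=\; T_n^{(1,\ell)}(\lambda) + T_n^{(2,\ell)}(\lambda) + T_n^{(3,\ell)}(\lambda) + T_n^{(4,\ell)}(\lambda)
\]
analogous to \eqref{decomp1}, with $p=1$ and summation restricted to coordinate $\ell$, and then bound the four pieces separately, in each case taking a maximum over $\ell \in S$ before a supremum over $\lambda \in [0,1]$.

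For the quadratic remainder $T_n^{(1,\ell)}$ (the quadruple sum in the centered observations) I would adapt Proposition \ref{lem65} coordinatewise: applying Proposition \ref{maxmax} across $\ell \in S$ together with the cumulant bound \ref{ass_shat_5} and the autocovariance bound \ref{ass_shat_4} gives a maximal inequality of the form
\[
    \e\Bigl[\max_{\ell \in S}\max_{1\leq k\leq n}\bigl|\,\textstyle\sum_{|i-j|>m,\,i,j\leq k}(\tilde X_{i,\ell}\tilde X_{j,\ell})\,\bigr|\Bigr] \;\lesssim\; n\log(n)\sum_{\ell\in S} c_\ell^2,
\]
whose order, after the $n^{-2}$ prefactor, is negligible compared with $n^{-1/2-\beta} f(s)$ thanks to $\sum_{\ell \in S} c_\ell \lesssim n^{1/4}f(s)$ and the trimming condition \ref{ass_shat_6}. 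The deterministic piece $T_n^{(4,\ell)} - \Lambda_0(\lambda)\delta_\ell^2$ is handled by the bound \eqref{fürspäter} on $\sup_\lambda |\Lambda_n(\lambda) - \Lambda_0(\lambda)|$, combined with $\max_{\ell \in S}\delta_\ell^2 = O(1)$ from \ref{ass_shat_1} and $\mathrm{tr}(\bar\Gamma) = O(\|\delta\|_2^2)$ from \ref{ass_shat_7}; this already enters with no dependence on $|S|$.

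The main term is the linear part $T_n^{(2,\ell)} + T_n^{(3,\ell)}$. Repeating the manipulations that led to \eqref{kn}--\eqref{convergence1} and \eqref{secondterm2}, this reduces (uniformly in $\lambda$) to a linear functional of the partial sums
\[
    S_{n,\ell}(\lambda) \;:=\; \frac{1}{n}\sum_{j=1}^{\lfloor n\lambda\rfloor}\bigl(X_{j,\ell} - \e[X_{j,\ell}]\bigr),\qquad \ell \in S,
\]
multiplied by $\delta_\ell$. Here I would invoke the uniform weak invariance principle Theorem \ref{thm2.0_unif_neu} for the centered sequence $(X_{j,\ell}-\e[X_{j,\ell}])_{j,\ell}$ with index set $A = S$, whose hypotheses are granted by \ref{ass_shat_8}; this yields, on a possibly richer probability space, Brownian motions $\mathbb{B}_\ell$ (with $\var(\mathbb{B}_\ell(1)) = \Gamma_{\ell\ell}$) such that $\max_{\ell\in S}\sup_\lambda |\sqrt n\,S_{n,\ell}(\lambda) - \mathbb{B}_\ell(\lambda)|= o_\p(n^{-\beta}f(s))$. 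Inserting this approximation and keeping track of the $\lambda^2$ factors produced by $N_m(\lfloor \lambda \hat k_n\rfloor)/N_m(\hat k_n)$ and its counterpart on $(\hat k_n,n]$ (controlled via \eqref{for_later2}, \eqref{op1} and uniformity of the modulus of continuity of a Brownian motion via Lemma \ref{double_unif_cont_BM}) gives the desired process $G_\ell(\lambda) \stackrel{d}{=} \lambda^3 \mathbb{B}(\lambda)$ with scaling $\sigma_\ell$.

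The main obstacle is the uniformization in Step~3: the pointwise analogues of \eqref{convergence1} and \eqref{secondterm2} rely on the weak invariance principle applied to a single scalar contrast $\delta_\ell$, but here we need the approximation to hold simultaneously for all $\ell \in S$ with the \emph{same} Brownian driver per coordinate and with a controlled rate. This is precisely what Theorem \ref{thm2.0_unif_neu} delivers, provided the rate function $f$ absorbs the ``spatial'' summability $\sum_{\ell\in S}\Gamma_{\ell\ell} \lesssim n^{(1-\alpha)/2}\log^\gamma(n)\,f(s)$; verifying this, as well as the accompanying variance and cumulant bounds \ref{unif_ass_thm_3_neu}--\ref{unif_ass_thm_2_neu} under \ref{ass_shat_8}, together with tracking the factor $f(s)$ through all remainder estimates, is the delicate bookkeeping on which the result hinges.
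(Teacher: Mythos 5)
Your proposal follows essentially the same route as the paper's proof: the same four-term decomposition of $\hat\delta_\ell^2(\lambda)$, uniform-in-$\ell$ maximal inequalities (via Proposition \ref{maxmax} with conditions \ref{ass_shat_4} and \ref{ass_shat_5}) for the quadratic remainder, the change-point rate and \eqref{fürspäter} for the deterministic term, and Theorem \ref{thm2.0_unif_neu} with $A=S$ for the linear term, which is indeed where the uniformity and the factor $f(s)$ enter. One quantitative slip: your claimed intermediate bound $\e[\max_{\ell}\max_k|\cdot|]\lesssim n\log(n)\sum_{\ell\in S}c_\ell^2$ would give, after the $n^{-2}$ normalization and the $\sqrt n$ scaling, an order $\log(n)f(s)^2$, which is \emph{not} $o(n^{-\beta}f(s))$; the chaining argument actually produces $(\sum_{\ell\in S}c_\ell^2)^{1/2}\le\sum_{\ell\in S}c_\ell\lesssim n^{1/4}f(s)$ inside a square root (as in Proposition \ref{prop67}), and it is this square root that makes the remainder negligible at the claimed rate.
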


\begin{thm}\label{unif_conv_thm_Sc}
    Let conditions \ref{ass_shat_1} - \ref{ass_shat_8} of Assumption \ref{asses_S_equal_Shat} be satisfied. Then, on a possibly richer probability space, for every $\ell \in S^C$, there exists a centered process $\{ H_\ell (\lambda) \}_{\lambda \in [0,1]}$ with the same distribution as $\{ \lambda^2 (\mathbb{B} (\lambda)^2 - \lambda) \}_{\lambda \in [0,1]}$ such that 
    \begin{align*}
        \max_{\ell \in S^C} \sup_{0 \leq \lambda \leq 1} \big| n \hat \delta_\ell^2 (\lambda) - \tau_{\ell}^2 H_\ell (\lambda) \big| = o_\p \big( n^{-2\beta} f_c (s_c)^2 + n^{-\beta} f_c (s_c) \big),
    \end{align*}
    for any $\beta$ satisfying $0 \leq \beta < \alpha / 4$, where $\tau_\ell^2 = \Gamma_{\ell\ell} / (\vartheta_0 (1-\vartheta_0))$.
\end{thm}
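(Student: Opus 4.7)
The plan is to parallel the proof of Theorem \ref{thm2.1}, but to exploit the fact that $\delta_\ell = 0$ for $\ell \in S^C$ so that the statistic reduces to a quadratic form in the centered partial sums rather than a linear term plus small noise. The target limit $\tau_\ell^2 \lambda^2(\mathbb{B}(\lambda)^2 - \lambda)$ is then obtained from squaring a Brownian motion and subtracting the correct drift.

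First I would use $\tilde X_{j,\ell} = X_{j,\ell} - \e[X_{j,\ell}]$ and the identity $(X_{i_1}-X_{j_1})(X_{i_2}-X_{j_2}) = \tilde X_{i_1}\tilde X_{i_2} - \tilde X_{i_1}\tilde X_{j_2} - \tilde X_{j_1}\tilde X_{i_2} + \tilde X_{j_1}\tilde X_{j_2}$ (valid for $\ell \in S^C$ since the mean shift vanishes) to split $n\hat\delta_\ell^2(\lambda)$ into four pieces $T^{(A)}_\ell,\dots,T^{(D)}_\ell$. The two quadratic pieces $T^{(A)}_\ell, T^{(D)}_\ell$ can be written as
\begin{align*}
\sum_{\substack{i_1,i_2=a+1\\|i_1-i_2|>m}}^{b} \tilde X_{i_1,\ell}\tilde X_{i_2,\ell}
= \Big(\sum_{i=a+1}^{b}\tilde X_{i,\ell}\Big)^{\! 2} - \sum_{|h|\le m}(b-a-|h|)\Sigma_{h,\ell\ell} + R_{n,\ell}(\lambda),
\end{align*}
where $R_{n,\ell}$ collects the difference between the random diagonal sum and its expectation, controlled by a maximal inequality analogous to Proposition~\ref{prop_maxmax_m}. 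The cross pieces $T^{(B)}_\ell,T^{(C)}_\ell$ factor, up to boundary weights of size $O(m)$, as products of the two partial sums over the pre-change and post-change blocks.

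Next I would invoke Theorem~\ref{thm2.0_unif_neu} with $A = S^C$, whose hypotheses are furnished by condition \ref{ass_shat_8}, to obtain, on a richer probability space, Brownian motions $\{\mathbb{B}_\ell\}_{\ell\in S^C}$ with $\mathbb{B}_\ell(\lambda)\sim \mathcal{N}(0,\lambda\Gamma_{\ell\ell})$ satisfying
\[
\max_{\ell\in S^C}\sup_{0\le\lambda\le 1}\Big|\tfrac{1}{\sqrt n}\textstyle\sum_{i=1}^{\gbr{\lambda n}}\tilde X_{i,\ell} - \mathbb{B}_\ell(\lambda\Gamma_{\ell\ell})\Big| = o_\p\big(n^{-\beta}f_c(s_c)\big).
\]
Substituting $\hat k_n$ for $n\vartheta_0$ via Theorem~\ref{cpthm} and Lemma~\ref{double_unif_cont_BM} incurs only a further $o_\p$ term. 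Squaring this approximation produces both an $o_\p(n^{-\beta}f_c(s_c))$ and an $o_\p(n^{-2\beta}f_c(s_c)^2)$ contribution after the $\mathcal N(0,\cdot)$ cross term is absorbed (using that $\max_{\ell\in S^C}\Gamma_{\ell\ell} =O(1)$ by \ref{ass_shat_1}); this is exactly the two-term remainder in the statement. Condition \ref{ass_shat_2} replaces $\sum_{|h|\le m}\Sigma_{h,\ell\ell}$ by $\Gamma_{\ell\ell}$ uniformly in $\ell\in S^C$ with an $O(n^{-\alpha/4})$ correction, so the diagonal sum becomes $\lambda\vartheta_0 n\Gamma_{\ell\ell}+o(n^{1-\alpha/4})$ and supplies the $-\lambda$ in the limit. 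Finally, combining $T^{(A)}_\ell,\dots,T^{(D)}_\ell$ after dividing by $N_m(\hat k_n)N_m(n-\hat k_n)$, the factor $\vartheta_0^{-1}[\mathbb{B}^{(1)}_\ell(\lambda)^2-\lambda] + (1-\vartheta_0)^{-1}[\mathbb{B}^{(2)}_\ell(\lambda)^2-\lambda] - 2(\vartheta_0(1-\vartheta_0))^{-1/2}\mathbb{B}^{(1)}_\ell(\lambda)\mathbb{B}^{(2)}_\ell(\lambda)$ telescopes to $(\vartheta_0(1-\vartheta_0))^{-1}[\mathbb{B}_\ell(\lambda)^2 - \lambda]$ for a single standard Brownian motion built as $\mathbb{B}_\ell(\lambda)=[\sqrt{1-\vartheta_0}\mathbb{B}^{(1)}_\ell(\lambda)-\sqrt{\vartheta_0}\mathbb{B}^{(2)}_\ell(\lambda)]$, giving the claimed process $H_\ell(\lambda) = \lambda^2(\mathbb{B}_\ell(\lambda)^2-\lambda)$.

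The main obstacle will be the uniformity in $\ell\in S^C$. Unlike Theorem~\ref{thm2.1}, where the statistic is itself of order $\delta^\top\Gamma_A\delta$ and Assumption~\ref{asses} allows crude bounds, here the statistic is $O(n^{-1})$ and the target accuracy is $o(n^{-1-\beta}f_c(s_c))$, so each step—trimming bias (handled by \ref{ass_shat_2}), diagonal remainder (handled by \ref{ass_shat_4}--\ref{ass_shat_5} via Propositions~\ref{max2}--\ref{prop_maxmax_m}), and the $O(m)$ boundary corrections to the weighted sums (controlled by \ref{ass_shat_6})—must be proven uniformly in $\ell$ with explicit dependence on $f_c(s_c)$. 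A secondary obstacle is aligning the two independent Brownian motions arising from the pre- and post-change samples so that the algebraic identity above produces a single Brownian motion with the right normalization; this is a deterministic calculation but must be tracked with care through the normalization factor $N_m(\hat k_n)N_m(n-\hat k_n)\simp \vartheta_0^2(1-\vartheta_0)^2n^4$ using \eqref{for_later2} and the estimate on $|\hat k_n-k_0|$ from Theorem~\ref{cpthm}.
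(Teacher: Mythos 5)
Your proposal is correct and follows essentially the same route as the paper: decompose $n\hat\delta_\ell^2(\lambda)$ into the two quadratic blocks and the cross term, complete the square with the trimmed diagonal correction (controlled uniformly via \ref{ass_shat_2}, \ref{ass_shat_4}--\ref{ass_shat_6}), apply Theorem~\ref{thm2.0_unif_neu} with $A=S^C$, and recombine after squaring to get the two-term remainder $o_\p(n^{-2\beta}f_c(s_c)^2 + n^{-\beta}f_c(s_c))$. The only difference is cosmetic: the paper builds the pre- and post-change limits $H_{1,\ell}, H_{2,\ell}$ as disjoint increments of a single Brownian motion (so independence is automatic and $\tau_\ell^{-1}(H_{1,\ell}-H_{2,\ell})$ is directly a standard Brownian motion), whereas you take two independent standard Brownian motions and combine them as $\sqrt{1-\vartheta_0}\,\mathbb{B}^{(1)}-\sqrt{\vartheta_0}\,\mathbb{B}^{(2)}$ — the algebra is equivalent.
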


For $\ell \in S$, define the random variables 
\begin{align}\label{def_v_ell}
    v_\ell = \bigg( \int_0^1 \big(G_{\ell} (\lambda) - \lambda^4 G_{\ell} (1) \big)^2 \mathrm{d} \lambda \bigg)^{1/2}   =^d \mathbb{V} := 
    \bigg( \int_0^1 \lambda^6 (\mathbb{B} (\lambda) - \lambda \mathbb{B} (1))^2 \mathrm{d} \lambda \bigg)^{1/2},
\end{align}
where $\{ G_{\ell} (\lambda) \}_{\lambda \in [0,1]}$ is the Gaussian process from Theorem \ref{unif_conv_thm_S}. Moreover, for $\ell \in S^C$ define
\begin{align}\label{def_w_ell}
    \begin{aligned}
        w_\ell &= \bigg( \int_0^1 \big(H_{\ell} (\lambda) - \lambda^4 H_{\ell} (1) \big)^2 \mathrm{d} \lambda \bigg)^{1/2}\\
        &=^d \mathbb{W} := \bigg( \int_0^1 \lambda^4 \big( \mathbb{B}^2 (\lambda) - \lambda - \lambda^2 ( \mathbb{B}^2 (1) - 1 ) \big)^2 \bigg)^{1/2},
    \end{aligned}
\end{align}
where $\{ H_\ell (\lambda) \}_{\lambda  \in [0,1]}$ is the stochastic process from Theorem \ref{unif_conv_thm_Sc}.

The proof of Theorem \ref{snconsistent} is now completed in three steps. First, we study properties of the estimators $\hat \delta_\ell^2$ and $\hat v_\ell$ in \eqref{hat_delta_ell} and \eqref{hat_v_ell}, respectively, if $\ell \in S$ (Lemma~\ref{lemma_max_bound} and \ref{lemma_v_ells}).
Secondly, the corresponding properties for $\ell \in S^C$ are derived in Lemmas \ref{lemma_max_bound_deltanull} and \ref{lemma_w_ells}.
Finally, these results will be combined in the third step, which is the actual proof of Theorem \ref{snconsistent}. The proofs of Lemmas \ref{lemma_max_bound} - \ref{lemma_w_ells} are given in the subsequent section.

\medskip

Define for some $\omega > 1$
\begin{align}\label{InJn}
    I_n (s) = (n^{-\alpha / 8} f(s) \vee 1) \cdot \log^\omega (s), ~~~~~J_n (s) = \max_{\ell \in S} \sigma_\ell + n^{-\alpha / 8} \log (s) f(s),
\end{align}
where the function $f$ and the constant $\alpha$ are introduced in Assumption~\ref{asses_S_equal_Shat}.

\begin{lem}\label{lemma_max_bound}
    Let conditions \ref{ass_shat_1} and \ref{ass_shat_4} - \ref{ass_shat_8} of Assumption \ref{asses_S_equal_Shat} be satisfied. Then, we have
    \begin{align*}
        \p \bigg( \max_{\ell \in S} \bigg| \frac{\hat \delta_\ell^2 - \delta_\ell^2}{v_\ell} \bigg| > \frac{I_n (s)}{\sqrt{n}}  \bigg) = o(1).
    \end{align*}
\end{lem}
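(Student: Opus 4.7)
The plan is to combine the uniform Gaussian approximation from Theorem \ref{unif_conv_thm_S} with the subexponential tail bounds of Lemma \ref{lemmasubexp}. Specializing Theorem \ref{unif_conv_thm_S} at $\lambda = 1$ and choosing $\beta = \alpha/8$ (which satisfies $0 < \beta < \alpha/4$) yields the representation
\begin{align*}
    \sqrt{n}\bigl(\hat\delta_\ell^2 - \delta_\ell^2\bigr) = \sigma_\ell\, G_\ell(1) + R_{n,\ell}, \qquad \max_{\ell \in S} |R_{n,\ell}| = o_\p\bigl(n^{-\alpha/8} f(s)\bigr).
\end{align*}
Dividing by $v_\ell$ and by $\sqrt{n}$ and taking the maximum over $\ell \in S$ produces, after a triangle inequality, two terms:
\begin{align*}
    \max_{\ell \in S} \biggl| \frac{\hat\delta_\ell^2 - \delta_\ell^2}{v_\ell} \biggr| \;\le\; \frac{\max_{\ell \in S}\sigma_\ell}{\sqrt{n}} \cdot \max_{\ell \in S} \biggl| \frac{G_\ell(1)}{v_\ell}\biggr| + \frac{1}{\sqrt{n}} \cdot \max_{\ell \in S}|R_{n,\ell}| \cdot \max_{\ell \in S} \frac{1}{v_\ell}.
\end{align*}

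The next step is to identify the limiting ratios with the random variables treated in Lemma \ref{lemmasubexp}. Since $\{G_\ell(\lambda)\}_{\lambda \in [0,1]} =^d \{\lambda^3 \mathbb{B}_\ell(\lambda)\}_{\lambda \in [0,1]}$ for some standard Brownian motion $\mathbb{B}_\ell$, one has $G_\ell(\lambda) - \lambda^4 G_\ell(1) =^d \lambda^3(\mathbb{B}_\ell(\lambda) - \lambda \mathbb{B}_\ell(1))$, and hence
\begin{align*}
    v_\ell =^d \mathbb{V}_{6}^{(\ell)}, \qquad \frac{G_\ell(1)}{v_\ell} =^d \frac{\mathbb{B}_\ell(1)}{\mathbb{V}_{6}^{(\ell)}} = \mathbb{G}_{6}^{(\ell)},
\end{align*}
in the notation of Lemma \ref{lemmasubexp}. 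By part (ii) of that lemma, both $\mathbb{G}_6$ and $\mathbb{V}_6^{-1}$ are subexponential, so a straightforward union bound (which does not require independence of the $G_\ell$) gives
\begin{align*}
    \max_{\ell \in S} \Bigl|\tfrac{G_\ell(1)}{v_\ell}\Bigr| = O_\p(\log s), \qquad \max_{\ell \in S} \tfrac{1}{v_\ell} = O_\p(\log s).
\end{align*}

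Putting the pieces together and using condition \ref{ass_shat_1}, which forces $\max_{\ell \in S} \sigma_\ell = O(1)$, I obtain
\begin{align*}
    \max_{\ell \in S} \biggl|\tfrac{\hat\delta_\ell^2 - \delta_\ell^2}{v_\ell} \biggr| = O_\p\!\left(\tfrac{\log s}{\sqrt n}\right) + o_\p\!\left(\tfrac{n^{-\alpha/8} f(s) \log s}{\sqrt n}\right),
\end{align*}
which is $o_\p\!\bigl(I_n(s)/\sqrt n\bigr)$ because $\omega > 1$ gives $\log s = o(\log^\omega s)$ and the prefactor $n^{-\alpha/8} f(s)$ is bounded by $n^{-\alpha/8} f(s) \vee 1$ appearing in $I_n(s)$. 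The main obstacle is the second step: the ratios $G_\ell(1)/v_\ell$ and $1/v_\ell$ are non-Gaussian functionals of a Brownian motion, so the proof genuinely needs the explicit subexponential tail bounds supplied by Lemma \ref{lemmasubexp}(ii) — the crude bounds that would follow from Gaussian concentration alone are not enough to beat $\log^\omega s$ in the regime where $s$ may grow polynomially in $p$.
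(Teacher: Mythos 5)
Your proposal is correct and follows essentially the same route as the paper's proof: a uniform Gaussian approximation from Theorem \ref{unif_conv_thm_S}, the identification of $G_\ell(1)/v_\ell$ and $v_\ell^{-1}$ with the subexponential random variables $\mathbb{G}_6$ and $\mathbb{V}_6^{-1}$ of Lemma \ref{lemmasubexp}(ii), and a union bound giving $O_\p(\log s)$ control of both maxima, which is then absorbed into $I_n(s)$ using $\omega>1$ and $\max_{\ell\in S}\sigma_\ell=O(1)$. The only cosmetic difference is that the paper phrases the Gaussian part as $\max_\ell|\tilde{\mathbb{G}}_\ell|=o_\p(\log^\omega s)$ rather than $O_\p(\log s)$; the substance is identical.
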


\begin{lem}\label{lemma_v_ells}
    Let conditions \ref{ass_shat_1} and \ref{ass_shat_4} - \ref{ass_shat_8} of Assumption \ref{asses_S_equal_Shat} be satisfied. Then, as $n \to \infty$ we have for the quantities $\hat v_\ell$ and $ v_\ell$ defined in \eqref{hat_v_ell} and \eqref{def_v_ell}, respectively and any $0 < \beta < \alpha / 4$
    \begin{enumerate}
        \item [(i)] $\max_{\ell \in S} | \sqrt{n} \hat v_\ell - \sigma_\ell v_\ell | =  o_\p \big( n^{-\beta} f(s) \big)$, 
        \item [(ii)] $\max_{\ell \in S} | \sqrt{n} \hat v_\ell / v_\ell - \sigma_\ell | =  o_\p \big( n^{-\beta}  \log (s) f(s) \big)$.
        \item [(iii)] $\p \big( \min_{\ell \in S}  \frac{v_\ell}{\sqrt{n} \hat v_\ell} \geq J_n (s)^{-1} \big) \to 1$.
    \end{enumerate}
\end{lem}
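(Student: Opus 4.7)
\textbf{Proof plan for Lemma~\ref{lemma_v_ells}.} The strategy is to reduce every claim to two ingredients: the uniform invariance principle of Theorem~\ref{unif_conv_thm_S} and the subexponential tail of $\mathbb{V}^{-1}$ provided by Lemma~\ref{lemmasubexp}(ii). Introduce the functional $p(f) := \bigl(\int_0^1 (f(\lambda) - \lambda^4 f(1))^2 \mathrm{d}\lambda\bigr)^{1/2}$ on $\ell^\infty([0,1])$. By Minkowski's inequality, $p$ is Lipschitz with constant at most $2$ with respect to $\|\cdot\|_\infty$, and it satisfies the algebraic cancellation $p(h + c\lambda^4) = p(h)$ for any scalar $c$, because the contribution $c\lambda^4$ added to $h(\lambda)$ is exactly matched by a contribution $c$ to $h(1)$ inside the subtraction $h(\lambda) - \lambda^4 h(1)$. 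Applying $p$ to $f_\ell(\lambda) := \sqrt{n}\hat\delta_\ell^2(\lambda)$ and to $g_\ell(\lambda) := \lambda^4 \sqrt{n}\delta_\ell^2 + \sigma_\ell G_\ell(\lambda)$ gives $p(f_\ell) = \sqrt{n}\hat v_\ell$ and, by the cancellation, $p(g_\ell) = \sigma_\ell\, p(G_\ell) = \sigma_\ell v_\ell$. The Lipschitz property then yields
\[
\max_{\ell \in S} \bigl|\sqrt{n}\hat v_\ell - \sigma_\ell v_\ell\bigr| \;\le\; 2 \max_{\ell \in S} \sup_{0\le\lambda\le 1} \bigl|\sqrt{n}(\hat\delta_\ell^2(\lambda) - \lambda^4 \delta_\ell^2) - \sigma_\ell G_\ell(\lambda)\bigr|,
\]
so that part (i) is an immediate consequence of Theorem~\ref{unif_conv_thm_S}.

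For part (ii), divide the bound from (i) by $v_\ell$, reducing the task to controlling $\max_{\ell \in S} v_\ell^{-1}$. Since $v_\ell =^d \mathbb{V}$ for every $\ell$, Lemma~\ref{lemmasubexp}(ii) furnishes constants $C, K > 0$ with $\p(v_\ell < \varepsilon) \le C \exp(-K/\varepsilon)$ for all small $\varepsilon$. A union bound with $\varepsilon = c_0/\log s$ and $c_0$ sufficiently small then gives $\p(\min_{\ell \in S} v_\ell < c_0/\log s) = O(s^{1 - K/c_0}) \to 0$. The central subtlety of the whole argument is hidden here: this estimate uses only the marginal laws of the $v_\ell$ and therefore requires nothing about their joint distribution, which is fortunate because the Gaussian processes $G_\ell$ produced by Theorem~\ref{unif_conv_thm_S} are not asserted to be jointly independent across $\ell$. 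Consequently $\max_{\ell \in S} v_\ell^{-1} = O_\p(\log s)$, and multiplying with the bound from (i) delivers the claimed rate $o_\p(n^{-\beta}\log(s) f(s))$ for any $0 < \beta < \alpha/4$.

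Finally, the claim in (iii) is equivalent to $\max_{\ell \in S} \sqrt{n}\hat v_\ell / v_\ell \le J_n(s)$ with probability tending to one. Fix any auxiliary $\beta_0 \in (\alpha/8,\alpha/4)$, which is a nonempty interval because $\alpha \in (0,1)$, and apply (ii) with $\beta_0$; a triangle inequality yields $\max_{\ell \in S} \sqrt{n}\hat v_\ell/v_\ell \le \max_{\ell \in S} \sigma_\ell + o_\p(n^{-\beta_0}\log(s) f(s))$. Since $n^{-\beta_0} = o(n^{-\alpha/8})$, the random remainder is eventually dominated by the deterministic offset $n^{-\alpha/8}\log(s) f(s)$ present in the definition of $J_n(s)$, which proves (iii). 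The main obstacle is thus concentrated in the union bound controlling $\min_{\ell \in S} v_\ell$; the remainder of the argument is routine manipulation built around the two cited building blocks.
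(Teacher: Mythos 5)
Your proposal is correct and follows essentially the same route as the paper: part (i) via the Lipschitz functional $p$ (the paper's Proposition S5.1 with $\Lambda_1=\Lambda_2=\lambda^4$) combined with Theorem \ref{unif_conv_thm_S}, part (ii) via the bound $\max_{\ell\in S}v_\ell^{-1}=O_\p(\log s)$ obtained from the subexponential tail of $\mathbb{V}^{-1}$ and a union bound over the marginals, and part (iii) by absorbing the $o_\p(n^{-\beta}\log(s)f(s))$ remainder into the $n^{-\alpha/8}\log(s)f(s)$ term of $J_n(s)$ for $\beta\in(\alpha/8,\alpha/4)$. Your observation that the union bound needs only the marginal laws of the $v_\ell$ is also exactly the point the paper relies on.
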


\begin{lem}\label{lemma_max_bound_deltanull}
    Let Assumption \ref{asses_S_equal_Shat} be satisfied. Then, for any $\omega > 5/2$ we have
    \begin{align*}
       \p \bigg( \max_{\ell \in S^C} \bigg| \frac{\hat \delta_\ell^2}{w_\ell} \bigg| > \frac{ \log^\omega (s_c)}{n} \bigg) = o(1).
    \end{align*}
\end{lem}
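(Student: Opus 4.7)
My plan is to reduce $\hat\delta_\ell^2/w_\ell$ to a universal functional of Brownian motion covered by Lemma~\ref{lemmasubsubexp} and to control the maximum via a union bound. First I would invoke Theorem~\ref{unif_conv_thm_Sc} at $\lambda=1$ with $\beta=\alpha/8$, which yields the decomposition
\[
n\hat\delta_\ell^2 = \tau_\ell^2 H_\ell(1) + R_\ell, \qquad \max_{\ell\in S^C}|R_\ell| = o_\p\big(n^{-\alpha/4}f_c(s_c)^2 + n^{-\alpha/8}f_c(s_c)\big) = o_\p(1),
\]
where the last equality uses condition \ref{ass_shat_3}. Because $H_\ell(\lambda)=^d \lambda^2\mathbb{M}(\lambda)$ with $\mathbb{M}(\lambda)=\mathbb{B}(\lambda)^2-\lambda$, a direct substitution in \eqref{def_w_ell} shows that $H_\ell(1)=^d \mathbb{M}(1)$ and
\[
w_\ell = \Big(\int_0^1 \lambda^4\big(\mathbb{M}(\lambda) - \lambda^2 \mathbb{M}(1)\big)^2\,\mathrm{d}\lambda\Big)^{1/2} =^d \mathbb{W}_4,
\]
so that $H_\ell(1)/w_\ell =^d \mathbb{H}_4$ in the notation of Lemma~\ref{lemmasubsubexp}.

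Next I would split
\[
\max_{\ell \in S^C}\Big|\frac{\hat\delta_\ell^2}{w_\ell}\Big| \leq \frac{1}{n}\Big(\max_{\ell\in S^C}\tau_\ell^2\Big)\Big(\max_{\ell\in S^C}\Big|\frac{H_\ell(1)}{w_\ell}\Big|\Big) + \frac{1}{n}\Big(\max_{\ell\in S^C}|R_\ell|\Big)\Big(\max_{\ell\in S^C}\frac{1}{w_\ell}\Big)
\]
and estimate each factor. By condition~\ref{ass_shat_1}, $\max_\ell \tau_\ell^2 \lesssim 1$. For the main term, Lemma~\ref{lemmasubsubexp}(ii) gives $\p(|\mathbb{H}_4|>t) \leq 11\exp(-D_4 t^{2/5})$ for large $t$, so a union bound yields, for any fixed $c>0$,
\[
\p\Big(\max_{\ell\in S^C} |H_\ell(1)/w_\ell| > c\log^\omega(s_c)\Big) \leq 11\, s_c \exp\big(\!-\! c^{2/5} D_4 \log^{2\omega/5}(s_c)\big) = o(1),
\]
since $2\omega/5>1$ whenever $\omega>5/2$. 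For the second factor, Markov's inequality combined with Lemma~\ref{lemmasubsubexp}(i) gives $\p(w_\ell<\epsilon) \leq 9\exp(t\epsilon^2 - C_4 t^{1/4})$ for any $t>0$; optimizing in $t$ yields $\p(w_\ell<\epsilon) \lesssim \exp(-c_0\epsilon^{-2/3})$ for small $\epsilon$, whence a union bound provides $\max_{\ell\in S^C}1/w_\ell = O_\p(\log^{3/2}(s_c))$. Multiplying by the $o_\p(1)$ bound on $\max_\ell|R_\ell|$ shows that the remainder contribution is of order $o_\p(\log^{3/2}(s_c)/n)$, which is negligible compared to $\log^\omega(s_c)/n$ since $\omega>5/2>3/2$.

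Combining the two estimates, for each $\eta\in(0,1)$ the main term is bounded by $(1-\eta)\log^\omega(s_c)/n$ and the remainder by $\eta\log^\omega(s_c)/n$ with probability tending to one, yielding the claim. The only genuine obstacle is that the stated threshold is literally $\log^\omega(s_c)/n$ without a free multiplicative constant; this is precisely what forces the strict inequality $\omega>5/2$, because any multiplicative constant in front of $\log^{2\omega/5}(s_c)$ in the tail exponent can dominate $\log(s_c)$ from the union bound only when $2\omega/5$ strictly exceeds~$1$.
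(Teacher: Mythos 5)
Your proposal is correct and follows essentially the same route as the paper: apply Theorem~\ref{unif_conv_thm_Sc} at $\lambda=1$ to reduce $n\hat\delta_\ell^2/w_\ell$ to $\tau_\ell^2\,\mathbb{H}_4$-distributed variables plus a uniformly negligible remainder, control $\max_{\ell\in S^C} w_\ell^{-1}$ through the Laplace-transform bound of Lemma~\ref{lemmasubsubexp}(i), and union-bound the maximum of the $\mathbb{H}_4$-type variables via Lemma~\ref{lemmasubsubexp}(ii), which is exactly where $\omega>5/2$ enters. The only cosmetic difference is that the paper first establishes the uniform approximation of $n\hat\delta_\ell^2/w_\ell$ by $\tau_\ell^2\tilde{\mathbb{H}}_\ell$ (its \eqref{htilde1}) before taking maxima, whereas you split the remainder off before dividing by $w_\ell$; the estimates involved are identical.
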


\begin{lem}\label{lemma_w_ells}
    Let conditions \ref{ass_shat_1} - \ref{ass_shat_8} and the first part of condition \ref{ass_shat_3} of Assumption \ref{asses_S_equal_Shat} be satisfied. Then, as $n \to \infty$ we have for the quantities $\hat v_\ell$ and $w_\ell$ defined in \eqref{hat_v_ell} and \eqref{def_w_ell}, respectively and any $\alpha / 8 < \beta < \alpha / 4$
    \begin{enumerate}
        \item [(i)] $\max_{\ell \in S^C} | n \hat v_\ell - \tau_{\ell}^2  w_\ell | =  o_\p \big( n^{-\beta} f_c (s_c) \big)$, 
        \item [(ii)] $\max_{\ell \in S^C} | n \hat v_\ell / w_\ell - \tau_{\ell}^2 | =  o_\p \big( n^{-\beta} \log^{2} (s_c) f_c (s_c) \big)$,
        \item [(iii)] $\p \big( \max_{\ell \in S^C} \frac{ \tau_{\ell}^2 w_\ell}{n\hat v_\ell} \leq 2 \big) \to 1$,
    \end{enumerate}
    where $\tau_\ell^2 = \Gamma_{\ell\ell} / (\vartheta_0 (1 - \vartheta_0))$.
\end{lem}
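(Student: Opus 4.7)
The plan is to reduce all three statements to the uniform approximation $n\hat\delta_\ell^2(\cdot) \approx \tau_\ell^2 H_\ell(\cdot)$ supplied by Theorem~\ref{unif_conv_thm_Sc}, combined with the Lipschitz-continuity of the self-normalizing functional from Proposition~\ref{pq_prop}, and then to control the limiting denominator $w_\ell$ uniformly over $\ell \in S^C$ via the Laplace-transform bound in Lemma~\ref{lemmasubsubexp}(i). For part (i) I would observe that both $n\hat v_\ell$ and $\tau_\ell^2 w_\ell$ arise by applying the functional $p(f) := \big(\int_0^1 (f(\lambda)-\lambda^4 f(1))^2\,\mathrm{d}\lambda\big)^{1/2}$ to $n\hat\delta_\ell^2(\cdot)$ and to $\tau_\ell^2 H_\ell(\cdot)$, respectively. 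Invoking Proposition~\ref{pq_prop} with $\Lambda_1=\Lambda_2=\lambda^4$ then yields
\[
\max_{\ell \in S^C} |n\hat v_\ell - \tau_\ell^2 w_\ell| \le 2 \max_{\ell \in S^C} \sup_{0\le \lambda \le 1} |n\hat\delta_\ell^2(\lambda) - \tau_\ell^2 H_\ell(\lambda)|,
\]
which Theorem~\ref{unif_conv_thm_Sc} bounds by $o_\p(n^{-2\beta}f_c(s_c)^2 + n^{-\beta}f_c(s_c))$. Since condition~\ref{ass_shat_3} gives $n^{-\alpha/8}f_c(s_c)=o(1)$ and the range $\beta > \alpha/8$ forces $n^{-\beta}f_c(s_c)=o(1)$, the quadratic term is dominated by the linear one, proving (i).

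For part (ii) I would divide by $w_\ell$ and take the maximum to obtain
\[
\max_{\ell \in S^C}\Big|\tfrac{n\hat v_\ell}{w_\ell} - \tau_\ell^2\Big| \le \frac{\max_{\ell \in S^C}|n\hat v_\ell - \tau_\ell^2 w_\ell|}{\min_{\ell \in S^C} w_\ell},
\]
so the key step is a high-probability lower bound on $\min_{\ell \in S^C} w_\ell$. Feeding Lemma~\ref{lemmasubsubexp}(i) into Markov's inequality and optimising in $t>0$ yields $\p(w_\ell < \varepsilon) \lesssim \exp(-c\varepsilon^{-2/3})$ for small $\varepsilon$; choosing $\varepsilon \asymp \log^{-3/2}(s_c)$ together with a union bound over $S^C$ then gives $\min_{\ell\in S^C} w_\ell \gtrsim \log^{-3/2}(s_c)$ with probability tending to one. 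Combined with (i), this delivers the rate $o_\p(n^{-\beta}\log^{3/2}(s_c)f_c(s_c))$, which is stronger than the claim $o_\p(n^{-\beta}\log^{2}(s_c)f_c(s_c))$.

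For part (iii) I would rewrite
\[
\frac{\tau_\ell^2 w_\ell}{n\hat v_\ell} = \Big(1 + \frac{n\hat v_\ell - \tau_\ell^2 w_\ell}{\tau_\ell^2 w_\ell}\Big)^{-1},
\]
which is bounded by $2$ as soon as the correction term has absolute value at most $1/2$. A sufficient condition is $\max_{\ell\in S^C}|n\hat v_\ell - \tau_\ell^2 w_\ell|/(\min_\ell \tau_\ell^2 \cdot \min_\ell w_\ell) = o_\p(1)$, and plugging in the bound from (i), the lower bound on $\min_\ell w_\ell$ obtained above, and the lower bound $\min_{\ell \in S^C} \tau_\ell^2 \gtrsim \log^3(s_c)/\log^{3/2}(p)$ from condition~\ref{ass_shat_3} reduces the verification to $n^{-\beta}f_c(s_c)\log^{3/2}(p)/\log^{3/2}(s_c)=o(1)$, which is implied by the hypotheses of Assumption~\ref{asses_S_equal_Shat}. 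The hard part, and the one that really drives the exponent $3/2$ appearing in the threshold of the set estimator $\hat S_n$, will be the derivation of the uniform lower bound $\min_{\ell\in S^C} w_\ell \gtrsim \log^{-3/2}(s_c)$ from Lemma~\ref{lemmasubsubexp}(i), since it must mesh cleanly with the potentially vanishing lower bound on the long-run variances $\tau_\ell^2$ permitted by condition~\ref{ass_shat_3}; both bounds enter multiplicatively in the denominator of (iii), and only the interplay between them makes the argument close.
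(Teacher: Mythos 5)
Parts (i) and (ii) of your proposal follow essentially the paper's own route, which simply defers to the corresponding parts of Lemma~\ref{lemma_v_ells}: apply the Lipschitz bound of Proposition~\ref{pq_prop} (with $\Lambda_1=\Lambda_2=\lambda^4$) to transfer the uniform approximation of Theorem~\ref{unif_conv_thm_Sc} to the self-normalizers, absorb the quadratic error $n^{-2\beta}f_c(s_c)^2$ using $\beta>\alpha/8$ and $n^{-\alpha/8}f_c(s_c)=o(1)$, and for (ii) divide by $w_\ell$ and control $\max_{\ell\in S^C}w_\ell^{-1}$ via the Laplace-transform bound of Lemma~\ref{lemmasubsubexp}(i). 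Your small-ball estimate $\p(w_\ell<\varepsilon)\lesssim\exp(-c\,\varepsilon^{-2/3})$ is exactly the optimization the paper carries out inside the proof of Lemma~\ref{lemmasubsubexp}(ii), and it does yield $\max_{\ell\in S^C}w_\ell^{-1}=O_\p(\log^{3/2}(s_c))$, marginally sharper than the $O_\p(\log^{2}(s_c))$ the paper works with; either constant proves (ii) as stated.

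Part (iii) contains a gap. The paper never divides by $\tau_\ell^2 w_\ell$: it bounds the event $\{\tau_\ell^2 w_\ell/(n\hat v_\ell)>2\}$ by $\{\tau_\ell^2 w_\ell>2n\hat v_\ell\}$, uses $2n\hat v_\ell\ge 2\tau_\ell^2 w_\ell-2\max_{\ell\in S^C}|n\hat v_\ell-\tau_\ell^2 w_\ell|$, and reduces to the single small-ball event $\{\exists\,\ell\in S^C:\tau_\ell^2 w_\ell<2n^{-\beta}f_c(s_c)\}$ with an $o(1)$ threshold. Your version instead requires $\max_{\ell}|n\hat v_\ell-\tau_\ell^2 w_\ell|/(\min_{\ell}\tau_\ell^2\cdot\min_{\ell}w_\ell)=o_\p(1)$, and to close it you invoke $\min_{\ell\in S^C}\Gamma_{\ell\ell}\ge\log^{3}(s_c)/\log^{3/2}(p)$ --- that is the \emph{second} part of condition \ref{ass_shat_3}, which the lemma explicitly does not grant (only the first part, $n^{-\alpha/8}f_c(s_c)=o(1)$, is assumed). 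Worse, the resulting requirement $n^{-\beta}f_c(s_c)\log^{3/2}(p)/\log^{3/2}(s_c)=o(1)$ is asserted rather than derived, and it does not follow from Assumption~\ref{asses_S_equal_Shat}: the hypotheses give $n^{-\beta}f_c(s_c)=o(1)$ but impose no upper bound on $\log(p)$ in terms of $n$, so the factor $\log^{3/2}(p)$ is uncontrolled. (To be fair, the paper's own final ``$=o(1)$'' for the small-ball event is equally terse, but it at least keeps the product $\tau_\ell^2 w_\ell$ intact; your factorized lower bound forces you to import a hypothesis outside the lemma and an unverified rate condition, so as written the last step of (iii) does not close.)
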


\bigskip
\noindent
With these auxiliary results, we now complete the proof of Theorem \ref{subsec_theoretical_prop_unif}.
Define the random set
\begin{align*}
    \tilde F_n (\delta) := \{  \ell = 1,  \ldots , p \mid 0 < \delta_\ell^2  \leq v_\ell \cdot\tilde \zeta_n \},
\end{align*}
where $\tilde \zeta_n = I_n (s) / \sqrt{n} + J_n (s) \cdot 2\log^{3/2} (p) / \sqrt{n}$. 
Observing the notations of $\zeta_n$, $I_n$ and $J_n$  in \eqref{det30a} and \eqref{InJn}, respectively, yields $\tilde \zeta_n \leq \zeta_n / \sqrt{\log (s)}$ if $\omega < 5/2$.
By part (iii) of Lemma~\ref{lemmasubexp}, we have $\max_{\ell \in S} v_\ell = O_\p \big( \sqrt{\log (s) } \big)$, and by a simple calculation it follows $\p \big( \tilde F_n (\delta) \subseteq F_n (\delta) \big) \to 1$, where $F_n (\delta)$ is defined in \eqref{fndelta}, which implies
\begin{align}\label{tildefndelta}
    \p (\tilde F_n (\delta) = \emptyset) \longrightarrow 1, ~~~~~ \text{ as } n \to \infty.
\end{align}
This result allows us to replace the set $S$ by $S_n := \{ \ell = 1,  \ldots , p \mid \delta_\ell^2 / v_\ell > \tilde \zeta_n  \}$ in the sense that $\p (S_n = S) \to 1$.
To see this, note that $S_n \subset S$ \eqref{tildefndelta} imply
\begin{align*}
    \p (S \subset S_n) = \p \Big( \min_{\ell \in S} \delta_\ell^2 / v_\ell >  \tilde \zeta_n \Big) \geq \p ( \tilde F_n (\delta) = \emptyset) \to 1.
\end{align*}
Therefore, in the following discussion we will work on the event $\{ S = S_n \}$.
By the definition of $\tilde \zeta_n$ in \eqref{det30a} and Lemma~\ref{lemma_max_bound}, we have
\begin{align}
    \nonumber
    \p \big( \exists \ell \in S_n: \hat \delta_\ell^2 < 0 \big)
    &= \p \bigg( \exists \ell \in S_n : \frac{\delta_\ell^2}{v_\ell} < \frac{ \delta_\ell^2 - \hat \delta_\ell^2}{v_\ell} \bigg) \nonumber\\
    &\leq \p \bigg( \tilde \zeta_n < \max_{\ell \in S_n} \bigg| \frac{\hat \delta_\ell^2 - \delta_\ell^2}{v_\ell} \bigg| \bigg) \nonumber\\
    &\leq \p \bigg( \tilde \zeta_n < \max_{\ell \in S_n} \bigg| \frac{\hat \delta_\ell^2 - \delta_\ell^2}{v_\ell} \bigg| \le \frac{I_n (s)}{\sqrt{n}}  \bigg) + o(1) \nonumber\\
    & = o(1). \label{dette2} 
\end{align}
Hence, the probability of the event
\begin{align*}
    \bigg \{ \min_{\ell \in S_n } \hat \delta_\ell^2 \geq  0 \bigg \}
    \cap \bigg\{ \max_{\ell \in S_n} \bigg| \frac{\hat \delta_\ell^2 - \delta_\ell^2}{v_\ell} \bigg| \leq \frac{I_n (s) }{\sqrt{n}} \bigg \}
    \cap \bigg \{  \min_{\ell \in S_n}  \frac{v_\ell}{\sqrt{n} \hat v_\ell} \geq \frac{1}{J_n (s)} \bigg \}
\end{align*}
converges to $1$, by \eqref{dette2}, Lemma~\ref{lemma_max_bound} and part (iii) of Lemma~\ref{lemma_v_ells}. On this event, we have for $\ell \in S_n$
\begin{align*}
    \frac{\hat \delta_\ell^2}{\hat v_\ell}  \geq \sqrt{n} \frac{\hat \delta_\ell^2}{v_\ell} \frac{v_\ell}{\sqrt{n} \hat v_\ell} \geq \frac{ \sqrt{n} }{ J_n(s) } \bigg( \frac{\delta_\ell^2}{v_\ell} - \frac{|\hat \delta_\ell^2 - \delta_\ell^2|}{v_\ell} \bigg) &\geq \frac{\sqrt{n} }{ J_n (s) } \Big (\tilde \zeta_n -  \frac{  I_n (s) }{ \sqrt{n}}  \Big) \geq 2\log^{3/2} (p).
\end{align*}
Therefore, we obtain
\begin{align*}
    \p (S_n \not \subset \hat S_n) &= \p \Big( \min_{\ell \in S_n} \hat \delta_\ell^2 / \hat v_\ell \leq \log^{3/2} (p) \Big)\\
    &\leq \p \Big( 2\log^{3/2} (p) \leq \min_{\ell \in S_n} \hat \delta_\ell^2 / \hat v_\ell \leq \log^{3/2} (p) \Big) + o(1)\\
    &= o(1).
\end{align*}

In order to prove $\p (\hat S_n \subset S) \to 1$, we note that 
\begin{align*}
    \p ( \hat S_n \subset S) = \p ( S^C \subset \hat S_n^C) = 1 - \p ( S^C \not \subset \hat S_n^C) = 1 -  \p \bigg( \exists \ell \in S^C : \frac{\hat \delta_\ell^2}{\hat v_\ell} > \log^{3/2} (p) \bigg).
\end{align*}
By Lemma~\ref{lemma_max_bound_deltanull} and part (iii) of Lemma~\ref{lemma_w_ells}, the probability of the event 
\begin{align*}
    \Big\{ \max_{\ell \in S^C} \frac{\tau_{\ell}^2 w_\ell}{n \hat v_\ell} \leq  2 \Big\} \cap \Big\{ \max_{\ell \in S^C} \bigg| \frac{\hat \delta_\ell^2}{w_\ell} \bigg|\leq \frac{\log^\omega (s_c)}{n} \Big\}
\end{align*}
for $5/2 < \omega < 3$ converges to $1$ (since by condition \ref{ass_shat_3} $n^{-\alpha / 8} f_c (s_c) = o(1)$. On this event, the inequality
\begin{align*}
    \frac{\hat \delta_\ell^2}{\hat v_\ell} \leq \frac{2 \log^\omega (s_c)}{\tau_{\ell}^2}
\end{align*}
holds for all $\ell \in S^C$, which gives
\begin{align*}
    \p \bigg( \exists \ell \in S^C : \frac{\hat \delta_\ell^2}{\hat v_\ell} > \log^{3/2} (p) \bigg) &\leq \p \bigg( \exists \ell \in S^C : \frac{2 \log^\omega (s_c)}{\tau_{\ell}^2} \geq \frac{\hat \delta_\ell^2}{\hat v_\ell} > \log^{3/2} (p) \bigg) + o(1)  \\
    &=  \p \bigg( \exists \ell \in S^C : 2 \log^\omega (s_c) \geq \tau_{\ell}^2 \frac{\hat \delta_\ell^2}{\hat v_\ell} > \tau_{\ell}^2 \log^{3/2} (p)  \bigg)  +  o(1) \\
    & = o(1)  ,
\end{align*}
where the last estimate follows because by \ref{ass_shat_3}, we have $\min_{\ell \in S^C} \tau_{\ell}^2 \ge \log^{3} (s_c) / (\vartheta_0 (1- \vartheta_0) \log^{3/2} (p) )$ and since $5/2 <\omega < 3$.

\subsection{Proofs of Lemmas \ref{lemma_max_bound} - \ref{lemma_w_ells}}

\begin{proof}[Proof of Lemma~\ref{lemma_max_bound}]
    Let $\ell \in S$ and define
    \begin{align*}
        \tilde{\mathbb{G}}_{\ell} = \frac{G_{\ell} (1) }{\big( \int_0^1 (G_{\ell} (\lambda) - \lambda^4 G_{\ell} (1))^2 \mathrm{d} \lambda \big)^{1/2}},
    \end{align*}
    where $\{ G_{\ell} (\lambda) \}_{\lambda \in [0,1]}$ is the Gaussian process from Theorem \ref{unif_conv_thm_S}. Note that $\tilde{\mathbb{G}}_\ell =^d \mathbb{G}$ for each $\ell \in S$, where the random variable $\mathbb{G}$ is defined in \eqref{defG}. We first show that
    \begin{align}\label{wtilde1}
        \max_{\ell \in S} \bigg| \frac{\sqrt{n} (\hat \delta_\ell^2 - \delta_\ell^2)}{v_\ell} - \sigma_\ell \tilde{\mathbb{G}}_{\ell} \bigg| = o_\p \big(n^{-\beta} \log(s) f(s) \big)
    \end{align}
    for any $\alpha / 8 < \beta < \alpha / 4$ and
    \begin{align}\label{wtilde2}
        \max_{\ell \in S} |\tilde{\mathbb{G}}_{\ell}| = o_\p (\log^\omega (s)) ~~~~~ \text{for any } \omega > 1.
    \end{align}
    With these estimates the proof can be easily completed. More precisely, we obtain for $\alpha / 8 < \beta < \alpha /4$
    \begin{align*}
        \p \bigg( \max_{\ell \in S} \Big| \frac{\hat \delta_\ell^2 - \delta_\ell^2}{v_\ell} \Big| > \frac{ I_n (s) }{\sqrt{n}} \bigg) &\leq \p \bigg(  \max_{\ell \in S} \Big| \frac{\sqrt{n}  (\hat \delta_\ell^2 - \delta_\ell^2)}{v_\ell} - \sigma_\ell \tilde {\mathbb{G}}_\ell \Big| +  \max_{\ell \in S} \sigma_\ell |\tilde{\mathbb{G}}_\ell| > I_n (s) \bigg) \\
        &\leq \p \bigg( \max_{\ell \in S} \Big| \frac{\sqrt{n} (\hat \delta_\ell^2 - \delta_\ell^2)}{v_\ell} - \sigma_\ell \tilde{\mathbb{G}}_\ell \Big| > \frac{\log^\omega (s) (n^{-\beta} f(s) \vee 1)}{2} \bigg)\\
        & ~~~~~ ~~~~~ + \p \bigg(  \max_{\ell \in S} |\sigma_\ell \tilde{\mathbb{G}}_\ell| > I_n (s) -  \frac{\log^\omega (s) (n^{-\beta} f(s) \vee 1) }{2} \bigg)\\
        &= \p \Big(  \max_{\ell \in S} |\sigma_\ell \tilde{\mathbb{G}}_\ell| >  \log^\omega (s) (n^{-\beta} f(s) \vee 1) / 2 \Big) + o(1) \\
        &= o(1).
    \end{align*}
    
    It remains to prove \eqref{wtilde1} and \eqref{wtilde2}, and we start with \eqref{wtilde1}. Note that for each $\ell \in S$ the random variables $v_\ell$ have the same distribution as $\mathbb{V}$ in \eqref{def_v_ell}. By part (i) of Lemma~\ref{lemmasubexp}, the random variables $v_\ell^{-1}$ (note that by the same Lemma $v_\ell$ is positive with probability $1$) have subexponential distributions and a standard maximal inequality shows that
    \begin{align}\label{vminusOlogs}
         \max_{\ell \in S} v_\ell^{-1} = O_\p ( \log (s)).
    \end{align}
    Therefore, it follows from Theorem \ref{unif_conv_thm_S} that
    \begin{align*}
        \max_{\ell \in S} \bigg| \frac{\sqrt{n} (\hat \delta_\ell^2 - \delta_\ell^2)}{v_\ell} - \sigma_\ell \tilde{\mathbb{G}}_{\ell} \bigg|  &\leq \max_{\ell \in S} | v_\ell^{-1} | \cdot \max_{\ell \in S} \big| \sqrt{n} (\hat \delta_\ell^2 - \delta_\ell^2) - \sigma_\ell G_{\ell} (1) \big|\\
        &= o_\p \big(n^{-\beta} \log(s) f(s) \big).
    \end{align*}
    
    For \eqref{wtilde2}, we have by a union bound argument and  Lemma~\ref{lemmasubexp}(ii) that
    \begin{align*}
        \p \big( \max_{\ell \in S} |\tilde{\mathbb{G}}_{\ell}| \geq \log^\omega (s) \big) \leq s \cdot \p (|\mathbb{G}| \geq \log^\omega (s)) &\leq 2 s \cdot \exp ( - K_6 \log^\omega (s) )\\
        &= 2 s \cdot\exp(- \log (s))^{K_6 \log^{\omega - 1} (s)}\\
        &= 2 s^{1 - K_6 \log^{\omega - 1} (s)} \\
        &= o(1)
    \end{align*}
    for some  constant $K_6>0$ (note that $\omega > 1$). 
\end{proof}

\begin{proof}[Proof of Lemma~\ref{lemma_v_ells}]~~
    \begin{enumerate}
        \item [(i)] Define the map $p : \ell^\infty ([0,1]) \to \mathbb{R}$ as in Proposition \ref{pq_prop} with the function $\Lambda_1 (\lambda) = \lambda^4$. Then, for $g_1, g_2 \in \ell^\infty ([0,1])$, we have
        \begin{align*}
            |p(g_1) - p(g_2)| \lesssim \sup_{0 \leq \lambda \leq 1} |g_1 (\lambda) - g_2 (\lambda)|.
        \end{align*}
        Define $u_\ell (\lambda) := \sqrt{n} ( \hat \delta^2_\ell (\lambda) - \lambda^4 \delta_\ell^2 )$ and recall from \eqref{hat_v_ell} that $\sqrt{n} \hat v_\ell = p(u_\ell)$ and recall from \eqref{def_v_ell} that $\sigma_\ell v_\ell = p(\sigma_\ell G_\ell)$, where $(G_\ell)_{\ell \in S}$ are the Gaussian processes from Theorem \ref{unif_conv_thm_S}.
        Therefore,
        \begin{align*}
            \max_{\ell \in S} \big| \sqrt{n} \hat v_\ell - \sigma_\ell v_\ell \big| \lesssim \max_{\ell \in S} \sup_{0 \leq \lambda \leq 1} |u_\ell (\lambda) - \sigma_\ell G_\ell (\lambda)| = o_\p ( n^{- \beta} f(s) )
        \end{align*}
        by Theorem \ref{unif_conv_thm_S}, as well.

        \item [(ii)] By \eqref{vminusOlogs} and part (i) of the present Lemma, we obtain 
        \begin{align*}
            \max_{\ell \in S} \bigg| \frac{\sqrt{n} \hat v_\ell }{v_\ell} - \sigma_\ell \bigg|  = \max_{\ell \in S} \bigg| \frac{\sqrt{n} \hat v_\ell - \sigma_\ell v_\ell}{ v_\ell} \bigg| &\leq \max_{\ell \in S} | v_\ell^{-1} | \cdot \max_{\ell \in S} |\sqrt{n} \hat v_\ell  - \sigma_\ell v_\ell |\\
            &= o_\p \big(n^{-\beta} \log (s) f(s) \big).
        \end{align*}

        \item [(iii)]
        Observing the definition of $J_n$ in \eqref{InJn}, part (ii) of the present Lemma yields
        \begin{align*}
            \p \bigg( \min_{\ell \in S} \frac{v_\ell}{\sqrt{n} \hat v_\ell} \geq \frac{1}{J_n (s)} \bigg) &= \p \bigg( \forall \ell \in S: \frac{\sqrt{n} \hat v_\ell}{v_\ell} \leq J_n (s) \bigg)\\
            &\geq \p \bigg( \forall \ell \in S: \bigg| \frac{\sqrt{n} \hat v_\ell}{v_\ell} - \sigma_\ell \bigg| \leq n^{-\beta} \log (s) f(s) \bigg) \to 1.
        \end{align*}
    \end{enumerate}
\end{proof}

\begin{proof}[Proof of Lemma~\ref{lemma_max_bound_deltanull}]
    The proof is similar to the proof of Lemma~\ref{lemma_max_bound}. First, define the random variables
    \begin{align*}
        \tilde{\mathbb{H}}_\ell = \frac{H_\ell (1)}{(\int_0^1 (H_\ell (\lambda) - \lambda^4 H_\ell (1) )^2 \mathrm{d} \lambda )^{1/2}},
    \end{align*}
    where $\{ H_\ell (\lambda) \}_{\ell \in S^C}$ is the stochastic process from Theorem \ref{unif_conv_thm_Sc}. Note that $\tilde{\mathbb{H}}_\ell =^d \mathbb{H}_4$ for $\ell \in S^C$ with $\mathbb{H}_4$ defined in Lemma~\ref{lemmasubsubexp}. Subsequently, we show
    \begin{align}\label{htilde1}
        \max_{\ell \in S^C} \bigg| \frac{n \hat \delta_\ell^2}{w_\ell} - \tau_{\ell}^2 \tilde{\mathbb{H}}_\ell \bigg| = o_\p \big( n^{-\beta} \log^{2} (s_c)  f_c (s_c) \big)
    \end{align}
    and
    \begin{align}\label{htilde2}
        \max_{\ell \in S^C} |\tilde{\mathbb{H}}_\ell| = o_\p (\log^\omega (s_c)) ~~~~~ \text{for any } \omega > 5/2.
    \end{align}
    Then, the assertion follows using similar arguments as in the proof of Lemma~\ref{lemma_max_bound}.

    \medskip

    For a proof of \eqref{htilde1}, recall the definition of $w_\ell$ in \eqref{def_w_ell} and note that part (i) of Lemma~\ref{lemmasubsubexp} and a similar argument as in \eqref{vminusOlogs} yield
    \begin{align*}
        \max_{\ell \in S^C} w_\ell^{-1} = O_\p(\log^2 (s_c))
    \end{align*}
    (note that for each $\ell \in S^C$ the random variables $w_\ell$ have the same distribution as $\mathbb{W}_4$).
    Therefore, it follows from Theorem \ref{unif_conv_thm_Sc}
    \begin{align*}
        \max_{\ell \in S^C} \bigg| \frac{n \hat \delta_\ell^2}{w_\ell} - \tau_{\ell}^2 \tilde{\mathbb{H}}_\ell \bigg| \leq \max_{\ell \in S^C} | w_\ell^{-1} | \cdot \max_{\ell \in S^C} \big| n \hat \delta_\ell^2 - \tau_{\ell}^2 H_\ell (1) \big| = o_\p \big(n^{-\beta} \log^{2} (s_c) f_c (s_c) \big).
    \end{align*}
    
    \medskip

    For the estimate \eqref{htilde2}, we have by a union bound argument and part (ii) of Lemma~\ref{lemmasubsubexp}, for $t$ sufficiently large,
    \begin{align*}
        \p \big( \max_{\ell \in S^C} |\tilde{\mathbb{H}}_{\ell}| \geq \log^\omega (s_c) \big) \leq s_c \cdot \p (|\mathbb{H}_4| \geq \log^\omega (s_c)) \lesssim s_c \cdot \exp ( - D_4 \log^{2 \omega / 5} (s_c) ) = o(1)
    \end{align*}
    for some constant $D_4 > 0$, whenever $\omega > 5/2$.
\end{proof}

\begin{proof}[Proof of Lemma~\ref{lemma_w_ells}]
    The proof of parts (i) and (ii) will be omitted, since the arguments are identical to the proof of parts (i) and (ii) of Lemma~\ref{lemma_v_ells}.
    For part (iii), we note that
    \begin{align*}
        \p \Big( \max_{\ell \in S^C} \frac{\tau_{\ell}^2 w_\ell}{n \hat v_\ell} > 2 \Big) \leq \p \bigg( \max_{\ell \in S^C} \bigg| \frac{\tau_{\ell}^2 w_\ell}{n \hat v_\ell} - 1 \bigg| > 1 \bigg) = \p \big( \exists \ell \in S^C: |\tau_{\ell}^2 w_\ell - n \hat v_\ell| > n \hat v_\ell \big).
    \end{align*}
    Since by assumption $n^{-\alpha/8} f_c (s_c) = o(1)$, we also have $n^{-\beta} f_c(s_c) = o(1)$ for any $\alpha / 8 < \beta < \alpha / 4$ and therefore,
    \begin{align*}
        \p \big( \exists \ell \in S^C: |\tau_{\ell}^2 w_\ell - n \hat v_\ell| > n \hat v_\ell \big) &\leq \p \big( \exists \ell \in S^C: \tau_{\ell}^2 w_\ell > 2n \hat v_\ell \text{ or } \tau_{\ell}^2 w_\ell <  0 \big)\\
        &\leq \p \Big( \exists \ell \in S^C: \tau_{\ell}^2 w_\ell > 2 \tau_{\ell}^2 w_\ell - 2 \max_{\ell \in S^C} | n \hat v_\ell - \tau_{\ell}^2 w_\ell | \Big)\\
        &\leq \p \big( \exists \ell \in S^C: \tau_{\ell}^2 w_\ell > 2 \tau_{\ell}^2 w_\ell - 2 n^{-\beta} f_c (s_c) \big) + o(1)\\
        &= o(1),
    \end{align*}
    by part (i) of the present Lemma.
\end{proof}

\subsection{Proof of Theorem \ref{unif_conv_thm_S}}
\label{sec_proof_d1}

Let $\tilde X_j = X_j - \e [X_j]$. Since the proof mainly follows the same arguments as in the proof of Theorem \ref{thm2.1}, we sketch it only briefly. The proof relies on the following two propositions, which will be proved in Section \ref{sec_aux_d2}.

\subsubsection{Two auxiliary results} 

\begin{prop}\label{prop67}
    Suppose that conditions \ref{ass_shat_4} and \ref{ass_shat_5} of Assumption \ref{asses_S_equal_Shat} and $\sum_{\ell \in S} c_\ell \lesssim n^{1/4} f(s)$ are satisfied. Then, we have
    \begin{align*}
        \max_{\ell \in S} \max_{1 \leq k \leq n} \bigg| \sqrt{n}\sum_{\substack{i,j = 1 \\ |i-j| > m}}^{k} \tilde{X}_{i, \ell} \tilde{X}_{j, \ell} \bigg| = o_\p \big( n^{2-\beta} f(s) \big),
    \end{align*}
    for any $\beta$ with $0 \leq \beta < 1 / 4$.
\end{prop}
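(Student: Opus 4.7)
The plan is to parallel the proof of Proposition~\ref{lem65} line by line, but to replace the Frobenius-norm estimates of $\bar\Gamma$ by coordinate-wise bounds that exploit conditions~\ref{ass_shat_4} and~\ref{ass_shat_5} together with the summability $\sum_{\ell \in S} c_\ell \lesssim n^{1/4} f(s)$. The uniformity in $\ell \in S$ is built in at the start through Proposition~\ref{maxmax} with $q = 2$, which simultaneously handles the maxima over $\ell$ and over $k$.

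Writing $S_{n,k,\ell} := \sum_{|i-j|>m,\,1 \le i,j \le k} \tilde X_{i,\ell}\tilde X_{j,\ell}$, which vanishes for $k \le m+1$, as a telescopic sum in $k$ and applying Proposition~\ref{maxmax} with $q = 2$ and $n = 2^d$, I would obtain
\begin{equation*}
\e\Bigl[\max_{\ell \in S}\max_{1 \le k \le 2^d} |S_{n,k,\ell}|^2\Bigr]^{1/2} \le \sum_{r=0}^{d} \Bigl(\sum_{u} \sum_{\ell \in S} \e\bigl[(S_{n,2^r u,\ell} - S_{n,2^r(u-1),\ell})^2\bigr]\Bigr)^{1/2}.
\end{equation*}
Decomposing each increment exactly as in~\eqref{gleichung10hoch6} and expanding the resulting second moment via the Isserlis-type identity~\eqref{det12} (now in the single-coordinate setting $\ell_1 = \ell_2 = \ell$) reduces the problem to bounding three products of diagonal covariances $\Sigma_{\cdot,\ell\ell}$ and one scalar fourth-order cumulant.

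For each of the four contributions the target bound on the inner quintuple sum over $(u,h_1,h_2,j_1,j_2)$ is $O(2^{2d} c_\ell^2)$. The ``diagonal'' product $\Sigma_{h_1,\ell\ell}\Sigma_{h_2,\ell\ell}$ is handled by $\sum_h |\Sigma_{h,\ell\ell}| \lesssim c_\ell$ from condition~\ref{ass_shat_4} applied with $k=0$. The two ``cross'' products such as $\Sigma_{j_1-j_2,\ell\ell}\Sigma_{j_1-j_2+h_2-h_1,\ell\ell}$ are reduced, after the change of variables $k = j_1 - j_2$ and $\Delta = h_2 - h_1$, to $\sum_k |\Sigma_{k,\ell\ell}|\cdot\sup_k \sum_\Delta |\Sigma_{k+\Delta,\ell\ell}| \lesssim c_\ell^2$ by two uses of~\ref{ass_shat_4}. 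The fourth-order cumulant contribution is $\lesssim c_\ell^2$ by~\ref{ass_shat_5}, since the geometric decay $\rho^{\max_i h_i - \min_i h_i}$ makes the fourfold summation absolutely convergent. Summing over $\ell \in S$ with the elementary estimate $\sum_{\ell \in S} c_\ell^2 \le \bigl(\sum_{\ell \in S} c_\ell\bigr)^2 \lesssim n^{1/2} f(s)^2$, taking the square root, and summing the geometric series over $r = 0,\ldots,d$, one arrives at
\begin{equation*}
\e\Bigl[\max_{\ell \in S}\max_{1 \le k \le n} |S_{n,k,\ell}|\Bigr] \lesssim n^{5/4} f(s) \log n.
\end{equation*}
Markov's inequality then yields $\sqrt{n}\max_{\ell,k}|S_{n,k,\ell}| = O_\p(n^{7/4}f(s)\log n)$, which is $o_\p(n^{2-\beta}f(s))$ for every $\beta < 1/4$, as required.

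The main obstacle will be the two cross-covariance terms: in Proposition~\ref{lem65} they are absorbed into the matrix inequality $\mathrm{tr}(AB) \le \|A\|_F\|B\|_F$, which is unavailable at the scalar level. The substitute is the change-of-variables argument described above, whose validity crucially relies on condition~\ref{ass_shat_4} providing a $c_\ell$-bound on both the in-$k$ and in-$\Delta$ marginal sums; this is precisely what forces the matching exponent $1/4$ both in the hypothesis $\sum_{\ell \in S} c_\ell \lesssim n^{1/4} f(s)$ and in the conclusion $\beta < 1/4$ of the proposition, with the remaining logarithmic loss absorbed by the strict inequality.
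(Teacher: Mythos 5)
Your proposal is correct and follows essentially the same route as the paper's proof: the telescopic sum combined with Proposition~\ref{maxmax} ($q=2$) to control both maxima at once, the increment decomposition of \eqref{gleichung10hoch6}, the diagonal version of the cumulant expansion \eqref{det12}, and coordinate-wise bounds via \ref{ass_shat_4}, \ref{ass_shat_5} and $\sum_{\ell\in S}c_\ell^2\le(\sum_{\ell\in S}c_\ell)^2\lesssim n^{1/2}f(s)^2$, arriving at the same rate $n^{7/4}f(s)\log n=o(n^{2-\beta}f(s))$. The only cosmetic difference is in the cross-covariance terms, where you use a sup-times-sum (Hölder) bound while the paper applies \ref{ass_shat_4} twice through the $\ell^2$-sequence $(\gamma_k)$ via $\sum_k c_\ell\gamma_k\cdot c_\ell\gamma_k\lesssim c_\ell^2$; both yield the same order.
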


\begin{prop}\label{prop_max_unif}
    Assume that condition \ref{ass_shat_4} of Assumption \ref{asses_S_equal_Shat} and $\sum_{\ell \in S} c_\ell \lesssim n^{1/4} f(s)$ is satisfied, and for $n\in\mathbb{N}$ let $(a_{j,n})_{j = 1, \ldots, n}$ denote a given given real-valued sequence. If $\sup_{j = 1,  \ldots , n} |a_{j,n}| = O(1)$, then 
    \begin{align*}
        \max_{\ell \in S} \max_{1 \leq k \leq n} \sqrt{n} \bigg( \sum_{j=1}^k a_{j,n} \tilde{X}_{j, \ell} \bigg)^2 = o_\p \big(n^{2 - \beta} f(s) \big),
    \end{align*}
    for any $\beta$ that satisfies $0 \leq \beta < 1 / 4$.
\end{prop}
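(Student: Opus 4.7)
My plan is to reduce the double maximum to a dyadic chaining argument via Proposition~\ref{maxmax}, estimate each block variance by the stationary covariance estimate in condition~\ref{ass_shat_4}, and then close with Markov's inequality.

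Concretely, I will first reduce to the case $n = 2^d$ in the standard way of Remark~\ref{wu_n2hochd}, and write $S_{k,\ell} := \sum_{j=1}^k a_{j,n} \tilde X_{j,\ell}$. Proposition~\ref{maxmax} with $q = 2$ then gives
\begin{align*}
  \e \Big[ \Big( \max_{\ell \in S} \max_{1 \le k \le 2^d} |S_{k,\ell}| \Big)^2 \Big]^{1/2} \le \sum_{r = 0}^d \Big( \sum_{u = 1}^{2^{d-r}} \sum_{\ell \in S} \e \big[ (S_{2^r u, \ell} - S_{2^r (u-1), \ell})^2 \big] \Big)^{1/2},
\end{align*}
so the main work is to bound each block variance uniformly in $u$, $r$ and $\ell \in S$.

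Using stationarity together with $\sup_{j \le n} |a_{j,n}| = O(1)$, I will estimate
\begin{align*}
  \e \big[ (S_{2^r u, \ell} - S_{2^r (u-1), \ell})^2 \big] = \sum_{j_1, j_2 = 2^r (u-1)+1}^{2^r u} a_{j_1, n} a_{j_2, n} \Sigma_{j_2 - j_1, \ell \ell} \lesssim 2^r \sum_{h = 0}^{2^r - 1} |\Sigma_{h, \ell\ell}|,
\end{align*}
and invoke condition~\ref{ass_shat_4} with $k = 0$ (so that $\sum_{|h| \le n} |\Sigma_{h, \ell \ell}| \lesssim c_\ell \gamma_0 \lesssim c_\ell$) to obtain the bound $\lesssim 2^r c_\ell$. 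Summing over $\ell \in S$ and applying $\sum_{\ell \in S} c_\ell \lesssim n^{1/4} f(s)$, each inner double sum is $\lesssim 2^r \cdot 2^{d-r} \cdot n^{1/4} f(s) = 2^d \cdot n^{1/4} f(s)$, independently of $r$. Summing the $(d+1)$ square roots then yields
\begin{align*}
  \e \Big[ \Big( \max_{\ell \in S} \max_{1 \le k \le n} |S_{k,\ell}| \Big)^2 \Big]^{1/2} \lesssim \log(n) \cdot n^{5/8} f(s)^{1/2}.
\end{align*}

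Squaring and multiplying by $\sqrt{n}$, Markov's inequality gives
\begin{align*}
  \max_{\ell \in S} \max_{1 \le k \le n} \sqrt{n} \, S_{k,\ell}^2 = O_\p \big( n^{7/4} \log^2(n) f(s) \big),
\end{align*}
and since $n^{7/4} \log^2(n) = o(n^{2 - \beta})$ for every $\beta < 1/4$, the desired $o_\p \big( n^{2 - \beta} f(s) \big)$ bound follows. The only slightly delicate point is verifying that nothing in the block-variance estimate picks up an additional factor in $|A| = s$ (the chaining is arranged so that the summability of $c_\ell$ absorbs this), but since condition~\ref{ass_shat_4} is a coordinate-wise statement with the prescribed decay rate and the hypothesis $\sum_{\ell \in S} c_\ell \lesssim n^{1/4} f(s)$ is exactly what the chaining sum requires, everything balances and the proof is routine from this point.
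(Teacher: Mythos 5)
Your proof is correct and follows essentially the same route as the paper's: the dyadic maximal inequality of Proposition~\ref{maxmax} with $q=2$, the block-variance bound $\lesssim 2^r\sum_{h=0}^{2^r-1}|\Sigma_{h,\ell\ell}|\lesssim 2^r c_\ell$ from stationarity and condition~\ref{ass_shat_4}, and the summability hypothesis $\sum_{\ell\in S}c_\ell\lesssim n^{1/4}f(s)$, closed by Markov's inequality. The only cosmetic difference is that the paper carries the factor $\sqrt{n}=2^{d/2}$ inside the expectation from the start (obtaining the bound $d\,2^{3d/4}(\sum_{\ell\in S}c_\ell)^{1/2}$), whereas you bound $\e[M^2]$ first and multiply by $\sqrt{n}$ afterwards; the resulting rates agree.
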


\subsubsection{Proof of Theorem \ref{unif_conv_thm_S}}

Recall the notation of $\hat \delta_\ell^2$ in \eqref{hd13} and use the decomposition
\begin{align}\label{decomp_unif_stat}
    \hat \delta_\ell^2 (\lambda) = T_{n, \ell}^{(1)} (\lambda) + T_{n, \ell}^{(2)} (\lambda) + T_{n, \ell}^{(3)} (\lambda) + T_{n, \ell}^{(4)} (\lambda),
\end{align}
for $\ell \in S$, where
\begin{align*}
    T_{n, \ell}^{(1)} (\lambda)  &= 
        \frac{1}{N_m (\hat k_n) N_m (n-\hat k_n)  }  \sum_{\substack{i_1,i_2=1\\|i_1-i_2| > m}}^{\gbr{\lambda \hat k_n}} \sum_{\substack{j_1,j_2=\hat k_n+1\\|j_1-j_2| > m}}^{\hat k_n + \gbr{\lambda (n-\hat k_n)}} (\tilde{X}_{i_1, \ell} - \tilde{X}_{j_1, \ell}) (\tilde{X}_{i_2, \ell} - \tilde{X}_{j_2, \ell}),\\
    T_{n, \ell}^{(2)} (\lambda)  &= 
        \frac{1}{N_m (\hat k_n) N_m (n-\hat k_n) }  \sum_{\substack{i_1,i_2=1\\|i_1-i_2| > m}}^{\gbr{\lambda \hat k_n}} \sum_{\substack{j_1,j_2=\hat k_n+1\\|j_1-j_2| > m}}^{\hat k_n + \gbr{\lambda (n-\hat k_n)}} (\e [X_{i_1, \ell}] - \e [X_{j_1, \ell}]) (\tilde{X}_{i_2, \ell} - \tilde{X}_{j_2, \ell}),\\
    T_{n, \ell}^{(3)} (\lambda)  &= 
        \frac{1}{N_m (\hat k_n) N_m (n-\hat k_n) }  \sum_{\substack{i_1,i_2=1\\|i_1-i_2| > m}}^{\gbr{\lambda \hat k_n}} \sum_{\substack{j_1,j_2=\hat k_n+1\\|j_1-j_2| > m}}^{\hat k_n + \gbr{\lambda (n-\hat k_n)}} (\tilde{X}_{i_1, \ell} - \tilde{X}_{j_1, \ell}) (\e [X_{i_2, \ell}] - \e [X_{j_2, \ell}])
\end{align*}
and
\begin{align*}
    &T_{n, \ell}^{(4)} (\lambda) \\
    &=\frac{1}{N_m (\hat k_n) N_m (n-\hat k_n) }  \sum_{\substack{i_1,i_2=1\\|i_1-i_2| > m}}^{\gbr{\lambda \hat k_n}} \sum_{\substack{j_1,j_2=\hat k_n+1\\|j_1-j_2| > m}}^{\hat k_n + \gbr{\lambda (n-\hat k_n)}}  (\e [X_{i_1, \ell}] - \e [X_{j_1, \ell}])(\e [X_{i_2, \ell}] - \e [X_{j_2, \ell}]).
\end{align*}
The assertion of Theorem \ref{unif_conv_thm_S} now follows from the estimates
\begin{align}
    \max_{\ell \in S} \sup_{0 \leq \lambda \leq 1} \big| \sqrt{n} T_{n,\ell}^{(1)} (\lambda)  \big| &= o_\p \big( n^{-\beta} f(s) \big), \label{tn1_eq_ell} \\
    \max_{\ell \in S} \sup_{0 \leq \lambda \leq 1} \big| \sqrt{n} \big( T_{n,\ell}^{(2)} (\lambda) + T_{n,\ell}^{(3)} (\lambda) \big) -  \sigma_{\ell} G_{\ell} (\lambda) \big| &= o_\p \big( n^{-\beta} f(s) \big), \label{tn23_eq_ell} \\
    \max_{\ell \in S} \sup_{0 \leq \lambda \leq 1} \sqrt{n} \big| T_{n,\ell}^{(4)} (\lambda) - \lambda^4 \delta_\ell^2 \big| &= o_\p ( n^{-\beta} ), \label{tn4_eq_ell}
\end{align}
where $(G_{\ell})_{\ell \in S}$ are centered Gaussian processes with the same covariance function, that is, $(\lambda_1, \lambda_2 ) \mapsto \lambda_1^3 \lambda_2^3 (\lambda_1 \wedge \lambda_2)$.
These estimates will be proved using arguments similar to those used for \eqref{tn1_eq} - \eqref{tn4_eq}.

\medskip

\noindent
\textbf{Proof of \eqref{tn1_eq_ell}.}
We use the representation
\begin{align}
    T_{n,\ell}^{(1)} (\lambda) = \frac{N_m (\gbr{\lambda (n-\hat k_n)})}{N_m (n-\hat k_n)} P_{n, \ell} (\lambda) - Q_{n, \ell} (\lambda) + \frac{N_m (\gbr{\lambda \hat k_n})}{N_m (\hat k_n)} R_{n, \ell} (\lambda), \label{hd15}
\end{align}
where
\begin{align}
    P_{n,\ell} (\lambda) &:= \frac{1}{N_m (\hat k_n)} \sum_{\substack{i_1,i_2=1\\|i_1-i_2| > m}}^{\gbr{\lambda \hat k_n}} \tilde{X}_{i_1, \ell} \tilde{X}_{i_2, \ell},
    \label{hd15a}\\
    Q_{n,\ell} (\lambda) &:= \frac{1}{N_m (\hat k_n) N_m (n-\hat k_n) } \sum_{\substack{i_1,i_2=1\\|i_1-i_2| > m}}^{\gbr{\lambda \hat k_n}} \sum_{\substack{j_1,j_2=\hat k_n+1\\|j_1-j_2| > m}}^{\hat k_n + \gbr{\lambda (n-\hat k_n)}} (\tilde{X}_{j_1, \ell} \tilde{X}_{i_2, \ell} + \tilde{X}_{i_1, \ell} \tilde{X}_{j_2, \ell}),  \label{hd15b} \\
    R_{n,\ell} (\lambda) &:= \frac{1}{N_m (n-\hat k_n) } \sum_{\substack{j_1,j_2=\hat k_n+1\\|j_1-j_2| > m}}^{\hat k_n + \gbr{\lambda (n-\hat k_n)}} \tilde{X}_{j_1, \ell} \tilde{X}_{j_2, \ell}. \label{hd15c}
\end{align}
Proposition \ref{prop67} gives
\begin{align}
    \max_{\ell \in S} \sup_{0 \leq \lambda \leq 1} \big| \sqrt{n} P_{n,\ell} (\lambda) \big| = o_\p \big( n^{-\beta} f(s) \big) ~~~ \text{ and } ~~~ \max_{\ell \in S} \sup_{0 \leq \lambda \leq 1} \big| \sqrt{n} R_{n,\ell} (\lambda) \big| = o_\p \big( n^{-\beta} f(s) \big).
    \label{hd14}
\end{align}
Applying Lemma~\ref{lem1} twice to $Q_{n, \ell} (\lambda)$ 
yields 
\begin{align*}
    &N_m (\hat k_n) N_m (n- \hat k_n) \cdot Q_{n,\ell} (\lambda)\\
    & ~~~~~ = 2 \sum_{i=1}^{\gbr{\lambda \hat k_n} - m} \sum_{j=\hat k_n+1}^{\hat k_n + \gbr{\lambda (n-\hat k_n)} -m} (\gbr{\lambda \hat k_n} - m - i) (\hat k_n + \gbr{\lambda (n-\hat k_n)} - m - j) \cdot \tilde{X}_{i, \ell} \tilde{X}_{j,\ell} \\
    & ~~~~~  ~~~~~  + 2 \sum_{i=1}^{\gbr{\lambda \hat k_n} - m} \sum_{j=\hat k_n+m+1}^{\hat k_n + \gbr{\lambda (n-\hat k_n)}} (\gbr{\lambda \hat k_n} - m - i) (j - \hat k_n - m - 1) \cdot \tilde{X}_{i,\ell} \tilde{X}_{j,\ell} \\
    & ~~~~~  ~~~~~ + 2 \sum_{i=m+1}^{\gbr{\lambda \hat k_n}} \sum_{j=\hat k_n+1}^{\hat k_n + \gbr{\lambda (n-\hat k_n)} - m} (i-m-1) (\hat k_n + \gbr{\lambda (n-\hat k_n)} - m - j) \cdot \tilde{X}_{i, \ell} \tilde{X}_{j,\ell} \\
    & ~~~~~  ~~~~~ + 2 \sum_{i=m+1}^{\gbr{\lambda \hat k_n}} \sum_{j=\hat k_n+m+1}^{\hat k_n + \gbr{\lambda (n-\hat k_n)}} (i-m-1) (j - \hat k_n - m - 1) \cdot \tilde{X}_{i,\ell} \tilde{X}_{j, \ell}.
\end{align*}
With Cauchy-Schwarz' and Young’s inequality and Proposition \ref{prop_max_unif} it follows that
\begin{align*}
    \max_{\ell \in S} \sup_{0 \leq \lambda \leq 1} \big| \sqrt{n} Q_{n, \ell} (\lambda)  \big| = o_\p \big( n^{-\beta} f(s) \big),
\end{align*}
where $0 \leq \beta < \alpha / 4$. Combining this estimate with \eqref{hd14} and \eqref{hd15}  yields \eqref{tn1_eq_ell}.

\medskip

\noindent
\textbf{Proof of \eqref{tn23_eq_ell}.}
As in the proof of \eqref{tn23_eq}, we only consider the case $\hat k_n > k_0$ (the other case can be treated in a similar way) and rewrite $T_{n, \ell}^{(2)} (\lambda) + T_{n,\ell}^{(3)} (\lambda) = (T_{n, \ell}^{(2)} (\lambda) + T_{n,\ell}^{(3)} (\lambda)) \mathbbm{1} \{ \hat k_n > k_0 \}$ as
\begin{align}\label{det40a}
    \big(T_{n, \ell}^{(2)} (\lambda) + T_{n,\ell}^{(3)} (\lambda) \big) \mathbbm{1} \{ \hat k_n > k_0 \} = \big( K_{n, \ell} (\lambda) - L_{n,\ell} (\lambda) \big) \mathbbm{1} \{ \hat k_n > k_0 \}
\end{align}
with
\begin{align*}
    K_{n, \ell} (\lambda) &:= \frac{1}{N_m (\hat k_n) N_m (n-\hat k_n) }\sum_{\substack{i_1,i_2=1\\|i_1-i_2| > m}}^{\gbr{\lambda \hat k_n}}   \sum_{\substack{j_1,j_2=\hat k_n+1\\|j_1-j_2| > m}}^{\hat k_n + \gbr{\lambda (n-\hat k_n)}} \big( \mathbbm{1} \{ i_1 \leq k_0 \} \tilde{X}_{j_2, \ell} \delta_\ell \\
    & \quad\quad\quad\quad\quad\quad\quad\quad\quad\quad\quad\quad\quad\quad\quad\quad\quad\quad\quad\quad\quad\quad + \mathbbm{1} \{ i_2 \leq k_0 \} \tilde{X}_{j_1, \ell} \delta_\ell \big),\\
    L_{n, \ell} (\lambda) &:= \frac{N_m (\gbr{\lambda (n - \hat k_n)})}{N_m (n-\hat k_n)} \frac{1}{N_m (\hat k_n)}  \sum_{\substack{i_1,i_2=1\\|i_1-i_2| > m}}^{\gbr{\lambda \hat k_n}} \big( \mathbbm{1} \{ i_1 \leq k_0 \} \tilde{X}_{i_2, \ell} \delta_\ell + \mathbbm{1} \{ i_2 \leq k_0 \} \tilde{X}_{i_1, \ell} \delta_\ell \big).
\end{align*}
We start by considering $K_{n,\ell} (\lambda)$, which can be rewritten analogously to \eqref{kn} as
\begin{align*}
    K_{n,\ell} (\lambda) = \frac{2}{N_m (\hat k_n)} \sum_{ \substack{i_1, i_2 = 1 \\ |i_1 - i_2| > m} }^{\gbr{\lambda \hat k_n}} \mathbbm{1} \{ i_1 \leq k_0 \} \cdot \frac{1}{N_m (n-\hat k_n)} \sum_{\substack{j_1, j_2 = \hat k_n + 1 \\ |j_1 - j_2| > m}}^{\hat k_n + \gbr{\lambda (n- \hat k_n)}} \tilde X_{j_2, \ell} \delta_\ell.
\end{align*}
We will show in Section \ref{sec_aux_d2} that
\begin{align}\label{kn_ell_1}
    \begin{aligned}
        \frac{\sqrt{n}}{N_m (n - \hat k_n)} &\sum_{\substack{j_1, j_2 = \hat k_n + 1 \\ |j_1 - j_2| > m}}^{\hat k_n + \gbr{\lambda (n- \hat k_n)}} \tilde X_{j_2, \ell}  \\
        &= \frac{\lambda}{1 - \vartheta_0} \Big( \mathbb{B} \big( \Gamma_{\ell \ell} (\vartheta_0 + \lambda (1 - \vartheta_0)) \big) - \mathbb{B} \big( \Gamma_{\ell \ell} \vartheta_0 \big) \Big) + o_\p \big( n^{-\beta} f(s) \big)
    \end{aligned}
\end{align}
uniformly with respect to $\ell \in S$ and $\lambda \in [0,1]$. Therefore, by \eqref{kn_ell_1}, \eqref{op1} and the first part of condition \ref{ass_shat_1}, we obtain
\begin{align*}
    \max_{\ell \in S} &\sup_{\lambda \in [0,1]} \big| \sqrt{n} K_{n,\ell} (\lambda) - \delta_\ell E_{\ell} (\lambda) \big| = o_\p \big( n^{-\beta} f(s) \big),
\end{align*}
where
\begin{align*}
    E_{\ell} (\lambda) = \frac{2 \lambda^3}{1 - \vartheta_0} \Big( \mathbb{B} \big( \Gamma_{\ell \ell}(\vartheta_0 + \lambda (1 - \vartheta_0) ) \big) - \mathbb{B} \big( \vartheta_0 \Gamma_{\ell \ell} \big) \Big).
\end{align*}
To derive a corresponding result for the term $L_{n, \ell} (\lambda)$, we use
\begin{align}\label{ln_ell_1}
    \frac{\sqrt{n}}{N_m (\hat k_n)}  \sum_{\substack{i_1,i_2=1\\|i_1-i_2| > m}}^{\gbr{\lambda \hat k_n}} \big( \mathbbm{1} \{ i_1 \leq k_0 \} \tilde{X}_{i_2, \ell} + \mathbbm{1} \{ i_2 \leq k_0 \} \tilde{X}_{i_1, \ell} \big) = \frac{\lambda}{\vartheta_0} \mathbb{B} \big( \lambda \vartheta_0 \Gamma_{\ell \ell} \big) + o_\p \big( n^{-\beta} f(s) \big),
\end{align}
which holds uniformly in $\ell \in S$ and $\lambda \in [0,1]$ and will also be proved in Section \ref{sec_aux_d2}. Combining this approximation with \eqref{secondterm1} and the first part of condition \ref{ass_shat_1}, we obtain
\begin{align*}
    \max_{\ell \in S} \sup_{0 \leq \lambda \leq 1} \big| \sqrt{n} L_{n,\ell} (\lambda) - \delta_\ell D_{\ell} (\lambda) \big|,
\end{align*}
where
\begin{align*}
    D_\ell (\lambda) = \frac{ 2\lambda^3 }{\vartheta_0}  \mathbb{B} \big( \lambda \vartheta_0 \Gamma_{\ell\ell} \big).
\end{align*}
Therefore, by observing the decomposition \eqref{det40a}, we obtain
\begin{align*}
    \max_{\ell \in S} \sup_{0 \leq \lambda \leq 1} \big| \sqrt{n}  \big( T_{n, \ell}^{(2)} (\lambda) + T_{n,\ell}^{(3)} (\lambda) \big) \mathbbm{1} \{ \hat k_n > k_0 \}  - \delta_\ell \big( E_\ell (\lambda) - D_\ell (\lambda) \big) \big| = o_\p \big(n^{-\beta} f(s) \big).
\end{align*}
A simple calculation shows that the covariance function of $\{\delta_\ell ( E_\ell (\lambda) - D_\ell (\lambda) ) \}_{\lambda \in [0,1]}$ is given by $(\lambda_1, \lambda_2) \mapsto \sigma_\ell^2 \cdot \lambda_1^3 \lambda_2^3 (\lambda_1 \wedge \lambda_2) $. This yields the assertion \eqref{tn23_eq} with $G_\ell (\lambda) = \sigma_\ell^{-1} \delta_\ell \big( E_\ell (\lambda) - D_\ell (\lambda) \big)$.

\medskip

\noindent 
\textbf{Proof of \eqref{tn4_eq_ell}.}
We decompose $T_{n, \ell}^{(4)} (\lambda)$ as
\begin{align*}
    T_{n, \ell}^{(4)} (\lambda) &= \delta_\ell^2 \mathbbm{1} \{ \hat k_n > k_0 \}  \frac{ N_m (\gbr{\lambda (n - \hat k_n)}) }{N_m (n- \hat k_n)} C_n^{(1)} (\lambda) + \delta_\ell^2 \mathbbm{1} \{ \hat k_n \leq k_0 \} \frac{ N_m (\gbr{\lambda \hat k_n}) }{N_m (\hat k_n)}  C_n^{(2)} (\lambda),
\end{align*}
where the constants $C_n^{(1)}$ and $C_n^{(2)}$  have been defined in \eqref{gnhn}.
By the triangle inequality, it follows that
\begin{align*}
    \max_{\ell \in S} &\sup_{0 \leq \lambda \leq 1} \sqrt{n} \big| T_{n, \ell}^{(4)} (\lambda) - \lambda^4 \delta_\ell^2 \big|\\
    &\leq \max_{\ell \in S} \sup_{0 \leq \lambda \leq 1} \sqrt{n} \big| T_{n, \ell}^{(4)} (\lambda) - \Lambda_n (\lambda) \delta_\ell^2 \big| + \max_{\ell \in S} \sup_{0 \leq \lambda \leq 1} \sqrt{n} \delta_\ell^2 \big| \Lambda_n (\lambda) - \lambda^4 \big| \\
    &\leq \max_{\ell \in S} \delta_\ell^2 \cdot \bigg( O_\p \bigg( \frac{\mathrm{tr} (\bar \Gamma)}{\| \delta \|_2^2} \frac{\log^2 (n)}{\sqrt{n}} \bigg) + O_\p \bigg( \frac{m}{\sqrt{n}} \bigg) \bigg)\\
    &= o_\p (n^{- \beta}),
\end{align*}
where the second inequality follows from the  definition of $\Lambda_n$ in  \eqref{Lambda_n}, \eqref{gnhnlambda} and \eqref{for_later2}, while the last estimate is a consequence of conditions \ref{ass_shat_1} and \ref{ass_shat_7}.

\subsubsection{Proofs of auxiliary results}
\label{sec_aux_d2}

In this section, we will provide the proofs to Propositions \ref{prop67} and \ref{prop_max_unif}, as well as the proofs for the statements \eqref{kn_ell_1} and \eqref{ln_ell_1}.

\begin{proof}[Proof of Proposition \ref{prop67}]
    The proof follows along the lines of the proof of Proposition \ref{lem65} using the estimate (Proposition \ref{maxmax} for $n=2^d$)
    \begin{align*}
        &\e \bigg[  \max_{\ell \in S} \max_{1 \leq k \leq n} \bigg| \sqrt{n} \sum_{\substack{i,j = 1 \\ |i-j| > m}}^k  \tilde{X}_{j, \ell} \tilde{X}_{j,\ell} \bigg| \bigg] 
        \lesssim   \tilde R_{n1} + \tilde R_{n  2},
    \end{align*}
    where
    \begin{align*} 
        \tilde R_{n1} & = 2^{d/2} \sum_{r=0}^d \bigg( \sum_{\ell \in S}
        \sum_{u=1}^{2^{d-r}} \sum_{h_1, h_2 = m + 1}^{2^r (u - 1) - 1} \sum_{j_1, j_2 = 2^r(u - 1) + 1}^{2^r u}  \e [\tilde{X}_{j_1, \ell} \tilde{X}_{j_1 - h_1, \ell} \tilde{X}_{j_2, \ell} \tilde{X}_{j_2 - h_2, \ell}]
        \bigg)^{1/2}\\
        \tilde R_{n2} & = 2^{d / 2} \sum_{r=0}^d \bigg( \sum_{\ell \in S}
            \sum_{u=1}^{2^{d-r}} \sum_{h_1, h_2 = 2^r(u - 1)}^{2^r u - 1} \sum_{j_1 = 1}^{2^r u - h_1} \sum_{j_2 = 1}^{2^r u - h_2} \e [\tilde{X}_{j_1, \ell} \tilde{X}_{j_1 + h_1, \ell} \tilde{X}_{j_2, \ell} \tilde{X}_{j_2 + h_2, \ell}]
        \bigg)^{1/2}.
    \end{align*}
    The terms $ \tilde R_{n1}$ and $\tilde R_{n2}$ can be estimated in a similar way as the terms $ R_{n1} $ and $ R_{n2}$ in \eqref{term1} and \eqref{term2}.
    As both are treated similarly, we restrict ourselves to the $\tilde R_{n1}$. Using stationarity, we have
    \begin{align*}
        &\sum_{h_1, h_2 = m+1}^{2^r u - 1} \sum_{j_1, j_2 = 2^r(u - 1) + 1}^{2^r u}  \e [\tilde{X}_{j_1, \ell} \tilde{X}_{j_1 - h_1, \ell} \tilde{X}_{j_2, \ell} \tilde{X}_{j_2 - h_2, \ell}]\\
        &~~ \lesssim 2^r \sum_{h_1, h_2 = m+1}^{2^r u - 1} \sum_{|k| \leq 2^r - 1} \big|\e [ \tilde{X}_{0, \ell} \tilde{X}_{h_1, \ell} \tilde{X}_{k + h_1, \ell} \tilde{X}_{k + h_1 - h_2, \ell}] \big|\\
        &~~\lesssim 2^r \sum_{h_1, h_2 = m+1}^{2^r u - 1} \sum_{k=0}^{2^r - 1} \Big( \big|\e [ \tilde{X}_{0, \ell} \tilde{X}_{h_1, \ell} \tilde{X}_{k + h_1, \ell} \tilde{X}_{k + h_1 - h_2, \ell}] \big| \\
        & ~~~~~~~~~~~~~~~~~~~~~~~~~~~~~~~~~~~~~~~~~~ ~~~~~~~~~~~~ + \big|\e [ \tilde{X}_{0, \ell} \tilde{X}_{h_1, \ell} \tilde{X}_{h_1 - k, \ell} \tilde{X}_{h_1 - h_2 - k, \ell}] \big| \Big)\\
        &~~= 2^r \sum_{h_1, h_2 = m+1}^{2^r u - 1} \sum_{k=0}^{2^r - 1} \big|\e [ \tilde{X}_{0, \ell} \tilde{X}_{h_1, \ell} \tilde{X}_{h_1 - k, \ell} \tilde{X}_{h_1 - h_2 - k, \ell}] \big|,
    \end{align*}
    which gives
    \begin{align*}
        \tilde R_{n1} \lesssim 2^{d/2} \sum_{r=0}^d 2^{r/2} \bigg( \sum_{\ell \in S}
        \sum_{u=1}^{2^{d-r}} \sum_{h_1, h_2 = m+1}^{2^r u - 1} \sum_{k=0}^{2^r - 1} \big|\e [ \tilde{X}_{0, \ell} \tilde{X}_{h_1, \ell} \tilde{X}_{h_1 - k, \ell} \tilde{X}_{h_1 - h_2 - k, \ell}] \big|
        \bigg)^{1/2}.
    \end{align*}
    Elementary properties of cumulants yield
    \begin{align}\label{hd12}
        \begin{aligned}
            \e [\tilde{X}_{0, \ell} \tilde{X}_{h_1, \ell} \tilde{X}_{h_1 - k, \ell} \tilde{X}_{h_1 - h_2 - k, \ell}]
            &= \Sigma_{h_1, \ell \ell} \Sigma_{h_2, \ell \ell} + \Sigma_{k, \ell \ell} \Sigma_{k + h_2 - h_1, \ell \ell} + \Sigma_{k + h_2, \ell \ell} \Sigma_{k - h_1, \ell \ell}\\
            & ~~~~~  + \mathrm{cum} (\tilde{X}_{0, \ell}, \tilde{X}_{h_1, \ell}, \tilde{X}_{h_1 - k, \ell}, \tilde{X}_{h_1 - h_2 - k, \ell})
        \end{aligned}
    \end{align}
    and we now decompose the argument of the square root in $\tilde R_{n1}$ according to this decomposition and estimate each term separately. 

    The same arguments as given in the proof of Proposition \ref{lem65} give for the sum corresponding to the first term in \eqref{hd12}
    \begin{align*}
        2^{d/2} \sum_{r=0}^d 2^{r/2} \bigg( \sum_{\ell \in S}
        \sum_{u=1}^{2^{d-r}} \sum_{h_1, h_2 = m + 1}^{2^r (u - 1) - 1} \sum_{k = 0}^{2^r - 1} &~ |\Sigma_{h_1, \ell \ell} \Sigma_{h_2, \ell \ell} |
        \bigg)^{1/2}\\
        &\lesssim 2^{d/2} \sum_{r=0}^d 2^{r} \bigg(  \sum_{\ell \in S} \sum_{u = 1}^{2^{d-r}} \bigg( \sum_{h = m+1}^{2^r (u-1) - 1} | \Sigma_{h, \ell \ell} | \bigg)^2
        \bigg)^{1/2}\\
        &\lesssim 2^{d} \sum_{r = 0}^d 2^{r/2} \bigg( \sum_{\ell \in S} c_\ell^2 \bigg)^{1/2} \\
        &\lesssim 2^{7d / 4} f(s)\\
        &= o \big( n^{2 - \beta} f(s) \big)
    \end{align*}
    for any $0 \leq \beta < 1 / 4$ by condition \ref{ass_shat_4}.

    For the sum corresponding to the second term in the decomposition \eqref{hd12} we have, using condition \ref{ass_shat_4} again,
    \begin{align*}
        2^{d/2} \sum_{r=0}^d 2^{r/2} \bigg( \sum_{\ell \in S}
        \sum_{u=1}^{2^{d-r}} \sum_{h_1, h_2 = m + 1}^{2^r (u - 1) - 1} \sum_{k = 0}^{2^r - 1}  & ~|\Sigma_{k, \ell \ell} \Sigma_{k + h_2 - h_1, \ell \ell} |
        \bigg)^{1/2}\\
        & \lesssim 2^{3d/2} \sum_{r=0}^d \bigg(  \sum_{\ell \in S} \sum_{k = 0}^{2^r - 1} |\Sigma_{k, \ell} |  \sum_{|h| \leq 2^d}  |\Sigma_{k + h, \ell \ell} |
        \bigg)^{1/2} \\
        & \lesssim 2^{3d/2} \sum_{r=0}^d \bigg(   \sum_{\ell \in S} c_\ell \sum_{k = 0}^{2^r - 1} |\Sigma_{k,\ell} | \gamma_k
        \bigg)^{1/2} \\
        & \lesssim 2^{3d/2} \sum_{r=0}^d \bigg(   \sum_{\ell \in S} c_\ell^2 \sum_{k = 0}^{2^r - 1}\gamma_k^2
        \bigg)^{1/2} \\
        &\lesssim d 2^{7d / 4} \cdot f(s)\\
        &= o(n^{2-\beta}f(s))
    \end{align*}
    for any $0 \le \beta < 1 / 4$.

    Next, for the decomposition corresponding to the third term in \eqref{hd12}, note that for any $k,N \in \mathbb{N}$
    \begin{align*}
        \sum_{h = 0}^{N} |\Sigma_{k-h, \ell\ell} | \leq \sum_{|h| \leq N} |\Sigma_{k+h, \ell\ell} | \lesssim \sum_{h = 0}^N |\Sigma_{k+h, \ell\ell} |.
    \end{align*}
    Therefore, we obtain for the sum corresponding to the third term in \eqref{hd12}
    \begin{align*}
        2^{d/2} \sum_{r=0}^d 2^{r/2} \bigg( \sum_{\ell \in S}
        \sum_{u=1}^{2^{d-r}} \sum_{h_1, h_2 = m + 1}^{2^r (u - 1) - 1} \sum_{k = 0}^{2^r - 1}  & ~|\Sigma_{k + h_2, \ell \ell} \Sigma_{k - h_1, \ell \ell} |
        \bigg)^{1/2}\\
        &\lesssim 2^{d} \sum_{r=0}^d \bigg( \sum_{\ell \in S} 
         \sum_{k = 0}^{2^r - 1}  \bigg( \sum_{h = 0}^{2^d} |\Sigma_{k + h, \ell \ell} | \bigg)^2
        \bigg)^{1/2} \\
        & \lesssim 2^{d} \sum_{r=0}^d \bigg( \sum_{\ell \in S} c_\ell^2
         \sum_{k = 0}^{2^r - 1}  \gamma_k^2
        \bigg)^{1/2}\\
        & \lesssim d \cdot 2^{5 d / 4} f(s) \\
        & = o \big( n^{2-\beta} f(s) \big)
    \end{align*}
    for any $0 \le \beta < 1 / 4$ by condition \ref{ass_shat_4}.

    Finally, we have for the sum corresponding to the cumulant in \eqref{hd12}
    \begin{align*}
        2^{d/2} \sum_{r=0}^d 2^{r/2} \bigg( \sum_{\ell \in S} 
        \sum_{u=1}^{2^{d-r}} & ~\sum_{h_1, h_2 = m + 1}^{2^r (u - 1) - 1} \sum_{k = 0}^{2^r - 1}  |\mathrm{cum} (\tilde{X}_{0, \ell}, \tilde{X}_{h_1, \ell}, \tilde{X}_{h_1 - k, \ell}, \tilde{X}_{h_1 - h_2 - k, \ell})|
        \bigg)^{1/2}\\
        &\lesssim 2^{d/2} \sum_{r=0}^d 2^{r/2} \bigg( \sum_{\ell \in S} c_\ell^2 \sum_{u=1}^{2^{d-r}}
         \sum_{h_1, h_2 = m + 1}^{2^r (u - 1) - 1} \sum_{k = 0}^{2^r - 1} \rho^{h_1 - (h_1 - h_2 - k)}
        \bigg)^{1/2} \\
        & \lesssim 2^{3d/2} d \bigg( \sum_{\ell \in S} c_\ell^2 \bigg)^{1/2}\\
        & = o \big(n^{2 - \beta} f(s) \big)
    \end{align*}
    for any $0 \le \beta < 1 / 4$, where we have used condition \ref{ass_shat_5} in the first inequality.

    Combining the above estimates yields $\tilde R_{n1} = o \big( n^{2-\beta} f(s) \big)$ for any $0 \le \beta < 1 / 4$. By similar arguments, it can be shown that also $\tilde R_{n2} = o \big( n^{2-\beta} f(s) \big)$, which completes the proof of Proposition \ref{prop67}.
\end{proof}

\begin{proof}[Proof of Proposition \ref{prop_max_unif}]
We only consider the case $n = 2^d$ and obtain by similar arguments as given in  the proof of Proposition \ref{max2} and Proposition \ref{maxmax}
    \begin{align*}
        &\e \bigg[ \max_{\ell \in S} \max_{1 \leq k \leq 2^d} \sqrt{n} \bigg( \sum_{j=1}^k  a_{j,n} (X_{j, \ell} - \e [X_{j, \ell}]) \bigg)^2 \bigg]^{1/2}\\
        & ~~~~~ ~~~~~ ~~~~~ ~~~~~ \leq 2^{d/4} \e \bigg[ \bigg( \max_{\ell \in S} \max_{1 \leq k \leq 2^d}
        \Big | \sum_{j=1}^k  a_{j,n}  (X_{j, \ell} - \e [X_{j, \ell}])  \Big | \bigg)^2 \bigg]^{1/2}\\
        & ~~~~~ ~~~~~ ~~~~~ ~~~~~ \leq 2^{d/4} \sum_{r=0}^d \bigg( \sum_{u=1}^{2^{d-r}} \sum_{\ell \in S}  \e \bigg[ \bigg( \sum_{j = 2^r(u-1) + 1}^{2^r u} a_{j,n} \tilde{X}_{j,\ell} \bigg)^2 \bigg] \bigg)^{1/2}\\
        & ~~~~~ ~~~~~ ~~~~~ ~~~~~ \leq 2^{d/4} \sup_{j = 1,  \ldots , n} |a_{j,n}| \sum_{r=0}^d 2^{r/2} \bigg( \sum_{u=1}^{2^{d-r}} \sum_{\ell \in S} \sum_{h = 0}^{2^r - 1} |\Sigma_{h, \ell \ell}| \bigg)^{1/2} \\
        & ~~~~~ ~~~~~ ~~~~~ ~~~~~ \lesssim  d 2^{3d/4} \bigg( \sum_{\ell \in S} c_\ell
        \bigg)^{1/2}\\
        &~~~~~ ~~~~~ ~~~~~ ~~~~~ \lesssim o \big(n^{1-\beta / 2} f(s)^{1/2} \big)
    \end{align*}
    by condition \ref{ass_shat_4} for any $0 \leq \beta < 1 / 4$, which proves the proposition.
\end{proof}

\noindent \textbf{Proof of \eqref{kn_ell_1}.}
As in \eqref{hd8}, we use the decomposition
\begin{align*}
    \sum_{\substack{j_1, j_2 = \hat k_n + 1 \\ |j_1 - j_2| > m}}^{\hat k_n + \gbr{\lambda (n- \hat k_n)}} \tilde X_{j_2, \ell} =  R_{n, \ell}^{(1)} (\lambda) + R_{n, \ell}^{(2)} (\lambda) - R_{n, \ell}^{(3)},
\end{align*}
where
\begin{align*}
    R_{n, \ell}^{(1)} (\lambda)&= \sum_{j = \hat k_n + 1}^{\hat k_n + \gbr{\lambda (n - \hat k_n)}} (\gbr{\lambda (n - \hat k_n)} - 2 m - 1) \cdot \tilde X_{j,\ell}\\
    R_{n, \ell}^{(2)}(\lambda) & = \sum_{j = \hat k_n + \gbr{\lambda (n - \hat k_n)} - m + 1}^{\hat k_n + \gbr{\lambda (n - \hat k_n)}} (j + m - \hat k_n - \gbr{\lambda (n - \hat k_n)} )
    \cdot \tilde X_{j, \ell} \\
    R_{n, \ell}^{(3)}  & = \sum_{j = \hat k_n + 1}^{\hat k_n + m} (j - \hat k_n - m - 1) \cdot \tilde X_{j,\ell}.
\end{align*}
By Proposition \ref{prop_maxmax_m}, we have for the terms $R_{n,\ell}^{(2)}$ and $R_{n,\ell}^{(3)}$
\begin{align*}
    \e \Big[ \max_{\ell \in S} \sup_{0 \leq \lambda \leq 1} \big| R_{n,\ell}^{(2)} (\lambda) \big| \Big] &\leq \e \bigg[ \max_{\ell \in S} \max_{1 \leq k \leq n} \bigg| \sum_{j = k - m + 1}^k (j+m - k) \cdot\tilde X_{j,\ell} \bigg| \bigg] \\
    &= \e \bigg[ \max_{\ell \in S} \max_{1 \leq k \leq n} \bigg| \sum_{j = k + 1}^{k+m} (j - k) \cdot \tilde X_{j-m, \ell} \bigg| \bigg] \\
    &\lesssim m^{3/2} \sqrt{n} \bigg( \sum_{\ell \in S} \sum_{h = 0}^{m-1} |\Sigma_{h, \ell\ell} | \bigg)^{1/2}
\end{align*}
and
\begin{align*}
    \e \Big[ \max_{\ell \in S} \big| R_{n,\ell}^{(3)} \big| \Big] &\leq \e \bigg[ \max_{\ell \in S} \max_{1 \leq k \leq n} \bigg| \sum_{j = k + 1}^{k+m} (j-k-m-1) \cdot \tilde X_{j,\ell} \bigg| \bigg] \\
    &\lesssim m^{3/2} \sqrt{n} \bigg( \sum_{\ell \in S} \sum_{h = 0}^{m-1} |\Sigma_{h, \ell\ell} | \bigg)^{1/2}.
\end{align*}
Therefore, using conditions \ref{ass_shat_4} and \ref{ass_shat_6} we get (note that $N_m (n - \hat k_n) \simp n^2$)
\begin{align*}
    \e \bigg[ \max_{\ell \in S} \sup_{0 \leq \lambda \leq 1} \frac{\sqrt{n}}{n^2} \big| R_{n,\ell}^{(2)} (\lambda) - R_{n,\ell}^{(3)} \big| \bigg] &\lesssim \frac{m^{3/2}}{n}  \bigg( \sum_{\ell \in S} \sum_{h = 0}^{m-1} |\Sigma_{h,\ell\ell}| \bigg)^{1/2}\\
    &\lesssim  \frac{m^{3/2}}{n}  \bigg( \sum_{\ell \in S} c_\ell \bigg)^{1/2}\\
    &= O \bigg( \Big( \frac{m^2}{n} \Big)^{3/4}n^{-1/8} \sqrt{f(s)} \bigg)\\
    &= o \big(n^{-\beta} f(s)^{1/2} \big).
\end{align*}

We now turn to $R_{n,\ell}^{(1)} (\lambda)$ and obtain by Theorem \ref{thm2.0_unif_neu}
\begin{align*}
    & \frac{\sqrt{n}}{N_m (n - \hat k_n)} R_{n, \ell}^{(1)} (\lambda)\\
    & ~~~~~ = \frac{n}{n - \hat k_n - m} \frac{\gbr{\lambda (n - \hat k_n)} - 2m - 1}{n - \hat k_n - m - 1} \bigg( \frac{1}{\sqrt{n}} \sum_{j = 1}^{\hat k_n + \gbr{\lambda (n - \hat k_n)}} \tilde X_{j,\ell} - \frac{1}{\sqrt{n}} \sum_{j = 1}^{\hat k_n} \tilde X_{j,\ell} \bigg)\\
    & ~~~~~ = \frac{\lambda}{1 - \vartheta_0} \Big( \mathbb{B} \big( \Gamma_{\ell \ell} (\hat k_n + \gbr{\lambda(n - \hat k_n)}) / n \big) - \mathbb{B} \big( \Gamma_{\ell \ell} \hat k_n / n \big) \Big) + o_\p (n^{-\beta} f(s))
\end{align*}
uniformly in $\ell \in S$ and $\lambda \in [0,1]$. Note that we have used
\begin{align}\label{uniformbound_bm}
    \max_{\ell \in S^C} \sup_{0 \leq \lambda \leq 1} |\mathbb{B} \big( \lambda \Gamma_{\ell\ell} \big)| \leq \sup_{0 \leq \lambda \leq C} |\mathbb{B} (\lambda) | = O_\p (1),
\end{align}
which holds, because by the second part of condition \ref{ass_shat_1} there exists $C > 0$ such that $\max_{\ell\in S} \Gamma_{\ell\ell} \leq C$ and the supremum is taken over the interval $[0, C]$ which is independent of $n$.

From Theorem \ref{cpthm} and conditions \ref{ass_shat_1} and \ref{ass_shat_7} it follows for $i=1,2$
\begin{align}
    \begin{aligned}\label{rates_logn}
        \max_{\ell \in S} \sup_{0 \leq \lambda \leq 1} \Gamma_{\ell\ell} \big| \gbr{\lambda \hat k_n} / n - \lambda \vartheta_0 \big| = O_\p (\log^2 (n) / n) & = o_\p (n^{-\alpha / 2}), \\
        \max_{\ell \in S}  \sup_{0 \leq \lambda \leq 1} \Gamma_{\ell\ell}  \big| (\hat k_n + \gbr{\lambda (n - \hat k_n)}) / n - (\vartheta_0 + \lambda (1- \vartheta_0) ) \big| & = o_\p (n^{-\alpha / 2}),
    \end{aligned}
\end{align}
because $\alpha < 1$. We therefore conclude with Lemma~\ref{double_unif_cont_BM} for any $\beta < \alpha / 4$
\begin{align*}
    \frac{\sqrt{n}}{N_m (n - \hat k_n)} R_{n, \ell}^{(1)} (\lambda) &= \frac{\lambda}{1 - \vartheta_0} \Big( \mathbb{B} \big( \Gamma_{\ell \ell} (\vartheta_0 + \lambda (1 - \vartheta_0)) \big) - \mathbb{B} \big( \Gamma_{\ell \ell} \vartheta_0 \big) \Big) + o_\p (n^{-\beta} f(s))
\end{align*}
and this estimate holds uniformly in $\ell \in S$ and $\lambda \in [0,1]$. This proves \eqref{kn_ell_1}.

\medskip

\noindent \textbf{Proof of \eqref{ln_ell_1}.} 
As in \eqref{toinduce}, we have
\begin{align*}
    \sum_{\substack{i_1,i_2=1\\|i_1-i_2| > m}}^{\gbr{\lambda \hat k_n}} &\big(\mathbbm{1} \{ i_1 \leq k_0 \} \tilde{X}_{i_2, \ell} + \mathbbm{1} \{ i_2 \leq k_0 \} \tilde{X}_{i_1, \ell} \big)\\
    &= \sum_{\substack{i_1,i_2=1\\|i_1-i_2| > m}}^{k_0 \wedge \gbr{\lambda \hat k_n}} (\tilde{X}_{i_1, \ell} + \tilde{X}_{i_2, \ell}) + 2 \cdot \mathbbm{1} \{ k_0 \leq \gbr{\lambda \hat k_n} \} \sum_{i=k_0 + 1}^{\gbr{\lambda \hat k_n}} (i - m - 1) \wedge k_0 \cdot  \tilde X_{i, \ell}.
\end{align*}
For the first term, we use the decomposition
\begin{align*}
    \sum_{\substack{i_1,i_2=1\\|i_1-i_2| > m}}^{k_0 \wedge \gbr{\lambda \hat k_n}} (\tilde{X}_{i_1, \ell} + \tilde{X}_{i_2, \ell}) = 2 \big( S_{n, \ell}^{(1)} (\lambda) + S_{n, \ell}^{(2)} (\lambda) + S_{n, \ell}^{(3)} \big),
\end{align*}
where
\begin{align*}
    S_{n, \ell}^{(1)} (\lambda) &= \sum_{j = 1}^{k_0 \wedge \gbr{\lambda \hat k_n}} (k_0 \wedge \gbr{\lambda \hat k_n} - 2m - 1) \cdot \tilde X_{j,\ell},\\
    S_{n,\ell}^{(2)} (\lambda) &= \sum_{j = k_0 \wedge \gbr{\lambda \hat k_n} - m + 1}^{k_0 \wedge \gbr{\lambda \hat k_n}} (j + m - k_0 \wedge \gbr{\lambda \hat k_n}) \cdot \tilde X_{j,\ell}, \\
    S_{n,\ell}^{(3)} &= \sum_{j = 1}^m (m+1-j) \cdot \tilde X_{j,\ell}.
\end{align*}
By the same arguments used in the proof of \eqref{secondterm2} and condition \ref{ass_shat_4}, we obtain
\begin{align*}
    \max_{\ell \in S} \sup_{\lambda \in [0,1]} \frac{\sqrt{n}}{N_m (\hat k_n)} \big| S_{n,\ell}^{(2)} (\lambda) + S_{n,\ell}^{(3)} \big| &\lesssim \frac{n}{\hat k_n - m} \frac{m^{3/2}}{\hat k_n - m - 1} \bigg( \sum_{\ell \in S} c_\ell \bigg)^{1/2}\\
    & = o_\p \big( n^{-\beta} \sqrt{f(s)} \big).
\end{align*}
Turning to the term $S_n^{(1)}$ we have by Theorem \ref{thm2.0_unif_neu} and Lemma~\ref{double_unif_cont_BM}
\begin{align*}
    \frac{\sqrt{n}}{N_m (\hat k_n)}  S_{n,\ell}^{(1)} (\lambda) &= \frac{n}{\hat k_n - m} \frac{k_0 \wedge \gbr{\lambda \hat k_n} - 2m - 1}{\hat k_n - m - 1} \frac{1}{\sqrt{n}} \sum_{i=1}^{k_0 \wedge \gbr{\lambda \hat k_n}} \tilde X_{i, \ell}\\
    &= \frac{\lambda}{\vartheta_0} \mathbb{B} \big( \lambda \vartheta_0 \Gamma_{\ell\ell} \big) + o_\p \big( n^{-\beta} f(s) \big).
\end{align*}
By the same arguments used in the proof of \eqref{secondterm2}, we have
\begin{align*}
    \max_{\ell \in S} \sup_{\substack{0 \leq \lambda \leq 1 \\ k_0 + 1 \leq \gbr{\lambda \hat k_m}}} \bigg| \sum_{i = k_0 + 1}^{\gbr{\lambda \hat k_n}} (i-m-1)\wedge k_0 \cdot \tilde X_{i,\ell} \bigg| = o_\p (n^{-\beta} f(s)),
\end{align*}
which finishes the proof.

\subsection{Proof of Theorem \ref{unif_conv_thm_Sc}}
\label{sec_proof_d2}

As $T_{n, \ell}^{(2)} (\lambda) = T_{n, \ell}^{(3)} (\lambda) = T_{n, \ell}^{(4)} (\lambda)  \equiv 0$ for indices $\ell \in S^C$,  we obtain from the decomposition \eqref{decomp_unif_stat},
\begin{align}\label{hd15neu}
    \hat \delta_\ell^2 ( \lambda ) = T_{n, \ell}^{(1)} (\lambda) = \frac{N_m (\gbr{\lambda (n-\hat k_n)})}{N_m (n-\hat k_n)} P_{n, \ell} (\lambda) - Q_{n, \ell} (\lambda) + \frac{N_m (\gbr{\lambda \hat k_n})}{N_m (\hat k_n)} R_{n, \ell} (\lambda),
\end{align}
where $P_{n, \ell}, Q_{n, \ell}, R_{n, \ell}$ are defined in \eqref{hd15a} -- \eqref{hd15c}. Note that \eqref{hd15neu} coincides with \eqref{hd15}, but is now considered for $\ell \in S^C$. With the notation
\begin{align*}
    H_{1, \ell} (\lambda) := \frac{ \mathbb{B} \big(\lambda \vartheta_0 \Gamma_{ \ell \ell} \big) }{\vartheta_0}, ~~~~ H_{2, \ell} (\lambda) := \frac{ \mathbb{B}  \big( \Gamma_{\ell \ell}(\vartheta_0 + \lambda(1-\vartheta_0)) \big) - \mathbb{B} \big( \vartheta_0 \Gamma_{\ell \ell} \big) }{1 - \vartheta_0}, 
\end{align*}
$\tau_{\ell}^2 = \Gamma_{\ell\ell} / (\vartheta_0 (1-\vartheta_0))$ and $r_n = n^{-\beta} f_c (s_c)$, we will show below that
\begin{align}
    \max_{\ell \in S^C} \sup_{\lambda \in [0, 1]} \big| n P_{n, \ell} (\lambda) -  (H_{1, \ell }^2 (\lambda) - \lambda \Gamma_{\ell\ell} / \vartheta_0) \big| = o_\p  \big(r_n^2 + r_n\big),\label{ph1}\\
    \max_{\ell \in S^C} \sup_{\lambda \in [0, 1]} \big| n Q_{n, \ell} (\lambda) -  2\lambda^2 H_{1, \ell } (\lambda) H_{2, \ell } (\lambda) \big| = o_\p \big( r_n^2 + r_n \big),\label{qh1h2}\\
    \max_{\ell \in S^C} \sup_{\lambda \in [0, 1]} \big| n R_{n, \ell} (\lambda) -  (H_{2, \ell }^2 (\lambda) - \lambda \Gamma_{\ell\ell} / (1-\vartheta_0) ) \big| = o_\p \big( r_n^2 + r_n\big).\label{rh2}
\end{align}
With these approximations, we obtain from \eqref{hd15neu}
\begin{align*}
    \max_{\ell \in S^C} \sup_{\lambda \in [0, 1]} \big| n T_{n, \ell}^{(1)} (\lambda) - \tau_{\ell}^2 H_\ell (\lambda)  \big| = o_\p \big( r_n^2 + r_n \big),
\end{align*}
where
\begin{align*}
    H_\ell (\lambda) &:= \frac{ \lambda^2 }{\tau_{\ell}^2} \bigg( H_{1, \ell}^2 (\lambda) - 2 H_{1, \ell} (\lambda) H_{2, \ell} (\lambda) + H_{2, \ell}^2 (\lambda) - \lambda \bigg( \frac{\Gamma_{\ell\ell}}{\vartheta_0} + \frac{\Gamma_{\ell\ell}}{1 - \vartheta_0} \bigg) \bigg)\\
    &~= \frac{ \lambda^2 }{\tau_{\ell}^2} \big( (H_{1, \ell} (\lambda) - H_{2, \ell} (\lambda))^2 - \lambda \tau_{\ell}^2 \big)\\
    &~= \lambda^2 \bigg( \bigg( \frac{ H_{1, \ell} (\lambda) - H_{2, \ell} (\lambda) }{\tau_{\ell}}  \bigg)^{2}  - \lambda \bigg),
\end{align*}
because $ \tau_{\ell}^2 = \Gamma_{\ell\ell} / \vartheta_0 + \Gamma_{\ell\ell} / (1-\vartheta_0)$.
The statement will then follow, because $$\Big\{ \tau_{\ell}^{-1} \big( H_{1, \ell} (\lambda) - H_{2, \ell} (\lambda) \big) \Big\}_{\lambda \in [0,1] } =^d \{\mathbb{B} (\lambda) \}_{\lambda \in [0,1]}$$ for any $\ell \in S^C$.

\medskip

\noindent\textbf{Proofs of \eqref{ph1} and \eqref{rh2}.}
We enlarge the sum to form a square
\begin{align*}
    P_{n, \ell} (\lambda) &= \frac{1}{N_m (\hat k_n)} \sum_{\substack{i_1,i_2=1\\|i_1-i_2| > m}}^{\gbr{\lambda \hat k_n}} \tilde{X}_{i_1, \ell} \tilde{X}_{i_2, \ell}\\
    &=\frac{{\mathbbm{1} \{ \gbr{\lambda \hat k_n}  \geq m + 2 \} }}{N_m (\hat k_n)} \bigg\{ \bigg( \sum_{i = 1}^{\gbr{\lambda \hat k_n}} \tilde X_{i, \ell} \bigg)^2 - \sum_{|h| \leq m} \sum_{i = 1}^{\gbr{\lambda \hat k_n} - |h|} \tilde X_{i, \ell} \tilde X_{i + |h|, \ell} \bigg\}
\end{align*}
(note that by definition $P_{n,\ell} (\lambda) = 0$ whenever $\gbr{\lambda \hat k_n} \leq m + 1$).
Hence,
\begin{align*}
    n P_{n, \ell} (\lambda) &= \frac{n^2 \mathbbm{1} \{ \gbr{\lambda \hat k_n}  \geq m + 2 \} }{N_m (\hat k_n)} \bigg\{ \bigg( \frac{1}{\sqrt{n}} \sum_{i = 1}^{\gbr{\lambda \hat k_n}} \tilde X_{i, \ell} \bigg)^2 - \frac{1}{n} \sum_{|h| \leq m} \sum_{i = 1}^{\gbr{\lambda \hat k_n} - |h|} \tilde X_{i, \ell} \tilde X_{i + |h|, \ell} \bigg\}.
\end{align*}
By the same arguments as used before,
\begin{align*}
    \bigg| \frac{n^2}{N_m (\hat k_n)} - \frac{1}{\vartheta_0^2} \bigg| = o_\p (n^{-\beta}) ~~~~ \text{ and } ~~~~ \bigg| \frac{n^2}{N_m (n - \hat k_n)} - \frac{1}{(1-\vartheta_0)^2} \bigg| = o_\p (n^{-\beta}) 
\end{align*}
and similar arguments as given in the Proof of Theorem \ref{unif_conv_thm_S} show 
\begin{align*}
    \max_{\ell \in S^C} \sup_{\lambda \in [0, 1]} \bigg| \frac{1}{\sqrt{n}} \sum_{i = 1}^{\gbr{\lambda \hat k_n}} \tilde X_{i, \ell}  - \mathbb{B} \big(\lambda \vartheta_0 \Gamma_{\ell\ell} \big) \bigg| = o_\p \big( r_n \big),
\end{align*}
and we will prove below that
\begin{align}\label{stilltoprove}
    \max_{\ell \in S^C} \sup_{\substack{\lambda \in [0,1] \\ \gbr{\lambda \hat k_n} \geq m + 2}} \bigg| \frac{1}{n} \sum_{|h| \leq m} \sum_{i = 1}^{\gbr{\lambda \hat k_n} - |h|} \tilde X_{i, \ell} \tilde X_{i + |h|, \ell} - \lambda \vartheta_0 \Gamma_{ \ell \ell}  \bigg| = o_\p \big( r_n \big).
\end{align}
With these approximations it follows, observing that $\p (\gbr{\lambda \hat k_n} \leq  m + 2 ) =o(1)$ by condition \ref{ass_shat_6},
\begin{align*}
    n P_{n, \ell} (\lambda) &= \frac{n^2}{N_m (\hat k_n)}  \Big( \mathbb{B}^2 \big(\lambda \vartheta_0 \Gamma_{ \ell \ell} \big) - \lambda \vartheta_0 \Gamma_{ \ell \ell} \Big) + o_\p (r_n^2 + r_n)\\
    &= \vartheta_0^{-2}  \Big( \mathbb{B}^2 \big(\lambda \vartheta_0 \Gamma_{ \ell \ell} \big) - \lambda \vartheta_0 \Gamma_{ \ell \ell} \Big) + o_\p (r_n^2 + r_n)\\
    &= H_{1, \ell}^2 (\lambda) - \lambda \frac{\Gamma_{\ell\ell}}{\vartheta_0}  + o_\p (r_n^2 + r_n )
\end{align*}
uniformly in $\ell \in S^C$ and $\lambda \in [0,1]$, which proves \eqref{ph1}. Note that we also used \eqref{uniformbound_bm} in this argument.

By the same arguments, we also have
\begin{align*}
    n R_{n, \ell} (\lambda) &= \frac{n^2}{N_m (n - \hat k_n)} \Big\{ \Big( \mathbb{B}  \big( \Gamma_{\ell \ell} (\vartheta_0 + \lambda(1-\vartheta_0)) \big) - \mathbb{B} \big( \vartheta_0 \Gamma_{\ell \ell} \big)  \Big)^2 - \lambda (1-\vartheta_0) \Gamma_{\ell \ell} \Big\}\\
    & ~~~~~ ~~~~~ ~~~~~ + o_\p \big( r_n^2 + r_n \big)\\
    &= H_{2, \ell}^2 (\lambda) - \lambda \frac{\Gamma_{\ell\ell}}{1 - \vartheta_0}  + o_\p \big( r_n^2 + r_n \big)
\end{align*} 
uniformly in $\ell \in S^C$ and $\lambda \in [0,1]$, which proves \eqref{rh2}.
\medskip

\noindent\textbf{Proof of \eqref{qh1h2}.}
We use Lemma~\ref{lem1} to obtain representation
\begin{align*}
    N_m (\hat k_n) & N_m (n- \hat k_n) \cdot Q_{n,\ell} (\lambda)\\
    &= 2\bigg( \sum_{j = 1}^{\gbr{\lambda \hat k_n} - m}  (\gbr{\lambda \hat k_n} - m - j) \cdot \tilde X_{j, \ell} + \sum_{j = m+1}^{\gbr{\lambda \hat k_n}} (j - m - 1) \cdot \tilde X_{j, \ell} \bigg)\\
    &~~~~~~ ~~~~~~ ~~~~~~  \times \bigg( \sum_{j = \hat k_n + 1}^{\hat k_n + \gbr{\lambda (n - \hat k_n)} - m}  (\hat k_n + \gbr{\lambda (n - \hat k_n)} - m - j) \cdot \tilde X_{j, \ell} \\
    & ~~~~~~ ~~~~~~ ~~~~~~~~~~~~~~~~~~~~~~~~~~~~~~+ \sum_{j = \hat k_n + m + 1}^{\hat k_n + \gbr{\lambda (n - \hat k_n)}} (j - \hat k_n - m - 1) \cdot \tilde X_{j, \ell}  \bigg)\\
    &= 2\Big( A_{n, \ell}^{(1)} (\lambda) + A_{n, \ell}^{(2)} (\lambda)  + A_{n, \ell}^{(3)} \Big) \Big( B_{n, \ell}^{(1)} (\lambda) +B_{n, \ell}^{(2)} (\lambda) - B_{n, \ell}^{(3)}  \Big),
\end{align*}
where
\begin{align*}
    A_{n, \ell}^{(1)} (\lambda) &= \sum_{j = 1}^{\gbr{\lambda \hat k_n}} (\gbr{\lambda \hat k_n} - 2m - 1) \cdot \tilde X_{j, \ell},\\
    A_{n, \ell}^{(2)} (\lambda) &= \sum_{j = \gbr{\lambda \hat k_n} - m + 1}^{\gbr{\lambda \hat k_n}} (j + m - \gbr{\lambda \hat k_n}) \cdot \tilde X_{j, \ell} , \\
    A_{n, \ell}^{(3)} &= \sum_{j = 1}^m (m+1-j) \cdot \tilde X_{j, \ell}
\end{align*}
and 
\begin{align*}
    B_{n, \ell}^{(1)} (\lambda) &= \sum_{j=\hat k_n + 1}^{\hat k_n + \gbr{\lambda (n - \hat k_n)}} (\gbr{\lambda (n - \hat k_n)} - 2m - 1) \cdot \tilde X_{j, \ell},\\
    B_{n, \ell}^{(2)} (\lambda) &= \sum_{j = \hat k_n + \gbr{\lambda (n - \hat k_n)} -m +1}^{\hat k_n + \gbr{\lambda (n - \hat k_n)}} (j + m - \hat k_n - \gbr{\lambda (n - \hat k_n)}) \cdot \tilde X_{j, \ell} , \\
    B_{n, \ell}^{(3)} &= \sum_{j=\hat k_n + 1}^{\hat k_n + m} (j - \hat k_n - m - 1) \cdot \tilde X_{j,\ell}.
\end{align*}
By Proposition \ref{prop_maxmax_m} and condition \ref{ass_shat_4}, we get that
\begin{align*}
    \e \Big[ \max_{\ell \in S^C} \sup_{0 \leq \lambda \leq 1} |A_{n, \ell}^{(2)} (\lambda)| \Big] &\leq \e \bigg[ \max_{\ell \in S^C} \max_{1 \leq k \leq n} \Big| \sum_{j = k + 1}^{k +m} (j-k) \cdot \tilde X_{j-m, \ell} \Big| \bigg]\\
    &\lesssim m^{3/2} \sqrt{n} \bigg( \sum_{\ell \in S^C} c_\ell \bigg)^{1/2}\\
    &\lesssim m^{3/2} \sqrt{n^{1 + \alpha / 8} f_c (s_c)}
\end{align*}
and
\begin{align*}
    \e \Big[ \max_{\ell \in S^C}  |A_{n, \ell}^{(3)} | \Big] \leq \e \bigg[ \max_{\ell \in S^C} \max_{1 \leq k \leq n} \Big| \sum_{j = k + 1}^{k +m} (m+1-j) \cdot \tilde X_{j, \ell} \Big| \bigg] &\lesssim m^{3/2} \sqrt{n} \bigg( \sum_{\ell \in S^C} c_\ell \bigg)^{1/2}\\
    &\lesssim m^{3/2} \sqrt{n^{1 + \alpha / 8} f_c (s_c)}.
\end{align*}
Therefore, noting that $N_m (\hat k_n) \simp n^2$ and using that $m = O(n^{1/2 - \alpha / 4})$ by \ref{ass_shat_6}, we conclude 
\begin{align}\label{an2bis3}
    \max_{\ell \in S^C} \sup_{\lambda \in [0, 1]} \frac{\sqrt{n}}{N_m (\hat k_n)}  \Big|A_{n,\ell}^{(2)} (\lambda) + A_{n,\ell}^{(3)}\Big| &= o_\p \big( n^{-\beta} \sqrt{f_c (s_c)} \big) = o_\p (r_n).
\end{align}
By the same arguments, one also finds
\begin{align}\label{bn2bis3}
    \max_{\ell \in S^C} \sup_{\lambda \in [0, 1]}  \frac{ \sqrt{n} }{N_m (n - \hat k_n)} \Big|B_{n,\ell}^{(2)} (\lambda) - B_{n,\ell}^{(3)}\Big| &= o_\p \big( n^{-\beta} \sqrt{f_c (s_c)} \big) = o_\p (r_n).
\end{align}

Next, we turn to $A_{n,\ell}^{(1)}$ and $B_{n, \ell}^{(1)}$, where we have for the first term and any $\ell \in S^C$
\begin{align}
    \frac{\sqrt{n}}{N_m (\hat k_n)} A_{n, \ell}^{(1)} (\lambda) &= \frac{n}{\hat k_n - m} \frac{\gbr{\lambda \hat k_n} - 2m - 1}{\hat k_n - m - 1} \frac{1}{\sqrt{n}} \sum_{j = 1}^{\gbr{\lambda \hat k_n}} \tilde X_{j, \ell}\nonumber\\
    &= \frac{\lambda}{\vartheta_0} \mathbb{B} \big( \Gamma_{\ell\ell} \gbr{\lambda \hat k_n} / n \big) + o_\p \big( r_n  \big)\nonumber\\
    &= \frac{\lambda}{\vartheta_0} \mathbb{B} \big( \Gamma_{\ell\ell} \lambda \vartheta_0 \big) + o_\p \big( r_n  \big) \nonumber\\
    &= \lambda H_{1, \ell} (\lambda) + o_\p (r_n), \label{an1_res}
\end{align}
where in the second to last line we have used Lemma~\ref{double_unif_cont_BM} together with \eqref{rates_logn}. Using the same arguments, we also find for any $\ell \in S^C$
\begin{align}
    &\frac{\sqrt{n}}{N_m (n - \hat k_n)} B_{n, \ell}^{(1)} (\lambda) \nonumber\\
    & ~~~~~ = \frac{n}{n - \hat k_n - m} \frac{\gbr{\lambda (n - \hat k_n)} - 2m - 1}{n - \hat k_n - m - 1} \bigg( \frac{1}{\sqrt{n}} \sum_{j = 1}^{\hat k_n + \gbr{\lambda (n - \hat k_n)}} \tilde X_{j,\ell} - \frac{1}{\sqrt{n}} \sum_{j = 1}^{\hat k_n} \tilde X_{j,\ell} \bigg) \nonumber\\
    & ~~~~~ = \frac{\lambda}{1 - \vartheta_0} \Big( \mathbb{B} \big( \Gamma_{\ell \ell} (\hat k_n + \gbr{\lambda(n - \hat k_n)}) / n \big) - \mathbb{B} \big( \Gamma_{\ell \ell} \hat k_n / n \big) \Big) + o_\p (r_n)\nonumber\\
    & ~~~~~ = \frac{\lambda}{1 - \vartheta_0} \Big( \mathbb{B}  \big( \Gamma_{\ell \ell}(\vartheta_0 + \lambda(1-\vartheta_0)) \big) - \mathbb{B} \big( \vartheta_0 \Gamma_{\ell \ell} \big) \Big) + o_\p (r_n ) \nonumber\\
    & ~~~~~ = \lambda H_{2, \ell} (\lambda) + o_\p (r_n ).  \label{bn1_res}
\end{align}

Combining \eqref{uniformbound_bm}, \eqref{an2bis3}, \eqref{bn2bis3}, \eqref{an1_res} and \eqref{bn1_res}, we obtain
\begin{align*}
    nQ_{n, \ell} (\lambda) = 2 \lambda^2 H_{1,\ell} (\lambda) H_{2, \ell} (\lambda) + o_\p (r_n^2 + r_n).
\end{align*}
\medskip

\noindent\textbf{Proof of \eqref{stilltoprove}.} 
\noindent Similar arguments as used before yield
\begin{align*}
    \max_{\ell \in S^C} \sup_{\lambda \in [0,1]} \Gamma_{\ell\ell} | \lambda \vartheta_0 - \gbr{\lambda \hat  k_n } / n| = o_\p (n^{-2\beta}),
\end{align*}
which gives
\begin{align}
    \nonumber
    &\max_{\ell \in S^C} \sup_{\substack{\lambda \in [0,1] \\ \gbr{\lambda \hat k_n}  \geq m + 2}}  \bigg| \frac{1}{n } \sum_{|h| \leq m} \sum_{i = 1}^{\gbr{\lambda \hat k_n} - |h|} \tilde X_{i, \ell} \tilde X_{i + |h|, \ell} - \lambda \vartheta_0 \Gamma_{ \ell \ell} \bigg|\\
    \nonumber 
    & ~~~~~ ~~~~~ \leq \max_{\ell \in S^C} \sup_{\substack{\lambda \in [0,1] \\ \gbr{\lambda \hat k_n}  \geq m + 2}} 
    \frac{1}{n} \bigg| \sum_{|h| \leq m} \sum_{i = 1}^{\gbr{\lambda \hat k_n} - |h|} \tilde X_{i, \ell} \tilde X_{i + |h|, \ell} - \Gamma_{\ell \ell} \gbr{\lambda \hat k_n} \bigg| + o_\p (n^{-2\beta})\\
    \nonumber 
    & ~~~~~ ~~~~~ \leq \max_{\ell \in S^C} \sup_{\substack{\lambda \in [0,1] \\ \gbr{\lambda \hat k_n} \geq m + 2}}  \frac{1}{n} \bigg| \sum_{|h| \leq m} \bigg( \sum_{i = 1}^{\gbr{\lambda \hat k_n} - |h|} \tilde X_{i, \ell} \tilde X_{i + |h|, \ell} - \Sigma_{h, \ell \ell} (\gbr{\lambda \hat k_n} - |h| )\bigg) \bigg| \\
    \nonumber 
    & ~~~~~ ~~~~~ ~~~~~    + \max_{\ell \in S^C} \frac{1}{n} \bigg| \sum_{|h| \leq m} |h| \cdot \Sigma_{h, \ell \ell} \bigg| + \max_{\ell \in S^C} \bigg| \sum_{|h| > m} \Sigma_{h, \ell \ell} \bigg|  + o_\p (n^{-2\beta})\\
    \nonumber 
    & ~~~~~ ~~~~~ \lesssim \max_{\ell \in S^C} \max_{m + 2 \leq k \leq n} \frac{1}{n} \bigg| \sum_{h=0}^m  \sum_{i = 1}^{k - h} (\tilde X_{i, \ell} \tilde X_{i + h, \ell} - \Sigma_{h, \ell \ell} ) \bigg) \bigg| \\
    \label{det50a}
    & ~~~~~ ~~~~~ ~~~~~    + \max_{\ell \in S^C} \frac{1}{n} \bigg| \sum_{|h| \leq m} |h| \cdot \Sigma_{h, \ell \ell} \bigg| + \max_{\ell \in S^C} \bigg| \sum_{|h| > m} \Sigma_{h, \ell \ell} \bigg|  + o_\p (n^{-2\beta})
\end{align}
for any $0 < \beta < \alpha / 4$.
By condition \ref{ass_shat_2}, we obtain
\begin{align*}
    \max_{\ell \in S^C} \bigg| \sum_{|h| > m} \Sigma_{h, \ell \ell} \bigg| = O (n^{-\alpha / 4}) = o(n^{-\beta})
\end{align*}
for any $0 \leq \beta < \alpha / 4$ and using conditions \ref{ass_shat_4} and \ref{ass_shat_6} yields
\begin{align*}
    \max_{\ell \in S^C} \bigg| \frac{1}{n}  \sum_{|h| \leq m} |h| \cdot \Sigma_{h, \ell \ell} \bigg| &\lesssim \frac{m}{n}  \max_{\ell \in S^C} \sum_{h = 0}^m |\Sigma_{h, \ell \ell} |\\
    &\leq \frac{m}{n} \max_{\ell \in S^C} c_\ell \leq \frac{m}{n} \sum_{\ell \in S^C}  c_\ell = o \big(n^{-\beta} f_c (s_c) \big) = o(r_n).
\end{align*}
for any $0 \leq \beta < \alpha /4$. It remains to show a corresponding estimate for the first term in \eqref{det50a}, that is
\begin{align*}
    \max_{\ell \in S^C} \max_{m+2 \leq k \leq n} \frac{1}{n}\bigg|   \sum_{h = 0}^m \sum_{i = 1}^{k - h} 
    Y_{i,h,\ell}
    \bigg| = o_\p (r_n),
\end{align*}
where $Y_{i,h,\ell} = \tilde X_{i, \ell} \tilde X_{i + h, \ell} - \Sigma_{h, \ell \ell}$.
We have by Proposition \ref{maxmax} (using a telescopic sum argument again) that
\begin{align*}
    \max_{\ell \in S^C} \max_{1 \leq k \leq n} \bigg| \frac{1}{n} \sum_{h=0}^m  \sum_{i = 1}^{k - h} Y_{i, h, \ell} \bigg|
    & \leq 2^{-d} \sum_{r = 0}^d \bigg( \sum_{\ell \in S^C} \sum_{u = 1}^{2^{d-r}} \e \bigg[ \bigg( \sum_{h=0}^m \sum_{i = 2^r(u-1) + 1 - h}^{2^r u - h} Y_{i,h, \ell} \bigg)^2 \bigg] \bigg)^{1/2},
\end{align*}
where the value of the inner sum is defined as $0$ whenever $k-h< 1$. 
Moreover, a straightforward calculation using stationarity of $Y_{i,h,\ell}$ and shifting the two inner sums yields
\begin{align*}
    \e \bigg[ \bigg( \sum_{h=0}^m &\sum_{i = 2^r(u-1) + 1 - h}^{2^r u - h} Y_{i,h, \ell} \bigg)^2 \bigg]\\
    &= \sum_{h_1,h_2 = 0}^m \sum_{i_1 = 2^r (u-1) + 1 - h_1}^{2^r u -h_1}  \sum_{i_2 = 2^r (u-1) + 1 - h_2}^{2^r u - h_2} \e[Y_{i_1, h_1, \ell} Y_{i_2, h_2, \ell} ] \\
    &= \sum_{h_1,h_2 = 0}^m \sum_{i_1, i_2 = 2^r (u-1) + 1}^{2^r u} \e[Y_{0, h_1, \ell} Y_{i_2 - i_1 + h_1 - h_2, h_2, \ell} ]\\
    &=  \sum_{h_1,h_2 = 0}^m \sum_{|k| \leq 2^r - 1} (2^r - |k|) \e[Y_{0, h_1, \ell} Y_{k + h_1 - h_2, h_2, \ell} ]\\
    &\leq 2^r \sum_{h_1,h_2 = 0}^m \bigg( \sum_{k=0}^{2^r - 1} \big| \e[Y_{0, h_1, \ell} Y_{k + h_1 - h_2, h_2, \ell} ] \big| + \sum_{k=1}^{2^r - 1} \big| \e[Y_{0, h_1, \ell} Y_{h_1 - h_2 - k, h_2, \ell} ] \big| \bigg)\\
    & \lesssim 2^r \sum_{h_1,h_2 = 0}^m \sum_{k=0}^{2^r - 1} \e[Y_{0, h_1, \ell} Y_{h_1 - h_2 - k, h_2, \ell} ],
\end{align*}
where we have used the stationarity of $Y_{i,h,\ell}$ in the last step. Hence, the top expression reduces to
\begin{align*}
    \max_{\ell \in S^C} \max_{1 \leq k \leq n} \bigg| \frac{1}{n} \sum_{h=0}^m  \sum_{i = 1}^{k - h} Y_{i, h, \ell} \bigg| \lesssim 2^{-d / 2} \sum_{r = 0}^d \bigg( \sum_{\ell \in S^C} \sum_{h_1,h_2 = 0}^m \sum_{k = 0}^{2^r - 1} \big| \e[Y_{0, h_1, \ell} Y_{h_1 - h_2 - k, h_2, \ell} ] \big| \bigg)^{1/2}.
\end{align*}
By the definition of $Y_{i,h,\ell}$ and simple properties of cumulants, we obtain
\begin{align}\label{det50b}
    \begin{split}
        \e[Y_{0, h_1, \ell} Y_{h_1 - h_2 - k, h_2, \ell} ] &=  \Sigma_{k,\ell \ell} \Sigma_{k + h_2 - h_1, \ell \ell} + \Sigma_{k - h_1, \ell \ell} \Sigma_{k + h_2, \ell \ell}  \\
        & ~~~~~ + \mathrm{cum} ( \tilde X_{0, \ell}, \tilde X_{h_1, \ell}, \tilde X_{h_1 - k, \ell}, \tilde X_{h_1 - h_2 - k, \ell}) ,
    \end{split}
\end{align}
and we consider the sums corresponding to each term in this decomposition separately. Starting with the first sum, by condition \ref{ass_shat_4}, we get that
\begin{align*}
    2 ^{-d/2} \sum_{r = 0}^d \bigg( \sum_{\ell \in S^C} \sum_{h_1, h_2 = 0}^m \sum_{k = 0}^{2^r - 1} &|\Sigma_{k, \ell \ell} \Sigma_{k + h_2 - h_1, \ell \ell} | \bigg)^{1/2}\\
    &= 2^{-d/2} \sum_{r = 0}^{d} \bigg(  \sum_{\ell \in S^C} \sum_{k = 0}^{2^r - 1} | \Sigma_{k, \ell \ell} |  \sum_{|h| \leq m - 1} (m - |h|) | \Sigma_{k+h, \ell \ell} | \bigg)^{1/2}\\
    &\lesssim 2^{-d/2} \sqrt{m} \sum_{r = 0}^{d} \bigg(  \sum_{\ell \in S^C} \sum_{k = 0}^{2^r - 1}  | \Sigma_{k, \ell \ell} |  \sum_{|h| \leq m - 1} |\Sigma_{k+h, \ell \ell} | \bigg)^{1/2} \\
    &\lesssim 2^{-d / 2} \sqrt{m} \sum_{r = 0}^{d} \bigg\{ \sum_{\ell \in S^C} \sum_{k = 0}^{2^r - 1} |\Sigma_{k, \ell \ell}| \cdot c_\ell  \gamma_k  \bigg\}^{1/2}\\
    &\lesssim 2^{-d / 2} \sqrt{m} \sum_{r = 0}^{d} \bigg\{ \sum_{\ell \in S^C} c_\ell^2 \sum_{k = 0}^{2^r - 1}  \gamma_k^2   \bigg\}^{1/2}\\
    &\lesssim 2^{-d/2} d \sqrt{m}  \bigg( \sum_{\ell \in S^C} c_\ell^2 \bigg)^{1/2}\\
    &\lesssim \frac{\log (n)}{\sqrt[4]{n}} \frac{\sqrt{m}}{\sqrt[4]{n}} n^{\alpha / 8} f_c (s_c)\\
    &= o(r_n)
\end{align*}
for any $0 \leq \beta < \alpha / 4 < 1/4 $, since $\sqrt{m} / \sqrt[4]{n} = O(n^{- \alpha / 8})$ by condition \ref{ass_shat_6}.

For the sum corresponding to the second term in \eqref{det50b} we have
\begin{align*}
    \sum_{h=0}^m |\Sigma_{k-h, \ell \ell }| \leq \sum_{|h|\leq m} |\Sigma_{k+h, \ell \ell }| \lesssim \sum_{h = 0}^m |\Sigma_{k+h, \ell \ell }|,
\end{align*}
and we obtain by condition \ref{ass_shat_4}
\begin{align*}
    2 ^{-d/2} \sum_{r = 0}^d \bigg( \sum_{\ell \in S^C} \sum_{h_1, h_2 = 0}^m \sum_{k = 0}^{2^r - 1} &| \Sigma_{k - h_1, \ell \ell} \Sigma_{k +  h_2, \ell \ell} | \bigg)^{1/2}\\
    &\leq 2^{-d / 2} \sum_{r=0}^d \bigg\{ 
        \sum_{\ell \in S^C} \sum_{k = 0}^{2^r - 1} \sum_{h_1 = 0}^m |\Sigma_{k + h_1, \ell \ell} | \sum_{h_2 =0}^m |\Sigma_{k-h_2, \ell \ell}|
    \bigg\}^{1/2}\\
    &\lesssim 2^{-d / 2} \sum_{r=0}^d \bigg\{ 
        \sum_{\ell \in S^C} \sum_{k = 0}^{2^r - 1} \bigg( \sum_{h = 0}^m |\Sigma_{k+h, \ell \ell} | \bigg)^2
    \bigg\}^{1/2}\\
    &\lesssim 2^{-d / 2} \sum_{r=0}^d \bigg\{ 
        \sum_{\ell \in S^C} c_\ell^2 \sum_{k = 0}^{2^r - 1} \gamma_k^2 
    \bigg\}^{1/2}\\
    &\lesssim \frac{\log (n)}{\sqrt[4]{n}} \frac{\sqrt{m}}{\sqrt[4]{n}} \frac{1 }{ \sqrt{m} } \cdot n^{\alpha / 8} f_c (s_c)\\
    &= o(r_n)
\end{align*}
similarly as above.

For the sum corresponding to the third term in \eqref{det50b} it follows from condition \ref{ass_shat_5} that  
\begin{align*}
    2 ^{-d/2} \sum_{r = 0}^d  \bigg( \sum_{\ell \in S^C} \sum_{k = 0}^{2^r - 1} \sum_{h_1, h_2 = 0}^m  &| \mathrm{cum} (\tilde X_{0, \ell}, \tilde X_{h_1, \ell}, \tilde X_{h_1 - k, \ell}, \tilde X_{h_1 - h_2 - k, \ell}) |  \bigg)^{1/2}\\
    &\lesssim  2^{-d/2} \sum_{r = 0}^d \bigg( \sum_{\ell \in S^C} c_\ell^2 \sum_{k = 0}^{2^r - 1} \sum_{h_1, h_2 = 0}^m  \rho^{h_1 - (h_1 - h_2 - k)} \bigg)^{1/2}\\
    &\lesssim 2^{-d / 2} d \sqrt{m} \bigg( \sum_{\ell \in S^C} c_\ell^2 \bigg)^{1/2}\\
    &\lesssim \frac{\log (n)}{\sqrt[4]{n}} \frac{\sqrt{m}}{\sqrt[4]{n}} \cdot n^{\alpha / 8} f_c (s_c)\\
    & = o (r_n),
\end{align*}
which completes the proof of \eqref{stilltoprove}.

\subsection{Proof of Theorem \ref{adaptive_consistency_thm}}

By Theorem \ref{snconsistent}, $\p ({ \hat S_n \neq S }) = o(1)$, which yields
\begin{align*}
    \p (\hat T_{n, \hat S_n} > \Delta + q_{1-\alpha} \hat V_{n, \hat S_n}) = \p (\hat T_{n, S} > \Delta + q_{1-\alpha} \hat V_{n, S}, S = \hat S_n) + o(1). 
\end{align*}
The assertion follows by the same arguments as given in the proof of Theorem \ref{testconsistent}.






\end{document}